\newcommand{\br}{\mathbb{R}}
\newcommand{\bc}{\mathbb C}
\newcommand{\bz}{\mathbb Z}
\newcommand{\bn}{\mathbb N}
\newcommand{\bq}{\mathbb Q}
\newcommand{\bh}{\mathbb H}
\newcommand{\bS}{\mathbb S}
\newcommand{\cE}{\mathcal E}
\newcommand{\al}{\alpha}
\newcommand{\be}{\beta}
\newcommand{\vp}{\varphi}
\newcommand{\ssm}{\smallsetminus}
\DeclareMathOperator{\arcsinh}{arcsinh}
\DeclareMathOperator{\mcg}{MCG}
\DeclareMathOperator{\pmcg}{PMCG}
\DeclareMathOperator{\isom}{Isom}
\DeclareMathOperator{\Ends}{\cE}
\DeclareMathOperator{\Homeo}{Homeo}
\DeclareMathOperator{\Diffeo}{Diffeo}
\DeclareMathOperator{\Aut}{Aut}
\newtheorem{Thm}{Theorem}[section]
\newtheorem{Thm*}{Theorem}
\newtheorem{Prop}[Thm]{Proposition}
\newtheorem{Lem}[Thm]{Lemma}
\newtheorem{Cor}[Thm]{Corollary}
\newtheorem{Cor*}[Thm*]{Corollary}
\newtheorem{problem}[Thm*]{Problem}
\newtheorem*{ThmA}{Theorem~\ref*{mainUncountable}}
\newtheorem*{ThmB}{Theorem~\ref*{main}}
\newtheorem*{ThmC}{Theorem~\ref*{thm:covering}}
\newtheorem*{ThmD}{Theorem~\ref*{thm:rigid}}
\theoremstyle{definition}
\newtheorem{Def}[Thm]{Definition}
\newtheorem{Rem}[Thm]{Remark}
\numberwithin{equation}{section}
\title{Isometry groups of infinite-genus hyperbolic surfaces}
\author{Tarik Aougab}
\address{Department of Mathematics, Haverford College, Haverford, PA 19104}
\email{taougab@haverford.edu}
\author{Priyam Patel}
\address{Department of Mathematics, University of Utah \\ Salt Lake City, UT 84112}
\email{patelp@math.utah.edu}
\author{Nicholas G. Vlamis}
\address{Department of Mathematics, CUNY Queens College \\ Flushing, NY 11367}
\email{nicholas.vlamis@qc.cuny.edu}
\begin{document}  

\begin{abstract}
Given a 2-manifold, a fundamental question to ask is which groups can be realized as the isometry group of a Riemannan metric of constant curvature on the manifold. In this paper, we give a nearly complete classification of such groups for infinite-genus 2-manifolds with no planar ends. Surprisingly, we show there is an uncountable class of such 2-manifolds where every countable group can be realized as an isometry group (namely, those with self-similar end spaces). We apply this result to obtain obstructions to standard group theoretic properties for the groups of homeomorphisms, diffeomorphisms, and the mapping class groups of such 2-manifolds.  For example, none of these groups satisfy the Tits Alternative; are coherent; are linear; are cyclically or linearly orderable; or are residually finite. As a second application, we give an algebraic rigidity result for mapping class groups. 
\end{abstract}

\maketitle

\section{Introduction}

Allcock in \cite{AllcockHyperbolic} and Winkelmann in \cite{Winkelmann} independently proved that every countable group is realized as the isometry group of some complete hyperbolic surface whose topology depends on the group. 
Prior to their work, it was known that every finite group can be realized as the isometry group of a closed hyperbolic manifold in every dimension: Greenberg \cite{Greenberg} established dimension two, Kojima \cite{Kojima} dimension three, and Belolipetsky--Lubotzky \cite{BL}---following the work of Long and Reid \cite{LR}---the remaining cases.
Allcock's proof is constructive: given a countable group $G$, he explicitly constructs the desired hyperbolic surface $X$, which has finite or infinite genus whenever \( G \) has finite or infinite cardinality, respectively.
Since the topology of \(X\) depends on \(G\), one can ask in the other direction for restrictions on groups that act by isometries on a complete hyperbolic surface of fixed topological type. 
A well-known theorem of Hurwitz gives an upper bound on the order of a group acting on a closed hyperbolic surface in terms of its genus.
In this paper we determine what, if any, restrictions there are on isometry groups of infinite-genus hyperbolic surfaces. More specifically, we propose: 

\begin{problem} \label{main question}
Let $S$ be a 2-manifold \footnote{The terms 2-manifold and surface are used throughout the paper. We use the term 2-manifold when we want to specify that the manifold is without boundary.}. Characterize the groups $G$ for which there exists a complete hyperbolic metric on $S$ whose isometry group is isomorphic to $G$.
\end{problem}

A complete characterization of such groups \(G\) for a closed surface remains open; the main focus of this paper is to address Problem~\ref{main question} in the infinite-genus setting. The terms \textit{self-similar, doubly pointed} and \textit{non-displaceable subsurface} in the theorem below will all be defined in Sections~\ref{Denumerable} and \ref{sec:SSandPS}, but, very roughly, they are meant to generalize the situation of an infinite-genus surface having one, two, or three ends, respectively (see Figure \ref{fig:exs}).

\begin{ThmA}
Let \( S \) be an orientable infinite-genus 2-manifold with no planar ends and let $G$ be an arbitrary group. Then: 
\begin{enumerate}
\item If the end space of $S$ is self-similar, there exists a  complete hyperbolic metric on $S$ whose isometry group is $G$ if and only if $G$ is countable. 

\item If the end space $S$ is doubly pointed, then the isometry group of any complete hyperbolic metric on $S$ is virtually cyclic. 

\item If $S$ contains a compact non-displaceable subsurface, then there exists a  complete hyperbolic metric on $S$ whose isometry group is $G$ if and only if $G$ is finite. 

\end{enumerate}
Moreover, every such 2-manifold satisfies at least one of the above hypotheses.
\end{ThmA}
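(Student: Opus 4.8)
The plan is to prove Theorem~\ref{mainUncountable} by separating the \emph{restriction} directions (upper bounds forcing the isometry group to be countable, virtually cyclic, or finite) from the \emph{realization} directions (explicit constructions producing a prescribed $G$), and to treat the final \emph{moreover} clause as a purely topological trichotomy for the end space. All three upper bounds will flow from a single analytic input, while the two ``if'' directions share one construction.

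\textbf{Restriction directions.} By Myers--Steenrod, the isometry group $\isom(S,\rho)$ of a complete hyperbolic metric $\rho$ is a Lie group acting smoothly and properly on $S$. Since an infinite-genus surface admits no nontrivial Killing field (a one-parameter isometry flow would force $S$ to be $\bh^2$ or a cyclic quotient, none of which has infinite genus), $\isom(S,\rho)$ is $0$-dimensional, hence discrete and countable; this is the ``only if'' direction of~(1). For~(3), if $K\subset S$ is a compact non-displaceable subsurface then every isometry $\phi$ satisfies $\phi(K)\cap K\neq\emptyset$, so properness identifies all of $\isom(S,\rho)$ with the compact set $\{\phi:\phi(K)\cap K\neq\emptyset\}$, and a discrete compact group is finite. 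For~(2), a doubly pointed end space canonically distinguishes two ends $e_1,e_2$, and every isometry induces a homeomorphism of the end space preserving this structure, hence fixes or swaps $\{e_1,e_2\}$. Writing $H\leq\isom(S,\rho)$ for the index-at-most-two subgroup fixing both ends, I would exhibit a coarse ``axis'' between $e_1$ and $e_2$---an exhaustion by compact subsurfaces separating $e_1$ from $e_2$---on which $H$ acts by coarse translation; recording the position of, say, the shortest $\{e_1,e_2\}$-separating geodesic multicurve gives a homomorphism $H\to\bz$ whose kernel fixes a compact region and is therefore finite by the argument of~(3). Thus $H$, and hence $\isom(S,\rho)$, is virtually cyclic.

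\textbf{Realization directions and the main obstacle.} For the ``if'' directions of~(1) and~(3), I would follow the constructive strategy of Greenberg~\cite{Greenberg} and Allcock~\cite{AllcockHyperbolic}: realize the countable (respectively finite) group $G$ as the automorphism group of a connected graph $\Lambda$, then thicken $\Lambda$ to an infinite-genus surface by replacing its vertices and edges with hyperbolic building blocks that are internally asymmetric and pairwise non-isometric. The decorations are chosen so that every isometry of the result must permute the blocks compatibly with the graph structure, forcing $\isom\cong\Aut(\Lambda)=G$, while $G$ acts by isometries by construction. The crucial and most delicate step is to make the end space of the thickened surface homeomorphic to that of $S$: this is exactly where self-similarity is used, as it lets one absorb the auxiliary building blocks into the end space without changing its homeomorphism type, after which the resulting surface is identified with $S$ via the classification of surfaces (Kerékjártó--Richards) and the change-of-coordinates principle. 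For finite $G$ in~(3) the same scheme applies with $\Lambda$ finite and the non-displaceable subsurface built $G$-invariantly. I expect the principal difficulty to lie here in case~(1): one must simultaneously rigidify the surface so that \emph{no} unintended isometries appear and steer its end space to the prescribed self-similar type, and these pull in opposite directions---rigidity wants idiosyncratic local pieces while matching a fixed self-similar end space demands repetition---so reconciling them is the technical heart.

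\textbf{The trichotomy.} Finally, the \emph{moreover} clause is a statement about the end space $E$ of $S$, a compact, separable, totally disconnected space carrying the partial order of ends by type. I would argue by cases on the maximal ends of maximal type: if $E$ is self-similar we are in case~(1); if $E$ is not self-similar, the failure of self-similarity concentrates the largest behavior on finitely many maximal ends, and when there are exactly two such ends with symmetric surrounding structure one obtains a doubly pointed space, case~(2); in every remaining configuration the asymmetry can be localized in a finite-type piece of $S$ whose end data cannot be displaced off itself, producing a compact non-displaceable subsurface, case~(3). Exhaustiveness then reduces to checking that these alternatives cover all admissible end spaces.
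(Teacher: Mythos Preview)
Your overall architecture---separate the obstructions from the constructions, then verify exhaustiveness---matches the paper exactly, and your construction sketch for the realization directions (Cayley-graph blueprint with asymmetric vertex pieces, then classify the resulting surface via Ker\'ekj\'art\'o--Richards) is precisely what the paper does in Sections~\ref{Allcock} and~\ref{sec:radial}. Your use of Myers--Steenrod plus ``no Killing field'' for the countability in~(1) is a legitimate alternative to the paper's more elementary route through Lemma~\ref{lem:prop-discont} (proper discontinuity via intersecting closed geodesics); both land in the same place.

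There is, however, a genuine gap in your treatment of~(2). The phrase ``recording the position of the shortest $\{e_1,e_2\}$-separating geodesic multicurve'' does not yield a homomorphism $H\to\bz$: such a shortest curve need not exist (the infimum may not be realized), need not be unique, and even granting a canonical choice there is no evident additive structure on ``position in the exhaustion'' under composition. The paper's homomorphism is built quite differently. When $S$ has infinitely many ends, one fixes a separating curve $\gamma$ and an end $e$ with \emph{infinite} $H$-orbit, sets $L_\gamma$ to be the orbit points on one side of $\gamma$, and defines $\varphi(g)=|\widehat g(L_\gamma)\ssm L_\gamma|-|L_\gamma\ssm\widehat g(L_\gamma)|$, i.e.\ the net number of orbit points pushed across $\gamma$. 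Well-definedness (finiteness of each term) is nontrivial and uses Lemma~\ref{lem:closure}: the closure of any infinite orbit must contain both $e_1$ and $e_2$. Showing $\ker\varphi$ is finite also requires an auxiliary curve $\eta$, not just $\gamma$. When $S$ has exactly two ends, the paper does not build the map from scratch but invokes the homomorphism of \cite{AramayonaFirst}. Your ``coarse axis'' intuition is morally right, but the actual invariant that makes it a homomorphism is end-counting, not curve-position.

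One smaller point: in~(3) the realization of a finite $G$ does not use the non-displaceable subsurface at all---Theorem~\ref{thm:finite} produces the metric on \emph{every} orientable infinite-genus $S$ with no planar ends, and the non-displaceable hypothesis enters only on the obstruction side. Finally, your ``trichotomy'' sketch is the right shape but relies implicitly on the Mann--Rafi structure theory of maximal ends (their Propositions~4.7 and~4.8), which the paper invokes explicitly in Theorem~\ref{thm:partialclass}; you should be aware that the three cases are not mutually exclusive (doubly pointed can overlap with non-displaceable), so only exhaustiveness is being claimed.
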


We note that Theorem~\ref{mainUncountable} is not a trichotomy due to the fact that the second and third statements are not mutually exclusive. 
The proof of Theorems~\ref{mainUncountable} and \ref{thm:partialclass} rely on the results of Section~\ref{sec:SSandPS}. In particular, Theorem~\ref{mainUncountable}(1) is first proved for 2-manifolds with end spaces that have \emph{radial symmetry}, defined in Section~\ref{Denumerable}. In Section~\ref{sec:SSandPS}, we prove that radial symmetry of the end space of $S$ is equivalent to the notion of self-similarity (introduced in \cite{MannRafi}), which is a necessary component in proving Theorem~\ref{thm:partialclass} and Theorem~\ref{mainUncountable}. Both of the assumptions that \(S\) has no planar ends and has infinite genus are justified, in part, in Section~\ref{sec:other surfaces}. Note that there are aspects of the proof of Theorem~\ref{mainUncountable} for which one or both the assumptions are not necessary (this is indicated throughout the text). However, for the full statement of Theorem~\ref{mainUncountable}, we rely on the given restrictions.

We would like to stress that, to the authors' knowledge, Theorem~\ref{mainUncountable} gives the first examples of surfaces for which every countable group can be realized as the isometry group of a complete hyperbolic metric on the surface. Note that both Allock's and Winkelmann's constructions yield finite-type surfaces whenever the group in question is finite.

When $S$ has a countable end space, we can strengthen Theorem~\ref{mainUncountable}(2), and characterize \emph{all} possible isometry groups in terms of the topology of $S$. In the statement of the theorem, the characterisic system $(\alpha, n)$ of the end space \(\cE\) of \(S\) is given by its Cantor--Bendixson rank and degree, which are defined in Section~\ref{sec:CB}. 
The case \( n = 2 \) is equivalent to being doubly pointed. Therefore, Theorem~\ref{main} below demonstrates that when \(\cE\) is countable, we can separate 2-manifolds with doubly pointed end spaces into two categories, depending on whether $\alpha$ is a limit or successor ordinal, and completely determine their possible isometry groups.

\begin{ThmB}
Let \( S \) be an orientable infinite-genus 2-manifold with a countable space of ends, none of which are planar, and let \( G \) be an arbitrary group.
Suppose the end space \( \cE \) of \( S \) has characteristic system \( (\al, n) \).
\begin{enumerate}
\item 
If \( n = 1 \), there exists a  complete hyperbolic metric on \( S \) whose isometry group is \( G \) if and only if \( G \) is countable.
\item 
If \( n = 2 \) and $\alpha$ is a successor ordinal, there exists a  complete hyperbolic metric on \( S \) whose isometry group is \( G \) if and only if \( G \) is virtually cyclic.
\item
If \( n \geq 3 \), or $n=2$ and $\alpha$ is a limit ordinal, there exists a complete hyperbolic metric on \( S \) whose isometry group is \( G \) if and only if \( G \) is finite.
\end{enumerate}
\end{ThmB}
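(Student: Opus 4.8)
The plan is to deduce the theorem from Theorem~\ref{mainUncountable} by translating the hypothesis on the characteristic system $(\al,n)$ of $\cE$ into the three dichotomy conditions appearing there. By the Mazurkiewicz--Sierpiński classification of countable compact metrizable spaces, $\cE$ is determined up to homeomorphism by $(\al,n)$ (for $\al\geq 1$ it is the ordinal space $\omega^\al\cdot n+1$, and for $\al=0$ it is the $n$-point discrete space); moreover, since $S$ has infinite genus and no planar ends, all ends of $S$ are accumulated by genus, so the homeomorphism type of $S$ — and in particular which hypothesis of Theorem~\ref{mainUncountable} it satisfies — is determined by $(\al,n)$. I will work with this concrete model throughout. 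Note also that every $(\al,n)$ with $n\geq 1$ satisfies exactly one of: $n=1$; $n=2$ with $\al$ a successor; $n\geq 3$ or $n=2$ with $\al$ a limit; so the three cases are exhaustive.

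For statement (1), $n=1$: I would verify that $\omega^\al+1$ is self-similar. It has a unique point $p$ of Cantor--Bendixson rank $\al$, and a short ordinal-arithmetic computation shows that every clopen neighborhood of $p$ contains a basic neighborhood $(\gamma,\omega^\al]$, which has order type $\omega^\al+1$ and hence is homeomorphic to $\cE$; since every finite clopen partition of $\cE$ has a block containing $p$, self-similarity follows and (1) is exactly Theorem~\ref{mainUncountable}(1). For statement (3) when $n\geq 3$: the same computation shows $\cE$ is not self-similar (separating three rank-$\al$ points by clopen sets yields a partition no block of which contains a copy of $\cE$), and $\cE$ is not doubly pointed since $n\neq 2$; by the final clause of Theorem~\ref{mainUncountable}, $S$ then contains a compact non-displaceable subsurface, so Theorem~\ref{mainUncountable}(3) applies. (One can also exhibit such a subsurface directly: take a compact $K$ with $S\ssm K$ having three components each containing one of three chosen rank-$\al$ ends; if some homeomorphism $\phi$ had $\phi(K)\cap K=\varnothing$, then $\phi(K)$ would lie in a single component of $S\ssm K$, forcing two of those ends into one component of $\phi(S\ssm K)$, which contradicts that $\phi$ preserves the partition of the rank-$\al$ ends induced by the complementary components.)

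For $n=2$, the space $\cE$ is doubly pointed, so Theorem~\ref{mainUncountable}(2) already gives that the isometry group of any complete hyperbolic metric on $S$ is virtually cyclic — this is the forward implication of (2) and the first half of (3). It remains, in case (2) with $\al=\be+1$, to realize every virtually cyclic $G$, and in case (3) with $\al$ a limit, to upgrade virtual cyclicity to finiteness and to realize every finite $G$. Realizing a finite $G$ is handled by the Greenberg/Allcock-type rigid-block constructions underlying Theorem~\ref{mainUncountable}. For a virtually cyclic $G$ with $\al=\be+1$: the germ of $\cE$ at each of its two rank-$\al$ points is a sequence of clopen pieces of rank $\be$ converging to that point, so I would build $S$ together with its metric as a bi-infinite telescoping chain $\ldots,B_{-1},B_0,B_1,\ldots$ of identical rigid compact blocks, each block engineered to carry the finite part of $G$ and a rank-$\be$ clopen end-piece, with the block-shift $t$ — and, when $G$ surjects onto $D_\infty$, an end-swapping involution — generating $G$; one then checks the end space of the chain is $\omega^{\be+1}\cdot 2+1$ and that the metric admits no further isometries. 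To rule out an infinite-order isometry $\phi$ when $\al$ is a limit: since $S$ has no planar ends, $\phi$ is not parabolic, hence is hyperbolic with an axis; after replacing $\phi$ by a power we may assume $\phi$ fixes both rank-$\al$ ends, and then $Y:=S/\langle\phi\rangle$ is a complete hyperbolic surface with $S\to Y$ a regular $\bz$-cover. Analyzing the Cantor--Bendixson structure of the total space of such a cover — each end in the translation direction arises as the limit of a $\bz$-indexed family of copies of ends of $Y$ — one finds that the rank of each rank-$\al$ end of $S$ equals one more than a rank occurring in $\cE(Y)$, hence is a successor ordinal, contradicting that $\al$ is a limit; so the isometry group is finite.

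The main obstacle is the $n=2$ analysis in both directions: on the sufficiency side, the explicit rigid telescoping construction realizing an arbitrary virtually cyclic group in case (2); and on the necessity side, the Cantor--Bendixson bookkeeping for $\bz$-covers that forbids an infinite-order isometry when $\al$ is a limit, which is precisely what separates case (2) from case (3). The ``no planar ends'' hypothesis enters both through the absence of parabolic isometries and through the end-space behavior of $\bz$-covers.
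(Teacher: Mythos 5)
Your overall architecture matches the paper's: \(n=1\) is reduced to Theorem~\ref{mainUncountable}(1) via self-similarity of \(\omega^\al+1\) (the paper does this through radial symmetry and Lemma~\ref{lem:pointed-degree}, equivalent by Theorem~\ref{thm:SSandPS}); \(n\ge 3\) goes through a compact non-displaceable subsurface exactly as in Lemma~\ref{lem:nd-countable}; the upper bound in the \(n=2\) case is the doubly-pointed theorem; and the two remaining tasks you isolate are precisely the two lemmas the paper proves inside Theorem~\ref{thm:deg2}, with your ``telescoping chain of rigid blocks'' being the paper's Cayley-graph construction for a finite generating set of the virtually cyclic group. Two cautions on the limit-ordinal case. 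First, the appeal to a parabolic/hyperbolic trichotomy is a non sequitur: that classification applies to isometries of \(\bh\), not of the quotient surface, and it is not what you need --- the fact that \(\langle\phi\rangle\) acts freely and properly discontinuously (so that \(S\to S/\langle\phi\rangle\) is a genuine \(\bz\)-cover) follows from Lemma~\ref{lem:prop-discont} together with \(\bz\) being torsion-free. Second, the sentence ``one finds that the rank of each rank-\(\al\) end of \(S\) equals one more than a rank occurring in \(\cE(Y)\)'' is the entire content of the argument and cannot be waved through: end spaces are not functorial under infinite covers (preimages of compact sets are non-compact, so Proposition~\ref{prop:end-map} does not apply), and one must actually produce a \(\bz\)-periodic decomposition of \(S\) into slabs with compact frontier escaping to the two maximal ends. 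The paper supplies this upstairs, by choosing a separating simple closed geodesic \(\gamma\), using Lemma~\ref{lem:finite-isom} to show the translates \(\phi^n(\gamma)\) leave every compact set, and then reading off \(\cE\ssm\cE^\al=\bigsqcup_{n\in\bz}E_n\) with all \(E_n\) homeomorphic; your detour through \(\cE(Y)\) would still require an equivalent cross-section argument, so working directly in \(S\) is the cleaner route.
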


\begin{figure}[h]
\begin{center}
\begin{overpic}[trim = 1.25in 8in 1in 1.25in, clip=true, totalheight=0.2\textheight]{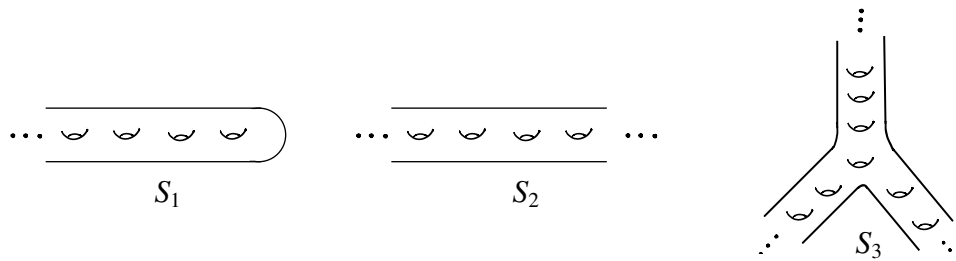}
\put(15,5){\large{$S_1$}}
\put(50,5){\large{$S_2$}}
\put(83.5,0){\large{$S_3$}}

\end{overpic}
\caption{Examples for the three cases of Theorems~\ref{mainUncountable} and \ref{main}.}\label{fig:exs}
\end{center}
\end{figure}

The 2-manifolds $S_1$, $S_2$, and $S_3$ in Figure \ref{fig:exs} are examples of cases (1), (2), and (3), respectively, of Theorems~\ref{mainUncountable} and \ref{main}. The class of 2-manifolds satisfying the criteria for each of the three cases in both of these theorems is uncountable. 

The main idea towards realizing groups as isometry groups in Theorems~\ref{mainUncountable} and \ref{main} is to build more rigidity into Allcock's construction so that, under the right circumstances, the topology of $S$ can be prescribed. In the other direction, we develop  obstructions for a group to act by isometries on a complete hyperbolic surface in terms of its underlying topology. 

The infinite-genus assumption for \(S\) in Theorems~\ref{mainUncountable} and \ref{main} is necessary: in Proposition~\ref{prop:finitegenus} we show that for every orientable finite-genus 2-manifold, there exists a finite group not realizable as the isometry group of a complete hyperbolic metric on the manifold. 
We remark in several places later in the article that the condition on (the lack of) planar ends is stronger than necessary; however, there needs to be some condition along these lines.
It is not clear what the optimal condition should be, but, in Proposition~\ref{prop:finiteplanar}, we show that if a 2-manifold has a finite number of planar ends, then, again, there exists a finite group not realizable as the isometry group of a complete hyperbolic metric on the manifold. 

\begin{Rem}
We pause to note two natural strengthenings of Theorem~\ref{mainUncountable}  that follow from our constructions.
Let \( S \) and \( G \) be as in Theorem \ref{mainUncountable} and let \( X = X(S,G) \) denote the complete hyperbolic surface given by Theorem \ref{mainUncountable}.

(1) \( X \) contains no funnels or half-planes---equivalently, \( X \) is equal to its convex core (see the remark following Lemma \ref{lem:geod-complete} and the discussion around Proposition \ref{prop:complete}).

(2) Every isometry of \( X \) is orientation preserving (see the note after Lemma \ref{lem:isom}).
Therefore, thinking of \( X \) as a Riemann surface, \( \Aut(X) \), the group of biholomorphisms \( X \to X \), is isomorphic to \( G \); in particular, Theorem \ref{mainUncountable} can be rephrased in terms of automorphisms of (hyperbolic) Riemann surfaces. 
\end{Rem}

Another consequence of the proofs of Theorems~\ref{mainUncountable} and \ref{main} is the following: 

\begin{ThmC}
Let \( S \) be an orientable infinite-genus 2-manifold with no planar ends.
If  \( G \) is any finite group, then there exists a regular covering \( \pi \co S \to S \) such that the deck group associated to \( \pi \) is isomorphic to \( G \). If, additionally, the end space of \(S\) is self-similar and \( G \) is any countable group, then there exists a regular covering \( \pi \co S \to S \) such that the deck group associated to \( \pi \) is isomorphic to \( G \). 
\end{ThmC}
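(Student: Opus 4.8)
The plan is to deduce this covering statement directly from the constructions underlying Theorems~\ref{mainUncountable} and~\ref{main}, rather than building anything new. The key observation is that a regular covering $\pi\co S' \to S$ with deck group $G$ is the same data as a properly discontinuous, free action of $G$ on some 2-manifold $S'$ whose quotient is homeomorphic to $S$; so it suffices to produce a hyperbolic surface $X$ homeomorphic to $S$ that admits a free, properly discontinuous action of $G$ by isometries with $X/G \cong S$. First I would recall that the proof of Theorem~\ref{mainUncountable} (resp. Theorem~\ref{main}) does not merely produce a hyperbolic surface with isometry group $G$; Allcock's construction, as refined in this paper, realizes $G$ by building a ``base'' hyperbolic piece, attaching a copy of a rigid building block for each element of $G$, and letting $G$ permute the copies by the (left-)regular representation. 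That action of $G$ on $X$ is free away from nothing problematic because $G$ acts freely on itself by left translation; one must only check that no nontrivial element of $G$ fixes a point of the base piece or of the ``spine'' gluing the copies together, which is exactly the sort of local freeness that is arranged in the construction (points in the base region have trivial stabilizer by design). Thus the same $X$ that witnesses Theorem~\ref{mainUncountable}(1) or Theorem~\ref{thm:covering} for self-similar ends, or the finite-group case in general, already carries a free isometric $G$-action.

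Next I would identify the quotient. Since $G$ acts freely and properly discontinuously by isometries on $X$, the quotient $X/G$ is a complete hyperbolic surface and $X \to X/G$ is a regular covering with deck group $G$. The content is then purely topological: I must show $X/G \cong S$. The cleanest route is to arrange, within the construction, that $X$ is assembled as $|G|$ (or countably many, indexed by $G$) isometric copies of a single surface-with-boundary $Y$ glued equivariantly to a central piece, in such a way that $X/G$ is homeomorphic to a single copy of $Y$ capped off appropriately — and that this capped-off surface is exactly $S$. Concretely, given $S$ infinite-genus with no planar ends, one chooses $Y$ to be a compact subsurface of $S$ such that $S$ is built from $Y$ by the standard end-space engineering (using that the end space of $S$, being that of an infinite-genus surface with no planar ends, can be exhausted and ``copied'' in the way the paper's Section~\ref{sec:SSandPS} machinery permits — in the self-similar case this is where self-similarity is used, and in the finite-$G$ case no such hypothesis is needed since we only need finitely many copies). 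Then $X$ is the $G$-cover of (a hyperbolic structure on) $S$ obtained by the covering-space correspondence applied to a surjection $\pi_1(S) \to G$, and one equips it with the hyperbolic metric pulled back and then modified on the copies to be $G$-symmetric, exactly as in the main construction.

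Actually, the more honest way to phrase the argument — and the one I would write — reverses the logic: the hyperbolic surface produced in the proof of Theorem~\ref{mainUncountable} is, by construction, a regular $G$-cover of its quotient $X/G$, and one checks that the recipe can be run with the quotient prescribed to be $S$. So the real work is bookkeeping: verifying that the topological type of $X/G$ is $S$, i.e. that it has infinite genus (clear, since $X$ does and genus only drops under finite-to-one quotients in a controlled way, or more simply the central piece contributes infinite genus to the quotient) and the correct end space with no planar ends. This is the step I expect to be the main obstacle: one must trace through how the end space of $X$ maps to the end space of $X/G$ under the quotient by $G$, and confirm that for the particular $X$ built in the proof, this quotient end space is homeomorphic to $\cE(S)$ — in the countable-$G$ case invoking self-similarity to guarantee there is ``room'' to attach a $G$-indexed family of blocks without changing the homeomorphism type of the end space, and in the finite-$G$ case simply observing that attaching finitely many copies of a compact block changes nothing. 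Once the end space and genus of $X/G$ are pinned down, the classification of surfaces (Kerékjártó–Richards) gives $X/G \cong S$, and $X \to X/G$ is the desired covering.
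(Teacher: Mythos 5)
Your proposal follows essentially the same route as the paper: Theorem~\ref{thm:covering} is proved there precisely by observing that the surfaces \( X_S^G \) (finite case) and \( Y_S^G \) (self-similar case) constructed for the main theorems carry free, properly discontinuous isometric \( G \)-actions, that these total spaces are homeomorphic to \( S \) (Lemmas~\ref{lem:surface} and~\ref{lem:Y-homeo}), and that their quotients are also homeomorphic to \( S \) (Lemma~\ref{lem:covering} and Proposition~\ref{prop:sscovering}) via an end-space analysis and the classification of surfaces. The ``bookkeeping'' you defer is exactly where the paper's content lies --- e.g.\ Lemma~\ref{lem:action-trivial} showing the \( G \)-action on ends is trivial when \( G \) is finite, and the radial-symmetry decomposition of \( \cE \) in the countable case --- but your outline correctly identifies each of these steps.
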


Note that if \( S \) is an infinite-genus 2-manifold then its fundamental group is a free group with countably infinite rank.
Therefore, it follows that given any countable group \( G \), there exists a regular covering \( S' \to S \) whose deck group is isomorphic to \( G\); therefore, the content of Theorem~\ref{thm:covering} is to provide conditions under which we can guarantee that \( S' \) is homeomorphic to \( S \).

Before moving to applications, we note that Theorem~\ref{main}(1) has an analog in the setting of Veech groups associated to translations surfaces; in particular, if \( S \) and \( G \) are as in Theorem~\ref{main}(1), with the additional restriction that \(\alpha\) is finite, then Maluendas--Valdez \cite{Valdez2} show that there exists a translation structure on \( S \) with Veech group isomorphic to \( G \) if and only if \( G \) is isomorphic to a countable subgroup of \( \rm{GL}(2,\br) \) (they also show this for the case when the end space is a Cantor space).
This work generalizes earlier work of Przytycki--Schmith\"usen--Valdez \cite{Valdez1} where they show the same result in the single case of the Loch Ness monster surface.

In the process of proving Theorem~\ref{mainUncountable}, we prove that for any infinite-genus 2-manifold $S$ with no planar ends, any finite group $G$ is realizable as the isometry group for some complete hyperbolic metric on \( S \) (see Theorem~\ref{thm:finite}). We immediately have the following corollaries to Theorem~\ref{mainUncountable}, where $\Homeo(S)$, $\Diffeo(S),$ and $\mcg(S)$ are the homemorphism group, diffeomorphism group, and mapping class group of $S$, respectively.

\begin{Cor} \label{cor:finite}
If $S$ is an orientable infinite-genus 2-manifold with no planar ends, then $\Homeo(S), \Diffeo(S),$ and $\mcg(S)$ each contain an isomorphic copy of every finite group.
\end{Cor}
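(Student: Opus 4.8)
The plan is to obtain the corollary directly from Theorem~\ref{thm:finite}, which we may assume. Fix a finite group $G$. By Theorem~\ref{thm:finite} there is a complete hyperbolic metric $\rho$ on $S$ with isometry group isomorphic to $G$; write $X$ for the hyperbolic surface $(S,\rho)$, so that $\isom(X)\cong G$. It then suffices to exhibit injective homomorphisms from $\isom(X)$ into each of $\Homeo(S)$, $\Diffeo(S)$, and $\mcg(S)$.

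For $\Homeo(S)$ and $\Diffeo(S)$ there is nothing to prove: by the Myers--Steenrod theorem every isometry of a smooth Riemannian metric is a smooth diffeomorphism, so $\isom(X)$ is a subgroup of $\Diffeo(S)$, which is in turn a subgroup of $\Homeo(S)$.

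The one step that requires an argument --- and which I regard as the crux --- is that the natural homomorphism $\isom(X)\to\mcg(S)=\Homeo(S)/\Homeo_0(S)$ is injective; equivalently, that no nontrivial isometry of $X$ is isotopic to the identity. I would argue as follows. Write $X=\bh^2/\Gamma$ for a discrete, torsion-free $\Gamma<\isom(\bh^2)$. Since $S$ has infinite genus, $\Gamma\cong\pi_1(S)$ is a free group of infinite rank, hence non-elementary, so its limit set $\Lambda(\Gamma)\subseteq\partial\bh^2$ is infinite. Let $f$ be an isometry of $X$ isotopic to the identity. Lifting an isotopy from the identity to $f$ through the universal cover yields a lift $\tilde f\in\isom(\bh^2)$ of $f$ that commutes with every element of $\Gamma$. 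For each hyperbolic $\gamma\in\Gamma$, commuting with $\gamma$ forces $\tilde f$ to fix both fixed points of $\gamma$ on $\partial\bh^2$; as these fixed points are dense in $\Lambda(\Gamma)$ and $\tilde f$ extends continuously to $\partial\bh^2$, the map $\tilde f$ fixes $\Lambda(\Gamma)$ pointwise, hence fixes at least three points of $\partial\bh^2$, hence $\tilde f=\mathrm{id}$. Therefore $f=\mathrm{id}$. (Alternatively one could invoke the strengthening recorded in the Remark after Theorem~\ref{mainUncountable} --- that $X$ equals its convex core, so $\Lambda(\Gamma)=\partial\bh^2$ --- but non-elementarity of $\Gamma$ already suffices.)

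Combining the three embeddings, for every finite group $G$ the metric $\rho$ supplied by Theorem~\ref{thm:finite} produces a copy of $G\cong\isom(X)$ inside $\Homeo(S)$, inside $\Diffeo(S)$, and inside $\mcg(S)$. Beyond Theorem~\ref{thm:finite} itself, the only ingredient is the standard fact that a nontrivial isometry of a non-elementary complete hyperbolic surface is never isotopic to the identity, so I do not anticipate further difficulties.
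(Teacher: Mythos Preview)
Your proposal is correct and follows exactly the intended route: the paper states this corollary with no proof, treating it as immediate from Theorem~\ref{thm:finite}, and you have correctly supplied the one nontrivial detail---injectivity of $\isom(X)\to\mcg(S)$---which the paper itself later invokes as ``well known'' in the proof of Theorem~\ref{thm:rigid}. Your Fuchsian-group argument for that injectivity is standard and sound.
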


\begin{Cor} \label{completely realizable}
Let $S$ be an orientable infinite-genus 2-manifold with no planar ends. If the end space of $S$ is self-similar, then $\Homeo(S), \Diffeo(S),$ and $\mcg(S)$ each contain an isomorphic copy of every countable group.
\end{Cor}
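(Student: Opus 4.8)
The plan is to read the statement off of Theorem~\ref{mainUncountable}(1). Fix a countable group $G$. By Theorem~\ref{mainUncountable}(1), $S$ admits a complete hyperbolic metric; let $X$ denote the resulting hyperbolic surface, so that $\isom(X)\cong G$. A hyperbolic metric is real-analytic, so every isometry of $X$ is a real-analytic---in particular, smooth---diffeomorphism of the underlying 2-manifold $S$, and distinct isometries are distinct maps of $S$. This gives injective homomorphisms $G\cong\isom(X)\hookrightarrow\Diffeo(S)\hookrightarrow\Homeo(S)$, which already settles the statement for $\Diffeo(S)$ and $\Homeo(S)$.

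For $\mcg(S)$ it remains to verify that the composition $G\cong\isom(X)\to\Homeo(S)\to\mcg(S)$ is injective; equivalently, that a nontrivial isometry of $X$ is never isotopic to the identity. (By the Remark following Theorem~\ref{mainUncountable}, every isometry of $X$ is orientation preserving, so this map indeed has image in the orientation-preserving mapping class group.) Write $X=\bh^2/\Gamma$, where $\Gamma\le\mathrm{PSL}(2,\br)$ is a torsion-free Fuchsian group; since $S$ has infinite genus, $\Gamma$ is non-abelian free, hence non-elementary. If an isometry $\phi$ of $X$ is isotopic to the identity, then $\phi$ is homotopic to the identity, so the induced automorphism $\phi_*$ of $\pi_1(X)\cong\Gamma$ is inner. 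Lifting $\phi$ to an element $g$ of the normalizer of $\Gamma$ in $\mathrm{PSL}(2,\br)$, there is $h\in\Gamma$ with $g\gamma g^{-1}=h\gamma h^{-1}$ for all $\gamma\in\Gamma$; then $h^{-1}g$ centralizes $\Gamma$. The centralizer in $\mathrm{PSL}(2,\br)$ of a non-elementary subgroup is trivial (such a subgroup contains two hyperbolic elements with distinct axes, and an element of $\mathrm{PSL}(2,\br)$ commuting with both must fix at least three points of $\partial\bh^2$, hence be the identity). Therefore $g=h\in\Gamma$, so $\phi=\mathrm{id}_X$. This proves injectivity, and hence $\mcg(S)$ contains an isomorphic copy of $G$.

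The only point requiring care beyond Theorem~\ref{mainUncountable}(1) is this last injectivity claim: for infinite-type surfaces one cannot simply quote the finite-type fact that isometry groups embed into mapping class groups, because an isotopy a priori has no control near the ends. The argument above circumvents this by passing to $\pi_1$ and using only that $\Gamma$ is non-elementary, a property forced by the infinite-genus hypothesis; alternatively, one could argue topologically, using that $\phi$ must preserve every essential simple closed geodesic setwise (by uniqueness of geodesic representatives in free homotopy classes) together with the fact that an isometry of a connected hyperbolic surface fixing a point and a tangent direction there is the identity.
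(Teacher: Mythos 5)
Your proof is correct and follows the same route the paper intends: the corollary is stated as an immediate consequence of Theorem~\ref{mainUncountable}(1), with the passage to $\mcg(S)$ resting on the standard fact (invoked without proof later in the paper, in the proof of Theorem~\ref{thm:rigid}) that no nontrivial isometry of a complete hyperbolic surface is homotopic to the identity. The only difference is that you actually prove that fact, via the triviality of the centralizer of a non-elementary Fuchsian group, rather than citing it; that argument is sound.
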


Given Corollary~\ref{cor:finite}, it is natural to ask if for every finite subgroup $H < \mcg(S)$, there exists \( H' < \rm{Homeo}^+(S) \) such that the natural projection \( \rm{Homeo}^+(S) \to \mcg(S) \) restricts to an isomorphism \( H' \to H \)---this is known as the Nielsen realization problem. 
For finite-type surfaces, Nielsen realization was proved by Kerckhoff \cite{Kerckhoff}, and Afton--Calegari--Chen--Lyman \cite{ACCL} recently proved an analog of the Nielsen realization theorem for mapping class groups of infinite-type surfaces. 
In fact, \( H' \) can be taken to be a subgroup of the isometry group of a complete hyperbolic metric on \( S \). 
We also note that Markovic \cite{Markovic} proved a version for the Teichm\"uller modular group: given a hyperbolic Riemann surface \( X \) and a group acting on \( \rm{Teich}(X) \), the Teichm\"uller space of \( X \), with bounded orbits, the group must fix a point in \( \rm{Teich(X)} \).
If \( X \) is closed, then this follows from Kerckhoff's result; hence, Markovic's result is another version of Nielsen realization in the infininite-type setting.

Corollary~\ref{completely realizable} can be leveraged to determine whether or not $\Homeo(S)$, $\Diffeo(S)$, and $\mcg(S)$ have certain algebraic properties. Suppose that $P$ is a hereditary property of groups---that is, if a group $G$ has property $P$, then every subgroup of $G$ has property $P$--- then, if there exists a subgroup $H$ of $G$ that does not have property $P$, $G$ does not have property $P$. A first application of our work is: 

\begin{Cor}\label{cor:hereditary}
Let $S$ be an orientable infinite-genus 2-manifold with no planar ends and a self-similar end space. If $P$ is any hereditary property such that there exists a countable group $H$ that does not possess property $P$, then $\Diffeo(S), \Homeo(S),$ and $\mcg(S)$ do not have property P. In particular, $\Diffeo(S), \Homeo(S),$ and $\mcg(S)$:

\begin{itemize}
\item do not satisfy the Tits alternative.
\item are not linear.
\item are not residually finite.
\item are not (cyclically or linearly) orderable.
\item are not coherent.
\end{itemize}

\end{Cor}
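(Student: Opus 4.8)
The plan is to derive everything directly from Corollary~\ref{completely realizable}. Fix one of the three groups, say $\Gamma \in \{\Diffeo(S),\Homeo(S),\mcg(S)\}$, and let $P$ be a hereditary property for which some countable group $H$ fails to have $P$. By Corollary~\ref{completely realizable}, $\Gamma$ contains a subgroup isomorphic to $H$; were $\Gamma$ to have property $P$, then by heredity so would this subgroup, contradicting the choice of $H$. Hence $\Gamma$ lacks $P$, which is the general assertion of the corollary. The enumerated consequences then follow as soon as, for each listed property, we exhibit a countable group failing it and check that the property is hereditary in the appropriate sense.

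I would run through the list as follows. \emph{Tits alternative} (read as ``every finitely generated subgroup is virtually solvable or contains a nonabelian free group''): this is inherited by subgroups, and the first Grigorchuk group is a finitely generated infinite torsion group, hence is itself neither virtually solvable nor a group containing a nonabelian free subgroup. \emph{Linearity}: subgroups of linear groups are linear, and by the Tits alternative for linear groups a finitely generated linear group is virtually solvable or contains a nonabelian free subgroup, so the Grigorchuk group is again a witness. \emph{Residual finiteness}: this passes to all subgroups, and $\mathbb{Q}$ has no nontrivial finite quotient, hence is not residually finite (any finitely generated infinite simple group, such as Thompson's group $V$, works equally well). \emph{Linear and cyclic orderability}: both properties pass to subgroups; a group carrying a left-invariant linear order is torsion-free, and a finite group carrying a left-invariant cyclic order is cyclic, so $\bz/2\bz$ and $\bz/2\bz\times\bz/2\bz$ are respective witnesses (here Corollary~\ref{cor:finite} already suffices). \emph{Coherence}: a finitely generated subgroup of a coherent group is finitely presented, so coherence passes to subgroups, and $F_2\times F_2$ is the classical example of a finitely generated group that is not coherent.

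I do not expect a genuine obstacle: the substantive content is entirely contained in Corollary~\ref{completely realizable}, and what remains is the bookkeeping of producing, for each property, a countable (often even finite) witness group lacking it. The one point deserving care is to phrase each property so that it is honestly hereditary---for instance, ``satisfies the Tits alternative'' and ``is coherent'' must be understood as statements quantified over all finitely generated subgroups of the ambient group, not merely over the group itself.
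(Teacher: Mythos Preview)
Your argument is exactly the one the paper intends: the corollary is stated immediately after Corollary~\ref{completely realizable} with no separate proof, so the deduction from heredity plus an explicit countable witness for each property is precisely what is expected. Your choice of witnesses (Grigorchuk's group, $\mathbb{Q}$ or a finitely generated infinite simple group, a finite noncyclic group, $F_2\times F_2$) is standard and correct, and your caution about phrasing the Tits alternative and coherence as quantified over finitely generated subgroups is well placed.
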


 
 The fact that $\mcg(S)$ is not residually finite for any orientable infinite-genus surface $S$ was first proved by the second two authors \cite{PatelAlgebraic}.  To the best of our knowledge, in most cases the other four applications were otherwise previously unknown (some cases can be deduced from work of Maluendas--Valdez \cite{Valdez2} cited above). We note that, in upcoming work, Allcock proves that $\mcg(S)$ does not satisfy the Tits Alternative for \emph{any} infinite-type 2-manifold $S$. Additionally, Lanier--Loving show in \cite{LanierLoving} that mapping class groups of orientable infinite-type 2-manifolds do not satisfy the strong Tits Alternative, which does not have any implication for the (classical) Tits alternative. 

The study of mapping class groups of infinite-type surfaces (so-called \textit{big mapping class groups}) is growing rapidly, and the first application given by Corollary~\ref{cor:hereditary} fits into a large body of work studying the algebraic properties of big mapping class groups. We refer the reader to the recent survey of Aramayona and the third author for a more detailed summary of related work \cite{AramayonaVlamis}.

The second main application of our work is an algebraic rigidity result for big mapping class groups. The pure mapping class group, denoted $\pmcg(S)$, is the kernel of the action of $\mcg(S)$ on the space of ends of \( S \). For finite-type surfaces, it is well-known that there are a few algebraic invariants of $\mcg(S)$ or $\pmcg(S)$ that determine the topology of $S$ (see Section \ref{sec:app2} for details). Using several recent results  and Theorem~\ref{thm:finite}, we give a finite list of algebraic invariants of $\pmcg(S)$ that completely determine the topology of $S$ for a countable collection of infinite-type surfaces:

\begin{ThmD}
Let \( \Omega_n \) be an \( n \)-ended orientable infinite-genus 2-manifold with no planar ends and let \( G = \pmcg(S) \) for some orientable  2-manifold \( S \).
Then, \( S \) is homeomorphic to \( \Omega_n \) if and only if  \( G  \) satisfies each of the following conditions:
\begin{enumerate}[(i)]
\item \( G \) is not residually finite,
\item \( G \) is not cyclically orderable,
\item \( \mathrm{Hom}(G, \bz) \) has algebraic rank \( n-1 \), and
\item \( G \) is finite index in \( \mathrm{Aut}(G) \).
\end{enumerate}
\end{ThmD}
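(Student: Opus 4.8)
The plan is to reduce Theorem~\ref{thm:rigid} to the topological classification of surfaces. Since $\Omega_n$ has infinite genus, it is the unique surface (up to homeomorphism) whose genus is infinite, whose end space $\cE(\Omega_n)$ is finite of cardinality $n$, and all of whose ends are accumulated by genus (equivalently, which has no planar ends). So it suffices to show that conditions (i)--(iv), taken together, are equivalent to the conjunction: \emph{$S$ has infinite genus, $\cE(S)$ is finite, $|\cE(S)|=n$, and $S$ has no planar ends}. The argument is organized so that each condition is responsible for one of these facts: (i) $\leftrightarrow$ infinite genus; (iv) $\Rightarrow$ $\cE(S)$ finite; (iii) pins down $|\cE_g(S)|$, the number of ends accumulated by genus; and (ii) $\leftrightarrow$ ``no planar ends,'' where $\cE_g(S)\subseteq\cE(S)$ is the closed subset of ends accumulated by genus.

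For the direction $S\cong\Omega_n\Rightarrow$ (i)--(iv): (i) is exactly the statement that $\pmcg$ of an infinite-genus surface is not residually finite, which follows from \cite{PatelAlgebraic} (which moreover shows $\pmcg(S)$ is residually finite precisely when $S$ has finite genus). For (iii), one invokes the computation of $H^1(\pmcg(S);\bz)\cong\mathrm{Hom}(\pmcg(S),\bz)$: for a positive-genus surface with finitely many, say $k$, ends accumulated by genus this group is free abelian of rank $k-1$ (generated by handle shifts modulo vanishing of their total flux); for $S=\Omega_n$ all $n$ ends are accumulated by genus, so the rank is $n-1$. For (iv), one uses recent automorphism-rigidity theorems to identify $\Aut(\pmcg(\Omega_n))$ with $\mcg^{\pm}(\Omega_n)$; since $\cE(\Omega_n)$ is just $n$ points with $\cE_g=\cE$, change of coordinates gives $\mcg(\Omega_n)\twoheadrightarrow\mathrm{Sym}(n)$ with kernel $\pmcg(\Omega_n)$, so $[\mcg^{\pm}(\Omega_n):\pmcg(\Omega_n)]\le 2\cdot n!<\infty$. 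The genuinely new input is (ii): a circularly orderable group has every finite subgroup cyclic (restricting the circular order to a finite subgroup realizes it inside the rotation group of a finite cyclically ordered set), so it is enough to embed a non-cyclic finite group, e.g.\ $(\bz/2)^2$, into $\pmcg(\Omega_n)$. To produce one, apply Theorem~\ref{thm:covering} to the Loch Ness monster $L$ (whose unique end is self-similar) and the finite group $(\bz/2)^2$ to get a free $(\bz/2)^2$-action on $L$; writing $\Omega_n=P\cup_{i=1}^n L_i$ with $P$ a sphere with $n$ holes and each $L_i$ a copy of $L$ minus a disk, one grafts four copies of this along the trivial $4$-sheeted cover of $P$ to build a regular self-covering $\Omega_n\to\Omega_n$ with deck group $(\bz/2)^2$ whose deck transformations fix each of the $n$ ends. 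Since a deck transformation of a nontrivial covering acts nontrivially on $\pi_1$ up to conjugacy, hence nontrivially in $\mcg$, this gives $(\bz/2)^2\into\pmcg(\Omega_n)$.

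For the converse, suppose $G=\pmcg(S)$ satisfies (i)--(iv); we extract the four topological facts in a careful order. First, (i) and \cite{PatelAlgebraic} give that $S$ has infinite genus. Second, conjugation embeds $\mcg(S)$ into $\Aut(\pmcg(S))$ (the kernel, the centralizer of $\pmcg(S)$ in $\mcg(S)$, is trivial by the Alexander method), so (iv) forces $[\mcg(S):\pmcg(S)]<\infty$; since $\mcg(S)$ surjects onto $\Homeo(\cE(S),\cE_g(S))$ with kernel $\pmcg(S)$ by change of coordinates, this homeomorphism group is finite, and a Cantor--Bendixson argument shows this forces $\cE(S)$ to be finite---so every end of $S$ is isolated. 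Third, we rule out planar ends using (ii): an isolated planar end of $S$ would be a distinguished puncture, and by the circle-action machinery for big mapping class groups of surfaces with a distinguished puncture (in the spirit of Bavard--Walker's study of the plane minus a Cantor set) $\pmcg(S)$ would act faithfully on $\bS^1$, hence be circularly orderable, contradicting (ii); thus $\cE(S)=\cE_g(S)$. Finally, the cohomology computation together with (iii) gives $|\cE_g(S)|=n$, so $|\cE(S)|=n$, and by the classification of surfaces $S\cong\Omega_n$.

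The main obstacle is the role of (ii) in the converse---the exclusion of planar ends. This needs a clean dichotomy: an isolated planar end must force circular orderability of $\pmcg(S)$, while the absence of planar ends must \emph{obstruct} it (which the $(\bz/2)^2$-construction supplies for $\Omega_n$). The hard half is establishing a faithful action of $\pmcg(S)$ (or a finite-index subgroup) on $\bS^1$ coming from the distinguished puncture in the present infinite-genus, several-ended setting; a Birman-exact-sequence analysis at least shows that such a group's torsion is all cyclic, removing the most obvious obstruction, but genuine faithfulness is the delicate point. A secondary technical issue is arranging, in the forward direction, that the deck group of the self-covering $\Omega_n\to\Omega_n$ acts trivially on the $n$-element end set---this is where self-similarity of the Loch Ness end and the flexibility in the proof of Theorem~\ref{thm:covering} enter---together with citing a version of automorphism rigidity valid for the surfaces $\Omega_n$, which are not self-similar once $n\ge 2$.
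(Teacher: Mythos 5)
Your proof is correct and follows the same overall skeleton as the paper's: each of (i)--(iv) is matched to a topological invariant via the same external inputs (\cite{PatelAlgebraic} for residual finiteness versus genus, \cite{AramayonaFirst} for the rank of \( \mathrm{Hom}(\pmcg(S),\bz) \), \cite{BavardTwo} for cyclic orderability in the presence of an isolated planar end, and \cite{BDR} for \( \Aut(\pmcg(S)) \)), and the converse is assembled in the same way. The one place you genuinely diverge is the forward direction of (ii): to embed a non-cyclic finite group into \( \pmcg(\Omega_n) \) you build a regular self-cover of \( \Omega_n \) with deck group \( (\bz/2)^2 \) by grafting covers of the Loch Ness monster over a trivial cover of an \( n \)-holed sphere, whereas the paper simply applies Theorem~\ref{thm:finite} and Lemma~\ref{lem:action-trivial} to \( \Omega_n \) itself: every finite group \( H \) is \( \isom(X) \) for some complete hyperbolic \( X\cong\Omega_n \), the isometries fix every end and are pairwise non-homotopic, so \( H\into\pmcg(\Omega_n) \) at once. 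Your route works, but it carries extra verification burdens the paper's avoids --- you must check that the restricted covers of each \( L_i \) are connected and one-ended (so the total space really is \( \Omega_n \)) and that a nontrivial deck transformation is nontrivial in \( \mcg(\Omega_n) \), which requires injectivity of \( \mcg\to\mathrm{Out}(\pi_1) \) together with triviality of the centralizer of the finite-index subgroup in the infinite-rank free group; the isometry argument sidesteps all of this. Two smaller points: in the converse of (iv) you use only the conjugation embedding \( \mcg(S)\into\Aut(\pmcg(S)) \) rather than the full Bavard--Dowdall--Rafi isomorphism, which is a mild economy; and your hedging about the ``faithfulness of the circle action'' in the converse of (ii) is unnecessary --- the statement you need (an isolated planar end forces \( \pmcg(S) \) to be left cyclically orderable) is exactly what the paper cites from \cite{BavardTwo}, so it is an import rather than something to be re-proved here.
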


\subsection*{Outline} 
In Section~\ref{Prelim}, we cover the relevant preliminaries on hyperbolic geometry, the topology of infinite-type surfaces, spaces of ends, and Cantor--Bendixson rank and degree. In Section~\ref{Allcock}, we build on Allcock's argument to first prove that for any finite group $G$ and any orientable infinite-genus 2-manifold $S$ with no planar ends, $G$ is realizable as the isometry group for some complete hyperbolic metric on $S$. We then prove that the unique (up to homeomorphism) orientable 2-manifold $L$ with infinite genus and one end, often referred to as the \textit{Loch Ness monster surface}, has the property that every countable group is realized as an isometry group of a complete hyperbolic metric on \( L \). In Section~\ref{Denumerable}, we prove the remaining components of Theorem~\ref{main} and several of the components of Theorem~\ref{mainUncountable}. We prove the equivalence of radial symmetry and self-symmetry of the end space of any orientable 2-manifold in Section~\ref{sec:SSandPS}. We use this equivalence to  prove Theorem~\ref{thm:partialclass} and finish the proof of Theorem~\ref{mainUncountable} in Section~\ref{sec:trichotomy}. In Section~\ref{sec:covers}, we finish the proof of Theorem~\ref{thm:covering}. We prove the algebraic rigidity application for big mapping class groups in Section~\ref{sec:app2}, and finally, in Section~\ref{sec:other surfaces} we justify the assumptions on \(S\) in Theorems~\ref{mainUncountable} and \ref{main} with Propositions~\ref{prop:finitegenus} and \ref{prop:finiteplanar}. 

\subsection*{Acknowledgements}
\vspace{-12pt}
The authors thank the referee for carefully reading the article and for their helpful suggestions, which have no doubt benefited the exposition.
The authors would also like to thank Kathryn Mann for her suggestions on the proofs contained in Section~\ref{sec:SSandPS} and numerous helpful discussions. We would also like Kathryn Mann and Kasra Rafi for sharing an early draft of their paper \cite{MannRafi} that was very helpful for writing Section~\ref{sec:SSandPS}. The first author was supported by NSF DMS-1807319, the second author was supported by NSF DMS 1812014 \& 1937969, and third author was supported in part by NSF RTG 1045119 and PSC-CUNY grant 62571-00~50.


\section{Preliminaries} \label{Prelim}
A 2-manifold is a connected, second countable, Hausdorff topological space in which every point has a neighborhood homeomorphic to \( \br^2 \). In particular, a 2-manifold is without boundary unless otherwise specified.
A Riemannian metric on a 2-manifold is \emph{hyperbolic} if it has constant sectional curvature \( -1 \);
a \emph{hyperbolic surface} is a connected 2-manifold equipped with a hyperbolic metric; a hyperbolic surface is \emph{complete} if it is a complete metric space.
Up to isometry, there is a unique complete simply connected hyperbolic surface, called the \emph{hyperbolic plane} and denoted \( \bh \). 
It is convenient to have a concrete model: we will identity \( \bh \) with the upper-half plane \( \{z = x+iy \in \bc :  y>0 \} \) equipped with the metric \( ds^2 = \frac{dzd\bar z}{y^2} \). 
A discrete group of isometries of \( \bh \) is called a \emph{Fuchsian} group.
Every complete hyperbolic surface is a quotient of the hyperbolic plane by an action of a Fuchsian group.
It follows that an equivalent definition of a hyperbolic surface is a connected metric space in which every point has an open neighborhood  isometric to an open subset of \( \bh \).

In the constructions below, we will need to extend our definition above to allow for hyperbolic surfaces with boundary:
A \emph{hyperbolic surface with totally geodesic boundary} is a connected complete metric space in which every point has a neighborhood  isometric to an open subset of a closed (geodesic) half-plane in \( \bh \).

In the following subsections, we will recall the definition of ends of a topological space, the classification of (topological) surfaces,  the classification of countable compact Hausdorff topological spaces, and some standard facts about hyperbolic geometry.

\subsection{Topological ends} 

The notion of a topological end captures  topologically distinct ways to escape a space.
For us, and historically, the introduction of topological ends serves two key purposes: classifying topological surfaces and building invariants of groups. 

For the remainder of the subsection, let \( X \) denote a topological space that is connected, locally connected, locally compact, second countable, and Hausdorff. 
An \emph{exiting sequence} in \( X \) is a sequence \( \{U_n\}_{n\in\bn} \) of open subsets of \( X \) satisfying:
\begin{enumerate}
\item \( \partial U_n \) is compact for each \( n \in \bn \),
\item \( U_{n+1} \subset U_n \) for each \( n \in \bn \), and
\item for any compact subset \( K \) of \( X \) there exists \( N \in \bn \) such that \( U_N \cap K = \varnothing \).
\end{enumerate}
Two exiting sequences \( \{U_n\}_{n\in\bn} \) and \( \{V_n\}_{n\in\bn} \) are equivalent if for every \( n \in \bn \) there exists \( m \in \bn \) such that \( U_m \subset V_n \) and \( V_m \subset U_n \).

An \emph{end} of \( X \) is an equivalence class of exiting sequences.
The \textit{space of ends}, denoted \( \cE (X) \) (or simply \( \cE \) when the underlying space is clear), is, as a set, the set of all ends of \( X \).
Let \( V \) be an open subset of \( X \) with compact boundary, define \( \widehat V = \left\{ \left[\{U_n\}_{n\in\bn}\right]\in \cE : U_n \subset V \text{ for some } n\in\bn\right\} \) and let \( \mathcal V = \{ \widehat V : V \subset X \text{ is open with compact boundary}\} \).
The set \( \cE \) becomes a topological space by declaring \( \mathcal V \) a basis. 
With this topology, the space of ends is always compact, second countable, totally disconnected, and Hausdorff (see \cite[Sections 36--37, Chapter 1]{AhlforsSario} or \cite[Section 2.1]{FernandezEnds}); in particular, it is homeomorphic to a closed subset of the Cantor set (see \cite[Theorem 7.8]{KechrisClassical}).

On several occasions, we will use the following proposition for inducing end maps, which can readily be argued from definitions:

\begin{Prop}
\label{prop:end-map}
Let \( X \) and \( X' \) be connected, locally connected, locally compact, second countable,  Hausdorff topological spaces.
If  \( f \co X \to X' \) is a proper continuous map, then there exists a unique continuous function \( \widehat f \co \cE(X) \to \cE(X') \) satisfying \( \widehat f^{-1}(\widehat W) = \widehat{f^{-1}(W)} \) for every open subset \( W \) of \( X' \) with compact boundary. \qed
\end{Prop}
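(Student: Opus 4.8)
The plan is to construct $\widehat f$ directly on exiting sequences and then verify it descends to a well-defined, continuous map of end spaces with the stated naturality property. First I would observe that properness is exactly the hypothesis needed to push forward exiting sequences: given an exiting sequence $\{U_n\}_{n\in\bn}$ in $X$, I would like $\{\widehat{f(U_n)}\}$ or, more carefully, a sequence of open subsets of $X'$ whose preimages shrink along $\{U_n\}$. The cleanest route is to work with \emph{complementary compact sets}: an end of $X$ is equivalently a choice, for every compact $K\subset X$, of a connected component of $X\setminus K$, compatible under inclusion (using local connectedness and local compactness to guarantee finitely many unbounded components). Given a compact $K'\subset X'$, the set $f^{-1}(K')$ is compact by properness, so an end $e$ of $X$ selects a component $C$ of $X\setminus f^{-1}(K')$; I would define $\widehat f(e)$ to be the component of $X'\setminus K'$ containing $f(C)$, after checking $f(C)$ is connected (it is, being a continuous image of a connected set) and lands in the complement of $K'$ (immediate from $C\cap f^{-1}(K')=\varnothing$).

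Next I would check consistency: if $K_1'\subset K_2'$ are compact in $X'$, then $f^{-1}(K_1')\subset f^{-1}(K_2')$, the component of $X\setminus f^{-1}(K_2')$ chosen by $e$ is contained in the one chosen for $K_1'$, and functoriality of "take the component containing the image" gives compatibility of the chosen components of $X'\setminus K_i'$. Hence $\widehat f(e)$ is a well-defined end of $X'$. For continuity and the naturality identity, I would unwind the basis: a basic open set of $\cE(X')$ is $\widehat W$ for $W\subset X'$ open with compact boundary $\partial W$. I claim $\widehat f^{-1}(\widehat W)=\widehat{f^{-1}(W)}$. Since $\partial(f^{-1}(W))\subset f^{-1}(\partial W)$ is closed, and is compact by properness, $f^{-1}(W)$ is an admissible set defining a basic open set of $\cE(X)$; then $e\in\widehat{f^{-1}(W)}$ iff the tail of (any exiting sequence representing) $e$ lies in $f^{-1}(W)$ iff $f$ of that tail lies in $W$ iff $\widehat f(e)\in\widehat W$, where the middle equivalence uses that the component of $X\setminus f^{-1}(\partial W)$ selected by $e$ lies in $f^{-1}(W)$ exactly when its image lies in $W$. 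This establishes both continuity (preimages of basic opens are open) and the naturality equation simultaneously. Uniqueness is then immediate: the sets $\widehat W$ separate points of $\cE(X')$ (it is Hausdorff with basis $\mathcal V'$), so a map satisfying $\widehat f^{-1}(\widehat W)=\widehat{f^{-1}(W)}$ for all such $W$ is determined on each $e$ by the collection of $\widehat W$ containing $\widehat f(e)$.

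I would expect the main obstacle to be the bookkeeping around translating between the "exiting sequence" definition of ends used in the excerpt and the "compatible system of components of complements of compact sets" picture — in particular verifying that, under the stated hypotheses on $X$ (connected, locally connected, locally compact, second countable, Hausdorff), every end is represented by an exiting sequence $\{U_n\}$ each of which is connected, so that $f(U_n)$ is connected and the component-selection is unambiguous. This uses local connectedness plus local compactness plus second countability in a standard but slightly fiddly way (exhaust $X$ by compacta $K_n$ with $K_n\subset \mathring K_{n+1}$, take $U_n$ to be the relevant component of $X\setminus K_n$). Once that normalization is in hand, everything else is a direct check from the definitions, which is presumably why the authors call the proposition one that "can readily be argued from definitions." I would therefore present the argument as: (1) normalize exiting sequences to connected ones; (2) define $\widehat f$ via pushforward of such sequences / selection of components; (3) verify well-definedness and independence of representative; (4) prove $\widehat f^{-1}(\widehat W)=\widehat{f^{-1}(W)}$, deducing continuity; (5) deduce uniqueness from the Hausdorff property of $\cE(X')$.
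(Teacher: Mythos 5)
The paper offers no proof of this proposition (it is asserted to follow ``readily from definitions''), so there is nothing to compare your argument against; what you propose is the standard argument, and it is correct in substance. Two remarks. First, the step you rightly single out as the main obstacle --- that every end admits a representative exiting sequence of \emph{connected} sets --- is delicate for a reason beyond bookkeeping: under the verbatim definition quoted in the paper it is false. In \( X = \br \), the sequence \( U_n = (-\infty,-n)\cup(n,\infty) \) is an exiting sequence (open, compact boundary, nested, escaping every compact set) that is not equivalent to any sequence of connected sets, and the resulting ``end'' cannot be separated from the two genuine ends of \( \br \), contradicting the Hausdorffness the paper asserts for \( \cE \). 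So the definition must be read with connectedness built in (equivalently, with ends defined as the inverse limit over compact \( K \) of the set of components of \( X \ssm K \) with non-compact closure); once that reading is adopted, your normalization step, and with it the whole construction, goes through exactly as you describe. Second, a small check you omit: to know that \( \widehat f(e) \) is genuinely an end of \( X' \), you need the component \( D \) of \( X' \ssm K' \) containing \( f(C) \) to have non-compact closure. This follows from properness once more: if \( f(C) \) were contained in a compact set \( K'' \), then \( C \subset f^{-1}(K'') \) would be relatively compact, contradicting the fact that \( C \) is a component selected by an end. With those two points registered, your verification of \( \widehat f^{-1}(\widehat W) = \widehat{f^{-1}(W)} \) (using that \( \partial(f^{-1}(W)) \subset f^{-1}(\partial W) \) is compact, and that a connected subset of \( X' \ssm \partial W \) meeting \( W \) lies in \( W \)), the deduction of continuity from preimages of basic opens being basic opens, and the uniqueness argument from the fact that the basic sets \( \widehat W \) separate points of the Hausdorff space \( \cE(X') \) are all correct as written.
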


As a corollary, we see that the function \( \Homeo(X) \to \Homeo(\cE(X)) \) given by \( f \mapsto \widehat f \) is a homomorphism.
(The existence of this homomorphism is also clear from definitions.)

We will also use the following well-known proposition, which is used to define the end of a topological group. 

\begin{Prop}
\label{prop:end-group}
Let \( G \) be a Hausdorff topological group and let \( X \) and \( X ' \) be  connected, locally connected, locally compact, second countable,  Hausdorff topological spaces.
If \( G \) acts properly discontinuously and cocompactly on both \( X \) and \( X' \) by homeomorphisms, then \( \cE(X) \) and \( \cE(X') \) are homeomorphic. \qed
\end{Prop}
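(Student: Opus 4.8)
The plan is to prove the classical fact that the space of ends is a coarse invariant and to use a \v{S}varc--Milnor argument to identify the coarse type of $X$ (and of $X'$) with that of $G$, so that the desired homeomorphism factors through a common ``space of ends of $G$''. As a first step I would replace $X$ by a Cayley graph of $G$. Since $X$ is second countable, locally compact, and Hausdorff it is metrizable and $\sigma$-compact; using local compactness together with $GK=X$ for some compact $K$, choose a relatively compact open $U\subseteq X$ with $GU=X$. Proper discontinuity makes $F:=\{g\in G: gU\cap U\neq\varnothing\}$ finite, and, because $X$ is connected, $F$ generates $G$: any point of $X$ can be joined to a fixed $x_0\in U$ by a finite chain of translates $U=g_0U, g_1U,\dots,g_nU$ with $g_iU\cap g_{i+1}U\neq\varnothing$, and each transition element $g_{i+1}^{-1}g_i$ lies in $F$. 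Hence $G$ is finitely generated; let $\Gamma=\mathrm{Cay}(G,F)$, a connected, locally finite graph on which $G$ acts cocompactly and properly discontinuously (freely if the original action is free). Running the same construction for $X'$ yields a Cayley graph $\Gamma'$ of $G$.

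Next I would equip $X$ with a proper, $G$-invariant, geodesic metric---when the action is free this is routine, as the quotient $X/G$ is a Peano continuum and hence carries a geodesic metric by Bing--Moise, which lifts, while the general case needs a little more care---so that $G$ acts properly discontinuously, cocompactly, and isometrically on a proper geodesic space. The \v{S}varc--Milnor lemma then produces a quasi-isometry $\Gamma\to X$, and likewise $\Gamma'\to X'$; since $\Gamma$ and $\Gamma'$ are Cayley graphs of the same group, $X$ and $X'$ are quasi-isometric. Finally I would check that a quasi-isometry $f\co Y\to Z$ of proper geodesic metric spaces induces a homeomorphism $\cE(Y)\to\cE(Z)$: in such a space the complements $Y\setminus \overline{B}(y_0,r)$ give each end a neighborhood basis of sets with compact boundary, $f$ carries such a complement into a bounded neighborhood of a corresponding complement of $Z$ (and a quasi-inverse reverses this), so $f$ takes exiting sequences to exiting sequences in a way that descends to a bijection of ends, continuous for the basis $\mathcal V$ in both directions---the ``bounded fudging'' being absorbed into the equivalence relation on exiting sequences, while Proposition~\ref{prop:end-map} handles the genuinely continuous parts. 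Composing along $X\simeq X'$ yields $\cE(X)\cong\cE(X')$.

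The main obstacle is the reduction step: extracting, from the purely topological hypotheses, a model to which \v{S}varc--Milnor applies---in particular the finiteness of $F$ (this is exactly what proper discontinuity buys, and it forces $G$ to be discrete modulo a finite kernel) and, for non-free actions, the existence of a $G$-invariant proper geodesic metric. If one prefers, both difficulties can be circumvented by never metrizing $X$ at all and instead comparing $\Gamma$ directly to $X$ through the cover $\{gU\}_{g\in G}$ via a nerve-type coarse equivalence, at the cost of developing coarse equivalences of non-metric spaces. Alternatively, adopting the more inclusive definition of a properly discontinuous action of a topological group as properness of the map $G\times X\to X\times X$ allows $G$ to be locally compact, in which case one invokes the \v{S}varc--Milnor lemma for compactly generated locally compact groups and the rest of the outline goes through unchanged.
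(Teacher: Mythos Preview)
The paper does not prove this proposition. It is stated with a terminal \qed and introduced only as ``the following well-known proposition, which is used to define the end of a topological group''; no argument is supplied. There is therefore nothing in the paper against which to compare your proposal.

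For what it is worth, your outline follows the standard route and is essentially sound: extract a finite generating set from the cover $\{gU\}_{g\in G}$, pass to a Cayley graph, and use that ends are a coarse invariant. The step you flag as the main obstacle really is the weakest link. Lifting a Bing--Moise geodesic metric from $X/G$ need not yield a $G$-invariant geodesic metric on $X$ when the action has nontrivial stabilizers, and even in the free case one must pass to the induced length metric and then verify properness, so this paragraph would need substantially more detail to stand on its own. Your alternative---comparing $\cE(X)$ directly with $\cE(\Gamma)$ through the nerve of the locally finite cover $\{g\overline U\}_{g\in G}$, without ever metrizing $X$---is both cleaner and closer to the classical Freudenthal--Hopf argument, and avoids all of these difficulties: for finite $T\subset G$, unbounded components of $X\setminus\bigcup_{g\in T} g\overline U$ correspond bijectively and inclusion-compatibly to infinite components of $\Gamma\setminus T$, and this gives the homeomorphism of end spaces directly. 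If you want a self-contained proof, that is the version I would write up.
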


\subsection{The classification of surfaces}

We will need to consider bordered manifolds: we define a \emph{(topological) surface}\footnote{A surface is permitted to have empty boundary. When we want to exclude the case of nonempty boundary, we will use the term 2-manifold.} to be a connected, second countable, Hausdorff topological space in which every point has a neighborhood homeomorphic to an open subset of \( \{(x,y) \in \br^2 : y \geq 0\} \subset \br^2\).
A (topological) surface is of \emph{finite (topological) type} if its fundamental group is finitely generated; otherwise, it is of \emph{infinite (topological) type}.

It is well known that closed orientable surfaces are classified by their genus.
Surprisingly, there is a complete classification of all surfaces. 
The essential tool in this classification is the notion of a topological end introduced above.

An end \( \left[\{U_n\}_{n\in\bn}\right] \)  is \textit{planar} if there exists \( n \in \bn \) such $U_{n}$ is homeomorphic to an open subset of the plane \( \br^2 \) (or, equivalently, there exists \( n \in \bn \) such that \( U_n \) has finite genus); otherwise, it is \emph{non-planar}.
For an orientable surface, an end \( \left[\{U_n\}_{n\in\bn}\right] \) is  non-planar if and only if each \( U_n \) has infinite genus. The set of non-planar ends is denoted by \(\cE_{np}\) and is a closed subset of \(\cE\). An end is \textit{isolated} if it is an isolated point of the space of ends.

Building off the classification of compact surfaces, Ker\'{e}kj\'{a}rt\'{o} gave the first proof of the classification of 2-manifolds in general---the standard reference for a proof is \cite{RichardsClassification}.
For our purposes, we state the classification for orientable surfaces.

\begin{Thm}[Classification of orientable 2-manifolds]
Let \( S \) and \( S' \) be orientable  2-manifolds of the same---possibly infinite---genus.
Then, \( S \) and \( S' \) are homeomorphic if and only if there is a homeomorphism \( \cE(S) \to \cE(S') \) sending \( \cE_{np}(S) \) onto \( \cE_{np}(S') \). 
\end{Thm}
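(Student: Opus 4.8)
The forward direction is immediate. If $h \co S \to S'$ is a homeomorphism then $h$ is proper, so by Proposition~\ref{prop:end-map} it induces a continuous map $\widehat h \co \cE(S) \to \cE(S')$; the map induced by $h^{-1}$ is a continuous two-sided inverse, so $\widehat h$ is a homeomorphism. Since $h$ carries a subsurface of genus $g$ onto a subsurface of genus $g$, it sends neighborhoods witnessing non-planarity of an end to such neighborhoods, hence $\widehat h(\cE_{np}(S)) = \cE_{np}(S')$; the same observation shows $S$ and $S'$ have equal genus. So the content is the converse, which I would prove following Richards \cite{RichardsClassification}.

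For the converse, fix a homeomorphism $\varphi \co \cE(S) \to \cE(S')$ with $\varphi(\cE_{np}(S)) = \cE_{np}(S')$ and assume $S$ and $S'$ have equal genus. Choose exhaustions $S = \bigcup_n K_n$ and $S' = \bigcup_n K'_n$ by compact connected bordered subsurfaces with $K_n \subset \mathrm{int}\, K_{n+1}$, arranged to be \emph{efficient}: no component of $S \ssm \mathrm{int}\, K_n$ has compact closure and none is an annulus with both boundary circles on $\partial K_n$. Each complementary region $R$ of $K_n$ then determines a nonempty clopen set $\widehat R \subseteq \cE(S)$, and these clopen partitions refine as $n$ grows and generate the topology of $\cE(S)$ (recall $\cE(S)$ is a metrizable, totally disconnected compact space); similarly for $S'$. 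The plan is to pass to subsequences of both exhaustions, interleaved in back-and-forth fashion, so that after renumbering the partition of $\cE(S)$ cut out by the complementary regions of $K_n$ is carried by $\varphi$ bijectively onto the partition of $\cE(S')$ cut out by the complementary regions of $K'_n$, with non-planar ends matched to non-planar ends. This is possible because each of the two partition sequences is cofinal among clopen partitions and $\varphi$ is a homeomorphism of the pairs $(\cE(S),\cE_{np}(S))$ and $(\cE(S'),\cE_{np}(S'))$.

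With the exhaustions aligned, build homeomorphisms $h_n \co K_n \to K'_n$ by induction with $h_{n+1}|_{K_n} = h_n$. At stage $n$, $K_{n+1} \ssm \mathrm{int}\, K_n$ is a disjoint union of compact bordered surfaces, one for each complementary region $R$ of $K_n$, and $K'_{n+1} \ssm \mathrm{int}\, K'_n$ decomposes correspondingly over the regions $R'$ matching $\varphi(\widehat R)$. By the classification of compact orientable surfaces with boundary, the piece over $R$ is homeomorphic to the piece over $R'$ once the two have the same genus and the same number of boundary circles; by further subdividing the exhaustions (still respecting $\varphi$) we may assume these finite numerical data agree, and that the remaining genus and the non-planar ends are distributed across the complementary regions compatibly — here is where "equal genus of $S$ and $S'$" and "$\varphi$ preserves $\cE_{np}$" are used. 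Choosing a homeomorphism on each piece agreeing with $h_n$ on the shared boundary $\partial K_n$ extends $h_n$ to $h_{n+1}$. Finally $h = \bigcup_n h_n \co S \to S'$ is a continuous bijection with $h(K_n) = K'_n$; since $\{K'_n\}$ exhausts $S'$, $h$ is proper, hence a homeomorphism.

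The main obstacle is precisely the bookkeeping in the middle step: one must simultaneously (a) align the two exhaustions so that their induced clopen partitions correspond under $\varphi$, (b) arrange by passing to subsequences that corresponding complementary pieces have equal genus and equal numbers of boundary circles, and (c) track where the infinite genus and the non-planar ends accumulate, so that the global hypotheses force a consistent matching at every finite stage. Everything else is either a routine application of the classification of compact bordered surfaces or a standard colimit/properness check. I would organize the write-up around Richards's treatment of the combinatorics of efficient exhausting sequences, adapting it so that the chosen exhaustions are compatible with the prescribed end homeomorphism $\varphi$.
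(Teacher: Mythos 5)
The paper does not prove this statement at all: it is Ker\'ekj\'art\'o's classical classification theorem, which the paper simply quotes and attributes to the literature, pointing to Richards \cite{RichardsClassification} as the standard reference. So there is no internal proof to compare against; what you have written is, in outline, the standard argument. Your forward direction is fine, and your converse follows the classical strategy: efficient compact exhaustions, the induced clopen partitions of the end spaces, a back-and-forth alignment of the two exhaustions with the prescribed homeomorphism \(\varphi\) of pairs \((\cE,\cE_{np})\), matching of genus and boundary-circle counts on the compact annular pieces, and a colimit/properness check. One remark on the comparison with the reference you yourself invoke: Richards does not run a direct back-and-forth between two arbitrary surfaces; he instead proves that every surface is homeomorphic to a canonical model built from its end-space data (a sphere with a totally disconnected closed set removed and handles attached along sequences converging to the non-planar ends), and deduces the classification by comparing models. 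Your direct alignment of exhaustions is a legitimate alternative organization, but the step you defer as ``bookkeeping''---arranging, after passing to subsequences, that corresponding complementary pieces have equal genus and equal numbers of boundary circles, using that a subsurface all of whose ends are planar has finite genus so that finite genus is eventually absorbed into the compact cores, while infinite genus can be pushed toward the non-planar ends matched by \(\varphi\)---is exactly the substantive content of the theorem, and in Richards's treatment it is what the canonical-form construction is designed to handle. As a proof sketch your proposal is sound and identifies the right difficulties, but a complete write-up would need to carry out that matching argument rather than assert it.
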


We will need to use a slightly more general version of the classification allowing for surfaces with compact boundary components.
In this case, let \( \cE_\partial(S) \) denote the subset of \( \cE(S) \) containing all ends \( \left[\{U_n\}_{n\in\bn}\right] \) such that, for all \( n \), \( U_n \) contains a component of \( \partial S \). 
It is not difficult to deduce from Ker\'ekj\'art\'o's theorem that two orientable surfaces with compact boundary components, \( S \) and \( S' \), of the same (possibly infinite) genus and with the same (possibly infinite) number of boundary components are homeomorphic if and only if there is a homeomorphism  \( \cE(S) \to \cE(S') \) mapping \( \cE_{np}(S) \) onto \( \cE_{np}(S') \) and \( \cE_\partial (S) \) onto \( \cE_\partial(S') \) (see \cite{PrishlyakClassification} for a complete classification of surfaces).


\subsection{Countable compact Hausdorff spaces}\label{sec:CB}

In this section we recall the classification of countable compact Hausdorff spaces, which we use to give a strengthening of our main theorem (see Theorem~\ref{main}) for surfaces with countable end spaces. 

We identify an ordinal \( \alpha \) with the set of all ordinals strictly less than \( \al \).
Following standard notation, we let \( \omega \) denote the first countably infinite ordinal (so, \( \omega = \bn \cup\{0\} \)\footnote{For clarity of notation, we specify \(0\notin\bn\).}) and \( \omega_1 \) the first uncountable ordinal.
In this perspective, an ordinal \( \alpha \) can be viewed as a topological space by equipping the set \( \al \) with the order topology.
Given a topological space \( X \) and a subset \( Y \) of \( X \), let \( Y' \) be the set consisting of all the accumulation points of \( Y \) in \( X \).
Given an ordinal \( \al \), the \emph{\( \al^{\rm{th}} \) Cantor--Bendixson derivative of \( X \)}, denoted \( X^{\al} \), is defined inductively as follows:

\begin{itemize}
\item \( X^0 = X \),
\item \( X^{\al+1}  = (X^{\al})' \), and 
\item \( X^\lambda = \bigcap_{\be < \lambda} X^\be \) for a limit ordinal \( \lambda \).
\end{itemize}

The \emph{Cantor--Bendixson rank} of a Hausdorff space \( X \) is the smallest ordinal \( \beta \) satisfying \( X^\beta = X^{\beta+1} \) (such an ordinal always exists).
If \( X \) is countable with Cantor--Bendixson rank \( \beta \), then \( \beta \) is a countable ordinal and \( X^\beta = \varnothing \). 
Moreover, if \( X \) is compact, then the Cantor--Bendixson rank is necessarily a successor ordinal, i.e. \(\beta = \al +1 \) for some ordinal \(\alpha\).
Therefore, if \( X \) is countable, compact, and has Cantor--Bendixson rank \( \al+1 \), then \( X^\al \) is finite; the cardinality of \( X^\al \) is called the \emph{Cantor--Bendixson degree} of \( X \). 
By a theorem of Mazurkiewicz and Sierpinski \cite{MazurkiewiczContribution}, a countable, compact, Hausdorff space is determined up to homeomorphism by its Cantor--Bendixson rank and degree:

\begin{Thm}
\label{thm:classification}
Let \( A \) be a countable, compact, Hausdorff topological space.
If \( A \) has Cantor--Bendixson rank \( \al+1 \in \omega_1 \) and Cantor--Bendixson degree \( d \in \omega \),  then \( A \) is homeomorphic to \( \omega^\al \cdot d + 1 \).  
\qed
\end{Thm}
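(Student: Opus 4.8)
The plan is to reprove this result of Mazurkiewicz and Sierpi\'nski by first reducing to Cantor--Bendixson degree one and then running a transfinite induction on \(\al\). Two elementary facts are used throughout. First, a nonempty countable compact Hausdorff space has an isolated point --- otherwise it would be the countable union of the closed nowhere-dense sets \(\{x\}\), contradicting the Baire category theorem --- so, applying this to every nonempty closed subspace, \(A\) has no nonempty perfect subset, i.e., \(A\) is scattered; being scattered it is totally disconnected, hence, being also compact Hausdorff, zero-dimensional, so its clopen subsets form a basis. In particular any \(x\in A\) admits a decreasing sequence of clopen neighborhoods \(A=U_0\supseteq U_1\supseteq\cdots\) with \(\bigcap_n U_n=\{x\}\) (take finite intersections of clopen sets separating \(x\) from the countably many other points). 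Second, for clopen \(C\subseteq A\) one has \(C^\be=C\cap A^\be\) for every ordinal \(\be\), so the Cantor--Bendixson data of a clopen set is inherited from \(A\); and a continuous bijection from a compact space onto a Hausdorff space is a homeomorphism, so we may certify homeomorphisms by writing down the bijection and checking continuity only at the non-isolated points.

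\emph{Step 1: reduction to degree one.} Write \(A^\al=\{x_1,\dots,x_d\}\). Choosing pairwise disjoint clopen sets \(V_i\ni x_i\) and setting \(C_1=V_1\cup(A\setminus\bigcup_j V_j)\) and \(C_i=V_i\) for \(i\ge 2\) yields a clopen partition \(A=C_1\sqcup\cdots\sqcup C_d\) with \(C_i^\al=C_i\cap A^\al=\{x_i\}\), so each \(C_i\) has Cantor--Bendixson rank \(\al+1\) and degree one. Granting the degree-one case, \(C_i\cong\omega^\al+1\); and splitting the ordinal \(\omega^\al\cdot d+1=[0,\omega^\al\cdot d]\) into the blocks \([0,\omega^\al]\) and \((\omega^\al\cdot(k-1),\omega^\al\cdot k]\) for \(2\le k\le d\), each of order type \(\omega^\al+1\) (using \(1+\omega^\al=\omega^\al\)), shows \(\bigsqcup_{i=1}^d(\omega^\al+1)\cong\omega^\al\cdot d+1\). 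Hence \(A\cong\omega^\al\cdot d+1\). (When \(A\) is finite the statement is immediate.)

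\emph{Step 2: the degree-one case, by induction on \(\al\).} For \(\al=1\): \(A\) has a unique non-isolated point \(x^*\), and \(A\setminus\{x^*\}\) converges to it (the complement of any neighborhood of \(x^*\) is compact and consists of isolated points, hence is finite), so \(A\cong\omega+1\). Let \(\al\ge 2\), let \(x^*\) be the unique point of \(A^\al\), and pick clopen \(A=U_0\supseteq U_1\supseteq\cdots\) with \(\bigcap_n U_n=\{x^*\}\). The clopen sets \(K_n:=U_{n-1}\setminus U_n\) partition \(A\setminus\{x^*\}\), and each closed subset of \(A\) avoiding \(x^*\) is compact, hence meets only finitely many \(K_n\); thus every neighborhood of \(x^*\) contains all but finitely many \(K_n\). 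Since \(K_n^\al=K_n\cap A^\al=\varnothing\), each \(K_n\) has rank at most \(\al\), so by the inductive hypothesis and Step 1, \(K_n\cong\omega^{\be_n}\cdot d_n+1\) with \(\be_n<\al\). Refining the partition, we may therefore write \(A\setminus\{x^*\}=\bigsqcup_m L_m\) with each \(L_m\) clopen, \(L_m\cong\omega^{\gamma_m}+1\), \(\gamma_m<\al\), the collection still converging to \(x^*\). A routine computation of the Cantor--Bendixson derivatives of this decomposition gives \(A^\delta=\{x^*\}\cup\bigcup_m L_m^\delta\) whenever \(x^*\in A^\delta\), and \(x^*\in A^{\delta+1}\) precisely when \(\{m:\gamma_m\ge\delta\}\) is infinite; since \(x^*\in A^\al\setminus A^{\al+1}\), the set \(\{m:\gamma_m\ge\delta\}\) is infinite for every \(\delta<\al\).

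It remains to regroup the \(L_m\) into consecutive finite blocks realizing a clopen decomposition of \(\omega^\al+1\). The key point is that a finite disjoint union of spaces \(\omega^\gamma+1\) whose largest exponent \(\gamma_0\) occurs exactly once is homeomorphic to \(\omega^{\gamma_0}+1\) (absorb each lower block into the top one, using \(\omega^{\gamma_0}+1\cong(\omega^\gamma+1)\sqcup(\omega^{\gamma_0}+1)\) for \(\gamma<\gamma_0\)). If \(\al=\al'+1\), then \(\{m:\gamma_m=\al'\}\) is infinite; distributing the pieces with \(\gamma_m<\al'\) among these yields finite blocks \(B_k\cong\omega^{\al'}+1\) with \(A=\{x^*\}\sqcup\bigsqcup_k B_k\) converging to \(x^*\), and sending \(B_k\) homeomorphically onto the \(k\)-th block of \(\omega^{\al'+1}+1=[0,\omega^{\al'}]\sqcup\bigsqcup_{k\ge 2}(\omega^{\al'}\cdot(k-1),\omega^{\al'}\cdot k]\,\sqcup\,\{\omega^{\al'+1}\}\) (with \(x^*\mapsto\omega^{\al'+1}\)) gives a continuous bijection, hence a homeomorphism. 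If \(\al\) is a limit, then \(\al\) has cofinality \(\omega\) and the \(\gamma_m\) are cofinal in \(\al\), so the \(L_m\) can be grouped into finite blocks \(B_k\) whose largest exponents \(\delta_k\) strictly increase to \(\al\), each occurring once, so \(B_k\cong\omega^{\delta_k}+1\); then \(A=\{x^*\}\sqcup\bigsqcup_k B_k\) maps bijectively and continuously onto \([0,\omega^{\delta_1}]\sqcup\bigsqcup_{k\ge 2}(\omega^{\delta_{k-1}},\omega^{\delta_k}]\sqcup\{\omega^\al\}=\omega^\al+1\), using \(\sup_k\omega^{\delta_k}=\omega^{\sup_k\delta_k}=\omega^\al\). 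I expect this regrouping --- organizing countably many clopen pieces into finite blocks realizing a prescribed well-ordered decomposition of \(\omega^\al+1\), with separate successor and limit arguments and the bookkeeping needed to exhaust all the \(L_m\) --- to be the main obstacle; the accompanying ordinal arithmetic (order types of intervals \((\omega^\gamma,\omega^\delta]\), continuity of \(\delta\mapsto\omega^\delta\) at limits, absorption of lower blocks) and point-set inputs (scatteredness via Baire, zero-dimensionality, the compact-to-Hausdorff trick) are routine by comparison.
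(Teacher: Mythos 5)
Your proposal is correct in substance, but note that the paper does not actually prove this statement: it is quoted as a classical theorem of Mazurkiewicz and Sierpi\'nski with a citation and a \(\qed\), so there is no internal argument to compare against. What you have written is a self-contained proof along the standard lines, and the skeleton is sound: scatteredness via Baire category, zero-dimensionality, the identity \(C^\be = C\cap A^\be\) for clopen \(C\), the reduction from degree \(d\) to degree \(1\) by a clopen partition, and the transfinite induction in which \(A\ssm\{x^*\}\) is decomposed into clopen pieces \(L_m\cong\omega^{\gamma_m}+1\) converging to \(x^*\) with \(\{m:\gamma_m\ge\delta\}\) infinite for all \(\delta<\al\). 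The regrouping step you single out as the main obstacle does go through: in the limit case one can, for instance, pair the least-indexed unused piece with a fresh piece of strictly larger exponent than everything in the new block, producing two-element blocks \(B_k\cong\omega^{\delta_k}+1\) that exhaust all the \(L_m\); cofinality of the \(\delta_k\) in \(\al\) is then automatic, since each \(L_m\) lies in a block whose top exponent dominates \(\gamma_m\) and \(\sup_m\gamma_m=\al\). The continuity checks at \(x^*\) and the appeal to ``continuous bijection from compact to Hausdorff'' close the argument correctly.

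Two small caveats. First, your parenthetical that the finite case is ``immediate'' is actually where the statement, read literally under the paper's convention that an ordinal is the set of strictly smaller ordinals, fails: a discrete space with \(d\) points has rank \(1\) and degree \(d\), but \(\omega^0\cdot d+1=d+1\) has \(d+1\) points. The classical theorem is stated for the interval \([1,\omega^\al\cdot d]\) (equivalently, one should assume \(A\) infinite, i.e.\ \(\al\ge 1\)); your induction, which starts at \(\al=1\), is unaffected. Second, in the inductive step you should note that infinitely many of the \(K_n\) (hence of the \(L_m\)) are nonempty --- otherwise \(x^*\) would be isolated, contradicting \(\al\ge 1\) --- so that the index \(m\) genuinely ranges over an infinite set; this is one line but is used when you speak of infinite subsets of \(\{m:\gamma_m\ge\delta\}\).
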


\subsection{Hyperbolic geometry}

In this section, we gather standard facts about the geometry of hyperbolic surfaces and their isometry groups. For references on hyperbolic geometry see \cite{HubbardTeichmuller1} and \cite{Buser}.
A quick comment about notation: given a hyperbolic  surface \( X \), we use \( d \) to denote its metric, \( \ell(\gamma) \) to denote the length of a geodesic \( \gamma \) in \( X \), and \( \isom(X) \) to denote the group of isometries of \( X \).

The goal of the first three lemmas is to establish basic facts about closed geodesics, the action of the isometry group on a hyperbolic surface, and the cardinality of isometry groups of hyperbolic surfaces.

\begin{Lem}
\label{lem:finite-scg}
On a hyperbolic  surface, the number of  closed geodesics of bounded length intersecting a compact set is finite.
\end{Lem}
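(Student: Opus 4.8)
The plan is to reduce the statement to a local finiteness argument in the hyperbolic plane via the universal cover. Let $X = \bh/\Gamma$ for a Fuchsian group $\Gamma$, let $K \subset X$ be compact, and fix a length bound $L > 0$. A closed geodesic $\gamma$ in $X$ of length $\ell(\gamma) \le L$ that meets $K$ corresponds to a conjugacy class in $\Gamma$ of a hyperbolic element $g$ whose translation length is $\ell(\gamma)$; moreover, since $\gamma$ meets $K$, some conjugate of $g$ has its axis passing within distance $\mathrm{diam}(K)$ of a fixed basepoint $\tilde{x}_0 \in \bh$ lying over a point of $K$. First I would lift $K$ to a compact set $\tilde{K} \subset \bh$ (take a compact fundamental domain intersection, or just the closure of one lift of $K$), so that every closed geodesic counted has a lift to a complete geodesic $\tilde\gamma$ in $\bh$ meeting $\tilde K$, with $\tilde\gamma$ the axis of some $g \in \Gamma$ of translation length $\le L$.

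The key step is then: the set of elements $g \in \Gamma$ that are hyperbolic with translation length $\le L$ and whose axis meets $\tilde K$ is finite. This follows from proper discontinuity of the $\Gamma$-action: such a $g$ moves a point $p \in \tilde\gamma \cap \tilde K$ a distance $d(p, gp)$ which is controlled — if $p$ lies on the axis, $d(p,gp)$ equals the translation length $\le L$, so $gp$ lies in the $L$-neighborhood $N_L(\tilde K)$, which is compact (closed and bounded in $\bh$). Hence $g$ maps a point of the compact set $\tilde K$ into the compact set $N_L(\tilde K)$; by proper discontinuity only finitely many $g \in \Gamma$ can do this. Each such $g$ determines its axis, hence the closed geodesic (the image of the axis in $X$), so only finitely many closed geodesics arise.

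There is one subtlety to handle carefully: a single closed geodesic in $X$ of length $\le L$ meeting $K$ might a priori correspond to many lifts, and I have only bounded the count of group elements, so the count of geodesics is at most this, which is all that is needed — but I should make sure each geodesic I want to count really does produce at least one such $g$, i.e. that the lift of a point of $\gamma \cap K$ can be taken on the axis of an appropriate conjugate. This is immediate: pick $x \in \gamma \cap K$, pick a lift $\tilde x \in \tilde K$, then the lift of $\gamma$ through $\tilde x$ is a complete geodesic invariant under a unique primitive hyperbolic $g \in \Gamma$ with that axis and $\ell(\gamma) = $ translation length of $g^k$ for the appropriate power, and in any case some element of translation length exactly $\ell(\gamma) \le L$ stabilizes this axis and has $\tilde x$ on it.

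The main obstacle is essentially bookkeeping: matching closed geodesics with group elements without over- or under-counting, and making sure the compactness ($N_L(\tilde K)$ compact, $\tilde K$ chosen compact) and proper discontinuity are invoked cleanly. There is no deep content beyond proper discontinuity of Fuchsian groups; I would keep the argument short, invoking that every complete hyperbolic surface is $\bh/\Gamma$ with $\Gamma$ acting properly discontinuously, as recorded in the preliminaries.
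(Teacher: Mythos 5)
Your proposal is correct and follows essentially the same route as the paper: lift $K$ to a compact set in $\bh$, associate to each closed geodesic of length at most $L$ meeting $K$ a hyperbolic element of $\Gamma$ stabilizing a lift through $\tilde K$, observe that the translation-length bound forces this element to move a point of $\tilde K$ into the compact $L$-neighborhood of $\tilde K$, and conclude by proper discontinuity. The only cosmetic difference is that the paper phrases this as a contradiction with an infinite sequence of distinct geodesics rather than a direct count.
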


\begin{proof}
Let \( \Gamma \) be a Fuchsian group and let \( X = \Gamma \backslash \bh \) be a hyperbolic surface.
Now let \( K \subset X \) be compact and \( b \) be a positive real number.
Suppose there exists a sequence of simple closed geodesics \( \{\gamma_n\}_{n\in\bn} \) in \( X \) such that \( \gamma_n\neq\gamma_m \), \( \ell(\gamma_n) < b \), and \( \gamma_n \cap K  \neq \varnothing \) for all distinct \( n,m \in \bn \).

Choose a lift \( \tilde K \) of \( K \) in \( \bh \) and lifts \( \tilde \gamma_n \) of \( \gamma_n \) such that \( \tilde\gamma_n \cap \tilde K \neq \varnothing \). 
Now let \( g_n \in \Gamma \) be a generator of the (setwise) stabilizer, in \( \Gamma \), of \( \tilde \gamma_n \).
If \( B \) is the closed \( b \)-neighborhood of \( \tilde K \), then \( B \) is compact and \( g_nB \cap B \neq \varnothing \) for all \( n \in \bn \), but, as every Fuchsian group acts on \( \bh \) properly discontinuously, this is a contradiction.
\end{proof}

\begin{Lem}
\label{lem:prop-discont}
The action of the isometry group of a complete hyperbolic surface with non-abelian fundamental group on the surface is properly discontinuous.
\end{Lem}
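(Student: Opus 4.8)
The plan is to deduce proper discontinuity of the $\isom(X)$-action from the fact that a complete hyperbolic surface $X$ with non-abelian fundamental group is a quotient $\Gamma \backslash \bh$ by a non-elementary Fuchsian group $\Gamma$, and then to use the lift-to-$\bh$ technique already employed in Lemma~\ref{lem:finite-scg}. First I would recall that since $\pi_1(X)$ is non-abelian, $\Gamma$ is non-elementary; in particular $\Gamma$ has trivial centralizer in $\isom(\bh)$, and more importantly the limit set $\Lambda(\Gamma)$ contains at least three points. The key structural fact I would invoke is that every isometry $\phi$ of $X$ lifts to an isometry $\tilde\phi$ of $\bh$ that normalizes $\Gamma$ (because $\bh \to X$ is the universal cover and $\Gamma$ is the deck group), and the lift is unique up to post-composition with an element of $\Gamma$. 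This gives a homomorphism from a subgroup of $N_{\isom(\bh)}(\Gamma)$ onto $\isom(X)$ with kernel $\Gamma$, so $\isom(X) \cong N/\Gamma$ where $N = N_{\isom(\bh)}(\Gamma)$.

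The main step is then to show $N$ is itself a discrete (i.e.\ Fuchsian) subgroup of $\isom(\bh)$; once this is known, $\isom(X) = N/\Gamma$ acts properly discontinuously on $X = \Gamma\backslash\bh$ by the standard fact that a discrete group acting properly discontinuously on $\bh$ descends to a properly discontinuous action of the quotient group on the quotient space (the orbit of a compact set meets only finitely many translates). To see $N$ is discrete: suppose $\phi_n \in N$ is a sequence converging to the identity in $\isom(\bh)$. For a fixed non-trivial $g \in \Gamma$, the conjugates $\phi_n g \phi_n^{-1}$ lie in $\Gamma$ and converge to $g$; since $\Gamma$ is discrete, $\phi_n g \phi_n^{-1} = g$ for all large $n$, so $\phi_n$ centralizes $g$. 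Running this over finitely many generators of a non-elementary (in particular, non-cyclic) subgroup of $\Gamma$ forces $\phi_n$ to centralize a non-elementary group, hence $\phi_n = \mathrm{id}$ for large $n$ (the centralizer of a non-elementary Fuchsian group in $\isom(\bh)$ is trivial). Thus the identity is isolated in $N$, and since $N$ is a topological group, $N$ is discrete.

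Alternatively — and this may be cleaner to write — one can argue proper discontinuity directly on $X$: fix a compact $K \subset X$, a lift $\tilde K \subset \bh$, and suppose infinitely many distinct $\phi_n \in \isom(X)$ satisfy $\phi_n(K)\cap K \neq \varnothing$. Choosing lifts $\tilde\phi_n$ with $\tilde\phi_n(\tilde K) \cap \Gamma\tilde K \neq \varnothing$, and using that $\Gamma\tilde K$ meets only finitely many $\Gamma$-translates of $\tilde K$ (proper discontinuity of $\Gamma$), we may pass to a subsequence and adjust by deck transformations so that $\tilde\phi_n(\tilde K)\cap \tilde K \neq\varnothing$ for all $n$; since the $\tilde\phi_n$ normalize $\Gamma$ and are pairwise distinct modulo $\Gamma$, they form an infinite set of isometries of $\bh$ moving $\tilde K$ a bounded amount, so a further subsequence converges in $\isom(\bh)$ to some $\psi$, whence $\tilde\phi_m \tilde\phi_n^{-1} \to \mathrm{id}$ along the subsequence; the discreteness argument of the previous paragraph then yields $\tilde\phi_m = \tilde\phi_n$ modulo $\Gamma$, a contradiction.

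The hard part will be handling the lifting step carefully: one must verify that an isometry of $X$ always lifts to $\bh$ (this is automatic since $\bh$ is simply connected and the covering $\bh\to X$ is characterized by $\pi_1$, but it should be stated), that such a lift normalizes $\Gamma$, and — the genuinely essential input — that the non-abelian hypothesis on $\pi_1(X)$ is exactly what rules out the pathological elementary cases (e.g.\ $X$ an annulus or a one-holed torus's double, where $\isom(X)$ can be a positive-dimensional Lie group acting non-properly-discontinuously). I would emphasize that "non-abelian $\pi_1$" $\Rightarrow$ "$\Gamma$ non-elementary" $\Rightarrow$ "$C_{\isom(\bh)}(\Gamma) = \{\mathrm{id}\}$ and $N_{\isom(\bh)}(\Gamma)$ discrete," and cite a standard reference (e.g.\ \cite{Buser} or a text on Fuchsian groups) for the fact that the normalizer of a non-elementary Fuchsian group is Fuchsian.
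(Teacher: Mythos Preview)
Your argument is correct, but it follows a genuinely different route from the paper. The paper argues entirely on the surface: given a compact $K$, it enlarges $K$ so that it contains two intersecting closed geodesics $\gamma_1,\gamma_2$ (this is where the non-abelian hypothesis enters), observes that $\bigcup_{f\in I_K} f(K)$ has bounded diameter and hence sits in a compact $K'$, and then applies Lemma~\ref{lem:finite-scg} to conclude that the $I_K$-orbit of each $\gamma_i$ is finite. A unit tangent vector based at a point of $\gamma_1\cap\gamma_2$ then has finite $I_K$-orbit, and since an isometry is determined by its action on a single unit tangent vector, $I_K$ is finite.

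Your approach instead lifts everything to $\bh$: you identify $\isom(X)$ with $N_{\isom(\bh)}(\Gamma)/\Gamma$ and prove the normalizer is discrete via the standard conjugation argument, using that the centralizer of a non-elementary Fuchsian group is trivial. This is the classical proof (essentially what is behind the reference the paper gives to \cite{MaskitKleinian}) and has the virtue of generalizing verbatim to higher-dimensional hyperbolic manifolds. The paper's proof, by contrast, is more self-contained---it recycles Lemma~\ref{lem:finite-scg} rather than importing normalizer facts---and keeps the argument entirely intrinsic to $X$, which fits the surrounding exposition. Both uses of the non-abelian hypothesis are equivalent: you need it to get a non-elementary $\Gamma$ (trivial centralizer), the paper needs it to produce a pair of intersecting closed geodesics.
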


\begin{proof}
Let \( X \) be a complete hyperbolic surface such that \( \pi_1(X) \) is  non-abelian.
Given a compact subset \( K \) of \( X \), let \( I_K = \{f\in \isom(X) : f(K) \cap K \neq \varnothing\} \).
It is enough to show that \( I_K \) is finite.
By possibly enlarging \( K \), we may assume that there are two intersecting closed geodesics \( \gamma_1 \) and \( \gamma_2 \) contained in the interior of \( K \) (this is possible since \( \pi_1(X) \) is non-abelian).
Observe that \( \bigcup_{f\in I_K} f(K) \) has bounded diameter and hence is contained in some compact set \( K' \). 
It follows that all the \( I_K \)-translates of \( \gamma_1 \) and \( \gamma_2 \) are contained in \( K' \).
Orient \( \gamma_1 \) and let \( v \) be the unit tangent vector based at a point \( x \in \gamma_1 \cap \gamma_2 \) in the direction of \( \gamma_1 \).
The finiteness of \( \gamma_1 \cap \gamma_2 \), together with Lemma \ref{lem:finite-scg}, guarantees that the orbit of \( v \) under \( I_K \) is finite.  
But, if two isometries agree on a unit tangent vector, then they agree everywhere; hence,  \( I_K \) must be finite as well. 
\end{proof}

(N.B.  Lemma \ref{lem:prop-discont} is true for hyperbolic manifolds in general, for instance see \cite[Proposition V.E.10]{MaskitKleinian}.)

\begin{Lem}
\label{lem:countable}
The isometry group of a complete hyperbolic surface with non-abelian fundamental group is countable.
Moreover, the isometry group of every hyperbolic surface of finite topological type is finite.
\end{Lem}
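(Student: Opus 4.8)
The plan is to deduce both statements of Lemma~\ref{lem:countable} from the preceding two lemmas. For the first statement, let $X$ be a complete hyperbolic surface with non-abelian fundamental group. By Lemma~\ref{lem:prop-discont}, $\isom(X)$ acts properly discontinuously on $X$; in particular, the stabilizer $I_K = \{f \in \isom(X) : f(K) \cap K \neq \varnothing\}$ of any compact set $K$ is finite. Since $X$ is second countable and locally compact, it can be exhausted by a countable family of compact sets $K_1 \subset K_2 \subset \cdots$ with $\bigcup_j K_j = X$. Fixing a basepoint $x_0 \in K_1$, every isometry $f$ sends $x_0$ into some $K_j$, and the orbit $\isom(X)\cdot x_0$ meets each $K_j$ in a finite set (again by proper discontinuity, or by discreteness of the orbit together with compactness of $K_j$), so the orbit is countable. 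Finally, the stabilizer of $x_0$ in $\isom(X)$ is finite, since an isometry of a connected hyperbolic surface is determined by its action on a single unit tangent vector and there are only finitely many unit tangent vectors at $x_0$ fixed-setwise, hence finitely many isometries fixing $x_0$ (alternatively, this stabilizer is contained in $I_{K_1}$). Therefore $\isom(X)$, being a finite-to-one extension of a countable orbit, is countable.

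For the second statement, suppose $X$ has finite topological type. If $\pi_1(X)$ is non-abelian, then by the argument just given (or more directly) the key point is to upgrade countability to finiteness. The cleanest route: a hyperbolic surface of finite type has finitely generated fundamental group, so it admits a finite-area or more generally a finite-description structure — but what we really want is that its isometry group is \emph{discrete and cocompact-like} enough to be finite. I would instead argue as follows: $\isom(X)$ acts properly discontinuously on $X$ by Lemma~\ref{lem:prop-discont}, so the quotient $\isom(X)\backslash X$ is an orbifold, and the covering $X \to \isom(X)\backslash X$ has deck-transformation-like behaviour giving $|\isom(X)|$ bounded by a ratio of Euler characteristics (when these are finite) or, in the cusped/funnel case, by an analysis of the finitely many ends. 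The remaining cases — $\pi_1(X)$ abelian — are the surfaces $\br^2$, the open annulus, and their quotients; these have only finitely many complete hyperbolic structures up to isometry of a controlled kind, and one checks directly (e.g. a hyperbolic annulus is determined by the length of its core geodesic, if any, and its isometry group is compact but the discreteness forces finiteness only in degenerate cases — in fact a hyperbolic cylinder has isometry group containing a circle, so one must be careful that ``finite topological type'' here is still meant to exclude nothing, or the abelian case genuinely has infinite isometry group).

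The step I expect to be the main obstacle is precisely this last point: reconciling the abelian-$\pi_1$ cases with the blanket assertion that \emph{every} finite-type hyperbolic surface has finite isometry group. A hyperbolic structure on the open annulus (a ``hyperbolic cylinder'') has an $S^1$ of rotational isometries, so the statement as literally quoted seems to need either the implicit standing assumption that surfaces here have non-abelian fundamental group, or a reading of ``hyperbolic surface of finite topological type'' that tacitly means negative Euler characteristic with $\chi \le -1$ excluding the annulus and disc; I would resolve this by invoking whatever convention the paper has set (likely that the relevant surfaces always have non-abelian $\pi_1$, so the ``moreover'' clause is really about the non-abelian finite-type case), and then the finiteness follows from the orbifold Euler characteristic bound: $|\isom(X)| \le \chi(X)/\chi(\isom(X)\backslash X)$, with the denominator bounded away from $0$ since an orbifold quotient of a finite-type hyperbolic surface has Euler characteristic at most $-1/N$ for a universally bounded $N$ — in fact at most the maximal orbifold Euler characteristic $-1/42$ appearing in the $(2,3,7)$ triangle orbifold, which is the classical source of the Hurwitz bound. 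So concretely: reduce to non-abelian $\pi_1$, apply Lemma~\ref{lem:prop-discont} to get an orbifold quotient, and bound the order by comparing (finite) Euler characteristics.
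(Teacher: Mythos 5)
Your proof of the first assertion is correct and is essentially the paper's: exhaust $X$ by compact sets, use proper discontinuity (Lemma~\ref{lem:prop-discont}) to make each set $I_{K_j}=\{f\in\isom(X): f(K_j)\cap K_j\neq\varnothing\}$ finite, and conclude that $\isom(X)$ is a countable union of finite sets. One slip: there are not ``finitely many unit tangent vectors at $x_0$'' --- there is a circle of them --- so the finiteness of the point stabilizer does not come from counting tangent vectors; it comes from the containment of the stabilizer in $I_{K_1}$, exactly as your parenthetical says. Your observation about the hypotheses of the second assertion is also well taken: read as a freestanding claim it fails for the hyperbolic annulus, the parabolic cylinder, and $\bh$ itself, whose isometry groups contain a circle. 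The paper's proof tacitly retains the non-abelian $\pi_1$ hypothesis, since it again invokes Lemma~\ref{lem:prop-discont}, and that is the only way the clause is used later.

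For the finiteness itself, however, your proposed route has a genuine gap. The comparison $|G|=\chi(X)/\chi^{\mathrm{orb}}(G\backslash X)$ (and a fortiori the Hurwitz bound $\chi^{\mathrm{orb}}\le -1/42$, which is not needed for mere finiteness) presupposes that the orbifold covering $X\to G\backslash X$ has finite degree and that the quotient is a finite-type orbifold --- which is precisely what is to be proved; as written the argument is circular. Moreover, for finite-type surfaces with funnels the Gauss--Bonnet version of this comparison must be run on the convex core $C(X)$ rather than on $X$. A repaired version of your idea does exist --- $G$ preserves $C(X)$, which has finite positive area when $\pi_1(X)$ is finitely generated and non-abelian, and proper discontinuity forces $\mathrm{area}(G\backslash C(X))>0$, whence $|G|\le \mathrm{area}(C(X))/\mathrm{area}(G\backslash C(X))<\infty$ --- but this still rests on Lemma~\ref{lem:prop-discont} and requires more setup than the problem needs. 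The paper's argument is more elementary: since $X$ has finite genus and finitely many ends, all planar, one can choose a compact $K$ so that $X\ssm K$ is a disjoint union of annuli (enlarged, if necessary, so that these annuli contain no closed geodesics); then every closed geodesic meets $K$, so $f(K)\cap K\neq\varnothing$ for every $f\in\isom(X)$, and Lemma~\ref{lem:prop-discont} gives finiteness directly. I would recommend that route, as it is also the template for the non-displaceability arguments used throughout the paper.
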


\begin{proof}
Let \( X \) be a complete hyperbolic surface with non-abelian fundamental group. 
Recall that \( X \), being a metrizable manifold, is second countable and locally compact; in particular, \( X \) is a countable union of compact sets. 
Therefore, since, by Lemma \ref{lem:prop-discont}, \( \isom(X) \) acts on \( X \) properly discontinuously, 
\( \isom(X) \) is a countable union of finite sets and hence countable.

To finish, let \( X \) be of finite topological type.
Since \( X \) has finite genus, every end of \( X \) is planar.
In particular, as \( X \) has a finite number of ends, all of which are planar, we can find a compact subset \( K \) of \( X \) such that \( X \ssm K \) is a disjoint union of annuli.  
It follows that every closed geodesic in \( X \) must intersect \( K \); hence, \( f(K) \cap K \neq \varnothing \) for every \( f \in \isom(X) \).
Again, by Lemma \ref{lem:prop-discont}, \( \isom(X) \) is finite.
\end{proof}

A fundamental tool in 2-dimensional hyperbolic geometry is the \emph{collar lemma}---a very concise and elegant proof can be found in \cite[Theorem 3.8.3]{HubbardTeichmuller1}.
Due to its repeated use below, we recall its statement here in detail.

Let \( \gamma \) be a simple closed geodesic in a hyperbolic surface \( X \).  
If the \( \delta \)-neighborhood \( A_\delta(\gamma) = \{ x \in X : d(x,\gamma)<\delta\} \) of \( \gamma \) in \( X \) is isometric to the \( \delta \)-neighborhood of the unique simple closed geodesic in the annulus obtained as the quotient of  \( \bh \) by the Fuchsian group generated by the isometry \( z \mapsto e^{\ell(\gamma)}z \), then we say that \( \gamma \) admits a \( \delta \)-collar. 
Define the \emph{collar function} \( \eta \) by 
\[
\eta(\ell) = \arcsinh\left(\frac1{\sinh(\ell/2)}\right).
\]

\begin{Thm}[The collar lemma]
\label{thm:collar}
Let \( X \) be a complete hyperbolic surface.
If \( \{\gamma_i\}_{i\in I} \) is a collection of pairwise-disjoint simple closed geodesics indexed by a countable set \( I \), then the sets \( A_{\eta(\ell_i)}(\gamma_i) \) are pairwise-disjoint collars, where \( \ell_i = \ell(\gamma_i) \). \qed
\end{Thm}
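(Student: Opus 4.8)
The plan is to lift everything to \( \bh \) and reduce the whole statement to a single metric estimate between lifts of the \( \gamma_i \). Write \( X = \Gamma\backslash\bh \) for a torsion-free Fuchsian group \( \Gamma \) (possible since \( X \) is a complete hyperbolic \( 2 \)-manifold), and for each \( i \) fix a lift \( \tilde\gamma_i \subset\bh \) of \( \gamma_i \); the setwise stabilizer of \( \tilde\gamma_i \) in \( \Gamma \) is infinite cyclic and translates \( \tilde\gamma_i \) through \( \ell_i \). The claim I would prove is: \emph{whenever \( \tilde a \) is a \( \Gamma \)-translate of some \( \tilde\gamma_i \) and \( \tilde b \) is a \( \Gamma \)-translate of some \( \tilde\gamma_j \) with \( \tilde a\neq\tilde b \), the geodesics \( \tilde a \) and \( \tilde b \) are disjoint and \( d_{\bh}(\tilde a,\tilde b)\geq\eta(\ell_i)+\eta(\ell_j) \).} Disjointness is immediate: a crossing of \( \tilde a \) and \( \tilde b \) would project to a transverse intersection of \( \gamma_i \) with \( \gamma_j \), impossible since the \( \gamma_i \) are simple and pairwise disjoint, and \( \tilde a \), \( \tilde b \) cannot share an ideal endpoint because two distinct axes of a discrete group never do. Granting the estimate, the theorem follows: with \( i=j \) and \( \tilde b \) a translate of \( \tilde\gamma_i \) other than \( \tilde\gamma_i \), the \( \eta(\ell_i) \)-neighbourhood of \( \tilde\gamma_i \) in \( \bh \) meets none of its nontrivial \( \Gamma \)-translates, so it maps isometrically onto \( A_{\eta(\ell_i)}(\gamma_i) \) modulo its cyclic stabilizer; hence \( A_{\eta(\ell_i)}(\gamma_i) \) is exactly the model \( \eta(\ell_i) \)-collar. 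With \( i\neq j \), a common point of \( A_{\eta(\ell_i)}(\gamma_i) \) and \( A_{\eta(\ell_j)}(\gamma_j) \) would lie within \( \eta(\ell_i) \) of some translate of \( \tilde\gamma_i \) and within \( \eta(\ell_j) \) of some translate of \( \tilde\gamma_j \), contradicting the estimate; so the collars are disjoint.

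For the estimate, normalize by an isometry of \( \bh \) so that \( \tilde a \) is the positive imaginary axis with \( \langle z\mapsto e^{\ell_i}z\rangle \) its stabilizer, and \( \tilde b \) the semicircle over an interval \( [p,q]\subset(0,\infty) \) (it cannot separate \( 0 \) from \( \infty \), by non-crossing; replace \( z \) by \( -z \) if needed). One then exploits that the entire \( \Gamma \)-orbits of \( \tilde a \) and of \( \tilde b \) are pairwise disjoint: the intervals \( e^{k\ell_i}[p,q] \) (\( k\in\bz \)) are pairwise disjoint, and so are the images of \( [p,q] \) and of \( [0,\infty] \) under powers of the generator of \( \mathrm{Stab}(\tilde b) \). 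Unwinding these nesting relations bounds \( q/p \) from above by an explicit function of \( \ell_i,\ell_j \). Finally \( \cosh d_{\bh}(\tilde a,\tilde b) = \frac{p+q}{q-p} \), and since \( \sinh\eta(\ell)=1/\sinh(\ell/2) \) (equivalently \( \cosh\eta(\ell)=\coth(\ell/2) \)), the bound on \( q/p \) is exactly what turns this identity into \( \cosh d_{\bh}(\tilde a,\tilde b)\geq\cosh\big(\eta(\ell_i)+\eta(\ell_j)\big) \). The same estimate can instead be read off from the standard right-angled pentagon and Saccheri-quadrilateral formulas of hyperbolic trigonometry, applied to the polygon bounded by \( \tilde a \), \( \tilde b \), their common perpendicular, and a suitable translate of that perpendicular.

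I expect the delicate point to be precisely the bookkeeping in the second paragraph: using only one disjointness relation (say \( [p,q]\cap e^{\ell_i}[p,q]=\varnothing \)) gives a bound on \( q/p \) too weak to reach the constant \( \eta(\ell_i)+\eta(\ell_j) \), so several of the disjointness/nesting relations must be combined (equivalently, the correct polygon must be chosen) to obtain the sharp constant built into the definition of \( \eta \). The remaining ingredients—torsion-freeness forcing axis-stabilizers to be cyclic and ruling out asymptotic lifts, and the fact that \( \bh\to X \) is a local isometry, which upgrades the injectivity statements to the asserted isometries with the model collar—are routine. Note the argument is entirely pairwise, so the countability of \( I \) plays no role beyond making the hypothesis meaningful.
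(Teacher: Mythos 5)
Your first paragraph is correct and is the right way to organize the statement: everything does reduce to showing that any two distinct lifts \( \tilde a \), \( \tilde b \) of \( \gamma_i \), \( \gamma_j \) satisfy \( d_{\bh}(\tilde a,\tilde b)\ge\eta(\ell_i)+\eta(\ell_j) \), and your deduction of both embeddedness and pairwise disjointness of the collars from that single estimate is fine. (The paper itself does not prove this theorem; it cites \cite{HubbardTeichmuller1}.) The gap is that the estimate---which is the entire quantitative content of the collar lemma---is never established, and the relations you propose to combine cannot establish it. The two families you list are exactly the disjointness of the \( \langle g_a\rangle \)-translates of \( \tilde b \) and of the \( \langle g_b\rangle \)-translates of \( \tilde a \). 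Using \( \cosh d=(p+q)/(q-p) \), the first family amounts to \( q/p<e^{\ell_i} \), which is equivalent to \( d>\eta(\ell_i) \), and the second, by symmetry, to \( d>\eta(\ell_j) \); together they give only \( d>\max(\eta(\ell_i),\eta(\ell_j)) \). The target bound is strictly stronger and is sharp: \( \cosh(\eta(\ell_i)+\eta(\ell_j))=\bigl(\cosh(\ell_i/2)\cosh(\ell_j/2)+1\bigr)/\bigl(\sinh(\ell_i/2)\sinh(\ell_j/2)\bigr) \), and it is attained in the limit where a mixed product such as \( g_ag_b^{\pm1} \) becomes parabolic. So any correct argument must use a mixed translate; one can place \( g_a \) and \( g_b \) with axes at any distance \( d \) strictly between \( \max(\eta(\ell_i),\eta(\ell_j)) \) and \( \eta(\ell_i)+\eta(\ell_j) \) so that every relation you list holds, which shows those relations alone cannot imply the theorem.

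The standard way to close the gap from exactly where you are: \( \langle g_a,g_b\rangle \) is discrete, torsion-free, and non-elementary (the axes are distinct and disjoint), hence free of rank two; since the full preimages of \( \gamma_i \) and \( \gamma_j \) in \( \bh \) consist of pairwise disjoint geodesics, the quotient \( \langle g_a,g_b\rangle\backslash\bh \) is a three-holed sphere whose convex core is a pair of pants with boundary lengths \( \ell_i \), \( \ell_j \), and some \( \ell_3\ge0 \), with \( \tilde a \) and \( \tilde b \) projecting onto the first two boundary geodesics. The right-angled hexagon identity gives \( \cosh d_{12}=\bigl(\cosh(\ell_i/2)\cosh(\ell_j/2)+\cosh(\ell_3/2)\bigr)/\bigl(\sinh(\ell_i/2)\sinh(\ell_j/2)\bigr)\ge\cosh(\eta(\ell_i)+\eta(\ell_j)) \), and \( d_{\bh}(\tilde a,\tilde b)\ge d_{12} \) because \( d_{12} \) is a minimum over the orbit. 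This is the ``correct polygon'' your last paragraph asks for; a quadrilateral bounded by \( \tilde a \), \( \tilde b \), and two perpendiculars cannot produce the constant because its data do not see \( \ell_3 \).
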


In the sequel, we will  routinely appeal to the following corollary of the collar lemma (see \cite[Corollary 3.8.7]{HubbardTeichmuller1}).

\begin{Cor}
\label{cor:sinh}
On a hyperbolic surface, any two simple closed geodesics of length less than \( 2\arcsinh(1) \) are either equal or disjoint.
\qed
\end{Cor}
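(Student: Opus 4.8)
The plan is to deduce this directly from the collar lemma (Theorem~\ref{thm:collar}). First I would record the elementary fact that the collar function \( \eta(\ell) = \arcsinh\!\left(\frac{1}{\sinh(\ell/2)}\right) \) is strictly decreasing in \( \ell \), and that \( \eta(\ell) \geq \eta(2\arcsinh(1)) \). A direct computation gives \( \sinh\!\left(\tfrac12 \cdot 2\arcsinh(1)\right) = \sinh(\arcsinh(1)) = 1 \), so \( \eta(2\arcsinh(1)) = \arcsinh(1) \); hence any simple closed geodesic \( \gamma \) of length \( \ell(\gamma) < 2\arcsinh(1) \) admits an \( \arcsinh(1) \)-collar, i.e. \( A_{\arcsinh(1)}(\gamma) \) is an embedded collar neighborhood isometric to a standard hyperbolic annular collar.

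Now suppose \( \gamma_1 \) and \( \gamma_2 \) are two simple closed geodesics on a hyperbolic surface \( X \), each of length less than \( 2\arcsinh(1) \), and suppose they are not disjoint. If they were also not equal, then, being distinct geodesics, \( \gamma_1 \cap \gamma_2 \) is a finite set of transverse intersection points (two distinct geodesics cannot share an arc). Pick a point \( p \in \gamma_1 \cap \gamma_2 \). Since \( \gamma_1 \) crosses \( \gamma_2 \) transversally at \( p \), a subarc of \( \gamma_1 \) leaves the collar \( A_{\arcsinh(1)}(\gamma_2) \) on one side and must travel through it; but a transverse geodesic arc crossing a standard \( \delta \)-collar of a closed geodesic of length \( \ell \) has length bounded below by a quantity depending only on \( \delta \) (for \( \delta = \arcsinh(1) \) this lower bound is a fixed positive constant, computed by lifting to \( \bh \) and measuring the distance a geodesic must travel to cross the lifted collar strip). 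I would make this quantitative: the width-\( 2\arcsinh(1) \) collar forces any geodesic arc transverse to \( \gamma_2 \) and exiting both sides of the collar to have length at least, say, \( 2\arcsinh(1) \) itself — and in fact, since \( \gamma_1 \) is a closed geodesic that enters its own collar \( A_{\arcsinh(1)}(\gamma_1) \) as well, one obtains a contradiction with disjointness of the two collars guaranteed by Theorem~\ref{thm:collar} applied to \( \{\gamma_1,\gamma_2\} \). Concretely: \( A_{\arcsinh(1)}(\gamma_1) \) and \( A_{\arcsinh(1)}(\gamma_2) \) are disjoint by the collar lemma, yet \( p \in \gamma_1 \subset A_{\arcsinh(1)}(\gamma_1) \) and \( p \in \gamma_2 \subset A_{\arcsinh(1)}(\gamma_2) \), so \( p \) lies in both collars — a contradiction. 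Hence \( \gamma_1 = \gamma_2 \).

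The main obstacle — really the only subtlety — is verifying that the collar lemma as stated applies to a family consisting of two geodesics that may intersect; the statement of Theorem~\ref{thm:collar} takes pairwise-\emph{disjoint} geodesics as input, so one cannot blindly feed in \( \{\gamma_1,\gamma_2\} \). The clean fix is the argument above in contrapositive form: one proves that if \( \gamma_1 \neq \gamma_2 \) and each has length \( < 2\arcsinh(1) \), then they must be disjoint, by noting that \emph{each individual} geodesic of length \( < 2\arcsinh(1) \) admits an embedded \( \arcsinh(1) \)-collar (this uses the collar lemma only for the singleton family \( \{\gamma_i\} \), which is vacuously pairwise-disjoint), and then showing that an embedded \( \arcsinh(1) \)-collar about \( \gamma_2 \) cannot be crossed transversally by another simple geodesic \( \gamma_1 \) of length \( < 2\arcsinh(1) \): such a \( \gamma_1 \), upon entering the collar of \( \gamma_2 \), would need length \( > 2\arcsinh(1) \) to traverse it and return to close up. This last quantitative step is a routine computation in \( \bh \) (lift the collar to the standard strip \( \{z : |\arg z - \pi/2| < \theta_0\} \) and measure the length of a crossing geodesic arc), and it is the one place I would actually verify a trigonometric inequality. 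Everything else is bookkeeping with \( \arcsinh \) and \( \sinh \).
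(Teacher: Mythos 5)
The paper offers no proof of its own here---it cites \cite[Corollary 3.8.7]{HubbardTeichmuller1}---so I am judging your argument on its own terms; your ``clean fix'' is the standard proof, but the version you write down first is not valid and the fix still has one unjustified step. The ``concrete'' contradiction in your second paragraph does not work: Theorem~\ref{thm:collar} guarantees disjointness of the collars \( A_{\eta(\ell_i)}(\gamma_i) \) only for a family of \emph{pairwise-disjoint} geodesics, so under the hypothesis \( \gamma_1 \cap \gamma_2 \neq \varnothing \) you have no license to assert that \( A_{\arcsinh(1)}(\gamma_1) \) and \( A_{\arcsinh(1)}(\gamma_2) \) are disjoint, and the ``contradiction'' at \( p \) evaporates. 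You correctly diagnose this yourself, and the repaired argument is right: since \( \eta \) is strictly decreasing with \( \eta(2\arcsinh(1)) = \arcsinh(1) \), each \( \gamma_i \) \emph{individually} admits an embedded \( \eta(\ell_i) \)-collar with \( \eta(\ell_i) > \arcsinh(1) \), and a closed geodesic \( \gamma_1 \) passing through a point \( p \) of the core \( \gamma_2 \) must then have length at least \( 2\eta(\ell_2) > 2\arcsinh(1) \), contradicting \( \ell(\gamma_1) < 2\arcsinh(1) \). Two points in that fix still need a sentence each. First, you must justify that \( \gamma_1 \) leaves the collar \( A_{\eta(\ell_2)}(\gamma_2) \) at all: this holds because the only closed geodesic contained in the standard hyperbolic annulus is its core, and \( \gamma_1 \neq \gamma_2 \). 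Second, the quantitative step you propose to verify by a trigonometric computation in \( \bh \) is actually immediate from the definition of \( A_\delta(\gamma) \) as a metric neighborhood: once \( \gamma_1 \) contains a point \( q \) with \( d(q,\gamma_2) \geq \eta(\ell_2) \), the two subarcs of \( \gamma_1 \) joining \( p \in \gamma_2 \) to \( q \) each have length at least \( \eta(\ell_2) \), giving \( \ell(\gamma_1) \geq 2\eta(\ell_2) \) with no hyperbolic trigonometry. With those two observations supplied, your argument is complete and coincides with the cited one.
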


For constructing hyperbolic surfaces, it is useful to have a brief discussion of Fenchel--Nielsen coordinates.  
A \emph{pair of pants} is a 2-manifold homeomorphic to the 2-sphere with three points removed.
Given a surface $S$, a \textit{(topological) pants decomposition} is a maximal collection of homotopically non-trivial simple closed curves on $S$ that are pairwise disjoint, pairwise non-homotopic, and such that the complement of their union is a  disjoint union of pairs of pants.

Given a pants decomposition $\mathcal{P}$ on $S$, a hyperbolic metric on $S$ is determined by specifying a length $l_{\gamma}$ for each curve $\gamma$ in $\mathcal{P}$, as well as a \textit{twist parameter} $t_{\gamma} \in \mathbb{R}$ that keeps track of how the pairs of pants are glued together to obtain $S$. Intuitively, $t_{\gamma}$ specifies the amount by which the left hand side of $\gamma$ should be twisted before gluing to the right hand side; we refer the reader to \cite[Section 7.6]{HubbardTeichmuller1} or \cite[Section 1.7]{Buser} for more details and the formal definition.

Note that on a hyperbolic surface, a maximal collection of pairwise-disjoint simple closed geodesics need not be a pants decomposition---for instance, it is possible to have a funnel or a half-plane in the complement of such a collection (in fact, these are the only possibilities).
In such a case, the hyperbolic surface fails to be equal to its convex core, where
the \emph{convex core} of a hyperbolic surface \( X \) is the smallest closed convex subsurface (with boundary) containing every simple closed geodesic in \( X \).
We end with a proposition, which can be derived from the collar lemma and will be used in our construction:

\begin{Prop}
\label{prop:complete}
Let \( X \) be an orientable hyperbolic surface with no planar ends and let \( \mathcal P \) be a topological pants decomposition of \( X \).
If there exists \( M > 0 \) such that \( \ell_X(\gamma) < M \) for each \( \gamma \in \mathcal P \), then \( X \) is complete.
Moreover,  \( X = C(X) \). \qed
\end{Prop}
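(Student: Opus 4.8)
The plan is to show two things: first, that completeness of $X$ is equivalent to completeness as a metric space and follows from a uniform lower bound on the injectivity radius along the geodesics of $\mathcal P$ combined with the collar lemma; and second, that the uniform upper bound $M$ on the pants curve lengths prevents the appearance of funnels or half-planes in the complement of the geodesic representatives, forcing $X=C(X)$. The key geometric input is that for a simple closed geodesic $\gamma$ of length $\ell$, the collar function $\eta(\ell)=\arcsinh(1/\sinh(\ell/2))$ is \emph{decreasing} in $\ell$, so an upper bound $\ell<M$ gives a \emph{lower} bound $\eta(\ell)>\eta(M)>0$ on the width of the embedded collar around $\gamma$ guaranteed by Theorem~\ref{thm:collar}.

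First I would isotope $\mathcal P$ so that each curve is realized by its geodesic representative; since the curves of $\mathcal P$ are pairwise disjoint and pairwise non-homotopic, their geodesic representatives $\{\gamma_i\}_{i\in I}$ are pairwise disjoint simple closed geodesics (with $I$ countable, as $X$ is second countable), and cut $X$ into hyperbolic pairs of pants with geodesic boundary. By Theorem~\ref{thm:collar}, the collars $A_{\eta(\ell_i)}(\gamma_i)$ are pairwise disjoint and embedded; since $\ell_i<M$ for all $i$ and $\eta$ is decreasing, each contains an embedded collar of uniform width $w:=\eta(M)>0$. Now I claim $X$ is complete: let $(x_n)$ be a Cauchy sequence in $X$. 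Either $(x_n)$ eventually stays within distance $w/2$ of the union of the $\gamma_i$'s, in which case it is eventually contained in the closure of the union of the (precompact) $w$-collars and one can extract a limit using that a closed ball of radius $w/2$ about any point of $\bigcup_i\gamma_i$ sits inside finitely many collars — actually cleaner: each such ball is a complete metric space because the collars are isometric to genuine hyperbolic collar neighborhoods — or else $(x_n)$ has a subsequence staying in the complement of the open $w/2$-neighborhood of $\bigcup_i\gamma_i$; but that complement is a disjoint union of compact ``cores'' of the pairs of pants (each pair of pants with geodesic boundary, minus the open $w/2$-collars of its boundary, is compact), and a Cauchy sequence staying in one compact piece converges. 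Hence every Cauchy sequence converges and $X$ is complete. (Alternatively, and perhaps more efficiently, one argues that the universal cover $\tilde X$, tiled by lifts of the pairs of pants together with their uniform-width collars, contains a uniform embedded ball around every point, so $\tilde X$ — and hence $X$ — is complete; I would likely present whichever of these two arguments is shortest.)

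For the moreover clause: the complement of the union of the geodesic representatives $\bigcup_i\gamma_i$ is, by the previous paragraph, a disjoint union of hyperbolic pairs of pants with \emph{geodesic} boundary (each boundary curve having finite length $<M$). In particular there are no funnels and no half-planes in this complement. Since every simple closed geodesic of $X$ is contained in the union of the pants together with their boundary — a closed convex subsurface equal to $X$ itself, as the pants cover $X$ — and $X$ has no planar ends so no end is an annular funnel end, the convex core $C(X)$, being the smallest closed convex subsurface containing all simple closed geodesics, must be all of $X$. More directly: $C(X)=X$ iff $X$ has no funnels and no half-planes in the complement of a maximal pairwise-disjoint collection of simple closed geodesics, and the pants decomposition $\mathcal P$ realized geodesically is such a collection with compact (pants) complementary pieces, so no funnel or half-plane occurs.

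The main obstacle I anticipate is the completeness argument, specifically making the case analysis on a Cauchy sequence airtight: one must be careful that the ``cores'' of the pairs of pants (pants minus open collars) are genuinely compact — this uses that a hyperbolic pair of pants with geodesic boundary of bounded length, with an open collar of fixed width removed from each cuff, is compact, which in turn needs a uniform \emph{upper} bound on the pants' diameter in terms of $M$. This is where the bound $\ell<M$ is used a second time (not just through $\eta$): a pair of pants with all cuff lengths below $M$ has diameter bounded above by an explicit function of $M$. Granting this standard fact about pairs of pants, the rest is routine. The $C(X)=X$ assertion is then essentially a restatement of the structure we have exhibited and requires only the remark, made in the paragraph preceding the proposition, that funnels and half-planes are the only obstructions to $X=C(X)$.
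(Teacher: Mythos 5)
The paper offers no proof to compare against: Proposition~\ref{prop:complete} is stated with a \(\qedsymbol\) and only the remark that it ``can be derived from the collar lemma.'' Your overall strategy is the standard one and its skeleton is right --- monotonicity of \( \eta \) converts the upper bound \( M \) into a uniform lower bound \( w = \eta(M) \) on collar widths, completeness follows because Cauchy sequences get trapped in compact pieces, and compact complementary pieces leave no room for funnels or half-planes, giving \( X = C(X) \).

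There are, however, two genuine problems. First, the step you single out as the ``main obstacle'' is resolved by a false statement: a pair of pants with all cuff lengths below \( M \) does \emph{not} have diameter bounded by a function of \( M \) alone. With only an upper bound on the cuffs, a cuff of length \( \ell \to 0 \) forces the seams --- and hence the diameter --- to blow up like \( \log(1/\ell) \), since the distance between two boundary geodesics of \( P(\ell_1,\ell_2,\ell_3) \) satisfies \( \cosh d(\partial_1,\partial_2) = \left(\cosh(\ell_3/2)+\cosh(\ell_1/2)\cosh(\ell_2/2)\right)/\left(\sinh(\ell_1/2)\sinh(\ell_2/2)\right) \). Fortunately you do not need this: a hyperbolic pair of pants with closed geodesic boundary is compact full stop, and removing open collars from a compact set leaves a compact set, so no diameter estimate is required. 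Second --- and this is the real crux --- you assert rather than prove that geodesic representatives of \( \mathcal P \) exist and cut \( X \) into pieces whose closures are compact pairs of pants with geodesic boundary. On a surface not yet known to be complete, neither the existence of geodesic representatives nor Theorem~\ref{thm:collar} (which is stated only for \emph{complete} surfaces) can be invoked without circularity; the fix is to use the half-collar estimate internal to a single compact pair of pants with geodesic boundary, which needs no global completeness. Compactness of the closed complementary pieces is also precisely where the hypothesis that \( X \) has no planar ends enters: it forces every end of a complementary piece to converge to a curve of \( \mathcal P \) rather than to an end of \( X \) (the situation responsible for cusps, funnels, and the incomplete ends that would falsify the statement). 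Your completeness argument never uses that hypothesis, which signals that the essential step is being assumed. Once each closed piece is known to be a compact \( P(\ell_1,\ell_2,\ell_3) \) with \( \ell_i < M \), your Cauchy-sequence argument (or, more cleanly, the observation that every closed ball of radius \( w/2 \) lies in one compact pants together with the compact half-collars of its cuffs on the far side, hence is compact) closes the proof, and the ``moreover'' clause follows as you describe.
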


For a further discussion on completeness, geodesic completeness, and convex cores of (bordered) hyperbolic surfaces, we refer the interested reader to \cite{BasmajianSaric}. 


\section{Finite groups} \label{Allcock}

Let \( G \) be a nontrivial countable group and let \( S \) be an orientable infinite-genus 2-manifold with no planar ends.
In this section, we give an explicit construction---using a Cayley graph for \( G \) as a blueprint---of a complete hyperbolic surface \( X_S^G \) whose isometry group is isomorphic to \( G \).

In the case that \( G \) is finite, we will see that \( X_S^G \) is homeomorphic to \( S \), showing that every finite group is realized as the isometry group of some complete hyperbolic metric on \( S \) (Theorem~\ref{thm:finite}). We remark that it is well known that there always exists a hyperbolic metric on \(S\) with trivial isometry group (in fact we outline an argument for this in our construction below). 

This result for finite groups is the main focus of the section; however, in our construction, considering countable groups is no more difficult than restricting to finite groups. We will, therefore, work in the more general setting of countable groups.
The advantage of doing so is realized at the end of the section where we observe that, when \( G \) is infinite, \( X_S^G \) is always homeomorphic to the Loch Ness monster surface---the orientable one-ended infinite-genus 2-manifold.
As a consequence, we see that every countable group is realized as the isometry group of a hyperbolic metric on the Loch Ness monster surface (Theorem~\ref{thm:lochness}).

With \( S \) and \( G \) as given above, we turn to the construction.
For clarity, we include figures demonstrating the construction for a surface $S$ with $|\cE|=3$ and $G$ a finite group with \( |G| = 4 \).
Let \( C_G \) be the complete, directed, and labelled Cayley graph of \( G \), that is, \( C_G \) is the graph whose vertices correspond to the elements of \( G \) and for each ordered pair \( (g,h) \in \{ (g,h) \in G^2 : h\neq id\} \) there is a directed edge from \( g \) to \( gh \) labelled `\( h \)'. The group \( G \) acts on \( C_G \) by left multiplication; it is an exercise to see that this action yields an isomorphism between \( G \) and the group of automorphisms of \( C_G \) preserving the direction and labels of edges (Allcock gives a short proof in the first paragraph of the proof of the main theorem in \cite{AllcockHyperbolic}). We note that the group of automorphisms of the unlabelled, undirected Cayley graph could be larger than $G$. 

We will use \( C_G \) as gluing instructions for building the desired hyperbolic structure on \( S \).
This is the same idea used in Allcock's argument \cite{AllcockHyperbolic}; however, the extra difficulty comes from controlling the topology of the resulting surface.
To proceed, we need a collection of hyperbolic surfaces corresponding to each edge and vertex of \( C_G \).

We first construct the vertex surface: 
every infinite-genus surface \(S\) has a nonempty space of ends denoted by \( \cE \).
As \( \cE \) is homeomorphic to a closed subset of the Cantor set, we can identify \( \cE \) with a closed subset of the 2-sphere \( S^2 \); define the surface \( S_\cE^2 \) to be the complement of \( \cE \) in \( S^2 \).

Let \( \delta \in \omega+1 \) such that \( H_1(S_\cE^2, \bq) \) has dimension \( |\delta| \).
Standard algebraic topology guarantees the existence of a basis \( \{v_i\}_{i\in \delta} \) of \( H_1(S_\cE^2, \bq) \) such that \( v_i \) can be represented on the surface by a simple closed curve \( c_i \subset S_\cE^2 \) for each \( i \in \delta \).
In addition, the \( c_i \) can be chosen to be pairwise disjoint.

For each \( i \in \delta \), choose a regular neighborhood \( \nu_i \) of \( c_i \) such that \( \nu_i \cap \nu_k = \varnothing \) for all distinct \( i, k \in \delta \). 
Let \( S' \) denote the complement of \( \bigcup_{i\in \delta} \nu_i \) in \( S_\cE^2 \).
Choose an indexing set \( J \) of the connected components of \( S' \) so that we may write \( S' = \bigsqcup_{j\in J} S_j \).
By construction, every simple closed curve \( c \) in \( S_j \) is separating and one component of \( S_j \ssm c \) is compact.
Indeed, \( c \) is separating as \( S_j \) is planar and both components of \( S_j \ssm c \) failing to be compact would contradict \( \{v_i\} \) being a basis.
It follows that \( S_j \) is one-ended; furthermore, as \( S_j \) is planar, its topology is determined---up to homeomorphism---by the cardinality of the connected components of \( \partial S_j \).
As \( S_j \) is a closed subset of \( S \), the inclusion \( S_j \hookrightarrow S \) is proper and hence, by Proposition \ref{prop:end-map}, induces a continuous map \( \cE(S_j) \to \cE \).
We can therefore identify the unique end of \( S_j \) with a point in \( \cE \); observe that the union of such points is dense in \( \cE \).

\begin{figure}[h]
\begin{center}
\begin{overpic}[trim = .5in 3.2in 1in 1.25in, clip=true, totalheight=0.4\textheight]{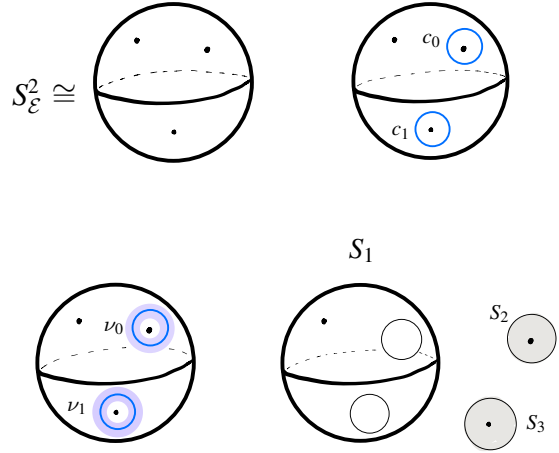}
\put(6,70){\large{$S^2_{\cE} \cong $}}
\put(68,65){\scriptsize{$c_1$}}
\put(73,80){\scriptsize{$c_0$}}
\put(15,20){\scriptsize{$\nu_1$}}
\put(21,33){\scriptsize{$\nu_0$}}
\put(61,45){\normalsize{$S_1$}}
\put(84,35){\scriptsize{$S_2$}}
\put(90,17){\scriptsize{$S_3$}}
\end{overpic}
\caption{For $S$ with $|\cE|=3$, we have the curves $c_0, c_1$, the neighborhoods $\nu_0, \nu_1$, and the surfaces $S_1, S_2,$ and $S_3$.}\label{fig:ex1}
\end{center}
\end{figure}

For \( \al \in \omega+1 \), define the surface \( Z_\alpha \) by 
\[
Z_\alpha = \br^2 \ssm \left( \bigcup_{m \in \alpha} B\left((m,0), \frac14\right) \right),
\]
where \( B(x,r) \) is the open ball of radius \( r \) about \( x \) in \( \br^2 \).
For \( j \in J \), let \( \alpha_j \in \omega+1 \) such that the set of boundary components of \( \partial S_j \) has cardinality \( |\alpha_j| \).
The classification of surfaces implies that \( S_j \) is homeomorphic to \( Z_{\alpha_j} \). Now, for each \( j \in J \), fix a homeomorphism \( \vp_j \co Z_{\alpha_j} \to S_j \) (see Figure~\ref{fig:ex2}).

\begin{figure}[h]
\begin{center}
\begin{overpic}[trim = .5in 7.2in .5in 1.5in, clip=true, totalheight=0.2\textheight]{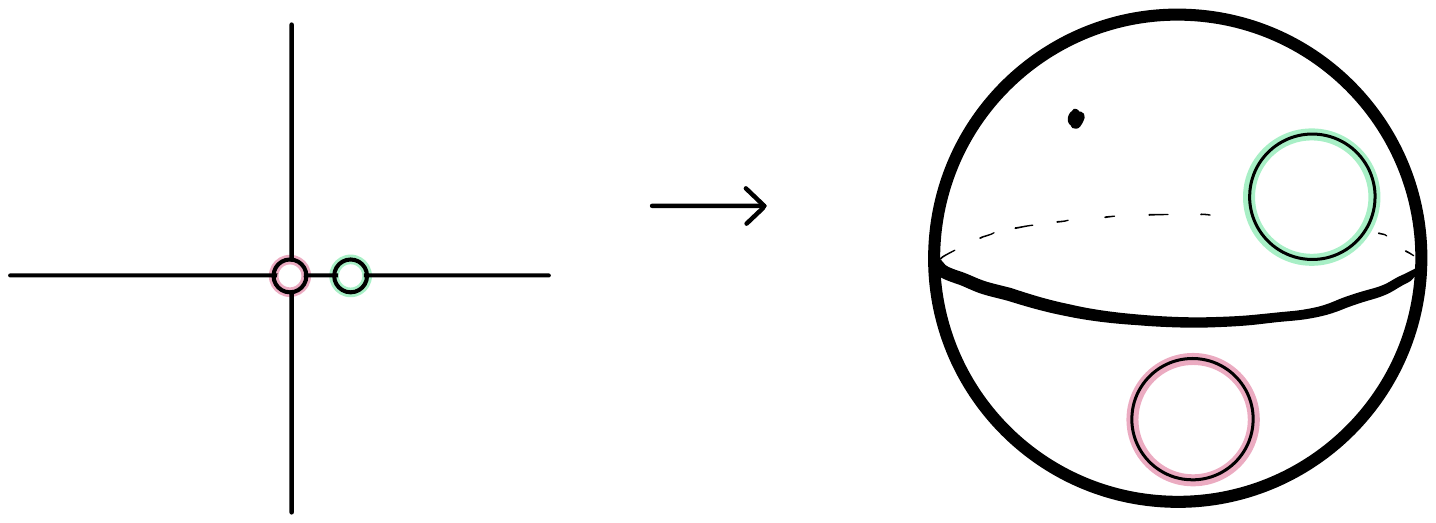}
\put(44,20){\normalsize{$\vp_1$}}
\put(10,20){\normalsize{$Z_2$}}
\put(85,17){\normalsize{$S_1$}}
\end{overpic}
\caption{The map $\vp_1$, where we imagine the point at $\infty$ of $Z_2$ is sent to the unique end of $S_1$.}\label{fig:ex2}
\end{center}
\end{figure}

As \( G \) is countable, we may choose an injective function (on the level of sets) \( f \co G \to \bz \). 
For \( \al \in \omega+1 \), define the surface \( Z_\alpha^G \) by
\[
Z_\alpha^G = Z_\alpha \ssm \left( \bigcup_{h\in G} \left( \bigcup_{m \in \bn} B\left( (f(h), m), \frac14\right) \right) \right).
\]

In other words, \(Z_\alpha^G\) is obtained from \(Z_\alpha\) by deleting a vertical column of disks for each element of \(G\). Again viewing \( S_j \) as a subsurface of \( S_\cE^2 \) and \( Z_\alpha^G \) as a subsurface of \( Z_\alpha \), we define the surface \( R_\cE \) to be 
\[
R_\cE = \left( \bigcup_{j\in J} \vp_j \left(Z_{\alpha_j}^G\right) \right) \cup \left( \bigcup_{i\in \delta} \nu_i \right).
\]
The components of \( \partial R_\cE \) are indexed by the set \( J \times G \times \bn \), namely, the component \( \partial(j,h,m) \) of \( \partial R_\cE \) corresponds to the simple closed curve contained in \( S_j \) whose center in \( \vp^{-1}(S_j) \) has coordinates \( (f(h), m) \). See Figure~\ref{fig:ex5}.

\begin{figure}[h]
\begin{center}
\begin{overpic}[trim = .5in 6.5in .5in 1.5in, clip=true, totalheight=0.27\textheight]{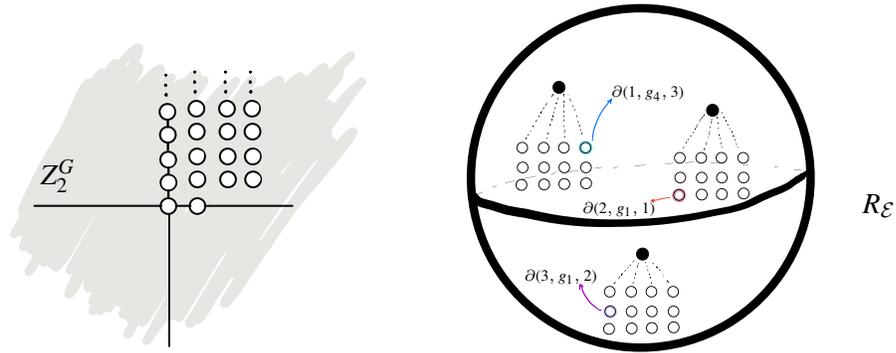}

\put(10,20){\normalsize{$Z^G_2$}}
\put(95,17){\normalsize{$R_\cE$}}
\put(69,29){\tiny{$\partial(1, g_4, 3)$}}
\put(66,17){\tiny{$\partial(2, g_1, 1)$}}
\put(60,10){\tiny{$\partial(3, g_1, 2)$}}
\end{overpic}
\caption{On the left hand side is $Z^G_2$, where \(|G| = 4\), and on the right hand side is $R_\cE$.}\label{fig:ex5}
\end{center}
\end{figure}

\begin{Lem}
\label{lem:end-space}
\( \cE(R_\cE) \) is homeomorphic to \( \cE \) and \( \cE_{\partial}(R_\cE) = \cE(R_\cE) \).
\end{Lem}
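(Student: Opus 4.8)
The plan is to recognize $R_\cE$ as $S^2_\cE$ with a locally finite family of open disks removed, to prove that such a removal changes neither the space of ends nor, in this situation, the property that every end is approached by boundary. By construction $R_\cE=\big(\bigcup_{j\in J}\vp_j(Z^G_{\alpha_j})\big)\cup\big(\bigcup_{i\in\delta}\nu_i\big)$ and $S^2_\cE=\big(\bigcup_{j\in J}S_j\big)\cup\big(\bigcup_{i\in\delta}\nu_i\big)$, and $\vp_j(Z^G_{\alpha_j})$ is exactly $S_j$ with the open disks indexed by $G\times\bn$ deleted; hence $R_\cE=S^2_\cE\ssm\mathcal D$, where $\mathcal D=\{D_{(j,h,m)}\}_{(j,h,m)\in J\times G\times\bn}$ is a family of pairwise disjoint open disks with pairwise disjoint closures, each a closed disk in $S^2_\cE$. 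The only point that uses the removal of $\cE$ from $S^2$ is that $\mathcal D$ is \emph{locally finite} in $S^2_\cE$: inside each $S_j\cong Z_{\alpha_j}$ the deleted disks are discrete and escape to the unique (point-)end of $S_j$, so $\mathcal D$ has no accumulation point in $S^2_\cE$.

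Next I would establish the general fact: if $N$ is a $2$-manifold and $N'=N\ssm\mathcal D$ for a locally finite family $\mathcal D$ of pairwise disjoint open disks with pairwise disjoint closures, each a closed disk, then the inclusion $\iota\co N'\into N$ is proper and the induced map $\widehat\iota\co\cE(N')\to\cE(N)$ of Proposition~\ref{prop:end-map} is a homeomorphism. As both spaces are compact Hausdorff and $\widehat\iota$ is continuous, it suffices to prove $\widehat\iota$ is a bijection. For surjectivity, given an exiting sequence $\{U_n\}$ in $N$, the open sets $U_n\ssm\bigcup\mathcal D$ form an exiting sequence in $N'$ mapping onto $[\{U_n\}]$; the only nontrivial check is that the frontier of $U_n\ssm\bigcup\mathcal D$ in $N'$ is compact, and this holds because that frontier is contained in the frontier of $U_n$ in $N$. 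For injectivity, given distinct ends $e_1,e_2$ of $N'$, pick a clopen subset $\widehat V$ of $\cE(N')$ with $e_1\in\widehat V$ and $e_2\notin\widehat V$, where $V\subseteq N'$ is open with compact frontier; by local finiteness only finitely many boundary circles of $N'$ meet the frontier of $V$, so after deleting small collars of those circles from $V$ we may assume the frontier of $V$ is disjoint from every boundary circle of $N'$ coming from $\mathcal D$ (this does not change which ends lie in $\widehat V$). Then $W:=V\cup\bigcup\{D\in\mathcal D:\partial D\subseteq V\}$ is open in $N$, has compact frontier, and satisfies $W\cap N'=V$; hence $\widehat\iota^{-1}(\widehat W)=\widehat V$, which separates $\widehat\iota(e_1)$ from $\widehat\iota(e_2)$. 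I expect this injectivity step --- manufacturing $W$ from $V$ across the deleted disks --- to be the main obstacle, and the collar-trimming, which relies on local finiteness of $\mathcal D$, is exactly what overcomes it.

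Finally I would assemble the conclusion. Applying the general fact to $N=S^2_\cE$, $N'=R_\cE$ and composing with the homeomorphism $\cE(S^2_\cE)\cong\cE$ (which sends the end of a connected exiting sequence $\{U_n\}$ to the unique point of $\bigcap_n\overline{U_n}$, a single point since it is a nested intersection of compact connected sets in the totally disconnected set $\cE$) gives $\cE(R_\cE)\cong\cE$. For $\cE_\partial(R_\cE)=\cE(R_\cE)$: applying the general fact once more with $N=\br^2$ shows each $\vp_j(Z^G_{\alpha_j})$ is one-ended, its end corresponds under the identifications above to the point $e_j\in\cE$ attached to the end of $S_j$, and the boundary components $\partial(j,h,m)$ of $R_\cE$ escape to that end as $m\to\infty$; thus $e_j\in\cE_\partial(R_\cE)$ for every $j\in J$. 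Since $\cE_\partial(R_\cE)$ is closed in $\cE(R_\cE)$ and $\{e_j\}_{j\in J}$ is dense in $\cE\cong\cE(R_\cE)$, we obtain $\cE(R_\cE)=\overline{\{e_j\}}\subseteq\cE_\partial(R_\cE)$, and the reverse containment is immediate.
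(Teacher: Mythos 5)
Your proof is correct and follows the same overall strategy as the paper's: compare \( \cE(R_\cE) \) with \( \cE(S^2_\cE) \cong \cE \) via the proper inclusion, show the induced map is a bijection, and conclude by the compact-Hausdorff continuous-bijection trick. The differences are in execution. You isolate a general lemma (deleting a locally finite family of disjoint open disks does not change the end space), and your injectivity step --- trim collars off the finitely many boundary circles meeting the frontier of \( V \), then fill in the disks whose boundary circles lie in \( V \) to produce an open \( W \subseteq S^2_\cE \) with compact frontier and \( W \cap R_\cE = V \) --- is a careful substitute for the paper's one-line appeal to the fact that separating sets in \( R_\cE \) remain separating in \( S^2_\cE \). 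Likewise, your proof that \( \cE_\partial(R_\cE) = \cE(R_\cE) \) via closedness of \( \cE_\partial \) together with density of \( \{e_j\}_{j \in J} \) replaces the paper's direct observation that every neighborhood of every point of \( \cE \) contains a component of \( \partial R_\cE \); both work. One point you should tighten: your justification of local finiteness of \( \mathcal D \) in \( S^2_\cE \) only rules out accumulation of the disks lying in a single \( S_j \), not accumulation of disks drawn from infinitely many distinct \( S_j \)'s at a point of \( S^2_\cE \). This is easily repaired: \( S' = \bigsqcup_{j} S_j \) is a closed, locally connected subset of \( S^2_\cE \), so each \( S_j \) is open in \( S' \); hence any point of \( S^2_\cE \) has a neighborhood meeting at most one \( S_j \), and since every disk of \( \mathcal D \) lies in some \( S_j \), the cross-\( j \) accumulation cannot occur.
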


\begin{proof}
Note that we have constructed \( R_\cE \) as a subset of the 2-sphere and observe that, by construction, the closure of \( R_\cE \) in \( S^2 \) is \( R_\cE \cup \cE \).
It follows that \( R_\cE \) is a closed subset of \( S^2_\cE \); hence, the inclusion \( \iota \co R_\cE \hookrightarrow S^2_\cE \) is proper.
By Proposition~\ref{prop:end-map}, \( \iota \) induces a continuous map \( \widehat \iota \) from the end space of \( R_\cE \) to the end space of \( S^2_\cE \), which we identify with \( \cE \). 
Observe that, by construction, every neighborhood of a point in \( \cE \) contains a component of \( \partial R_\cE \); in particular, \( \widehat \iota(\cE_\partial(R_\cE)) = \widehat \iota(\cE(R_\cE)) \). 
It also follows that \( \widehat \iota \) is surjective.
Observe that every separating set in \( R_\cE \) is also separating in \( S^2_\cE \) and, therefore, \( \widehat \iota \) is injective.
We have established \( \widehat \iota \) is a continuous bijective map between compact Hausdorff spaces and hence a homeomorphism. 
This also allows us to conclude that \( \cE_{\partial}(R_\cE) = \cE(R_\cE) \).
\end{proof}

We need to build a hyperbolic metric on \( R_\cE \). To do so, we will use Fenchel--Nielsen coordinates.
Choose an injective function \[ \lambda \co J\times G \times \bn \to (0, \arcsinh(1)) \subset \br. \]
Fix a pants decomposition \( \mathcal Q \) on \( R_\cE \) (note that every component of \( \partial R_\cE \) is necessarily in \( \mathcal Q \)).
Let \( V \) be any totally geodesic hyperbolic surface homeomorphic to \( R_\cE \) such that (1) the length of \( \partial(j,h,m) \) is \( \lambda(j,h,m) \), (2) the length of non-peripheral curves in \( \mathcal Q \) have pairwise-distinct lengths in the interval \( (\arcsinh(1), 2\arcsinh(1)) \), and (3) such that no isometry of a pair of pants in the complement of \( \mathcal Q \) extends globally to \( V \) (this is readily accomplished by deforming the twist parameters).

We claim that \( V \) admits no isometries: observe that, by the collar lemma, every closed geodesic on \( V \) and not in \( \mathcal Q \) must have length at least \( 2\arcsinh(1) \); in particular, any isometry must setwise fix each  geodesic in \( \mathcal Q \) and hence induces an isometry on each individual pair of pants in the complement of \( \mathcal Q \), which, by assumption, cannot be extended to \( V \).
Therefore, \( V \) has no isometries.

Now let us turn to the edge surfaces.
Let \( E(j,h,2m) \) be a hyperbolic surface homeomorphic to a torus with two boundary components obtained by gluing together two pairs of pants, one with boundary lengths \( \lambda(j,h,2m), \arcsinh(1), \) and \( \arcsinh(1) \), and the other with lengths \( \lambda(j,h,2m-1), \arcsinh(1), \) and \( \arcsinh(1) \).
By the collar lemma,  \( E(j,h,2m) \) has exactly two closed geodesics of length less than \( 2\arcsinh(1) \) in its interior, namely the interior cuffs of the given pants decomposition. 

We note that the difference in the lengths of the boundary curves of the edge surfaces is what allows us to encode the direction of the edges in the Cayley graph for $G$ into the geometry of the surface \( X_S^G \) that we will now construct.
For all \( g \in G \), let \( V_g \) be a copy of \( V \).
Also, for all \( g, h \in G \) and for each \( (j,m) \in J \times \bn \), take a copy of \( E(j,h,2m) \) and identify the boundary component of length \( \lambda(j,h,2m) \) to \( V_g \) along \( \partial (j,h,2m) \)  and the other boundary component to \( V_{gh} \) along \( \partial(j,h,2m-1) \) via orientation-reversing isometries.
We define \( X \) to be the resulting surface and, by the definition of the gluings, every point has an open neighborhood isometric to an open subset of \( \bh \); hence, \( X \) is a hyperbolic surface. 
Let \( \mathcal P \) denote the pants decomposition of \( X \) composed of all closed geodesics of length strictly less than \( 2\arcsinh(1) \). 
Lastly,  we modify the twist parameters of \( X \) associated to \(\mathcal P\) in such a way that there is an action of \( G \) on the resulting hyperbolic surface \( X_S^G \) by isometries satisfying \( gV_h = V_{gh} \), for each \(g,h \in G\). 
Therefore, by construction, we can realize \( G \) as a subgroup of \( \isom(X_S^G) \). 

We will now proceed through a sequence of lemmas that will establish the desired geometrical properties of \( X_S^G \) and the topology of \( X_S^G \).

\begin{Lem}
\label{lem:geod-complete}
\( X_S^G \) is a complete hyperbolic surface.
\end{Lem}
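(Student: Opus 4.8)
The plan is to apply Proposition~\ref{prop:complete}: I would exhibit a topological pants decomposition of $X_S^G$ all of whose cuffs have length bounded by a fixed constant, and then check that $X_S^G$ has no planar ends.

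For the pants decomposition, the decomposition $\mathcal Q$ of each vertex surface $V_g$ together with the two-cuff pants decompositions of the edge surfaces $E(j,h,2m)$ agree along the gluing curves $\partial(j,h,m)$, and so assemble into a collection $\mathcal P_0$ of pairwise-disjoint simple closed geodesics of $X_S^G$ whose complementary pieces are precisely the pants of the $V_g$'s and of the edge surfaces; hence $\mathcal P_0$ is a topological pants decomposition. Each of its cuffs is either a gluing curve, of length $\lambda(j,h,m)<\arcsinh(1)$; a non-peripheral cuff of some copy of $\mathcal Q$, of length in $(\arcsinh(1),2\arcsinh(1))$; or an interior cuff of an edge surface, of length $\arcsinh(1)$; in every case the length is less than $2\arcsinh(1)$. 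The final adjustment of the twist parameters along $\mathcal P$ used to build $X_S^G$ does not change these lengths, and by Corollary~\ref{cor:sinh} (together with the fact that a hyperbolic pair of pants contains no closed geodesic in its interior) one sees that $\mathcal P_0$ coincides with the collection $\mathcal P$ of all closed geodesics of length less than $2\arcsinh(1)$; this also justifies, a posteriori, that $\mathcal P$ is a pants decomposition as asserted in the construction.

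To see that $X_S^G$ has no planar ends, note that the complementary pants are compact and glued in a locally finite pattern, so the ends of $X_S^G$ correspond to the ends of the adjacency graph $\Gamma$ of these pants. This graph is built from the Cayley graph $C_G$ by replacing each vertex by a copy of the (fixed) adjacency graph of $(R_\cE,\mathcal Q)$ and each edge by the adjacency graph of an edge surface. An end of $\Gamma$ is therefore carried either by an end $e$ of some $V_g\cong R_\cE$, or by an end that eventually leaves every copy of the adjacency graph of $R_\cE$. In the first case, $e\in\cE_\partial(R_\cE)=\cE(R_\cE)$ by Lemma~\ref{lem:end-space}, so infinitely many gluing curves $\partial(j,h,m)$ accumulate at $e$, and each of them carries a glued-on edge surface of genus one, so the end has infinite genus in $X_S^G$. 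In the second case, leaving every copy of the adjacency graph of $R_\cE$ forces one to cross infinitely many edge surfaces, again each of genus one, so the end again has infinite genus. Thus $X_S^G$ has no planar ends, and Proposition~\ref{prop:complete} shows that $X_S^G$ is complete (indeed, equal to its convex core).

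I expect the only real obstacle to be the last step: identifying $\cE(X_S^G)$ with the ends of the graph $\Gamma$ and then checking, in both cases, that infinitely many genus-carrying edge surfaces accumulate at every end. The assembly of the pants decomposition and the cuff-length bound are immediate from the explicit construction.
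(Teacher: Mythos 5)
Your proposal is correct and takes essentially the same route as the paper: the paper's proof simply notes that, by construction, \( X_S^G \) has no planar ends and admits a pants decomposition with cuff lengths bounded above, and then invokes Proposition~\ref{prop:complete}. Your assembly of the pants decomposition, the cuff-length bound, and the infinite-genus argument for the ends are just a detailed unpacking of the paper's ``by construction'' claims.
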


\begin{proof}
We have already observed that \( X_S^G \) is a hyperbolic surface.
By construction, \( X_S^G \) has no planar ends and admits a pants decomposition all of whose curves have length bounded from above; hence, completeness follows from Proposition \ref{prop:complete}.
\end{proof}

We remark that Lemma \ref{lem:geod-complete} has the stronger conclusion that \( X_S^G \) is equal to its convex core. 

\begin{Lem}
\label{lem:isom}
The isometry group of \( X_S^G \) is isomorphic  to \( G \). 
\end{Lem}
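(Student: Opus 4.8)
The plan is to show that the inclusion $G \hookrightarrow \isom(X_S^G)$ constructed above is surjective, i.e. that every isometry of $X_S^G$ comes from the $G$-action. The key structural feature to exploit is the pants decomposition $\mathcal P$ consisting of \emph{all} closed geodesics of length strictly less than $2\arcsinh(1)$: by Corollary~\ref{cor:sinh} and the collar lemma, this is an intrinsically defined, canonical object, so any isometry $f$ of $X_S^G$ must permute the curves of $\mathcal P$, hence permute the components of the complement. Those components are precisely the pieces $V_g$ (minus their peripheral collars) together with the interiors of the edge surfaces $E(j,h,2m)$; but the $V_g$ are the only components homeomorphic to copies of $V$ (equivalently, the only ones that are not one-holed tori, or one can distinguish them by the combinatorics of adjacent short geodesics), so $f$ must send vertex pieces to vertex pieces and edge pieces to edge pieces. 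Thus $f$ induces an automorphism $\bar f$ of the adjacency graph of this decomposition, which is canonically identified with the Cayley graph $C_G$.

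First I would make precise the claim that $f$ must preserve $\mathcal P$: since lengths are isometry invariants, $f$ maps the set of geodesics shorter than $2\arcsinh(1)$ bijectively to itself, and these are pairwise disjoint or equal by Corollary~\ref{cor:sinh}, so $f$ acts on the dual graph. Second, I would argue $f$ respects the bipartition into vertex pieces and edge pieces: each $V_g$ contains infinitely many curves of $\mathcal P$ in its interior (the non-peripheral curves of $\mathcal Q$, which have lengths in $(\arcsinh(1),2\arcsinh(1))$), whereas each $E(j,h,2m)$ contains exactly one such interior curve; this numerical invariant is preserved by $f$. Third, I would recover the direction and labels on $C_G$ from the geometry: the two boundary curves of a given edge surface $E(j,h,2m)$ have distinct lengths $\lambda(j,h,2m)$ and $\lambda(j,h,2m-1)$, and since $\lambda$ is injective, the label $h$ and the orientation of the edge (which endpoint is ``$g$'' and which is ``$gh$'') are detected by which boundary-length appears on which side. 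Hence $\bar f$ is a label- and direction-preserving automorphism of $C_G$, so by the identification of such automorphisms with $G$ (recalled from Allcock's argument in the construction), $\bar f$ is realized by left-multiplication by some $g_0 \in G$.

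Next I would upgrade this combinatorial statement to a global one. Composing $f$ with the isometry $g_0^{-1} \in G \subset \isom(X_S^G)$, we get an isometry $f' = g_0^{-1} f$ that fixes every piece of the decomposition setwise; in particular it fixes each $V_g$ setwise. But $V \cong V_g$ was constructed (conditions (1)--(3) in the definition of $V$, using the distinct cuff lengths and generic twist parameters) to admit \emph{no} nontrivial isometries, and more precisely no nontrivial self-isometry of a pair of pants in its pants decomposition extends; so $f'|_{V_g}$ is the identity. Since $f'$ is an isometry agreeing with the identity on the open set $V_{g}$ (indeed on a unit tangent vector there), and $X_S^G$ is connected, $f'$ is the identity on all of $X_S^G$. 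Therefore $f = g_0$, proving $\isom(X_S^G) = G$.

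The step I expect to be the main obstacle is the second one: verifying cleanly that an isometry cannot mix vertex pieces with edge pieces, and more generally pinning down exactly which curves of $\mathcal P$ lie ``inside'' a vertex piece versus serving as a gluing curve. One must be careful that the peripheral curves $\partial(j,h,m)$ of the $V_g$ (which have length $\lambda(\cdot) \in (0,\arcsinh(1))$) are also in $\mathcal P$ and are shared between a vertex piece and an edge piece, so the dual graph is slightly more subtle than ``vertices $=$ $V_g$'': each edge surface $E(j,h,2m)$ contributes a path of length two (two gluing curves and one interior curve) between $V_g$ and $V_{gh}$. Once one records this refined combinatorial picture correctly — distinguishing the three types of curves by their length ranges, $(0,\arcsinh(1))$ for peripheral/gluing curves versus $(\arcsinh(1),2\arcsinh(1))$ for interior curves, and noting $\lambda$ is injective so each gluing curve length is distinct — the rest follows, but getting this bookkeeping exactly right (and confirming no ``accidental'' isometry permutes pieces in an unexpected way, e.g. swapping the two cuffs inside an edge surface, which is blocked by the distinct $\lambda$-values) is where the care is needed.
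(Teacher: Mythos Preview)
Your approach is correct and in spirit matches the paper's, but you take a detour that the paper avoids. Two points of comparison.

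First, the paper does not cut along all of $\mathcal P$. It singles out the subset $\mathcal V\subset\mathcal P$ consisting of curves of length strictly less than $\arcsinh(1)$---these are exactly the peripheral curves $\partial(j,h,m)$ of the $V_g$. Any isometry preserves $\mathcal V$ (by length), and the complementary components of $\bigcup_{\mu\in\mathcal V}\mu$ are \emph{precisely} the $V_g$'s and the edge surfaces. This cleanly dissolves the bookkeeping worry you raise in your last paragraph: you never need to sort out how pairs of pants assemble into pieces, because cutting along $\mathcal V$ already hands you the pieces. (Relatedly: the components of the complement of the full $\mathcal P$ are individual pairs of pants, not the $V_g$'s, and each edge surface has \emph{two} interior cuffs of length $\arcsinh(1)$, not one.)

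Second, the paper skips the Cayley-graph reconstruction entirely. Once one knows $\tau(V_g)=V_{g_\tau}$ for some $g_\tau$ (immediate, since the $V_g$'s are the only complementary components of infinite type), set $h=g\cdot g_\tau^{-1}\in G$; then $h\circ\tau$ carries $V_g$ to itself, hence is the identity there by the rigidity of $V$, hence is globally the identity. You only need that $G$ acts transitively on the set $\{V_g\}$, not the full statement that the induced map on $C_G$ is label- and direction-preserving. Your route through $\Aut(C_G)\cong G$ does work (injectivity of $\lambda$ forces $\tau$ to match boundary labels, so the induced permutation $\sigma$ of $G$ satisfies $\sigma(gh)=\sigma(g)h$), but it is more than is required.
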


\begin{proof}
As already noted, by construction, we can view \( G \) as a subgroup of \( \isom(X_S^G) \), so it remains to show that every isometry can be represented by an element of $G$.

Let \( \mathcal V \subset \mathcal P \) denote the elements that correspond to the boundary components of \( V_g \) for each \( g \in G \).
Let \( \gamma \in \mathcal P \) and let \( \tau \) be an isometry of \( X_S^G \).  
Recall that every geodesic in \( \mathcal P \)  has length strictly less than \( 2\arcsinh(1) \).
If \( \tau(\gamma) \) is not in \( \mathcal P \), then it must intersect an element of \( \mathcal P \) and hence \( \ell(\gamma) > 2\arcsinh(1) \) by the collar lemma, a contradiction.

Moreover, as the elements of \( \mathcal V \) are the only elements of \( \mathcal P \) of length strictly less than \( \arcsinh(1) \), we see that \( \tau \) preserves \( \mathcal V \). 
In particular, \( \tau \) must permute the closures of the components of \( X_S^G \ssm \bigcup_{\mu \in \mathcal V} \mu \). Note that these components are precisely the vertex and edge surfaces from the construction of \( X_S^G\).

Now fix \( g \in G \).
Since $\tau$ is an isometry, it sends one of these components to another of the same topological type. 
Thus, there exists \( g_\tau \) such that \( \tau(V_g) = V_{g_\tau} \).
Let \( h = g\cdot g_{\tau}^{-1} \in G \), so then \( h\circ \tau (V_g) = V_g \). 
Restricting \( h \circ \tau \) to \( V_g \), we obtain an isometry of \( V_g \); however, by construction, \( V_g \) has no non-trivial isometries.
This implies that \( h \circ \tau \) restricts to the identity on \( V_g \); in particular, there exists an open subset of \( X_S^G \) on which \( h\circ\tau \) is the identity and hence \( h\circ \tau \) is the identity on all of \( X_S^G \).
We conclude that \ \( \tau = h^{-1} \) and, thus, \( G = \isom(X_S^G) \).
\end{proof}

Note that by construction, the action of \( G \) on \( X_S^G \) is by orientation-preserving isometries, so, in fact, \( \isom(X_S^G) = \isom^+(X_S^G) \). 

\begin{Lem}
\label{lem:covering}
The quotient manifold \( G \backslash X_S^G \) is homeomorphic to \( S \). 
\end{Lem}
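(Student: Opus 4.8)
The plan is to identify the quotient $G\backslash X_S^G$ topologically by tracking what the construction does, and then invoke the classification of orientable surfaces (with compact boundary). First I would observe that the $G$-action permutes the vertex surfaces $V_g$ freely and transitively (by $gV_h = V_{gh}$), and likewise permutes the edge surfaces: the copy of $E(j,h,2m)$ glued between $V_g$ and $V_{gh}$ is carried by $g'$ to the copy glued between $V_{g'g}$ and $V_{g'gh}$. Hence the quotient $G\backslash X_S^G$ is obtained from a single vertex surface $V$ (a copy of $V_{id}$, which is homeomorphic to $R_\cE$) by attaching, for each $(j,h,m)\in J\times G\times\bn$, one copy of the edge surface $E(j,h,2m)$ with \emph{both} of its boundary components glued back to $V$ — one along $\partial(j,h,2m)$ and the other along $\partial(j,h,2m-1)$. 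In other words, the quotient is the closed surface obtained from $R_\cE$ by gluing a handle-with-extra-genus (a once-punctured-twice... rather, a genus-one surface with two boundary circles) onto each pair of boundary curves of $R_\cE$ indexed as above; every boundary component of $R_\cE$ gets capped off in this process, so $G\backslash X_S^G$ is a 2-manifold (no boundary).

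Next I would compute the two topological invariants. Since attaching a genus-one two-holed surface along two boundary curves of a surface raises the genus by at least one, and $R_\cE$ already has infinite genus (it is built from the infinite-genus $S$), the quotient has infinite genus. For the end space: I claim the gluing does not change the end space. By Lemma~\ref{lem:end-space}, $\cE(R_\cE)\cong\cE$ and $\cE_\partial(R_\cE)=\cE(R_\cE)$, meaning every end of $R_\cE$ is a limit of boundary components of $R_\cE$. The edge surfaces $E(j,h,2m)$ are compact, so attaching them affects only a compact part of each end's neighborhood basis; concretely, for any exiting sequence $\{U_n\}$ in $R_\cE$, replacing each $U_n$ by the union of $U_n$ with the (finitely many, then eventually all but finitely many, then...) — more carefully, one checks that the inclusion $R_\cE\hookrightarrow G\backslash X_S^G$ restricted to a suitable exhaustion is proper and induces a bijection on ends, using Proposition~\ref{prop:end-map} and the fact that each compact exhausting set of $G\backslash X_S^G$ meets only finitely many edge surfaces (the $\lambda$-lengths are bounded, so the collar lemma gives definite-width collars, forcing the edge surfaces to be metrically spread out — alternatively this is clear from the topological picture). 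Thus $\cE(G\backslash X_S^G)\cong\cE(R_\cE)\cong\cE$. Finally, since $R_\cE$ has infinite genus and the end space of $G\backslash X_S^G$ is all non-planar (it equals $\cE_{np}$, because $\cE_{np}(S)=\cE(S)$ as $S$ has no planar ends, and the homeomorphism $\cE(G\backslash X_S^G)\to\cE(S)$ respects the non-planar locus — every neighborhood of every end contains infinite genus), we get that $\cE_{np}(G\backslash X_S^G)\cong\cE_{np}(S)$ compatibly with $\cE(G\backslash X_S^G)\cong\cE(S)$. By Kerékjártó's classification theorem (stated in the Preliminaries), $G\backslash X_S^G$ is homeomorphic to $S$.

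The main obstacle I anticipate is making the end-space computation rigorous: one must verify that capping off all the boundary circles of $R_\cE$ — there are infinitely many, accumulating onto every end — genuinely produces a surface homeomorphic to $S$ rather than, say, creating or destroying ends or altering which ends are planar. The cleanest route is to set this up via Proposition~\ref{prop:end-map}: exhibit a proper map (e.g.\ a deformation retraction of $G\backslash X_S^G$ onto $R_\cE$, or the inclusion $R_\cE \hookrightarrow G\backslash X_S^G$, noting properness follows because the complement of $R_\cE$ in $G\backslash X_S^G$ is a disjoint union of compact pieces accumulating only on ends already present) and check the induced end map is a bijection sending non-planar ends to non-planar ends. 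I would also need the bookkeeping that the attaching genuinely uses \emph{each} boundary component of $R_\cE$ exactly once, which is immediate from the indexing of $\partial R_\cE$ by $J\times G\times\bn$ and the construction of the edge surfaces $E(j,h,2m)$ carrying curves of lengths $\lambda(j,h,2m)$ and $\lambda(j,h,2m-1)$.
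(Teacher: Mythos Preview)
Your approach is essentially the same as the paper's: identify the quotient as $R_\cE$ with a genus-one two-holed edge surface glued along each pair $\partial(j,h,2m),\,\partial(j,h,2m-1)$, then invoke the classification of surfaces after checking the quotient is borderless, infinite-genus with no planar ends, and has end space homeomorphic to $\cE$. The paper's proof is terser but follows exactly this outline.

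One correction: $R_\cE$ is \emph{planar}, not infinite-genus --- it is by construction a subsurface of the punctured sphere $S^2_\cE$ (the $c_i$ and the deleted discs introduce no genus). The infinite genus of $G\backslash X_S^G$, and the non-planarity of each of its ends, come entirely from the infinitely many genus-one edge surfaces attached, which accumulate on every end by Lemma~\ref{lem:end-space} (since $\cE_\partial(R_\cE)=\cE(R_\cE)$). Your argument survives this correction because you already cite the edge attachments for both conclusions, but the parenthetical ``$R_\cE$ already has infinite genus (it is built from the infinite-genus $S$)'' is false and should be deleted.
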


\begin{proof}
Let \( E \) be a genus-1 surface with two boundary components.
The surface \( G\backslash X_S^G \) is obtained from the surface \( R_\cE \) by taking a copy of \( E \) and attaching one boundary component to \( \partial(j,h,2m) \) in \(R_\cE\) and the other to \( \partial(j,h, 2m-1) \) in \(R_\cE\) for each \( (j,h,m) \in J\times G \times \bn \). 
It follows that the topological end spaces of \( R_\cE \) and \( G \backslash X_S^G \) are homeomorphic.
Now, recall that \( R_\cE \) and \( S \) have homeomorphic end spaces and observe that \( G \backslash X_S^G \) is borderless and has no planar ends.
The result follows from the classification of surfaces. 
\end{proof}

\begin{Lem}
\label{lem:action-trivial}
If \( G \) is finite, then the action of \( \isom(X_S^G) \) on \( \cE(X_S^G) \) is trivial.
\end{Lem}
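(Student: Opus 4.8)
The plan is to work with the covering $p\colon X\to \overline X:=G\backslash X$, where $X=X_S^G$. By Lemma~\ref{lem:isom}, $\isom(X)=G$, and $G$ acts freely (since $gV_h=V_{gh}\neq V_h$ when $g\neq\mathrm{id}$) and, by Lemma~\ref{lem:prop-discont}, properly discontinuously on $X$; hence $p$ is a regular covering with deck group $G$, and since $G$ is finite it has finitely many sheets and is proper. By Lemma~\ref{lem:covering}, $\overline X$ is homeomorphic to $S$. By Proposition~\ref{prop:end-map}, $p$ induces a continuous map $\widehat p\colon\cE(X)\to\cE(\overline X)$, and because $p\circ g=p$ for every $g\in G$, functoriality of the end-space construction gives $\widehat p(g\cdot\epsilon)=\widehat p(\epsilon)$ for all $\epsilon\in\cE(X)$. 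Therefore it suffices to show $\widehat p$ is injective: then $\widehat p(g\cdot\epsilon)=\widehat p(\epsilon)$ forces $g\cdot\epsilon=\epsilon$, i.e.\ the $G$-action on $\cE(X)$ is trivial.

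To prove injectivity I would exploit the explicit geometry of $p$ coming from the construction and the proof of Lemma~\ref{lem:covering}. The surface $\overline X\cong S$ is obtained from $R_\cE$ by attaching, for each $(j,h,m)\in J\times G\times\bn$ with $h\neq\mathrm{id}$, a genus-$1$ handle along the pair of boundary circles $\partial(j,h,2m)$, $\partial(j,h,2m-1)$; call it a \emph{handle of type $h$}. Over $R_\cE$ the covering is trivial, namely $\bigsqcup_{g\in G}V_g\to R_\cE$, while a handle of type $h$ lifts to $|G|$ edge surfaces, the one ``at $g$'' joining $V_g$ to $V_{gh}$. Two features of the construction are crucial. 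First, under $\vp_j$ the circles $\partial(j,h,k)$ converge to a single puncture $e_j\in\cE$ of $R_\cE$ as $k\to\infty$; hence, as $m\to\infty$, the edge surface lifting the $m$-th handle of type $h$ over $S_j$ joins a neighbourhood of the end $e_j$ in the sheet $V_g$ to a neighbourhood of the \emph{same} end $e_j$ in the sheet $V_{gh}$. Second, a loop in $\overline X$ crossing a handle of type $h$ once and closing up through $R_\cE$ has monodromy equal to right translation by $h$ on the fiber $G$ of $p$; so, over any region containing handles of every type $h\in G$, the monodromy group is all of $G$ acting by right translation, hence transitive on the fiber.

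The remaining geometric input is: near every end $\bar e$ of $R_\cE\cong\cE(S)$ and for every $h\in G\smallsetminus\{\mathrm{id}\}$, there are handles of type $h$ of arbitrarily small diameter. This holds because $\{e_j\}_{j\in J}$ is dense in $\cE$ and, for each $j$, the handles of type $h$ built on the circles $\partial(j,h,k)$ accumulate onto $e_j$. Given this, for any open neighbourhood $W$ of $\bar e$ in $R_\cE$ with compact frontier consisting of interior curves, set $N_W:=W\cup(\text{all handles both of whose attaching circles lie in }W)$; this is an open neighbourhood of $\bar e$ in $\overline X$ with $N_W\cap R_\cE=W$ and $\widehat{N_W}=\widehat W$, containing handles of every type near each of its ends, and as $W$ shrinks these form a cofinal family. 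For such $N_W$ we have $p^{-1}(N_W)=\bigsqcup_g W_g$ together with lifted edge surfaces; by the first crucial feature the edge surfaces only identify ends of the $W_g$ that lie over the \emph{same} end of $W$, and by the second (using handles of every type) they identify \emph{all} $|G|$ ends over a given end of $W$ into one. Hence $\widehat p$ restricts to a bijection $\widehat{p^{-1}(N_W)}\to\widehat{N_W}$. Since $\cE(S)$ is metrizable, $\bar e$ has a decreasing neighbourhood basis of the form $\widehat{N_{W_1}}\supseteq\widehat{N_{W_2}}\supseteq\cdots$ with intersection $\{\bar e\}$, and then $\widehat p^{-1}(\bar e)=\bigcap_k\widehat{p^{-1}(N_{W_k})}$ is a single point. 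As $\bar e$ was arbitrary, $\widehat p$ is injective, which completes the proof.

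The step I expect to be the main obstacle is the bookkeeping in the third paragraph: verifying that the lifted edge surfaces produce exactly the collapse of each fiber $G$ (over a single end of $W$) to a point, and nothing more. The delicate case is an end $\bar e$ of $R_\cE$ that is merely a limit of the punctures $e_j$ rather than equal to one of them: then no handle sits ``at'' $\bar e$, and one must argue that the handles clustering near the nearby $e_j$'s — which still lie inside every neighbourhood of $\bar e$ — suffice to merge the $|G|$ preimages of $\bar e$ into a single end. Once this end-splitting analysis of $p^{-1}(N_W)\to N_W$ is pinned down, the rest is the routine translation between components and monodromy of the pulled-back cover over small neighbourhoods and the fibers of $\widehat p$.
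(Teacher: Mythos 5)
Your proposal is correct, and it reaches the conclusion by an inverted logical route relative to the paper. The paper proves the lemma directly: it builds a $G$-invariant compact exhaustion $\{K_n\}$ of $X_S^G$ and shows each unbounded complementary component $U$ satisfies $gU=U$ by producing a path from $U$ to $gU$ through an edge surface far from $K_n$; only afterwards, in Lemma~\ref{lem:surface}, does it use the triviality of the action to deduce that the induced map $\cE(X_S^G)\to\cE(S)$ is injective. You instead prove injectivity of $\widehat p$ first and read off triviality from $\widehat p(g\cdot\epsilon)=\widehat p(\epsilon)$; as a byproduct your argument subsumes the end-space step of Lemma~\ref{lem:surface}. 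The geometric mechanism is identical in both proofs---arbitrarily deep in every end there are edge surfaces of every type $h$ joining the sheet at $g$ to the sheet at $gh$, so complements of compact sets (respectively, preimages of small connected end-neighborhoods) stay connected across sheets---but your packaging through the quotient covering and Proposition~\ref{prop:end-map} avoids having to verify connectedness and $G$-invariance of an exhaustion.

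The ``delicate case'' you flag does close exactly as you expect, and the one-line reason is the density of the punctures: the set $\{e_j\}_{j\in J}$ is dense in $\cE$ (this is observed in the construction of $R_\cE$), so any connected neighborhood $W'$ of an arbitrary end $\bar e$ with compact frontier contains some $e_{j'}$ in $\widehat{W'}$, and hence all but finitely many handles of every type $h$ built over $S_{j'}$ lie entirely in $N_{W'}$. Consequently $p^{-1}(N_{W'})$ is connected (the sheets over $W'$ are pairwise joined by type-$h$ edge surfaces for every $h$, realizing the complete graph on $G$), and since such $N_{W'}$ can be chosen inside the complement of any prescribed compact set, all $|G|$ preimage ends of $\bar e$ coincide. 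Note that for the lemma you only need this collapsing statement, not the full bijectivity of $\widehat{p^{-1}(N_W)}\to\widehat{N_W}$ nor the monodromy bookkeeping: a single connected $p^{-1}(N_{W'})$ avoiding each compact set already forces $\widehat p^{-1}(\bar e)$ to be a singleton.
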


\begin{proof}
By Lemma \ref{lem:isom}, we identify \( G \) with \( \isom(X_S^G) \).
Let \( V_h \) be a vertex surface contained in \( X_S^G \).
Fix an exhaustion \( \{ K_n' \}_{n\in\bn} \) of \( V_h \), where \( K_n' \) is a compact, connected, subsurface of \( V_h \) such that each component of \( V_h \ssm K_n' \) is unbounded and \( K_n' \) is contained in the interior of \( K_{n+1}' \) for each \( n \in \bn \). 

Now let \( K_n''  = \bigcup_{g\in G} gK_n' \) and let \( K_n \) be the union of \( K_n'' \) with the bounded components of \( X_S^G \ssm K_n'' \). 
The number of bounded components of \( X_S^G \ssm K_n'' \) is bounded from above by the number of boundary components of \( V_h \) in \( K_n' \) multiplied by the order of \( G \). 
Observe that for large \( n \), the finiteness of \( G \) and the compactness of the edge surfaces guarantee that \( K_n \) is connected.
By forgetting a finite number of sets, we can therefore assume that \( K_n \) is connected for all \( n \).
This implies that \( \{ K_n \}_{n\in\bn} \) is a \( G \)-invariant exhaustion of \( X_S^G \) by compact, connected, subsurfaces such that each component of \( X_S^G \ssm K_n \) is unbounded and \( K_n \) is contained in the interior of \( K_{n+1} \) for each \( n \in \bn \). 

Let \( U \) be a component of the complement of \( K_n \) for some \( n \in \bn \) and let \( g \in G \).
We claim \( g\cdot U = U \). 
To see this, first observe that \( g\cdot U \) is also a component of the complement of \( K_n \) since \( g\cdot K_n = K_n \).
Therefore, we need only show that there exists a path in \( X_S^G \ssm K_n \) connecting \( U \) and \( g\cdot U \). 
Fix \( s \in G \). Then, by construction,  \( U \cap V_s \) is unbounded; in particular, there exists some \( (j, t) \in J\times G \)  and some \( N \in \bn \) such that the boundary components of \( V_s \) labelled \( \partial(j,t,m) \) with \( m > N \) are contained in \( U \). 
Choose \( M \in \bn \) such that \( 2M-1 > N \). For every \( m > M \), the boundary component \( \partial(j,t,2m) \) of \( V_s \) is contained in \( U \), the boundary component \( \partial(j,t,2m-1) \) of \( V_{gs} \) is contained in \( gU \), and these components bound a copy of \( E(j,h,2m) \). 
By construction, this copy of \( E(j,h,2m) \) is not a bounded component of \( X_S^G \ssm K_n'' \) and is, therefore, in the complement of \( K_n \). Hence, there is a path from \( U \) to \( gU \) in the complement of \( K_n \) and, thus, \( gU = U \).

Given an end \( e \) of \( X_S^G \), we can identity \( e \) with a nested chain \( U_1 \supset U_2 \supset \cdots \), where \( U_n \) is a connected component of \( X_S^G \ssm K_n \). 
As we just saw, \( U_n \) is \( G \)-invariant for each \( n \in \bn \),  and hence each element of \( G \) fixes \( e \). 
\end{proof}

\begin{Lem}\label{lem:surface}
If \( G \) is finite, then \( X_S^G \) is homeomorphic to \( S \). 
\end{Lem}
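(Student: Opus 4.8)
The plan is to prove that \( X_S^G \) and \( S \) are homeomorphic by showing they have the same genus and homeomorphic end spaces, and then appealing to the classification of orientable 2-manifolds. First I would dispatch the genus: \( X_S^G \) is noncompact, so it has at least one end, and by construction it has no planar ends, so that end is non-planar; since a neighborhood of a non-planar end has infinite genus, \( X_S^G \) has infinite genus, as does \( S \). Consequently every end of either surface is non-planar, so the non-planarity clause in the classification theorem is vacuous and the problem reduces to producing a homeomorphism \( \cE(X_S^G)\to\cE(S) \). By Lemma~\ref{lem:covering} I may replace \( S \) by \( G\backslash X_S^G \), so it will suffice to show \( \cE(X_S^G)\cong\cE(G\backslash X_S^G) \).

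Next I would pass to end spaces through the quotient map \( p\co X_S^G\to G\backslash X_S^G \). As \( G \) is finite, \( p \) is proper, open, and surjective, so Proposition~\ref{prop:end-map} gives a continuous map \( \widehat p\co\cE(X_S^G)\to\cE(G\backslash X_S^G) \); since both end spaces are compact and Hausdorff, it remains to show that \( \widehat p \) is a bijection.

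The key is to recycle the proof of Lemma~\ref{lem:action-trivial}, which produces a \( G \)-invariant exhaustion \( \{K_n\}_{n\in\bn} \) of \( X_S^G \) by compact connected subsurfaces with \( K_n\subset\mathrm{int}(K_{n+1}) \), with each component of \( X_S^G\ssm K_n \) unbounded, and with the decisive property that \( g\cdot U=U \) for every \( g\in G \) and every component \( U \) of \( X_S^G\ssm K_n \). Setting \( L_n=p(K_n) \), the \( G \)-invariance of \( K_n \) gives \( p^{-1}(L_n)=K_n \) and \( p(X_S^G\ssm K_n)=(G\backslash X_S^G)\ssm L_n \), and I would check that \( \{L_n\} \) is an exhaustion of \( G\backslash X_S^G \) by compact connected subsurfaces, nested with interiors, with unbounded complementary components. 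The property \( g\cdot U=U \) forces \( p^{-1}(p(U))=U \), so \( p(U) \) is a single component of \( (G\backslash X_S^G)\ssm L_n \) and \( U\mapsto p(U) \) is a bijection between the components of \( X_S^G\ssm K_n \) and those of \( (G\backslash X_S^G)\ssm L_n \). These bijections are compatible with the bonding maps induced by the inclusions \( K_n\subset K_{n+1} \), so after identifying \( \cE(X_S^G) \) and \( \cE(G\backslash X_S^G) \) with the respective inverse limits of their sets of complementary components, they assemble into a bijection of inverse limits; one checks this bijection coincides with \( \widehat p \). Hence \( \widehat p \) is a bijection and the lemma follows.

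I do not expect a real obstacle here: the only slightly delicate point, the identity \( p^{-1}(p(U))=U \) together with the finite-stage bijections it produces, is exactly the content that the already-established Lemma~\ref{lem:action-trivial} was engineered to hand us. (A different, exhaustion-free route would be to prove the general fact that for a finite-sheeted regular covering the fibers of the induced map on end spaces are precisely the deck-transformation orbits, and then invoke Lemma~\ref{lem:action-trivial} to see that these orbits are singletons; I expect the exhaustion argument to be marginally shorter given what is already available.)
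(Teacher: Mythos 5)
Your proposal is correct and follows essentially the same route as the paper: both reduce the problem, via the classification of surfaces and Lemma~\ref{lem:covering}, to showing that the finite covering \( X_S^G \to S \) induces a bijection (hence homeomorphism) of compact Hausdorff end spaces, with injectivity coming from the \( G \)-invariance of complementary components supplied by Lemma~\ref{lem:action-trivial}. The paper's injectivity step is just a terser version of your inverse-limit bookkeeping — distinct ends sit in disjoint \( G \)-invariant neighborhoods whose images under the open covering map remain disjoint — so no substantive difference.
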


\begin{proof}
First observe that \( X_S^G \) has no planar ends and is borderless.
Using Lemma~\ref{lem:covering}, the action of \( G \) on \( X^G_S \) yields a covering map \( \pi \co X_S^G \to S \).
As \( G \) is finite, \( \pi \) is proper  and hence, by Proposition \ref{prop:end-map}, induces a continuous map \( \widehat \pi \co \Ends(X_S^G) \to \cE \). 

The surjectivity of \( \pi \) guarantees that \( \widehat \pi \) is surjective.
The injectivity of \( \widehat \pi \) follows from Lemma~\ref{lem:action-trivial}: 
if \( e \) and \( e' \) are distinct ends of \( X_S^G \), then, by Lemma~\ref{lem:action-trivial}, they have \( G \)-invariant disjoint neighborhoods.
As \( \pi \) is an open map, the images of these neighborhoods give disjoint neighborhoods of \( \widehat \pi(e) \) and \( \widehat \pi(e') \). 
Since \( \widehat \pi \) is a continuous bijection between compact Hausdorff spaces, it is a homeomorphism; hence, by the classification of surfaces, \( X_S^G \) and \( S \) are homeomorphic. 
\end{proof}

Combining Lemma~\ref{lem:geod-complete}, Lemma~\ref{lem:isom}, and Lemma~\ref{lem:surface} yields:
\begin{Thm}
\label{thm:finite}
Let \( S \) be an orientable infinite-genus 2-manifold with no planar ends. 
If \( G \) is a finite group, then \( S \) admits a complete hyperbolic metric whose isometry group is isomorphic to \( G \). \qed
\end{Thm}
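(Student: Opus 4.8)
The plan is to assemble Theorem~\ref{thm:finite} as an immediate consequence of three facts already developed in the section: that $X_S^G$ is a complete hyperbolic surface (Lemma~\ref{lem:geod-complete}), that $\isom(X_S^G)\cong G$ (Lemma~\ref{lem:isom}), and that $X_S^G$ is homeomorphic to $S$ when $G$ is finite (Lemma~\ref{lem:surface}). So the ``proof'' is really the observation that the construction $X_S^G$ — carried out in full generality at the start of the section — does everything we need once $G$ is assumed finite. Since this is stated as a QED-only theorem in the excerpt, the real work is to make sure the construction has been set up so that all three lemmas apply simultaneously to the \emph{same} hyperbolic surface, and then to note that Lemma~\ref{lem:surface} is exactly where finiteness of $G$ enters.

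Were I to write the argument from scratch rather than cite, I would proceed as follows. First, build the ``vertex surface'' $V$: realize $\cE=\cE(S)$ as a closed subset of $S^2$, excise it, cut along a curve system representing a rational homology basis to produce planar pieces $S_j$, delete a $\bz$-indexed family of disks in each piece along columns indexed by $G$ via an injection $f\co G\to\bz$, reglue the handle neighborhoods $\nu_i$ to obtain $R_\cE$, and check (as in Lemma~\ref{lem:end-space}) that $\cE(R_\cE)\cong\cE$ with every end a boundary-end. Put a hyperbolic metric on $R_\cE$ using Fenchel--Nielsen coordinates, choosing cuff lengths via an injective $\lambda\co J\times G\times\bn\to(0,\arcsinh 1)$ on the $\partial(j,h,m)$ and distinct lengths in $(\arcsinh 1, 2\arcsinh 1)$ on the interior curves of a pants decomposition $\mathcal Q$, and breaking all symmetry with the twist parameters so that $V$ has no nontrivial isometries. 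Then take one copy $V_g$ of $V$ for each $g\in G$ and, for each directed edge $(g,gh)$ of the labelled Cayley graph $C_G$, glue in an edge surface $E(j,h,2m)$ (a genus-one surface with two boundary cuffs of distinct lengths, so the gluing remembers edge direction) connecting $V_g$ to $V_{gh}$; finally adjust twists so that $G$ acts by orientation-preserving isometries with $gV_h=V_{gh}$. Call the result $X_S^G$.

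With $X_S^G$ in hand, the three lemmas give the theorem. Completeness: $X_S^G$ has no planar ends and admits a pants decomposition with cuff lengths bounded above by $2\arcsinh 1$, so Proposition~\ref{prop:complete} applies (this is Lemma~\ref{lem:geod-complete}). Isometry group: the subfamily $\mathcal V$ of cuffs bounding the $V_g$ is characterized metrically (lengths $<\arcsinh 1$), so any isometry permutes the $V_g$ and the edge surfaces; after precomposing with an element of $G$ we get an isometry fixing some $V_g$ setwise, hence — since $V$ is rigid — the identity on an open set, hence the identity, so $\isom(X_S^G)=G$ (Lemma~\ref{lem:isom}). Homeomorphism type: here finiteness is essential. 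When $|G|<\infty$ one first shows via a $G$-invariant exhaustion that $\isom(X_S^G)$ acts trivially on $\cE(X_S^G)$ (Lemma~\ref{lem:action-trivial}); this uses that $G$ is finite and the edge surfaces are compact, so each component of the complement of a $G$-invariant compact set is itself $G$-invariant. Then the covering $\pi\co X_S^G\to G\backslash X_S^G$ is proper (again finiteness of $G$), the induced map $\widehat\pi$ on end spaces is a continuous bijection between compact Hausdorff spaces hence a homeomorphism, and since $G\backslash X_S^G\simeq S$ (Lemma~\ref{lem:covering}) and both $X_S^G$ and $S$ are borderless with no planar ends, the classification of surfaces gives $X_S^G\simeq S$ (Lemma~\ref{lem:surface}).

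The only genuine obstacle in this whole argument is the one isolated in Lemma~\ref{lem:surface}/\ref{lem:action-trivial}: controlling the \emph{topology} — specifically the end space — of the glued-up surface $X_S^G$, as opposed to merely its isometry group. Allcock's original argument only needed the isometry group, so it could afford a wild topology; the novelty here is arranging the vertex and edge surfaces (and the deleted-disk columns, indexed so that the $S_j$-ends stay dense in $\cE$) so that passing to $X_S^G$ does not create or destroy ends. For finite $G$ this is exactly the statement that the deck group acts trivially on ends, which forces $X_S^G$ and $S=G\backslash X_S^G$ to have homeomorphic end spaces; since neither has planar ends or boundary, Ker\'ekj\'art\'o's classification closes the gap. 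Hence Theorem~\ref{thm:finite} follows by combining Lemmas~\ref{lem:geod-complete}, \ref{lem:isom}, and \ref{lem:surface}. \qed
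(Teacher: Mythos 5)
Your proposal is correct and matches the paper's proof exactly: Theorem~\ref{thm:finite} is obtained by combining Lemma~\ref{lem:geod-complete}, Lemma~\ref{lem:isom}, and Lemma~\ref{lem:surface}, with the finiteness of \( G \) entering precisely through Lemmas~\ref{lem:action-trivial} and \ref{lem:covering} as you describe. Your reconstruction of the construction of \( X_S^G \) and the role of each lemma is faithful to the paper.
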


Using Lemma~\ref{lem:covering} and Lemma~\ref{lem:surface}, we record a purely topological result. 

\begin{Prop}
\label{prop:finite-cover}
Let \( S \) be an orientable infinite-genus 2-manifold with no planar ends.
If  \( G \) is any finite group, then there exists a regular covering \( \pi \co S \to S \) such that the deck group associated to \( \pi \) is isomorphic to \( G \). \qed
\end{Prop}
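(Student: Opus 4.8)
The plan is to read the desired covering straight off the construction of \( X_S^G \) carried out in this section, using only Lemmas~\ref{lem:prop-discont}, \ref{lem:covering}, and~\ref{lem:surface}. Given the orientable infinite-genus 2-manifold \( S \) with no planar ends and the finite group \( G \), I would form the hyperbolic surface \( X_S^G \) together with its isometric action of \( G \) satisfying \( gV_h = V_{gh} \) for all \( g,h\in G \). By Lemma~\ref{lem:surface}, \( X_S^G \) is homeomorphic to \( S \); fix such a homeomorphism \( \Phi\co S \to X_S^G \).

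Next I would check that this \( G \)-action is free and properly discontinuous, so that the quotient map \( q\co X_S^G \to G\backslash X_S^G \) is a regular covering with deck group \( G \). Proper discontinuity is immediate: \( G \) is a subgroup of \( \isom(X_S^G) \), and \( \pi_1(X_S^G) \) is non-abelian since \( X_S^G \) has infinite genus, so Lemma~\ref{lem:prop-discont} applies. For freeness, observe that a nontrivial \( g\in G \) carries each vertex surface \( V_h \) to the \emph{distinct} vertex surface \( V_{gh} \), and carries the edge surface joining \( V_h \) to \( V_{hk} \) to the edge surface joining \( V_{gh} \) to \( V_{ghk} \), which is again distinct; hence \( g \) has no fixed point on \( X_S^G \). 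A free, properly discontinuous action of \( G \) on the manifold \( X_S^G \) thus yields the regular covering \( q \) with deck group \( G \).

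Finally, Lemma~\ref{lem:covering} gives a homeomorphism \( \Psi\co G\backslash X_S^G \to S \). Composing, \( \pi := \Psi\circ q\circ\Phi\co S\to S \) is a regular covering of \( S \) by \( S \) whose deck transformation group is (conjugate, hence isomorphic, to) the deck group of \( q \), namely \( G \). The only point requiring genuine verification beyond invoking the quoted lemmas is the freeness of the \( G \)-action---this is what ensures the quotient is an honest covering space rather than an orbifold quotient---and this follows directly from the labelling of vertex and edge surfaces in the construction; everything else is bookkeeping with the homeomorphisms supplied by Lemmas~\ref{lem:surface} and~\ref{lem:covering}. (One could instead give a self-contained topological construction of \( \pi \), but it would essentially reproduce the relevant portion of the \( X_S^G \) construction, so routing through \( X_S^G \) is cleanest.)
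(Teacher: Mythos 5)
Your proposal is correct and follows essentially the same route as the paper, which records this proposition as an immediate consequence of Lemmas~\ref{lem:covering} and~\ref{lem:surface} applied to the construction of \( X_S^G \). The only addition is your explicit verification that the \( G \)-action is free (via the permutation of vertex and edge surfaces), a point the paper leaves implicit but which your argument handles correctly.
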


We will discuss a strengthening of Proposition~\ref{prop:finite-cover} under an additional assumption on \(\cE\) in Section~\ref{sec:covers}. 

\subsection{The Loch Ness monster surface}
We now explore the case when \( G \) is denumerable\footnote{A set is denumerable if it is both countable and infinite.}.
Recall that the Loch Ness monster surface is the unique---up to homeomorphism---one-ended infinite-genus 2-manifold.
Let \( G, S, \) and \( X_S^G \) be as above.

\begin{Lem}
\label{lem:loch}
If \( G \) is a denumerable group, then \( X_S^G \) is homeomorphic to the Loch Ness monster surface (regardless of the topology of \( S \)).
\end{Lem}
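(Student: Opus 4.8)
The plan is to reduce everything to showing that $X_S^G$ is \emph{one-ended}. Indeed, $X_S^G$ is orientable (it is glued from orientable pieces along orientation-reversing maps) and has no planar ends (as noted in the proof of Lemma~\ref{lem:geod-complete}); if it is one-ended, then its unique end is non-planar, so $X_S^G$ has infinite genus, and by the classification of orientable 2-manifolds it is homeomorphic to the Loch Ness monster surface. Since $G$ is infinite, $X_S^G$ contains infinitely many pairwise disjoint vertex surfaces and is therefore non-compact, so it has at least one end. Thus it suffices to prove: for every compact $K \subseteq X_S^G$ there is a compact $K' \supseteq K$ with $X_S^G \ssm K'$ connected (a cofinal family of compacts with connected complement forces a single end).

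First I would fix a compact $K$ and enlarge it. The decomposition of $X_S^G$ into vertex surfaces $\{V_g\}_{g \in G}$ and edge surfaces is locally finite, so $K$ meets only finitely many pieces; by standard surface topology I can enlarge $K$ to a compact connected subsurface $K'$ such that: (i) $K'$ contains entirely every edge surface it meets; (ii) for each $g$, the set $L_g := K' \cap V_g$ is a compact subsurface of $V_g$ with $V_g \ssm L_g$ having no compact components; and (iii) $L_g \neq \varnothing$ only for $g$ in a finite set $F \subseteq G$. Then $X_S^G \ssm K'$ is assembled, along boundary circles, from three kinds of pieces: the vertex surfaces $V_g$ with $g \notin F$; the edge surfaces not contained in $K'$; and the (necessarily non-compact) components of $V_g \ssm L_g$ for $g \in F$.

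The next observation is that the Cayley graph $C_G$ used in the construction is the \emph{complete} graph on $G$: there is a directed edge between every ordered pair of distinct elements. Hence any two vertex surfaces $V_g, V_{g'}$ with $g \neq g'$ are joined by infinitely many edge surfaces (one for each $(j,m) \in J \times \bn$), only finitely many of which can lie inside $K'$; therefore all the vertex surfaces $V_g$ with $g \notin F$, together with every edge surface meeting one of them, lie in a single connected component $C_0$ of $X_S^G \ssm K'$. The crux is then to show that each component $Y$ of $V_g \ssm L_g$ (with $g \in F$) also lies in $C_0$. Since $Y$ is open with compact frontier, the set $\widehat Y$ of ends of $V_g$ captured by $Y$ is a nonempty open subset of $\cE(V_g)$, which by Lemma~\ref{lem:end-space} is homeomorphic to $\cE$ and satisfies $\cE_\partial(V_g) = \cE(V_g)$. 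The points $e_j$ ($j \in J$) arising as the ends of the planar pieces $S_j$ are dense in $\cE$, so $\widehat Y$ contains some $e_{j_0}$; thus $Y$ contains an open neighborhood of the end $e_{j_0}$ of $V_g$. In the model $S_{j_0} \cong Z^G_{\alpha_{j_0}}$, the boundary circles $\partial(j_0,h,m)$ of $V_g$ correspond to the disks deleted at the points $(f(h),m)$, and since $f\co G \to \bz$ is injective these points form a discrete set whose only accumulation point is $e_{j_0}$. Consequently $Y$ contains all but finitely many of the circles $\{\partial(j_0,h,m) : h \in G,\ m \in \bn\}$. Each such circle is glued to an edge surface---lying in $X_S^G \ssm K'$ by (i)---that runs to the vertex surface $V_{gh}$ or $V_{gh^{-1}}$; as $h$ ranges over $G$ these targets exhaust $G \ssm \{g\}$, so infinitely many of them lie outside the finite set $F$. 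Any one of these supplies a path in $X_S^G \ssm K'$ from $Y$ into $C_0$, so $Y \subseteq C_0$. Hence $X_S^G \ssm K'$ is connected, and the proof is complete.

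The step I expect to be the real obstacle is this last one: controlling the ``leftover'' components $Y$ of the vertex surfaces and showing each reconnects to the bulk of the complement. It is precisely here that the two special features of the construction must be combined---that $C_G$ is the complete Cayley graph (so every vertex surface is adjacent to every other) and that $\cE_\partial(R_\cE) = \cE(R_\cE)$ with the attaching circles near any given end spread across all group elements (so $Y$ is forced to ``see'' vertex surfaces outside the finite exceptional set $F$). Everything else---the reduction to one-endedness and the enlargement of $K$ to a well-positioned $K'$---is routine.
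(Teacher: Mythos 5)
Your proof is correct and follows essentially the same route as the paper: reduce to one-endedness, then use that the complete Cayley graph makes every pair of vertex surfaces adjacent via infinitely many edge surfaces, and that the boundary circles $\partial(j,h,m)$ accumulate at the (dense) ends $e_j$ so every unbounded leftover piece of a vertex surface reconnects to the bulk. The paper organizes this around an explicit $G$-translated compact exhaustion rather than enlarging an arbitrary compact set, but the key connectivity argument is the same.
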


\begin{proof}
By construction, \( X_S^G \) is borderless, has infinite genus, and does not have any planar ends.
It is only left to show that \( X_S^G \) is one ended.
In order to do this we must show that the complement of every compact set in \( X_S^G \) has a unique unbounded connected component.

Fix an enumeration \( \{ g_i \}_{i\in\bn} \) of the elements of \( G \).
Let \( \{K_n'\} \) be an exhaustion of \( V_{g_1} \) by compact, connected subsurfaces such that \( K_n' \) is in the interior of \( K_{n+1}' \) and each component of \( V_{g_1} \ssm K_n' \) is unbounded.
Now, obtain \( K_n'' \) from \( K_n' \) by taking the union of \( K_n' \) with each edge surface whose boundary lives in \( K_n' \).
Finally, define
\[
K_n = \bigcup_{k=1}^n g_k K_n'',
\]
then \( \{K_n\}_{n\in\bn} \) is an exhaustion of \( X_S^G \) by compact, connected subsurfaces such that \( K_n \) is in the interior of \( K_{n+1} \) and each component of \( X_S^G \ssm K_n \) is unbounded.

Let \( U_n \) denote the complement of \( K_n \); we must show that \( U_n \) is connected.
Let \( x,y \in U_n \).
Without loss of generality, we can assume that \( x \in V_{g_s} \) and \( y \in V_{g_t} \) for some \( s, t \in \bn \) (any point in an edge surface can be connected to a vertex surface since every component of \( U_n \) is unbounded and each edge surface is compact). 
 
Note that there exists \( N \in \bn \) such that \( V_{g_N} \subset U_n \).
Now observe that \( K_N \) contains only finitely many of the boundary components of \( V_{g_s} \) and \( V_{g_t} \), but there exists infinitely many edge surfaces connecting \( V_{g_N} \) to each of \( V_{g_s} \) and \( V_{g_t} \). 
Thus, there exist paths from \( x \) to \( V_{g_N} \) and \( y \) to \( V_{g_N} \) and, since \( V_{g_N} \) is connected and contained in \( U_n \), we can find a path in \( U_n \) from \( x \) to \( y \). Hence, \( U_n \) is connected.
\end{proof}

Combining Lemma~\ref{lem:geod-complete}, Lemma~\ref{lem:isom}, and Lemma~\ref{lem:loch} yields:

\begin{Thm}
\label{thm:lochness}
Let \( L \) be the Loch Ness monster surface and let \( G \) be a group.
There exists a complete hyperbolic metric on \( L \) whose isometry group is isomorphic to \( G \) if and only if \( G \) is countable.
\qed
\end{Thm}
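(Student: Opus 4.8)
The plan is to prove the two implications separately, extracting both of them from the machinery already developed in this section.

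For the ``only if'' direction, I would argue as follows. Suppose $L$ carries a complete hyperbolic metric with isometry group $G$. Since $L$ has infinite genus, $\pi_1(L)$ is a free group of countably infinite rank, and in particular is non-abelian. Lemma~\ref{lem:countable} then applies directly and shows that $G \cong \isom(L)$ is countable. (Alternatively one can run the proof of Lemma~\ref{lem:countable} in place: $L$ is a countable union of compact sets, the isometry action is properly discontinuous by Lemma~\ref{lem:prop-discont}, so $\isom(L)$ is a countable union of finite sets.)

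For the ``if'' direction, let $G$ be a countable group. If $G$ is trivial, I would invoke the standard fact---indicated in the text preceding Theorem~\ref{thm:finite}---that $L$ admits a hyperbolic metric with no nontrivial isometries. If $G$ is nontrivial, I would apply the construction of the present section with $S = L$ to produce the complete hyperbolic surface $X_L^G$: completeness is Lemma~\ref{lem:geod-complete}, and the identification $\isom(X_L^G) \cong G$ is Lemma~\ref{lem:isom}. What then remains is to verify that $X_L^G$ is homeomorphic to $L$ itself, i.e.\ that it is one-ended, of infinite genus, and has no planar ends. The last two properties are immediate from the construction; the one-ended claim splits into two cases according to the cardinality of $G$: when $G$ is finite this is Lemma~\ref{lem:surface} (applied to the one-ended, infinite-genus, planar-end-free surface $L$), and when $G$ is denumerable it is Lemma~\ref{lem:loch}, which yields one-endedness regardless of the topology of the input surface.

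I do not expect a serious obstacle here: the substance is entirely in the lemmas, and what is left is their assembly. The one point I would be careful about is that the ``if'' direction genuinely requires the case division finite/denumerable, since the topological conclusion $X_L^G \cong L$ comes from two different lemmas---Lemma~\ref{lem:surface} says a finite group's construction recovers the input surface, whereas Lemma~\ref{lem:loch} says an infinite group's construction always collapses the end space to a single point---and I would also confirm that the trivial-group case is handled by the earlier remark rather than by the $X_S^G$ construction, which was set up only for nontrivial $G$.
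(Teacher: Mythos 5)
Your proposal is correct and follows essentially the same route as the paper: the paper derives the theorem by combining Lemma~\ref{lem:geod-complete}, Lemma~\ref{lem:isom}, and Lemma~\ref{lem:loch} (with Lemma~\ref{lem:countable} supplying the ``only if'' direction), exactly as you do. Your explicit handling of the finite case via Lemma~\ref{lem:surface}/Theorem~\ref{thm:finite} and of the trivial group via the earlier remark is a reasonable (and slightly more careful) spelling-out of what the paper leaves implicit.
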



\section{Denumerable groups} \label{Denumerable}

Given a denumerable group \( G \) and an orientable infinite-genus 2-manifold \( S \), the goal of this section is to provide topological conditions on \( S \) guaranteeing the existence or non-existence of a complete hyperbolic metric on \( S \) whose isometry group is isomorphic to \( G \).
We break the section into three subsections covering three topological possibilities for \( S \).
We will later argue, in Section~\ref{sec:trichotomy}, that this covers all possibilities for infinite-genus 2-manifolds with no planar ends.

\subsection{Non-displaceable subsurfaces}

Following Mann and Rafi \cite{MannRafi}, given a surface \( S \) and a subset \( U \) of \( \Homeo(S) \), a subset \( \Sigma \) of \( S \) is \emph{non-displaceable with respect to $U$} if \( f(\Sigma) \cap \Sigma \neq \varnothing \) for every \( f \in U \).  We say \(\Sigma\) is \emph{non-displaceable} if it is non-displaceable with respect to all of \(\Homeo(S)\).

For examples, consider:

\begin{Lem} \label{lem:nd-countable}

Let \( S \) be an orientable surface with countable end space \( \cE \).
If the Cantor--Bendixson degree of \( \cE \) is at least 3, then \( S \) contains a compact non-displaceable subsurface.
\end{Lem}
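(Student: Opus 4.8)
The plan is to use the Cantor--Bendixson structure of $\cE$ to locate a canonical finite set of ``heaviest'' ends which must be preserved (as a set) by every homeomorphism, and then to build a compact subsurface $\Sigma$ separating these ends so that no homeomorphism can displace it. Concretely, suppose $\cE$ has Cantor--Bendixson rank $\al+1$ and Cantor--Bendixson degree $d \geq 3$, so that $\cE^\al$ is a finite set $\{e_1,\dots,e_d\}$ of exactly $d$ points (Theorem~\ref{thm:classification}). Since the Cantor--Bendixson derivative is a topological invariant, any homeomorphism $f \co S \to S$ induces a homeomorphism $\widehat f \co \cE \to \cE$ (by the remark after Proposition~\ref{prop:end-map}) which must satisfy $\widehat f(\cE^\al) = \cE^\al$; that is, $\widehat f$ permutes the finite set $\{e_1,\dots,e_d\}$.

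First I would choose pairwise-disjoint open neighborhoods $U_1, \dots, U_d$ in $S$ of the ends $e_1, \dots, e_d$, each with compact boundary, and small enough that $\widehat{U_i} \cap \cE^\al = \{e_i\}$ (possible since $\cE^\al$ is finite, hence discrete in $\cE$, and the sets $\widehat V$ form a basis). Let $\Sigma = S \ssm (U_1 \cup \cdots \cup U_d)$; by shrinking further and taking a large enough compact piece, we may arrange that $\Sigma$ is a connected compact subsurface whose boundary components are exactly $\partial U_1, \dots, \partial U_d$ (using that $S$ has infinite genus and is connected, so we can absorb the remaining ``light'' ends and genus into $\Sigma$ appropriately — actually it is cleaner to take $\Sigma$ to be a compact subsurface that separates $U_1, \dots, U_d$ pairwise, i.e. such that $S \ssm \Sigma$ has at least three components, one containing each $e_i$). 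The key point is that $\Sigma$ is a compact subsurface of $S$ such that $S \ssm \Sigma$ has (at least) three distinct complementary components $W_1, W_2, W_3$ with $e_i$ an end of $W_i$.

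Now let $f \in \Homeo(S)$ and suppose toward a contradiction that $f(\Sigma) \cap \Sigma = \varnothing$. Then $f(\Sigma)$ is a connected compact subset of $S \ssm \Sigma$, hence contained in a single complementary component $W_k$. But $f$ permutes $\{e_1,\dots,e_d\}$, and $\Sigma$ ``sees'' all $d \geq 3$ of these ends in the sense that each $e_i$ lies in a different component of $S \ssm \Sigma$; whereas $f(\Sigma) \subset W_k$ can separate at most the ends lying in $W_k \ssm f(\Sigma)$, and in particular cannot separate $e_j$ from $e_{j'}$ for the two indices $j, j'$ with $e_j, e_{j'} \notin W_k$ — these both lie in the single component of $S \ssm f(\Sigma)$ containing $S \ssm W_k$. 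Since $f$ is a homeomorphism, $f(\Sigma)$ must separate $\widehat f(e_j)$ from $\widehat f(e_{j'})$ inside $f(S \ssm \Sigma)$'s complementary structure exactly as $\Sigma$ separates $e_j$ from $e_{j'}$; as $d \geq 3$ there are always at least two of the marked ends outside $W_k$, giving the contradiction. Hence $f(\Sigma) \cap \Sigma \neq \varnothing$, so $\Sigma$ is non-displaceable.

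The main obstacle is the bookkeeping in the last step: making precise the claim that a compact connected subsurface contained in one complementary component of $\Sigma$ cannot reproduce the ``three-way separation'' of the marked ends that $\Sigma$ achieves. The cleanest way to handle this is to phrase everything in terms of the induced partition of $\cE^\al$: $\Sigma$ is non-displaceable because removing any compact connected set disjoint from $\Sigma$ leaves all of $\cE^\al \ssm (\text{at most the ends in one component})$ in a single end-component, so such a set cannot be the image of $\Sigma$ under a homeomorphism (which must preserve the finer partition of $\cE^\al$ that $\Sigma$ induces, since $d \geq 3$ forces that partition to be nontrivial on the complement of any single block). Degree $\geq 3$ is exactly what is needed: with degree $2$, a displacing homeomorphism could swap the two ends, and with degree $\le 1$ there is nothing to separate.
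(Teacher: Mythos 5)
Your proof is correct and follows essentially the same route as the paper: both arguments use that \( \cE^\al \) is a finite, homeomorphism-invariant set of \( d \geq 3 \) ends, build a compact subsurface separating them pairwise, and derive a contradiction from the fact that a displaced copy lies in a single complementary component and therefore leaves at least \( d-1 \geq 2 \) of these ends unseparated, violating the invariance of the induced partition of \( \cE^\al \). The only cosmetic difference is that the paper partitions all of \( \cE \) into \( d \) clopen pieces and takes the unique compact complementary component of the resulting curve system, whereas you separate small neighborhoods of the \( d \) maximal-rank ends (your initial suggestion that \( S \ssm (U_1 \cup \cdots \cup U_d) \) is compact would fail, but you correctly retreat from it).
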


\begin{proof}
Let \( \al +1 \) and \( d \) be the Cantor--Bendixson rank and degree of \( \cE \), respectively.
Fix a decomposition \( \cE = E_1 \sqcup \cdots \sqcup E_d \) such that, for each \( k \in \{1, \ldots, d\} \), \( E_k \) is clopen and contains exactly one point of \( \cE^\al \).
It follows that we can find pairwise-disjoint separating simple closed curves \( c_1, \ldots, c_d \) such that one component of \( S \ssm c_k \) induces the open set \( E_k \) in \( \cE \), and
 such that there must be a unique compact component \( K \) of \( S \ssm \bigcup_{k=1}^d c_k \).
We claim that the surface \( K \) is non-displaceable: indeed, if there is a self-homeomorphism \( f \) of \( S \) such that \( f(K) \cap K = \varnothing \), then there must be a component of \( S \ssm f(K) \) containing more than a single element of \( \cE^{\al} \) since the Cantor--Bendixson degree of \(\cE\) is at least 3. This yields a contradiction since \(K\) and \(f(K)\) must induce the same partition of \(\cE\). 
\end{proof}

In several places throughout the paper, we will cite the existence of a compact non-displaceable surface without proof since the argument would be nearly identical to the one in the proof of Lemma~\ref{lem:nd-countable}. 

We record the following lemma that is an immediate corollary of Lemma~\ref{lem:prop-discont}.
\begin{Lem} 
\label{lem:finite-isom}
Let \( X \) be a complete hyperbolic surface with non-abelian fundamental group and let \( \Lambda \subseteq \isom(X) \).
If \( X \) contains a compact non-displaceable subset with respect to \( \Lambda \), then \( \Lambda \) is finite. \qed
\end{Lem}

Combining Theorem~\ref{thm:finite} and Lemma~\ref{lem:finite-isom}, we obtain:

\begin{Thm}
\label{thm:lack}
Let \( S \) be an orientable infinite-genus 2-manifold with no planar ends containing a compact non-displaceable subsurface.
Given a group \( G \), there exists a complete hyperbolic metric on \( S \) whose isometry group is isomorphic to \( G \) if and only if \( G \) is finite.
\qed
\end{Thm}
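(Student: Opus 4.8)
The plan is to prove the two implications separately, and both directions are essentially immediate from results already established. For the ``if'' direction, suppose $G$ is finite. Since $S$ is an orientable infinite-genus 2-manifold with no planar ends, Theorem~\ref{thm:finite} directly produces a complete hyperbolic metric on $S$ whose isometry group is isomorphic to $G$; the hypothesis that $S$ contains a compact non-displaceable subsurface is not needed here. So the only real content is the ``only if'' direction: if $S$ carries a complete hyperbolic metric with isometry group isomorphic to $G$, then $G$ must be finite.

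For the ``only if'' direction, fix a complete hyperbolic metric on $S$ and set $X$ to be the resulting hyperbolic surface; let $\Sigma \subset S = X$ be a compact non-displaceable subsurface, which exists by hypothesis. First I would observe that $\pi_1(X)$ is non-abelian: this is immediate because $S$ has infinite genus, so $\pi_1(S)$ is an infinite-rank free group, which is certainly non-abelian. (Even a single handle already forces a rank-2 free subgroup.) Now I want to apply Lemma~\ref{lem:finite-isom} with $\Lambda = \isom(X)$. That lemma requires a compact subset of $X$ that is non-displaceable with respect to $\Lambda$. Since $\Sigma$ is non-displaceable with respect to all of $\Homeo(S)$, and $\isom(X) \subseteq \Homeo(S)$ (every isometry is in particular a homeomorphism), $\Sigma$ is a compact non-displaceable subset with respect to $\isom(X)$. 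Lemma~\ref{lem:finite-isom} then yields that $\isom(X)$ is finite, hence $G \cong \isom(X)$ is finite.

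The only thing to double-check is the compatibility of the two notions of ``non-displaceable'': the definition is stated for subsets $\Sigma$ of $S$ and subsets $U$ of $\Homeo(S)$, whereas Lemma~\ref{lem:finite-isom} is phrased for $\Lambda \subseteq \isom(X)$ and a compact non-displaceable subset of $X$ with respect to $\Lambda$. These match once one notes that a compact non-displaceable subsurface is in particular a compact non-displaceable subset, and that an isometry group element acts as a homeomorphism, so non-displaceability with respect to $\Homeo(S)$ implies non-displaceability with respect to $\isom(X)$. There is no genuine obstacle in this argument --- it is purely a matter of assembling Theorem~\ref{thm:finite} and Lemma~\ref{lem:finite-isom} --- so the ``hard part'', such as it is, was already done in establishing those two ingredients (and in Lemma~\ref{lem:prop-discont}, from which Lemma~\ref{lem:finite-isom} follows). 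Combining both directions gives the statement. \qed
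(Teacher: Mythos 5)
Your proposal is correct and follows exactly the paper's route: the paper derives Theorem~\ref{thm:lack} precisely by combining Theorem~\ref{thm:finite} (for realizing any finite group) with Lemma~\ref{lem:finite-isom} (the non-displaceable subsurface forcing finiteness of the isometry group, via the non-abelian fundamental group coming from infinite genus). Your extra remarks on compatibility of the two notions of non-displaceability are accurate but routine, and the argument is complete.
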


Theorem~\ref{thm:lack} together with Lemma~\ref{lem:nd-countable} yields:

\begin{Cor}\label{cor:CB1}
Let \( S \) be an orientable infinite-genus 2-manifold with no planar ends and whose space of ends is countable of Cantor--Bendixson rank at least 3.
Given a group \( G \), there exists a complete hyperbolic metric on \( S \) whose isometry group is isomorphic to \( G \) if and only if \( G \) is finite. \qed
\end{Cor}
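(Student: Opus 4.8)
The plan is to recognize Corollary~\ref{cor:CB1} as the instance of Theorem~\ref{thm:lack} in which the required compact non-displaceable subsurface is handed to us by the shape of the end space. The one substantive step is therefore to invoke Lemma~\ref{lem:nd-countable}: since \( \cE(S) \) is countable with Cantor--Bendixson degree at least \( 3 \), that lemma produces a compact subsurface \( K \subset S \) which is non-displaceable. The point behind non-displaceability is that any \( f \in \Homeo(S) \) permutes the finite top Cantor--Bendixson derivative \( \cE^{\al} \), so a hypothetical \( f \) with \( f(K) \cap K = \varnothing \) would push \( f(K) \) into a single complementary region of \( K \), forcing some complementary region of \( f(K) \) to swallow at least \( d - 1 \ge 2 \) points of \( \cE^{\al} \); this is impossible, since that region is the \( f \)-image of a complementary region of \( K \) containing exactly one point of \( \cE^{\al} \).

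Once \( K \) is in hand, \( S \) is an orientable infinite-genus \( 2 \)-manifold with no planar ends that contains a compact non-displaceable subsurface, so Theorem~\ref{thm:lack} applies verbatim and yields precisely the stated equivalence. For orientation, the two directions of Theorem~\ref{thm:lack} come from: Theorem~\ref{thm:finite} (the Allcock-style surface \( X_S^G \) realizes every finite \( G \)) for the ``if'' direction; and Lemma~\ref{lem:finite-isom}, applied with \( \Lambda = \isom(X) \) for any complete hyperbolic surface \( X \) homeomorphic to \( S \) with \( \isom(X) \cong G \), for the ``only if'' direction. In the latter, \( \pi_1(X) = \pi_1(S) \) is non-abelian because \( S \) has infinite genus, and \( K \) is still non-displaceable with respect to \( \isom(X) \subseteq \Homeo(X) \), so \( \isom(X) \) is forced to be finite.

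I do not anticipate any real obstacle: the corollary is a direct assembly of Theorem~\ref{thm:lack} and Lemma~\ref{lem:nd-countable}, and the only point requiring (brief) care is that the subsurface \( K \) supplied by Lemma~\ref{lem:nd-countable} is simultaneously compact and non-displaceable, which is exactly what that lemma asserts.
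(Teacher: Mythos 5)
Your proof is correct and is exactly the paper's: the corollary appears there as the immediate combination of Lemma~\ref{lem:nd-countable} (which supplies the compact non-displaceable subsurface) with Theorem~\ref{thm:lack}, and your unpacking of both ingredients matches the paper's arguments. One remark: the corollary as printed hypothesizes Cantor--Bendixson \emph{rank} at least $3$, whereas your argument (and Lemma~\ref{lem:nd-countable}) uses Cantor--Bendixson \emph{degree} at least $3$; the printed hypothesis must be a typo, since an end space homeomorphic to $\omega^2+1$ has rank $3$ and degree $1$, has radial symmetry, and by Theorem~\ref{thm:pointed} realizes every countable group, so the literal ``rank'' version of the statement is false. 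You have proved the intended (degree $\geq 3$) statement, which is the one needed for Theorem~\ref{main}.
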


Before continuing, we record a proposition about finite-genus surfaces that we will use later:

\begin{Prop}
\label{prop:finite-genus}
Every hyperbolic surface of positive finite genus has finite isometry group. 
\end{Prop}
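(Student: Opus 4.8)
The plan is to reduce to the finite-type case handled in Lemma~\ref{lem:countable}. Let $X$ be a hyperbolic surface of finite genus $g \geq 1$. If $X$ has non-abelian fundamental group (which it does, since $g\geq 1$ forces $\pi_1(X)$ to contain a free group of rank $2$), then by Lemma~\ref{lem:prop-discont} the isometry group $\isom(X)$ acts properly discontinuously on $X$. The strategy is to exhibit a compact set $K \subset X$ that meets every closed geodesic of $X$; then $f(K)\cap K \neq \varnothing$ for every $f \in \isom(X)$, since every isometry maps closed geodesics to closed geodesics and $X$ contains at least one closed geodesic (e.g. a non-separating one exists because $g \geq 1$). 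Proper discontinuity then immediately forces $\isom(X)$ to be finite. This is exactly the mechanism used in the second paragraph of the proof of Lemma~\ref{lem:countable}, the only difference being that there one uses that all ends are planar to split off annular ends, whereas here we must handle arbitrary (possibly non-planar, possibly infinitely many) ends.

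The key step is thus constructing $K$. First I would fix a pants decomposition (or, more carefully, a locally finite collection of disjoint simple closed geodesics) and use the structure of finite-genus surfaces: since $X$ has finite genus, there is a compact subsurface $K_0 \subset X$ containing all the genus, so that every component of $X \ssm K_0$ is planar. Each such planar complementary component is a subsurface of finite genus zero; a closed geodesic contained entirely in such a component would have to be null-homotopic or peripheral there, but a peripheral curve in a planar end-neighborhood $U$ need not be null-homotopic in $X$ — however, it is freely homotopic to a curve arbitrarily far out the end, and a standard argument shows that in a complete hyperbolic surface a free homotopy class of curves escaping to an end of infinite modulus contains no geodesic representative (the infimum of lengths in the class is not attained, or the class is that of a cusp/funnel core). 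More robustly, I would argue directly: take $K$ to be a slightly enlarged $K_0$ so that $X \ssm K$ is a disjoint union of subsurfaces each of which is an annulus, a half-open annulus (funnel or cusp neighborhood), or has only planar ends and is incompressible into such pieces; in each complementary piece no closed geodesic of $X$ can be disjoint from $K$ because such a geodesic would be homotopic into an annular neighborhood of an end or a funnel, and these contain no closed geodesics. Hence every closed geodesic meets $K$.

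Once $K$ is in hand, the conclusion is immediate: $\isom(X)$ permutes closed geodesics, $X$ has a closed geodesic $\gamma$ (as $g\geq 1$), and $f(\gamma)$ is a closed geodesic for every isometry $f$, so $f(\gamma)\cap K \neq \varnothing$; but $f(\gamma)\cap K\neq\varnothing$ together with $\gamma \subset$ (a compact set) and the fact that an isometry is determined by its action on a unit tangent vector — combined with Lemma~\ref{lem:finite-scg} applied to the bounded-length geodesics meeting $K$ — forces the orbit of a chosen unit tangent vector along $\gamma$ to be finite, exactly as in the proof of Lemma~\ref{lem:prop-discont}. Alternatively and more cleanly, enlarge $K$ to also contain $\gamma$ in its interior; then $f(K)\cap K\neq\varnothing$ for all $f$, and proper discontinuity (Lemma~\ref{lem:prop-discont}) gives finiteness of $\isom(X)$ directly.

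The main obstacle I anticipate is the construction of $K$ in the presence of infinitely many, possibly non-planar, ends: I must be careful that "finite genus" genuinely lets me push all the topological complexity into a compact core, and that the complementary pieces — which may individually be complicated infinite-type planar surfaces, or may be funnels/cusps — contain no closed geodesics of $X$ disjoint from $K$. The cleanest route is probably to invoke that a closed geodesic represents a primitive conjugacy class in $\pi_1(X)$ whose translation length is positive and attained, and to note that any essential curve lying outside a large enough compact core is either peripheral in a planar end (hence freely homotopic to curves of length $\to 0$, so no geodesic representative there, or a cusp) or bounds together with $\partial K$ an annulus; either way it cannot be a geodesic of $X$ disjoint from $K$ unless it is isotopic to a core curve that we have already included. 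Making this last point fully rigorous — ruling out a geodesic that wanders out a planar end and comes back — requires the observation that such a geodesic, being simple or not, is contained in a complementary component of $K$ and hence homotopic into that component, whose only geodesics of $X$ lying in it are those already accounted for; I would cite the collar lemma and the thick-thin decomposition to organize this.
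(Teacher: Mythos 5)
There is a genuine gap. Your argument hinges on producing a compact set $K$ such that \emph{every} closed geodesic of $X$ meets $K$, and this is false for finite-genus surfaces with infinitely many ends. For a concrete counterexample, let $X$ be a complete hyperbolic surface equal to its convex core, homeomorphic to a flute surface (or a sphere minus a Cantor set) with a single handle attached. The complement of a compact core $K_0$ containing the handle has planar components, but these components are themselves of infinite type and contain many essential, non-peripheral simple closed curves --- for instance curves separating the end space into two infinite pieces --- each of which has a geodesic representative lying entirely inside that component, arbitrarily far from $K_0$. So your claim that ``in each complementary piece no closed geodesic of $X$ can be disjoint from $K$ because such a geodesic would be homotopic into an annular neighborhood of an end or a funnel'' fails: the complementary pieces need not be annuli, funnels, or cusp neighborhoods, and the dichotomy ``null-homotopic or peripheral'' only holds in planar pieces with one or two ends. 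This is exactly the point where the second paragraph of Lemma~\ref{lem:countable} uses that there are \emph{finitely many} ends, all planar; that hypothesis is not available here. Your closing ``alternative'' (enlarge $K$ to contain $\gamma$) inherits the same gap, since it still needs $f(\gamma)$ to meet $K$ for every isometry $f$, which again rests on the false claim.

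The fix is to drop geodesics from the non-displaceability step entirely and argue topologically, which is what the paper does: choose a simple closed geodesic bounding a compact subsurface $\Sigma$ of genus $g$ (you already observed such a compact core exists). If $f(\Sigma)\cap\Sigma=\varnothing$ for some homeomorphism $f$, then $X$ contains two disjoint genus-$g$ subsurfaces and hence has genus at least $2g>g$, a contradiction; so $\Sigma$ is non-displaceable. Lemma~\ref{lem:finite-isom} (proper discontinuity from Lemma~\ref{lem:prop-discont} plus a compact non-displaceable set) then gives finiteness immediately. Your proposal contains all the ingredients for this --- the compact core carrying the genus, proper discontinuity, Lemma~\ref{lem:finite-isom} --- but routes them through an intermediate claim that is not true.
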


\begin{proof}
Let \( X \) be a hyperbolic surface of positive finite genus \( g \).
We can then choose a simple closed geodesic \( \gamma \) bounding a compact subsurface \( \Sigma \) of genus \( g \).
As \( \Sigma \) has the same genus of \( X \), it must be that \( \Sigma \) is non-displaceable.
Hence, by Lemma \ref{lem:finite-isom}, \( \isom(X) \) is finite.
\end{proof}

\subsection{Radial symmetry}
\label{sec:radial}

We now turn our focus to giving a sufficient condition on a surface for every countable  group to be realizable as an isometry group of some complete hyperbolic metric on the given surface.

\begin{Def}
\label{def:radial-symmetry}
A compact Hausdorff topological space \( E \) has \emph{radial symmetry} if either \( E \) is a singleton or there exists a point \( x \in E \) and a collection of pairwise homeomorphic, non-compact sets \( \{E_n\}_{n\in\bn} \) such that the closure of \( E_n \) is disjoint from \( E_m \) whenever \( n \neq m \) and such that
\[
E \ssm \{x\} = \bigsqcup_{n\in\bn} E_n,
\]
Note by the compactness of \( E \) and the assumptions on \( E_n \), it must be that \( x \) is in the closure of \( E_n \) for every \( n \in \bn \).
We refer to the point \( x \) as a \emph{star point} of \( E \).
\end{Def}

In Section~\ref{sec:SSandPS}, we will show that radial symmetry is equivalent to the notion of self-similarity introduced in \cite{MannRafi}.

For examples, both \( \omega+1 \) (i.e.~a convergent sequence together with its limit point) and the Cantor set have radial symmetry. 
Also, any compact countable Hausdorff space of Cantor--Bendixson degree 1 has radial symmetry (see Lemma~\ref{lem:pointed-degree} below), of which there are uncountably many.

Let \( S \) be an orientable infinite-genus 2-manifold with no planar ends whose end space has radial symmetry.
Given a denumerable group \( G \), we now construct a complete hyperbolic surface \( Y_S^G \) that is homeomorphic to \( S \) and whose isometry group is isomorphic to \( G \).
If the end space of \( S \) has radial symmetry, the cardinality of \( \cE(S) \) is either one or infinite: if \( S \) has a unique end, then \( S \) is the Loch Ness monster surface and, appealing to Theorem~\ref{thm:lochness}, we simply  set \( Y_S^G = X_S^G \).
Therefore, going forward, we assume that \( S \) has infinitely many ends.

The construction will closely follow the construction of \( X_S^G \) in Section~\ref{Allcock}.
We begin identically by letting \( C_G \) be the Cayley graph for \( G \) as described in Section~\ref{Allcock}, so that the group of label- and direction-preserving graph automorphisms of \( C_G \) is isomorphic to \( G \). 

Let \( \cE \) be the end space of \( S \) and let \( x \) be a star point of \( \cE \) so that there exists a decomposition 
\[
\cE \ssm \{x\} = \bigsqcup_{n\in\bn} E_n
\]
as in the definition of radial symmetry.
Let \( E = E_1 \) and \( \bar E = E_1 \cup \{x\} \) and note that \( E_n \) (respectively \( \bar E_n = E_n \cup\{x\} \)) is homeomorphic to \( E \) (respectively \( \bar E \)) for all \( n \in \bn \).

Let us view the 2-sphere as the one-point compactification of \( \br^2 \), written as \( \widehat \br^2 = \br^2 \cup \{\infty\} \). 
Fix an embedding \( \iota \co \bar E \to \widehat \br^2 \) satisfying \( \iota(x) = \infty \) and with \(\iota(E)\) in the lower half-plane.  

Fix a bijection \( f \co G \to \bn \) and let
\[
Z = \br^2 \ssm \left( \iota(E) \cup  \bigcup_{(h,m)\in G \times\bn} B \left((f(h),m), \frac14\right) \right),
\]
where \( B(y,r) \) is the (open) Euclidean ball in \( \br^2 \) of radius \( r \) and center \( y \). 
Now, let  \( R \) be the bordered infinite-genus surface with no planar ends obtained by taking the connected sum of \( Z \) with countably many tori.
Note that the requirement that \( R \) has no planar ends fixes the topology of \( R \) up to homeomorphism.

From here on, the construction of \( Y_S^G \) follows---with minor adjustments---that of \( X_S^G \); we provide the details for completeness.
First note that, arguing as in the proof of Lemma~\ref{lem:end-space}, we see that the end space of \( R \) is homeomorphic to \(  \bar E \). 
Also note that the boundary components of \( R \) are in bijection with \( G \times \bn \). Let \( \partial(h,m) \) denote the boundary component bounding the ball with center \( (f(h),m) \in \br^2 \).
Choose an injective function \( \lambda \co G \times \bn \to (0, \arcsinh(1)) \subset \br \). 
Let \( V \) be a hyperbolic surface with totally geodesic boundary homeomorphic to \( R \) such that \( V \) has no isometries, \( V \) admits a pants decomposition with all interior cuff lengths between \( \arcsinh(1) \) and \( 2\arcsinh(1) \), and the length of \( \partial(h,m) \) is \( \lambda(h,m) \). 
Again, as before, let \( E(h,2m) \) be a hyperbolic surface homeomorphic to a torus with two boundary components obtained by gluing together pairs of pants, one with boundary lengths \( \lambda(h,2m), \arcsinh(1) \) and \( \arcsinh(1) \), and the other with lengths \( \lambda(h,2m-1), \arcsinh(1), \) and \( \arcsinh(1) \). 

We now construct \( Y_S^G \).
Let \( V_g \) be a copy of \( V \) for each \( g \in G \).
For all \( g, h \in G \) and for each \( m \in \bn \) take a copy of \( E(h,2m) \) and attach one boundary component of \( V_g \) along \( \partial (h,2m) \) and the other to \( V_{gh} \) along \( \partial(h, 2m-1) \) via orientation-reversing isometries.
In addition, we require the attaching maps to be compatible with the action of \( G \) on \( C_G \).
We define \( Y_S^G \) to be the resulting surface.
Note that by construction, \( Y_S^G \) is orientable, borderless, infinite genus and has no planar ends.

Arguing as in Lemma~\ref{lem:geod-complete} and Lemma~\ref{lem:isom}, we have:

\begin{Lem}
\label{lem:Y-isom}
\( Y_S^G \) is a complete hyperbolic surface whose isometry group is isomorphic to \( G \). 
\qed
\end{Lem}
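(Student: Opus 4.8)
The proof of Lemma~\ref{lem:Y-isom} will proceed by carrying over, essentially verbatim, the two arguments already established for $X_S^G$ in Section~\ref{Allcock}, checking that the minor structural differences in the construction of $Y_S^G$ do not affect anything. For completeness, I sketch what needs to be said.

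\textbf{Completeness.} By construction $Y_S^G$ is assembled out of hyperbolic pieces (copies of $V$ and the compact edge surfaces $E(h,2m)$) glued along totally geodesic boundary curves, so every point has a neighborhood isometric to an open subset of $\bh$; thus $Y_S^G$ is a hyperbolic surface. It is borderless, has infinite genus, and has no planar ends. The interior cuffs of $V$ have length in $(\arcsinh(1),2\arcsinh(1))$, the edge-surface cuffs have length $\arcsinh(1)$, and the gluing curves $\partial(h,m)$ have length $\lambda(h,m) < \arcsinh(1)$; together these curves form a topological pants decomposition of $Y_S^G$ with all cuff lengths bounded above by $2\arcsinh(1)$. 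Proposition~\ref{prop:complete} then gives that $Y_S^G$ is complete (and equals its convex core), exactly as in Lemma~\ref{lem:geod-complete}.

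\textbf{Isometry group.} By construction the attaching maps are compatible with the $G$-action on $C_G$, so $G$ acts on $Y_S^G$ by (orientation-preserving) isometries with $gV_h = V_{gh}$, giving $G \le \isom(Y_S^G)$. For the reverse inclusion, let $\mathcal P$ be the pants decomposition consisting of all closed geodesics of length strictly less than $2\arcsinh(1)$; by the collar lemma (Corollary~\ref{cor:sinh}) any isometry $\tau$ permutes $\mathcal P$, and since the curves of length strictly less than $\arcsinh(1)$ are exactly the gluing curves $\partial(h,m)$, $\tau$ preserves the subcollection $\mathcal V$ of such curves. Hence $\tau$ permutes the closures of the complementary pieces, which are the vertex and edge surfaces; matching topological type (the $V_g$ are the non-compact pieces, the $E(h,2m)$ the compact ones, and the distinct boundary lengths $\lambda$ pin down each edge surface and its orientation), we get $\tau(V_g) = V_{g_\tau}$ for some $g_\tau \in G$. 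Then $h\circ\tau$ with $h = g\cdot g_\tau^{-1}$ fixes $V_g$ setwise and restricts to an isometry of $V_g \cong V$; since $V$ has no nontrivial isometries, $h\circ\tau$ is the identity on an open set, hence on all of $Y_S^G$, so $\tau = h^{-1} \in G$. Therefore $\isom(Y_S^G) = G$, as in Lemma~\ref{lem:isom}.

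The only point requiring a word of care — really the one place the argument is not literally identical to Section~\ref{Allcock} — is that here $V$ is built from the single surface $R$ (a connected sum of $Z$ with countably many tori) rather than from the pieces $\vp_j(Z^G_{\alpha_j})$, and one must confirm that such a $V$ with no nontrivial isometries, with the prescribed boundary lengths $\lambda(h,m)$, and with interior cuffs in $(\arcsinh(1),2\arcsinh(1))$, actually exists. This is handled exactly as for the surface $V$ in Section~\ref{Allcock}: fix a topological pants decomposition $\mathcal Q$ of $R$ (which necessarily contains every boundary curve), assign boundary lengths $\lambda(h,m) \in (0,\arcsinh(1))$ and pairwise-distinct interior cuff lengths in $(\arcsinh(1),2\arcsinh(1))$, and choose the twist parameters generically so that no isometry of a complementary pair of pants extends globally; the collar lemma then forces any isometry of $V$ to fix $\mathcal Q$ setwise and hence to restrict to a nontrivial pants isometry, a contradiction. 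I do not expect any genuine obstacle here — it is a routine transcription — so the substance of the lemma is entirely contained in the citations to Lemma~\ref{lem:geod-complete} and Lemma~\ref{lem:isom}.
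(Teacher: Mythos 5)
Your proposal is correct and follows exactly the route the paper takes: the paper's own "proof" of Lemma~\ref{lem:Y-isom} is literally the single line "Arguing as in Lemma~\ref{lem:geod-complete} and Lemma~\ref{lem:isom}," and you have simply written out that transcription in full, including the one genuinely new point (the existence of a rigid $V$ modeled on $R$ with the prescribed cuff lengths), which is handled the same way as in Section~\ref{Allcock}. No gaps.
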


It is left to show that \( Y_S^G \) is homeomorphic to \( S \). 
As \( V_g \) is a closed subset of \( Y_S^G \), the inclusion \( V_g \hookrightarrow Y_S^G \) is proper and induces a map \( \vp_g \co \Ends(V_g) \to \Ends(Y_S^G) \); moreover, as any two ends of \( V_g \) can be separated by a separating simple closed curve whose image in \( Y_S^G \) is also a separating simple closed curve, we see that \( \vp_g \) is injective. 
As \( V_g \) is homeomorphic to \( R \), we can identify \( \Ends(V_g) \) with \( \bar E \) and view \( \vp_g \co \bar E \to \Ends(Y_S^G) \). 

Now for any end \( e \) of \( V_g \) corresponding to a point in \( E \), there exists a simple separating closed curve in \( V_g \) such that one component of the complement contains \( e \) and is disjoint from \( \partial V_g \).
This guarantees that \( \vp_g(E) \) and \( \vp_h(E) \) are separated---that is, each set is disjoint from the other's closure---in \( \Ends(Y_S^G) \) for all distinct \( g, h \in G \). 
However, arguing as in Lemma~\ref{lem:loch}, we see that \( \vp_g(x) = \vp_h(x) \) for all \( g,h \in G \), where \(x\) is the star point. 
Using a compact exhaustion of \( Y_S^G \) analogous to that constructed in Lemma~\ref{lem:loch} yields 
\[
\Ends(Y_S^G) = \bigcup_{g\in G} \vp_g(\bar E).
\]
Let \( y = \vp_g(x) \) for any \( g \in G \), which we emphasize is well-defined.
We have shown that
\[
\Ends(Y_S^G) \ssm \{y\} = \bigsqcup_{g\in G} \vp_g(E).
\]
Moreover,
\[
\bigsqcup_{g\in G} \vp_g(E) \cong \bigsqcup_{n\in\bn} E_n
\]
and \( \Ends(Y_S^G) \) is the one-point compactification of the former and \( \cE \) is the one-point compactification of the latter; hence, \( \Ends(Y_S^G) \) and \( \cE \) are homeomorphic and thus \( S \) and \( Y_S^G \) are homeomorphic, establishing:

\begin{Lem}
\label{lem:Y-homeo}
\( Y_S^G \) is homeomorphic to \( S \).
\qed
\end{Lem}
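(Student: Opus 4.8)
The plan is to invoke the classification of orientable 2-manifolds. By construction $Y_S^G$ is orientable, borderless, of infinite genus, and has no planar ends, so $\cE_{np}(Y_S^G) = \cE(Y_S^G)$ and likewise $\cE_{np}(S) = \cE$; hence it suffices to produce a homeomorphism $\cE(Y_S^G) \to \cE$. The strategy is to locate, inside $\cE(Y_S^G)$, the images of the end spaces of the vertex surfaces and show these images assemble into exactly the radial-symmetry decomposition of $\cE$.

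First I would analyze how each vertex surface contributes to the end space. Since $V_g$ is a closed subsurface of $Y_S^G$, the inclusion $V_g \into Y_S^G$ is proper and, by Proposition~\ref{prop:end-map}, induces a continuous map $\vp_g \co \cE(V_g) \to \cE(Y_S^G)$; because any two distinct ends of $V_g$ are separated by a separating simple closed curve in $V_g$ whose image in $Y_S^G$ remains separating, $\vp_g$ is injective. As $V_g \cong R$ and $\cE(R) \cong \bar E$ (by the argument of Lemma~\ref{lem:end-space}), we regard $\vp_g \co \bar E \to \cE(Y_S^G)$. Then I would establish two facts: (i) for distinct $g,h \in G$, the sets $\vp_g(E)$ and $\vp_h(E)$ are separated in $\cE(Y_S^G)$ — this uses that each end of $V_g$ corresponding to a point of $E$ is cut off by a separating curve in $V_g$ that is disjoint from $\partial V_g$, hence separated in $Y_S^G$ from everything outside $V_g$; and (ii) all the star points are identified, i.e. $\vp_g(x) = \vp_h(x) =: y$ for all $g,h$. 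Fact (ii) is the analogue of the one-endedness argument in Lemma~\ref{lem:loch}: any two vertex surfaces are joined by infinitely many edge surfaces, so outside any compact set, escaping toward the star point of $V_g$ is connected to escaping toward the star point of $V_h$. Finally, constructing a compact exhaustion of $Y_S^G$ exactly as in Lemma~\ref{lem:loch}, I would show that every end of $Y_S^G$ is accounted for, i.e. $\cE(Y_S^G) = \bigcup_{g\in G} \vp_g(\bar E)$.

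Combining these, one obtains
\[
\cE(Y_S^G) \ssm \{y\} = \bigsqcup_{g\in G} \vp_g(E),
\]
and since $\vp_g(E) \cong E \cong E_n$ for every $g$ and every $n$, this gives $\bigsqcup_{g\in G}\vp_g(E) \cong \bigsqcup_{n\in\bn} E_n = \cE \ssm \{x\}$. As $\cE(Y_S^G)$ is the one-point compactification of the left-hand side (adding $y$) and $\cE$ is the one-point compactification of the right-hand side (adding $x$), a homeomorphism between these open dense subsets extends uniquely to a homeomorphism $\cE(Y_S^G) \to \cE$, and the classification of surfaces then yields $Y_S^G \cong S$. I expect the main obstacle to be the pair of claims in (ii) and the exhaustion argument — matching the end structure of the surface glued together from the $V_g$ and edge surfaces to the abstract radial-symmetry decomposition of $\cE$ — which requires the same careful bookkeeping of compact exhaustions as in the one-endedness proof for the Loch Ness monster surface.
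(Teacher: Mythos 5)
Your proposal is correct and follows essentially the same route as the paper: induced injective end maps $\vp_g \co \bar E \to \cE(Y_S^G)$ via Proposition~\ref{prop:end-map}, separation of the sets $\vp_g(E)$ by curves disjoint from $\partial V_g$, identification of the star points and the exhaustion argument as in Lemma~\ref{lem:loch}, and the one-point compactification comparison followed by the classification of surfaces. No substantive differences to report.
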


Combining Theorem~\ref{thm:lochness} (for the one-ended case), Theorem~\ref{thm:finite}, Lemma~\ref{lem:Y-isom}, and Lemma~\ref{lem:Y-homeo}, we obtain:

\begin{Thm}
\label{thm:pointed}
Let  \( S \) be an orientable infinite-genus 2-manifold with no planar ends and whose space of ends has radial symmetry.
Given a group \( G \), there exists a complete hyperbolic metric on \( S \) whose isometry group is isomorphic to \( G \) if and only if \( G \) is countable.
\qed
\end{Thm}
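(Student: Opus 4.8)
The plan is to combine the three pieces already assembled in this section: Theorem~\ref{thm:lochness} (the one-ended case), Theorem~\ref{thm:finite} (the finite-group case), and the pair Lemma~\ref{lem:Y-isom}--Lemma~\ref{lem:Y-homeo} (the denumerable case), together with Lemma~\ref{lem:countable} for the converse direction. The statement is a biconditional, so there are two implications to address.

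For the forward direction, suppose \( X \) is a complete hyperbolic metric on \( S \) with \( \isom(X) \cong G \). Since \( S \) is an infinite-genus 2-manifold, its fundamental group is a free group of countably infinite rank, hence non-abelian; therefore Lemma~\ref{lem:countable} applies and \( \isom(X) \) is countable, so \( G \) is countable.

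For the reverse direction, suppose \( G \) is countable. There are three cases according to the structure of \( G \) and \( \cE(S) \). If \( G \) is trivial, one can take any hyperbolic metric with trivial isometry group (such a metric exists by the construction in Section~\ref{Allcock}, e.g.\ the metric on \( V \) extended appropriately, or by a standard argument). If \( G \) is finite and nontrivial, Theorem~\ref{thm:finite} directly produces a complete hyperbolic metric on \( S \) with isometry group \( G \); note this uses only that \( S \) is an orientable infinite-genus 2-manifold with no planar ends, so the radial symmetry hypothesis is not even needed here. If \( G \) is denumerable, we split further: when \( \cE(S) \) is a singleton, \( S \) is the Loch Ness monster surface and Theorem~\ref{thm:lochness} applies; when \( \cE(S) \) is infinite (the only other possibility allowed by radial symmetry, as noted before the construction of \( Y_S^G \)), the surface \( Y_S^G \) constructed above is, by Lemma~\ref{lem:Y-isom}, a complete hyperbolic surface with isometry group isomorphic to \( G \), and by Lemma~\ref{lem:Y-homeo} it is homeomorphic to \( S \); transporting the metric along this homeomorphism gives the desired metric on \( S \).

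Since all of the substantive work has been front-loaded into the lemmas and earlier theorems of this section, the proof is essentially an assembly argument; the only thing to be careful about is the case division and invoking the correct hypothesis in each case. The one genuinely delicate input — controlling the topology of \( Y_S^G \) so that it comes out homeomorphic to \( S \) rather than to the Loch Ness monster — has already been handled in Lemma~\ref{lem:Y-homeo} via the one-point-compactification identification of the end spaces. So there is no remaining obstacle beyond bookkeeping.
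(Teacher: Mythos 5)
Your proof is correct and follows essentially the same route as the paper: the paper also obtains Theorem~\ref{thm:pointed} by combining Theorem~\ref{thm:lochness} (one-ended case), Theorem~\ref{thm:finite} (finite groups), and Lemmas~\ref{lem:Y-isom} and~\ref{lem:Y-homeo} (denumerable groups with infinitely many ends), with the ``only if'' direction coming from Lemma~\ref{lem:countable} exactly as you describe. Your explicit case bookkeeping (including the trivial-group case, which the paper covers via its remark in Section~\ref{Allcock}) matches the paper's intended assembly.
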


\begin{Rem}
\label{rem:self-similar_planar}
The notion of radial symmetry can be extended to pairs of topological spaces and, in particular, to the pair \( (\cE(S), \cE_{np}(S)) \) associated to \( S \).
In this case, one only needs to additonally require that \( E_n \cap \cE_{np} \) and \( E_m \cap \cE_{np} \) are homeomorphic for all \( n, m \in \bn \).   
With only slight modification to the construction above---namely, in the adding of handles to \( Z \)---the assumption on (the lack of) planar ends is readily removed in Theorem \ref{thm:pointed}.
Note that if a surface is not homeomorphic to the plane, has planar ends, and its end space has radial symmetry, then it necessarily has infinitely many planar ends.
\end{Rem}

There is a nice statement of Theorem~\ref{thm:pointed} when the end space of \( S \) is countable:

\begin{Cor}\label{cor:CB2}
Let \( S \) be an orientable  infinite-genus 2-manifold with no planar ends and whose space of ends is countable of Cantor--Bendixson degree 1.
Given a group \( G \), there exists a complete hyperbolic metric on \( S \) whose isometry group is isomorphic to \( G \) if and only if \( G \) is countable.
\end{Cor}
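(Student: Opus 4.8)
The plan is to derive Corollary~\ref{cor:CB2} directly from Theorem~\ref{thm:pointed} by showing that a countable, compact, Hausdorff space of Cantor--Bendixson degree $1$ automatically has radial symmetry. This is essentially the content of the (forward-referenced) Lemma~\ref{lem:pointed-degree}, so the real work is in establishing that lemma.

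First I would fix a countable compact Hausdorff end space $\cE$ with Cantor--Bendixson rank $\al+1$ and degree $1$. By Theorem~\ref{thm:classification}, $\cE$ is homeomorphic to $\omega^\al + 1$; in particular, there is a unique point $x \in \cE^\al$, and this point is the natural candidate for a star point. The key structural fact is that the space $\omega^\al+1$ (for $\al \geq 1$) is homeomorphic to the one-point compactification of a countable disjoint union of copies of $\omega^{\al'}$-type pieces whose common accumulation point is $x$ --- this is the standard ``ordinal arithmetic'' decomposition $\omega^\al = \sum_{n \in \bn} \omega^{\be_n}$ where $\be_n \nearrow \al$ (or $\be_n = \al'$ constant when $\al = \al'+1$). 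To get the precise homeomorphism-invariant statement required by Definition~\ref{def:radial-symmetry}, I would instead argue more topologically: removing $x$ from $\cE$ leaves a locally compact, $\sigma$-compact space with no point of Cantor--Bendixson rank $\al$, and I would choose a sequence of clopen neighborhoods $W_1 \supsetneq W_2 \supsetneq \cdots$ of $x$ with $\bigcap_n W_n = \{x\}$ (these exist because $\cE$ is compact, metrizable, totally disconnected, and $x$ is not isolated). Setting $E_n = W_n \ssm W_{n+1}$ gives a partition $\cE \ssm \{x\} = \bigsqcup_n E_n$ into clopen (hence with closure meeting only $\{x\}$ outside $E_n$) pieces. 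The remaining point is to arrange the $E_n$ to be pairwise homeomorphic and non-compact: non-compactness is automatic since each $E_n$ accumulates onto $x \notin E_n$ (equivalently, $E_n$ has an end), and pairwise homeomorphism is achieved by a careful bookkeeping choice of the $W_n$ --- one can group the successive ``shells'' of the canonical decomposition of $\omega^\al$ so that each $E_n$ realizes the same homeomorphism type (for successor $\al$, each shell can be taken homeomorphic to $\omega^{\al-1}\cdot k + 1$ minus a point for a fixed $k$; for limit $\al$, one uses a diagonal argument over the cofinal sequence $\be_n \nearrow \al$, again regrouping so the types repeat).

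Having shown $\cE$ has radial symmetry, the Corollary follows immediately: apply Theorem~\ref{thm:pointed} to conclude that for any group $G$, there is a complete hyperbolic metric on $S$ with isometry group isomorphic to $G$ if and only if $G$ is countable.

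The main obstacle is the bookkeeping in the previous-to-last step: one must verify that a countable, compact, Hausdorff, rank-$(\al+1)$, degree-$1$ space really does admit a decomposition into \emph{pairwise homeomorphic} non-compact clopen pieces accumulating at the single top-rank point. The cleanest route is probably to invoke Theorem~\ref{thm:classification} to replace $\cE$ by the concrete model $\omega^\al+1$ and then use ordinal arithmetic ($\omega^\al = \omega^{\al}\cdot\omega$ when $\al$ is a limit, and $\omega^{\al} = \omega^{\al-1}\cdot\omega$ when $\al$ is a successor) to write it as $\omega^\al = \sum_{n\in\bn}\omega^{\gamma}$ with all summands of one homeomorphism type --- reducing the whole argument to the observation that $\omega^\al$ and $\omega^{\al}\cdot\bn$ (suitably compactified) are the same space, which is exactly what gives the repeating pieces $E_n$.
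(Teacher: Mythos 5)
Your overall architecture is the same as the paper's: reduce Corollary~\ref{cor:CB2} to Theorem~\ref{thm:pointed} by proving that a countable compact Hausdorff space of Cantor--Bendixson degree $1$ has radial symmetry (this is exactly Lemma~\ref{lem:pointed-degree}), and use Theorem~\ref{thm:classification} to work with the model $\omega^\al+1$. However, your proposed proof of the key lemma has a genuine gap at the decomposition step. The sets $E_n = W_n \ssm W_{n+1}$ are clopen in the compact space $\cE$, hence \emph{compact}, and their closures are themselves, so they do not contain $x$. This is precisely the opposite of what Definition~\ref{def:radial-symmetry} requires: each $E_n$ must be non-compact and (as noted right after the definition) must have $x$ in its closure. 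Your claim that ``non-compactness is automatic since each $E_n$ accumulates onto $x$'' is backwards --- a clopen shell accumulates at nothing outside itself. The alternative you suggest, decomposing the ordinal as $\omega^\al = \sum_{n}\omega^{\gamma}$ and taking the order-intervals as the pieces, also fails the definition: each interval $[\omega^{\gamma}\cdot n, \omega^{\gamma}\cdot(n+1))$ is non-compact but accumulates at the \emph{left endpoint of the next interval}, not at the top point $x=\omega^\al$, so the requirement that the closure of $E_n$ be disjoint from $E_m$ is violated. (Also, the identity ``$\omega^\al = \omega^{\al}\cdot\omega$ for limit $\al$'' is false; $\omega^\al\cdot\omega = \omega^{\al+1}$.)

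The fix --- and what the paper actually does --- is to go in the other direction: rather than trying to decompose $\omega^\al+1$ in place, build the space $\bar E = \left(\bigsqcup_{n\in\bn} E_n\right)\cup\{\infty\}$ as the one-point compactification of a disjoint union of countably many copies $E_n$ of $\omega^\al$. Each $E_n$ is then non-compact, its closure in $\bar E$ is $E_n\cup\{\infty\}$, and the pieces are visibly pairwise homeomorphic, so $\bar E$ has radial symmetry by construction. One then computes that $\bar E$ has Cantor--Bendixson rank $\al+1$ and degree $1$ and invokes Theorem~\ref{thm:classification} to conclude $\bar E \cong \cE$. The point is that the homeomorphism $\bar E \to \omega^\al+1$ necessarily scrambles the order structure, which is why no order-respecting or shell-based decomposition of $\omega^\al+1$ can exhibit the radial symmetry directly. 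You gesture at this one-point-compactification picture in your second paragraph, but the argument you actually propose does not implement it.
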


The above corollary follows immediately from Theorem~\ref{thm:pointed} together with the following lemma:

\begin{Lem}
\label{lem:pointed-degree}
Let \( \cE \) be a compact, countable, Hausdorff space.
The Cantor--Bendixson degree of \( \cE \) is 1 if and only if \( \cE \) has radial symmetry.
\end{Lem}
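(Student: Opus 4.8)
The plan is to argue both implications directly from the definitions, using the classification of countable compact Hausdorff spaces (Theorem~\ref{thm:classification}) to pin down the structure of $\cE$. For the forward direction, suppose $\cE$ has Cantor--Bendixson degree $1$, with rank $\alpha+1$ for some countable ordinal $\alpha$; then $\cE^\alpha$ is a single point, say $\{x\}$. If $\cE$ is a singleton we are done, so assume otherwise. By Theorem~\ref{thm:classification}, $\cE$ is homeomorphic to $\omega^\alpha+1$, with $x$ corresponding to the top point. The idea is to exhibit the required decomposition $\cE\ssm\{x\}=\bigsqcup_{n\in\bn}E_n$ with each $E_n$ noncompact, pairwise homeomorphic, and with pairwise-disjoint closures all containing $x$. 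First I would choose a nested neighborhood basis $\{U_n\}_{n\in\bn}$ of $x$ consisting of clopen sets with $U_{n+1}\subsetneq U_n$, $\bigcap_n U_n=\{x\}$, and $U_0=\cE$; such a basis exists because $\cE$ is second countable, compact, and totally disconnected, so $x$ has a neighborhood basis of clopen sets, and we can pass to a strictly decreasing cofinal subsequence since $\cE$ is not a singleton (so $x$ is not isolated). Then set $E_n=U_{n-1}\ssm U_n$. Each $E_n$ is clopen in $\cE\ssm\{x\}$, the $E_n$ are pairwise disjoint with $\cE\ssm\{x\}=\bigsqcup_{n}E_n$, and since $x\notin E_n$ but every neighborhood of $x$ meets $E_n$ (as the $U_k$ are strictly decreasing and clopen), $x$ lies in the closure of each $E_n$ while the closures $E_n\cup\{x\}$ are otherwise disjoint.

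The one genuinely delicate point — and the step I expect to be the main obstacle — is arranging the $E_n$ to be \emph{pairwise homeomorphic} and \emph{noncompact}. Noncompactness is equivalent to $x$ being a limit of $E_n$, i.e. $E_n$ failing to be clopen in all of $\cE$; but with the naive choice above some $E_n$ could be a single isolated point, hence compact. To fix this I would use the homogeneity of $\omega^\alpha+1$ near its apex: by Theorem~\ref{thm:classification} and the arithmetic of ordinal exponentiation, $\omega^\alpha$ (the space $\cE\ssm\{x\}$, which is $\omega^\alpha$ as an ordinal with the order topology, noncompact) can be written as a disjoint union of countably many clopen pieces each order-isomorphic — hence homeomorphic — to $\omega^\alpha$ itself, precisely because $\omega^\alpha\cdot\omega=\omega^{\alpha+1}$ is \emph{not} what we want; rather we need $\omega^\alpha = \omega^\alpha + \omega^\alpha + \cdots$ in a suitable sense. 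The correct statement is: $\cE\ssm\{x\}$ is homeomorphic to $\bigsqcup_{n\in\bn}F_n$ where each $F_n\cong\cE\ssm\{x\}$ and the $F_n$ accumulate only at the added point. Concretely, writing $\cE\cong\omega^\alpha+1$ and using that $\omega^\alpha$ is a limit ordinal with cofinality $\omega$, pick a strictly increasing sequence of ordinals $\beta_n\nearrow\omega^\alpha$ each of the form $\omega^\alpha\cdot$(nothing)$+$ initial segments, and arrange via the Cantor normal form that each interval $[\beta_{n-1},\beta_n)$ is order-isomorphic to $\omega^\alpha$; this is possible exactly when $\alpha$ is such that $\omega^\alpha$ is additively indecomposable, which it always is. Setting $E_n=$ the image of $[\beta_{n-1},\beta_n)$ then gives pairwise-homeomorphic noncompact pieces, each with $x$ in its closure. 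I would present this cleanly by working with the homeomorphism type directly rather than ordinal bookkeeping: $\cE$ is the one-point compactification of $\cE\ssm\{x\}$, and $\cE\ssm\{x\}\cong\bigsqcup_{n\in\bn}(\cE\ssm\{x\})$ because the one-point compactification of a countable disjoint union of copies of $\omega^\alpha$ is $\omega^\alpha+1$ again (both have rank $\alpha+1$ and degree $1$, so Theorem~\ref{thm:classification} applies).

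For the converse, suppose $\cE$ has radial symmetry; I must show its Cantor--Bendixson degree is $1$. If $\cE$ is a singleton this is immediate. Otherwise fix a star point $x$ and decomposition $\cE\ssm\{x\}=\bigsqcup_{n\in\bn}E_n$ with the $E_n$ pairwise homeomorphic and noncompact, closures pairwise disjoint away from $x$, and $x\in\overline{E_n}$ for all $n$. I would argue that $x\in\cE^\beta$ for every $\beta<$ rank$(\cE)$ by induction, and that no other point survives all the derivatives. First, each $\overline{E_n}=E_n\cup\{x\}$ is clopen? No — it is closed but $x$ is in the closure of the others too, so $\overline{E_n}$ is not open. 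Instead: $E_n$ is open in $\cE$ (its complement is $\{x\}\cup\bigsqcup_{m\neq n}E_m$, which is closed since $x$ is the only possible accumulation point of $\bigcup_{m\neq n}E_m$ outside itself and $x\notin E_n$), so $E_n$ is a locally compact noncompact space, hence the subspace $\overline{E_n}=E_n\cup\{x\}$ is its one-point compactification and $x$ is the point at infinity. Now the key computation: for any $\beta$, the derivative satisfies $(\overline{E_n})^\beta = (E_n)^\beta\cup(\{x\}$ if it survives$)$, and by the pairwise homeomorphisms $(E_n)^\beta\neq\varnothing$ for all $n$ simultaneously or none. Since $\cE^\beta\supseteq\bigsqcup_n (E_n)^\beta$ when $\cE^\beta\ssm\{x\}\neq\varnothing$, if some point $y\neq x$ lies in $\cE^{\alpha}$ where $\alpha+1=$rank$(\cE)$, then $y\in E_{n_0}$ for a unique $n_0$, and by transporting via the homeomorphisms $E_{n_0}\cong E_n$ we would get infinitely many points of $\cE^{\alpha}$, one in each $E_n$, together with their accumulation point $x\in\cE^{\alpha+1}$ — contradicting that the rank is $\alpha+1$, i.e. $\cE^{\alpha+1}=\cE^{\alpha+2}$ forces $\cE^\alpha$ finite and $\cE^{\alpha+1}=\varnothing$, but then $x\notin\cE^{\alpha+1}$ while $x\in\overline{\cE^\alpha\cap\bigcup E_n}$ would put $x\in\cE^{\alpha+1}$, contradiction. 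Hence $\cE^\alpha\ssm\{x\}=\varnothing$, so $\cE^\alpha$ has at most one point; since $\cE$ is infinite its rank is at least $1$ and in fact $\cE^\alpha=\{x\}$ is nonempty (else the rank would be smaller). Therefore $\cE^\alpha=\{x\}$ is a single point and the degree is $1$. I would write this last paragraph carefully, perhaps replacing the slightly awkward rank-bookkeeping with the clean observation that the degree of $\cE$ equals the number of points in $\cE^\alpha$, and radial symmetry forces this set to be a subset of $\{x\}$, so degree $\le 1$, while degree $\ge 1$ always holds for nonempty $\cE$.
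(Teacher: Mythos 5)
Your argument takes essentially the same route as the paper's in both directions: for the forward implication you ultimately arrive at exactly the paper's proof (realize \( \omega^\al+1 \) as the one-point compactification of countably many disjoint copies of \( \omega^\al \) and invoke Theorem~\ref{thm:classification}), and for the converse you use the same rank-mismatch idea, namely that a point of \( \cE^\al \) other than the star point would, after transporting it through the pairwise homeomorphisms of the \( E_n \), contradict the finiteness of \( \cE^\al \).

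One caution on the converse. Your claim that each \( E_n \) is open in \( \cE \), justified by ``\( x \) is the only possible accumulation point of \( \bigcup_{m\neq n}E_m \) outside itself,'' does not follow from the literal wording of Definition~\ref{def:radial-symmetry}: each individual closure \( \overline{E_m} \) avoids \( E_n \), but the closure of the \emph{infinite} union can acquire extra limit points inside \( E_n \). (For instance, \( \omega\cdot 2+1 \) admits a decomposition into pairwise-homeomorphic, non-compact, discrete pieces whose closures pairwise meet only at one of the two rank-one points, yet the pieces are not open.) The openness of the \( E_n \) --- equivalently, that \( \cE \) is the one-point compactification of \( \bigsqcup_n E_n \) --- is precisely what lets you identify \( \cE^\be\cap E_n \) with the intrinsic Cantor--Bendixson derivative of the subspace \( E_n \), which is the step your transport argument (and, implicitly, the paper's own comparison of the ranks of \( E_n \) and \( E_m \)) relies on. Under the reading of radial symmetry used everywhere else in the paper (see Lemma~\ref{lem:Zx} and Remark~\ref{rem:rsE}) this openness holds and your proof goes through, but the one-line justification you give for it is not valid as stated and should either be strengthened or replaced by an explicit hypothesis on the decomposition.
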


\begin{proof}
First assume that the Cantor--Bendixson degree of \( \cE \) is 1, so that, by Theorem~\ref{thm:classification}, \( \cE \) is homeomorphic to \( \omega^\al +1 \), where \( \al +1 \) is the rank of \( \cE \).
For each \( n \in \bn \), let \( E_n \) be a copy of \( \omega^\al \) and let
\[
E = \bigsqcup_{n\in\bn} E_n.
\]
Observe that the Cantor--Bendixson rank of \( E \) is \( \al \); hence, the Cantor--Bendixson rank of the one-point compactification \( \bar E = E \cup \{\infty\} \) of \( E \) is \( \al +1 \) and \( \bar E^\al = \{\infty\} \).
By Theorem~\ref{thm:classification}, we have that \( \bar E \) is homeomorphic to \( \cE \); moreover, by construction, \( \bar E \) (and hence \( \cE \)) has radial symmetry. 

Now suppose that the Cantor--Bendixson degree of \( \cE \) is \( d \) and \( d> 1 \) and assume that \( \cE \) has radial symmetry.
Let \(\alpha +1\) be the rank of \(\cE\) and let \( x \in \cE \) such that
\[
\cE \ssm \{x\} = \bigsqcup_{n\in\bn} E_n
\]
as in the definition of radial symmetry.
Necessarily, as \( \cE^\al \) has at least two points, there exists \( n \in \bn \) such that \( E_n \cap \cE^\al \neq \varnothing \). 
However, as \( \cE^\al \) has a finite number of points, there exists \( m \in \bn \) such that \( E_m \cap \cE^\al = \varnothing \); the Cantor--Bendixson ranks of \( E_n \) and \( E_m \) do not agree and they are, therefore, not homeomorphic, a contradiction.
\end{proof}


\subsection{Doubly pointed spaces}

The end space of a surface \( S \) is \emph{doubly pointed} if there are exactly two ends of \( S \) whose orbits under the action of \( \rm{Homeo}(S) \) are finite (and hence of cardinality at most two).
For example, any infinite-genus surface with no planar ends and countable end space of Cantor--Bendixson degree 2 is doubly pointed.

Before stating the main theorem of the section, recall that a group is \emph{virtually cyclic} if it contains a cyclic subgroup of finite index.
Note that every finite group is virtually cyclic.
The goal of this section is to prove the following:

\begin{Thm}
\label{thm:doubly-pointed}
Let \( S \) be an orientable 2-manifold with non-abelian fundamental group and a doubly pointed end space.
The isometry group of any complete hyperbolic metric on \( S \) is virtually cyclic.
\end{Thm}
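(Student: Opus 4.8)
The plan is to show that the isometry group $\isom(X)$ of a complete hyperbolic metric on such an $S$ acts on the two-point set $F\subset\cE(X)$ consisting of the two ends with finite $\Homeo$-orbit, and that the kernel of this action is virtually cyclic; since the image is a subgroup of $\bz/2\bz$, this suffices. First I would record that, since $F$ is characterized purely topologically (the ends whose orbit under $\Homeo(X)$ is finite), it is invariant under every homeomorphism of $X$, in particular under every isometry; composing with the homomorphism $\isom(X)\to\Homeo(\cE(X))$ from Proposition~\ref{prop:end-map} gives a homomorphism $\isom(X)\to\mathrm{Sym}(F)\cong\bz/2\bz$. Let $\Lambda$ be its kernel, an index-at-most-2 subgroup of $\isom(X)$; it suffices to prove $\Lambda$ is virtually cyclic, indeed cyclic-by-finite.

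The heart of the argument is to produce, from the two fixed ends, a geometric ``axis'' in $X$ that $\Lambda$ coarsely preserves. Pick disjoint neighborhoods $U_1,U_2$ of the two points of $F$ with compact boundary; the complement $K_0=X\ssm(U_1\cup U_2)$ meets every closed geodesic that is ``between'' the two ends. I would like a canonical such region. Here is where the doubly pointed hypothesis does real work: because the two points of $F$ are the \emph{only} ends with finite $\Homeo$-orbit, any $\Lambda$-invariant exhaustion of $X$ must, past a certain stage, have connected complement-pieces, one ``pointing toward'' each end of $F$ — i.e. $X$ is, $\Lambda$-coarsely, a bi-infinite cylinder of compact chunks. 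Concretely I would build a $\Lambda$-invariant exhaustion $\{K_n\}$ (average an arbitrary exhaustion over the action, which is properly discontinuous by Lemma~\ref{lem:prop-discont} since $\pi_1(S)$ is non-abelian, so the orbits are finite) and argue that for large $n$, $X\ssm K_n$ has exactly two unbounded components $U_n^{(1)},U_n^{(2)}$, each $\Lambda$-invariant (the argument that each is invariant mimics Lemma~\ref{lem:action-trivial}: a path joining $g\cdot U_n^{(i)}$ to $U_n^{(i)}$ is found outside $K_n$). Then each $\lambda\in\Lambda$ preserves the ``core direction'': choosing basepoints $p_n$ in $\partial K_n$ and a geodesic segment $\sigma_n$ from $K_0$ to $p_n$, the union $\bigcup\sigma_n$ fellow-travels a properly embedded bi-infinite geodesic-like line $\ell$, and $\Lambda$ moves $\ell$ a bounded Hausdorff distance, hence acts by coarse isometries on $\ell\cong\br$.

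From there the endgame is standard. The coarse action of $\Lambda$ on $\ell\cong\br$ gives a homomorphism $\Lambda\to\isom(\br)\rtimes(\text{bounded error})$; after quotienting by the finite-error ambiguity — concretely, $\Lambda$ permutes the closed geodesics of $X$ of bounded length meeting the ``first'' core chunk, of which there are only finitely many by Lemma~\ref{lem:finite-scg}, so a finite-index subgroup $\Lambda_0$ fixes a unit tangent vector ``up to translation along the core'' — one gets that $\Lambda_0$ either fixes such a vector outright (forcing $\Lambda_0$ trivial, since an isometry is determined by one unit tangent vector) or is infinite cyclic, generated by an isometry that translates the core by a fixed combinatorial amount. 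Either way $\Lambda_0$ is cyclic, so $\Lambda$, hence $\isom(X)$, is virtually cyclic. The main obstacle I anticipate is making the ``$\Lambda$-coarsely a cylinder'' step precise: one must rule out that some $\lambda\in\Lambda$ swaps or scrambles the infinitely many ends accumulating toward a point of $F$ in a way that has no bounded displacement on any line — but this is exactly prevented by properly discontinuous action (Lemma~\ref{lem:prop-discont}) together with the fact that $\lambda$ fixes both points of $F$ and therefore, for $n$ large, must fix each unbounded component $U_n^{(i)}$ setwise, pinning down the translation length along the core.
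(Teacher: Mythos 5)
Your opening reduction (restrict to the index-at-most-two subgroup \( \Lambda \) fixing both distinguished ends) matches the paper, but the core of your argument has a gap that is fatal precisely in the only interesting case. You propose to build a \( \Lambda \)-invariant exhaustion of \( X \) by compact sets by ``averaging'' an exhaustion over the action, justified by the parenthetical ``the action is properly discontinuous, so the orbits are finite.'' That inference is backwards: proper discontinuity of an \emph{infinite} group action gives orbits that are infinite, closed, and discrete. Consequently no nonempty \( \Lambda \)-invariant subset of \( X \) is compact when \( \Lambda \) is infinite (it would contain a full orbit), so the invariant exhaustion \( \{K_n\} \) you need simply does not exist. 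The model you are imitating, Lemma~\ref{lem:action-trivial}, uses exactly the finiteness of the group to make \( K_n'' = \bigcup_{g\in G} gK_n' \) compact. Since a finite \( \isom(X) \) is trivially virtually cyclic, your construction collapses in the only case where the theorem has content. A second, independent problem: even for a non-invariant exhaustion, the claim that \( X \ssm K_n \) has exactly two unbounded components for large \( n \) forces \( |\cE(X)| = 2 \); a doubly pointed surface typically has infinitely many ends (e.g.\ end space \( \omega^{\al}\cdot 2 + 1 \)), and then the number of unbounded complementary components of \( K_n \) grows without bound. So \( X \) is not ``\( \Lambda \)-coarsely a bi-infinite cylinder'' in the sense you need, and the coarse line \( \ell \) is not produced.

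The paper's proof realizes your underlying intuition (a net ``translation'' toward one of the two special ends) without ever needing an invariant compact object. Fixing one separating simple closed curve \( \gamma \) between \( e_1 \) and \( e_2 \) and one infinite orbit \( \Lambda\cdot e \) of ends, it defines \( \vp(g) = |\widehat g(L_\gamma) \ssm L_\gamma| - |L_\gamma \ssm \widehat g(L_\gamma)| \), the net number of orbit points crossing \( \gamma \). Well-definedness (finiteness of both counts) comes from Lemma~\ref{lem:closure}, which says the closure of any infinite orbit of ends must contain both \( e_1 \) and \( e_2 \); this is the step that exploits the doubly pointed hypothesis. Finiteness of \( \ker\vp \) is then obtained by exhibiting a compact set \( \gamma\cup\eta \) that is non-displaceable for \( \ker\vp \) and invoking Lemma~\ref{lem:finite-isom}. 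Note also that the two-ended case requires a genuinely separate argument (the paper cites \cite{AramayonaFirst} for a homomorphism to \( \bz \) built from homological data rather than from an orbit of ends), since there is then no infinite orbit of ends to count; your proposal does not account for this dichotomy. If you want to salvage your coarse-geometric picture, the honest substitute for the invariant exhaustion is exactly such a homomorphism \( \Lambda \to \bz \) with finite-kernel certified by a non-displaceable compact set.
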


Since every finite group is virtually cyclic, we focus on denumerable groups.

\begin{Lem}
\label{lem:closure}
Let \( X \) be a complete hyperbolic surface with a doubly pointed end space and infinite isometry group, and let $\Lambda \subset \mbox{Isom}(X)$ be an infinite subset. If \( e_1 \) and \( e_2 \) are the unique ends with finite orbit under \( \rm{Homeo(X)} \), then the closure of any infinite \( \Lambda \)-orbit contains \( \{e_1,e_2\} \).
\end{Lem}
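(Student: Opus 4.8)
The plan is to exploit the fact that an infinite isometry orbit must "escape to infinity" in $X$, together with the topological rigidity of the two distinguished ends $e_1,e_2$. First I would fix a point $p\in X$ and consider the orbit $\Lambda\cdot p$; since $\isom(X)$ acts properly discontinuously on $X$ (Lemma~\ref{lem:prop-discont}, valid because $\pi_1(X)$ is non-abelian as the end space is doubly pointed), any infinite subset of this orbit leaves every compact set, so its accumulation points lie entirely in $\cE(X)$. Thus the closure of an infinite $\Lambda$-orbit, intersected with $X$, is just the orbit itself plus accumulation points in the end space; the content of the lemma is that those accumulation points must include \emph{both} $e_1$ and $e_2$.

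The key step is to show that the set of accumulation points in $\cE(X)$ of an infinite $\Lambda$-orbit is a $\overline{\langle \Lambda\rangle}$-invariant (indeed $\isom(X)$-is the wrong group; the relevant group is the one generated by $\Lambda$, but more usefully) closed subset of $\cE(X)$ that cannot be finite unless it is exactly $\{e_1,e_2\}$. Concretely: let $A\subset\cE(X)$ be the accumulation set of the orbit $\Lambda p$. I would first argue $A$ is nonempty (compactness of $\cE(X)$ plus the orbit being infinite and escaping compacta) and closed. Next, $A$ is invariant under the closure of the group generated by $\Lambda$ acting on $\cE(X)$ via the homomorphism $\isom(X)\to\Homeo(\cE(X))$ from Proposition~\ref{prop:end-map}; if $\Lambda$ itself does not generate an infinite image in $\Homeo(\cE(X))$ one has to be slightly careful, but one can instead observe directly that $A$ must be invariant under $\widehat{g}$ for any $g$ that is an accumulation point of $\Lambda$ in the (compact-open, i.e. proper-action) sense — here I would use that isometries converging on the orbit converge on all of $X$, a standard Arzelà–Ascoli/normal-families argument for hyperbolic surfaces. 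Then a closed $\Homeo$-orbit-closure-invariant nonempty subset of $\cE(X)$ either is all of $\cE(X)$ (impossible if $X$ has more than two ends with finite homeomorphism-orbit — but actually the doubly-pointed hypothesis does not bound $|\cE(X)|$, so instead) must contain every end whose $\Homeo(X)$-orbit is finite; since $e_1,e_2$ are precisely those ends, $\{e_1,e_2\}\subseteq A$. The clean way to see $e_i\in A$: the topological group $\isom(X)$ acts on $\cE(X)$, and any nonempty closed invariant set under an \emph{infinite} acting group must meet the union of finite orbits — no wait, that's false in general; rather, one shows $A$ is infinite (an infinite group of homeomorphisms of a compact metric space with a nonempty closed invariant set either has that set infinite or the action on it factors through a finite group, and in the latter case minimality pushes $A$ onto a finite orbit which by hypothesis is $\{e_1\}$, $\{e_2\}$, or $\{e_1,e_2\}$), and then use that $A$, being a closed invariant infinite set, contains a point of each finite orbit by a limiting argument: take $g_n\in\Lambda$ with $g_n p\to a\in A$; then for any end $e$, $g_n e$ subconverges in $\cE(X)$, and by properness of the action one shows the limit lies in $A$; choosing $e\in\{e_1,e_2\}$ and noting $g_n e$ ranges over a finite set (the orbit of $e_i$ has at most two elements), we get $e_i\in\overline{\{g_n e_i\}}\subseteq A$.

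I expect the main obstacle to be making the "isometries that converge on an orbit converge everywhere, and the limit is again an isometry carrying accumulation data correctly to the end space" argument fully rigorous on a non-compact hyperbolic surface — i.e.\ setting up the right compactness statement for $\isom(X)$ and checking that the induced action on $\cE(X)$ is compatible with limits. Once that is in hand, the combinatorics of finite $\Homeo(X)$-orbits forces $\{e_1,e_2\}\subseteq A$ quickly: any end with infinite $\isom(X)$-orbit has that orbit accumulating (by compactness of $\cE(X)$) to points that must be homeomorphism-invariant under the whole group's closure, hence have finite orbit, hence are among $e_1,e_2$; and a short separate check, using that $X$ is doubly pointed and its fundamental group non-abelian, rules out $A$ being a single point. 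I would close by noting this is exactly the stepping stone toward Theorem~\ref{thm:doubly-pointed}: once both $e_1$ and $e_2$ are forced to be limits of every infinite orbit, one can sandwich $\Lambda$ inside the stabilizer of an axis or a pair of ends and deduce virtual cyclicity.
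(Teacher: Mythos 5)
Your proposal attacks a different statement from the one the lemma makes, and the statement you attack is false. The orbit in question is the orbit \( \Lambda\cdot e = \{\widehat f(e) : f \in \Lambda\} \) of an \emph{end} \( e \) under the induced action of \( \Lambda \) on \( \cE(X) \), with the closure taken in \( \cE(X) \); this is how the lemma is used in the proof of Theorem~\ref{thm:doubly-pointed} (``choose an end \( e \) with infinite orbit and note that the closure of this orbit contains both \( e_1 \) and \( e_2 \)''), and it is the only reading under which the qualifier ``infinite'' is not vacuous, since for a properly discontinuous action of an infinite \( \Lambda \) every point of \( X \) already has infinite orbit. Your version---that the accumulation set \( A \subset \cE(X) \) of a point orbit \( \Lambda\cdot p \) contains both \( e_1 \) and \( e_2 \)---fails already for the two-ended infinite-genus surface with a \( \bz \)-periodic hyperbolic metric: taking \( T \) a generator of the \( \bz \)-action and \( \Lambda = \{T^n : n \geq 1\} \), the orbit \( \Lambda\cdot p \) accumulates at only one end. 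The same example falsifies your key step ``for any end \( e \), \( g_n e \) subconverges and the limit lies in \( A \)'': there \( g_n e_2 = e_2 \) for all \( n \), yet \( e_2 \notin A \). More generally, the principle you lean on---that a nonempty closed invariant infinite subset of \( \cE(X) \) must pick up every finite orbit by a limiting argument---has no justification and is not true for group actions on Stone spaces in general; nothing in your normal-families setup supplies the missing mechanism.

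The paper's argument is instead a two-case contradiction built on compact non-displaceable sets and Lemma~\ref{lem:finite-isom}. If the closure of an infinite orbit \( \Lambda\cdot e \) misses both \( e_1 \) and \( e_2 \), there is a compact subsurface whose components separate \( \{e_1\} \), \( \{e_2\} \), and \( \overline{\Lambda\cdot e} \); it is non-displaceable with respect to \( \Lambda \), forcing \( \Lambda \) to be finite, a contradiction. If the closure contains \( e_1 \) but not \( e_2 \) (or vice versa), one chooses an isolated point \( e' \) of the orbit and a separating simple closed geodesic \( \gamma \) placing \( e' \) alone among the orbit on the \( e_2 \)-side; the infinitely many elements of \( \Lambda \) that move \( e' \) cannot map \( \gamma \) off itself, again contradicting Lemma~\ref{lem:finite-isom}. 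To repair your write-up you would need to restate the claim for end orbits and replace the dynamical/Arzel\`a--Ascoli machinery with this separating-curve and non-displaceability argument; the former genuinely does not suffice.
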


\begin{proof}
First suppose there exists an end \( e \) of \( X \) such that the closure of \(\Lambda \cdot e \) is infinite and disjoint from \( \{e_1, e_2\} \). 
In this case, there exists a compact surface whose components separate the sets \( \{e_1\}, \{e_2\}, \) and \( \Lambda\cdot e \) and hence is non-displaceable with respect to \( \Lambda \). 
Therefore, by Lemma~\ref{lem:finite-isom}, \( \Lambda \) must be finite, a contradiction.

Next, suppose there exists an end \( e \) with infinite orbit such that the closure of \( \Lambda \cdot e \) contains \( e_1 \) but not \( e_2 \). 
First, since \( \Lambda \) is countable, there exists an element \( e' \in \Lambda \cdot e \) that is an isolated point of the orbit \( \Lambda \cdot e \) equipped with the subspace topology inherited from \( \cE(X) \). 
Now, let \( \gamma \) be a separating simple closed geodesic in \( X \) such that the components of \( X \ssm \gamma \) induce a partition of \( \cE(X) \) into two sets, \( U_1 \) and \( U_2 \), with  \( e_1 \in U_1, e_2 \in U_2 \), and \( \Lambda\cdot e \cap U_2 = \{e'\} \). 
Note that by the assumption on the closure of \( \Lambda \cdot e \), it must be that \( \widehat f(e_1) = e_1 \) and \( \widehat f(e_2) = e_2 \) for every \( f \in \Lambda \). 
Let \( \Lambda' \subset \Lambda \) be the elements of \( \Lambda \) that are not in the stabilizer of \( e' \): the infiniteness of \( \Lambda\cdot e \) guarantees that \( \Lambda' \) is infinite.  
Now, for every \( f \in \Lambda' \), neither \( \widehat f(U_1) \subset U_1 \) nor \( \widehat f(U_1) \subset U_2 \); hence, \( f(\gamma) \cap \gamma \neq \varnothing \).
Therefore, \( \gamma \) is non-displaceable for \( \Lambda' \) and thus, by Lemma \ref{lem:finite-isom}, \( \Lambda' \) is finite, a contradiction. 
\end{proof}

\begin{proof}[Proof of Theorem~\ref{thm:doubly-pointed}]

Let \( X \) be \( S \) equipped with a complete hyperbolic metric.  Since every finite group is virtually cyclic, we assume that \(\isom(X)\) is infinite.  It then suffices to construct a homomorphism from a finite-index subgroup of $\isom(X)$ to \(\mathbb{Z}\) with finite kernel. Let \( e_1 \) and \( e_2 \) be the unique ends of \( X \) with finite orbits, and let \( \Lambda \) denote the stabilizer of \( e_1 \) in \( \isom(X) \), which is index at most two.

First assume that \( X \) has infinitely many ends.
Choose an end \( e \) with infinite orbit and note that the closure of this orbit contains both \( e_1 \) and \( e_2 \) by Lemma~\ref{lem:closure}.
Fix an oriented simple closed curve \( \gamma \) separating \( e_1 \) and \( e_2 \). 
Let \( L_\gamma \) denote the set of ends to the left of \( \gamma \) and contained in \( \Lambda\cdot e \), and assume \( e_1 \) is to the left of \( \gamma \).
We define \( \vp \co \Lambda \to \bz \) by 
\[
\vp(g) = |\widehat g(L_\gamma) \ssm L_\gamma| - |L_\gamma \ssm \widehat g(L_\gamma)|.
\]

Note that \( |\widehat g(L_\gamma) \ssm L_\gamma| \) measures the size of the subset of ends in \( L_\gamma \) that \( g \) moves to the right of \( \gamma \). On the other hand, \( |L_\gamma \ssm \widehat g(L_\gamma ) | \) measures the size of the subset of \( L_\gamma \) that is not in the image of \(L_\gamma \) under \( g \), and therefore which \textit{is} in the image of the set of ends in the orbit of \( e \) which are to the right of \( \gamma \). Thus, \( \vp(g) \) all together measures the net change of how many ends in \( \isom(X) \cdot e\) move from the left of \( \gamma \) to the right of \( \gamma \). 

We claim that both \( |\widehat g(L_\gamma) \ssm L_\gamma | \) and \( |L_\gamma \ssm \widehat g(L_\gamma)| \) are finite, and that, therefore, \( \vp \) is well-defined. 

To see this, assume that \( |\widehat g(L_\gamma) \ssm L_\gamma | \) is infinite and let  \( A \subset L_{\gamma} \) denote the infinitely many ends in \(\Lambda \cdot e\) that move from the left of \(\gamma \) to the right of \(\gamma \). Each such end corresponds to a distinct isometry, and therefore associated to \( A \) is an infinite subset \( U \subset \Lambda \). Then by Lemma~\ref{lem:closure}, \( e_1 \) must be in the closure of \( A \). Thus by continuity, \(g\) must move \( e_1 \) to the right of \( \gamma \), contradicting our assumption that \( \Lambda \) fixes \( e_1 \). Replacing \(e _1 \) with \( e_2 \) and applying the same argument to the ends in the orbit of \( e \) and to the right of \( \gamma \) implies that  \( |L_\gamma \ssm \widehat g(L_\gamma)| \) is finite as well. 

Since \( \vp \) is well-defined, it follows immediately from the fact that  \( \vp \) merely counts the net number of ends in \( \Lambda \cdot e \) that move from the left of \( \gamma \) to the right of \(\gamma\), that it is a homomorphism: the net increase of ends in \( \Lambda \cdot e\) to the right of \(\gamma \) after applying the composition \( g_2 g_1 \) is the sum of the individual net increases.

We will now argue that \( \ker \vp \) is finite.  
Let \( R_\gamma = \Lambda\cdot e \ssm L_\gamma \).
 Let \( \eta \) be a separating simple closed geodesic on \( X \) such that the two connected components \( U_1 \) and \( U_2 \) of \( X \ssm \eta \) satisfy \( e_1, e_2 \in \widehat U_1 \) and such that \( \widehat U_2 \cap L_\gamma \neq \varnothing \) and \( \widehat U_2 \cap R_\gamma \neq \varnothing \).
Observe that, for \( g \in \Lambda \), it is impossible for \( g(\gamma) \subset U_2 \) since \( e_1 \) and \( e_2 \) must be on opposite sides of \(\gamma\), and therefore of \( g(\gamma) \).
Now, if \( g \in \ker\vp \), we claim that \(g(\gamma) \cap \eta \neq \varnothing\) or  \(g(\gamma) \cap \gamma \neq \varnothing\). 
To see this, assume that \(g(\gamma) \cap \eta = \varnothing\). Then the above argument shows that \( g(\gamma) \subset U_1 \). 
If \(\gamma \cap g(\gamma) = \varnothing\) as well, then, by the choice of \( \eta \), either \(\widehat g (L_\gamma) \subseteq L_\gamma\) or \(\widehat g (R_\gamma) \subseteq R_\gamma\); moreover, since \(L_\gamma \cap \widehat U_2 \neq \varnothing\) and  \(R_\gamma \cap \widehat U_2 \neq \varnothing\), the containment must be proper, contradicting the fact that \(\vp(g) = 0\). 
Therefore, if \( K = \gamma\cup\eta \), then \( g(K) \cap K \neq\varnothing \) for every \( g \in \ker\vp \).
In particular, \( K \) is a compact non-displaceable subset of \( X \) with respect to the action of \( \ker\vp \) on \( X \), and hence, by Lemma~\ref{lem:finite-isom}, \( \ker \vp \) is finite.

Now suppose that \( X \) has exactly two ends.
The condition on the fundamental group of \( X \) implies that \( X \) has positive genus and, by Proposition \ref{prop:finite-genus}, the assumption that \( \isom(X) \) is infinite implies \( X \) is infinite genus.
Now if \( X \) contains a planar end, then any non-compact finite-genus subsurface that is a closed subset of \( X \) is non-displaceable, and such a surface contains a compact non-displaceable subsurface of \( X \) (obtained by removing a neighborhood of the planar end).
In this case, \( \isom(X) \) would have to be finite, which is a contradiction; hence, both ends of \( X \) must be non-planar.
From \cite{AramayonaFirst}, there exists a surjective homomorphism \( \vp \co \Lambda \to \bz \) such that if \( g \in \ker \vp \) then \( g(\gamma) \cap \gamma \neq \varnothing \) for any simple closed curve separating the ends of \( X \)---the homomorphism is constructed in \cite[Proposition 3.3]{AramayonaFirst} and the condition on the intersection is \cite[Lemma 3.1]{AramayonaFirst}.
Again, by Lemma \ref{lem:finite-isom}, \( \ker \vp \) is finite.
\end{proof}

As in the previous cases, we can restate Theorem~\ref{thm:doubly-pointed} in terms of the Cantor--Bendixson rank and degree when the end space is countable. Moreover, in the case of countably many ends, we can strengthen Theorem~\ref{thm:doubly-pointed} by giving a partial converse:

\begin{Thm}
\label{thm:deg2}
Let \( S \) be an orientable infinite-genus 2-manifold with no planar ends and whose space of ends is countable of Cantor--Bendixson degree 2 and rank \( \al +1 \).
Let \( G \) be any group.
\begin{enumerate}[(i)]
\item
If \( \al \) is a successor, then there exists a complete hyperbolic metric on \( S \) whose isometry group is isomorphic to \( G \) if and only if \( G \) is virtually cyclic.

\item
If \( \al \) is a limit ordinal, then there exists a complete hyperbolic metric on \( S \) whose isometry group is isomorphic to \( G \) if and only if \( G \) is finite.
\end{enumerate}
\end{Thm}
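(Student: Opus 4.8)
We dispose of three of the four implications immediately. By Theorem~\ref{thm:classification}, $\cE$ is homeomorphic to $\omega^\alpha\cdot 2+1$; since $\cE$ is countable of Cantor--Bendixson degree $2$, the end space of $S$ is doubly pointed, so Theorem~\ref{thm:doubly-pointed} applies: \emph{any} complete hyperbolic metric on $S$ has virtually cyclic isometry group. This is the ``only if'' direction of (i), and it gives half of what (ii) needs. The ``if'' direction of (ii)---that every finite group is realized---is Theorem~\ref{thm:finite}. So two tasks remain: \textbf{(a)} when $\alpha$ is a limit ordinal, upgrade ``virtually cyclic'' to ``finite'', and \textbf{(b)} when $\alpha$ is a successor, realize every infinite virtually cyclic group (the finite case again being Theorem~\ref{thm:finite}).

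For \textbf{(a)}, suppose $X$ is $S$ equipped with a complete hyperbolic metric and that $\isom(X)$ is infinite; we will contradict the hypothesis that $\alpha$ is a limit ordinal. The set $\{x_1,x_2\}$ of the two points of $\cE^{(\alpha)}$ is $\isom(X)$-invariant, so the stabilizer $\Lambda$ of $x_1$ has index at most $2$, is infinite, and (being virtually cyclic) contains an element $h$ of infinite order, which fixes both $x_1$ and $x_2$. Choose an embedded simple closed curve $\gamma\subset X$ separating $x_1$ from $x_2$, which exists because $\cE\cong(\omega^\alpha+1)\sqcup(\omega^\alpha+1)$ is disconnected. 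Since $\isom(X)$ acts properly discontinuously on $X$ (Lemma~\ref{lem:prop-discont}) and $\gamma$ is compact, $h^k\gamma\cap\gamma=\varnothing$ for all large $k$; put $g=h^k$. Then $g\gamma$ lies in a single component of $X\ssm\gamma$, say the one whose end set contains $x_2$, and because $g$ is a homeomorphism fixing $x_2$ it follows inductively that the curves $\{g^j\gamma\}_{j\in\bz}$ are pairwise disjoint and nested, with $g^j\gamma$ exiting the end $x_2$ as $j\to+\infty$ and the end $x_1$ as $j\to-\infty$ (proper discontinuity forces them to leave every compact set). Letting $D$ be the closure of the region of $X$ between $\gamma$ and $g\gamma$, the surface $D$ has compact boundary (two circles), and the translates $\{g^jD\}_{j\in\bz}$ tile $X$, glued along the curves $g^j\gamma$; hence every end of $X$ other than $x_1,x_2$ lies in exactly one $g^jD$, and
\[
\cE(X)=\left(\bigsqcup_{j\in\bz} g^j\,\cE(D)\right)\sqcup\{x_1,x_2\},
\]
the two-point compactification of a bi-infinite family of copies of the fixed \emph{compact} space $\cE(D)$, with $g^j\cE(D)$ exiting $x_2$ as $j\to+\infty$ and $x_1$ as $j\to-\infty$. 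Writing $\mu+1$ for the (necessarily successor) Cantor--Bendixson rank of $\cE(D)$, the disjoint union $\bigsqcup_j g^j\cE(D)$ has rank $\mu+1$ with infinite $\mu$-th derivative, and adjoining $x_1,x_2$---the only accumulation points of $\bigsqcup_j g^j\cE(D)^{(\mu)}$---gives $\cE(X)^{(\mu+1)}=\{x_1,x_2\}$ and $\cE(X)^{(\mu+2)}=\varnothing$. Thus the rank $\alpha+1$ of $\cE(X)$ equals $\mu+2$, so $\alpha=\mu+1$ is a successor ordinal---contradiction. Hence $\isom(X)$ is finite, and with Theorem~\ref{thm:finite} this proves (ii).

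For \textbf{(b)}, write $\alpha=\beta+1$ and let $G$ be an infinite virtually cyclic group. The plan is to rerun the construction of Section~\ref{Allcock} with a \emph{two-ended} Cayley graph in place of the complete one. Fix a finite generating set of $G$; then the associated labelled, directed Cayley graph $C_G$ is locally finite and has exactly two ends (as $G$ is virtually $\bz$), and Allcock's argument still identifies the label- and direction-preserving automorphism group of $C_G$ with $G$. As before, take vertex surfaces $V_g$ ($g\in G$), each a copy of a fixed totally geodesic hyperbolic surface $V$ of infinite genus with finitely many boundary geodesics (one per edge of $C_G$ at a vertex), no isometries, and all interior pants curves of length in $(\arcsinh(1),2\arcsinh(1))$; the only change is that we require $\cE(V)\cong\omega^\beta+1$, which is possible by the classification of surfaces and the standard twist-parameter deformation. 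Glue in edge surfaces exactly as in Section~\ref{Allcock}, using the (finitely many) short boundary lengths to record the label and direction of each edge, and adjust twist parameters so that $G$ acts by isometries permuting the $V_g$ the way it permutes the vertices of $C_G$. The resulting $X$ is complete by Proposition~\ref{prop:complete}; the collar-lemma argument of Lemma~\ref{lem:isom} shows verbatim that $\isom(X)\cong\Aut(C_G)\cong G$; and an end-space count as in Lemmas~\ref{lem:loch}--\ref{lem:Y-homeo} gives $\cE(X)=\big(\bigsqcup_{g\in G}\cE(V_g)\big)\sqcup\{x_1,x_2\}$, where $x_1,x_2$ are the ends coming from the two ends of $C_G$. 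This is the two-point compactification of a countable disjoint union of copies of $\omega^\beta+1$, which has Cantor--Bendixson rank $\beta+2=\alpha+1$ and degree $2$; by Theorem~\ref{thm:classification} it is homeomorphic to $\cE(S)$, so $X$ is homeomorphic to $S$ by the classification of surfaces.

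The lengthiest part is the bookkeeping in \textbf{(b)}: one must verify that a finite generating set still suffices for the labelled-directed-Cayley-graph rigidity, that the edge surfaces can encode finitely many labels together with edge directions, and that the end-space computation survives the passage from ``infinite cyclic'' to ``virtually cyclic'' $G$; but it is a faithful adaptation of Section~\ref{Allcock}. The genuinely new ingredient---and the step I would pin down first---is the Cantor--Bendixson parity argument in \textbf{(a)}: an infinite-order isometry forces $X$ to be tiled, along the translates of a single compact curve, by a fundamental domain with compact boundary, and the existence of such a bi-infinite chain structure forces the rank of $\cE$ to be a successor ordinal, which is incompatible with $\alpha$ being a limit.
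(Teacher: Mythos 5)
Your proof is correct and follows essentially the same route as the paper's: the ``only if'' directions are reduced to Theorem~\ref{thm:doubly-pointed} and Theorem~\ref{thm:finite}; the limit-ordinal case is handled by the same argument (an infinite-order isometry fixing both maximal ends produces a bi-infinite nested family of disjoint translates of a separating curve, and the resulting decomposition of \( \cE \ssm \cE^\al \) into homeomorphic clopen pieces forces \( \al \) to be a successor); and the successor case uses the same two-ended finite-generating-set Cayley-graph variant of the Section~\ref{Allcock} construction. The only cosmetic difference is that the paper pins down the two extra ends in part (b) via Proposition~\ref{prop:end-group} applied to the cocompact complementary region, where you instead count directly.
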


The end space of a surface is doubly pointed and countable if and only if the Cantor--Bendixson degree of the end space is two.
So, let \( S \) be as in Theorem~\ref{thm:deg2}, then, by Theorem~\ref{thm:finite},  every finite group appears as the isometry group of some complete hyperbolic metric on \( S \); moreover, by Theorem~\ref{thm:doubly-pointed}, the isometry group of a complete hyperbolic metric on \( S \) must be virtually cyclic.
So we need only show that (i) given an infinite virtually cyclic group, it appears as the isometry group when \( \al \) is a successor and (ii) if \( \al \) is a limit ordinal, then the isometry group is finite.
We break these cases into two lemmas.

\begin{Lem}
Let \( X \) be an orientable complete hyperbolic surface whose space of ends \( \cE \) is countable of Cantor--Bendixson degree 2 and rank \( \al +1 \).
If \( \al \) is a limit ordinal, then \( \isom(X) \) is finite.
\end{Lem}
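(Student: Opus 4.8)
The plan is to reduce to the presence of an infinite-order isometry and then show that such an isometry forces the end space to be ``\( \bz \)-periodic'' near \( e_1,e_2 \), which in turn forces \( \al \) to be a successor ordinal. Since the Cantor--Bendixson derivative is a topological invariant and \( \cE \) has degree \( 2 \) and rank \( \al+1 \), the set \( \cE^\al \) consists of exactly two ends \( e_1,e_2 \), and it is preserved by the action of \( \isom(X) \) on \( \cE \). Hence the subgroup \( \Lambda \leq \isom(X) \) fixing \( e_1 \)---equivalently, fixing both \( e_1 \) and \( e_2 \)---has index at most two, and it suffices to prove \( \Lambda \) is finite. As \( \al \) is a limit ordinal, \( \cE \) is infinite, so \( \pi_1(X) \) is non-abelian, and \( \cE \), being countable of Cantor--Bendixson degree \( 2 \), is doubly pointed; so Theorem~\ref{thm:doubly-pointed} applies and \( \Lambda \) is virtually cyclic. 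I would then argue by contradiction: assuming \( \Lambda \) infinite, it contains an element \( g_0 \) of infinite order fixing both \( e_1 \) and \( e_2 \).

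Next I would set up the periodic structure. Fix a simple closed geodesic \( \gamma \) separating \( e_1 \) from \( e_2 \). Since \( \isom(X) \) acts properly discontinuously on \( X \) (Lemma~\ref{lem:prop-discont}), the setwise stabiliser of the compact set \( \gamma \) is finite, so \( g_0^n(\gamma)\neq\gamma \) for \( n\neq 0 \); by Lemma~\ref{lem:finite-scg} only finitely many of the (distinct) geodesics \( g_0^n(\gamma) \) meet \( \gamma \), so after replacing \( g_0 \) by a suitable power \( g=g_0^N \) we may assume \( g^n(\gamma)\cap\gamma=\varnothing \) for all \( n\neq 0 \). Then \( \{g^n(\gamma)\}_{n\in\bz} \) is a family of pairwise-disjoint simple closed geodesics, each separating \( e_1 \) from \( e_2 \), on which \( g \) acts as the shift; as \( g \) fixes \( e_1,e_2 \) it preserves the betweenness order these curves inherit, which is therefore order-isomorphic to \( \bz \). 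Replacing \( g \) by \( g^{-1} \) if needed, \( g^n(\gamma) \) marches monotonically towards \( e_2 \) as \( n\to+\infty \). Writing \( N_n^+ \) for the component of \( X\ssm g^n(\gamma) \) containing \( e_2 \) and \( B_n=N_n^+\ssm\overline{N_{n+1}^+} \) for the ``band'' between \( g^n(\gamma) \) and \( g^{n+1}(\gamma) \), each \( B_n \) is a connected subsurface with compact frontier and \( g^n(B_0)=B_n \).

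The main work is to identify the end space near \( e_2 \). Using Lemma~\ref{lem:finite-scg} one shows \( \{N_n^+\}_{n\geq 0} \) is an exiting sequence representing \( e_2 \); hence the bands tile \( N_0^+ \), the sets \( \widehat{B_n} \) are clopen in the open set \( \widehat{N_0^+}\subseteq\cE(X) \), the \( \widehat{N_n^+} \) form a neighbourhood basis of \( e_2 \), and \( \widehat{N_0^+} \) is canonically homeomorphic to the one-point compactification (with added point \( e_2 \)) of \( \bigsqcup_{n\geq 0}\widehat{B_n} \). Each inclusion \( \overline{B_n}\into X \) is proper and induces a homeomorphism \( \cE(B_n)\cong\widehat{B_n} \), and \( g^n \) induces a homeomorphism \( \cE(B_0)\cong\cE(B_n) \). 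Now \( \cE(B_0) \) is compact, countable, and non-empty---were it empty the bands would be compact, making \( e_2 \) an isolated end, which is impossible since \( e_2\in\cE^\al \) and \( \al\geq\omega \)---so by Theorem~\ref{thm:classification}, \( \cE(B_0)\cong\omega^\sigma\cdot d+1 \) for some \( \sigma\in\omega_1 \) and \( d\geq 1 \). Taking the \( \sigma \)-th Cantor--Bendixson derivative of \( \widehat{N_0^+} \) then leaves a countable discrete set together with its only accumulation point \( e_2 \), i.e., a copy of \( \omega+1 \); hence \( \widehat{N_0^+}\cong\omega^{\sigma+1}+1 \) by Theorem~\ref{thm:classification}, with \( e_2 \) the unique point of rank \( \sigma+1 \). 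Since Cantor--Bendixson rank is computed locally on the open set \( \widehat{N_0^+} \), and \( e_2 \) is a point of maximal rank \( \al \) in \( \cE \), this gives \( \al=\sigma+1 \)---a successor ordinal, contradicting the hypothesis. Therefore \( \Lambda \), and hence \( \isom(X) \), is finite.

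The step I expect to be the crux is the third paragraph: verifying that \( X \) genuinely is \( \bz \)-periodic near \( e_2 \) (that \( \{N_n^+\} \) exits and that \( \widehat{N_0^+} \) is the one-point compactification of a disjoint union of copies of the single compact space \( \cE(B_0) \)), and then carrying out the Cantor--Bendixson bookkeeping that extracts ``\( \al \) is a successor'' from this periodicity. The reduction to an infinite-order isometry translating a separating geodesic, and the ruling out of the degenerate ``shift along a geodesic'' case, are comparatively routine given the collar lemma and proper discontinuity.
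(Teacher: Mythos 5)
Your proof is correct and follows essentially the same route as the paper: extract an infinite-order isometry fixing both points of \( \cE^\al \) via Theorem~\ref{thm:doubly-pointed}, translate a separating geodesic to tile the surface by homeomorphic bands, and conclude that the periodicity forces \( \al \) to be a successor. The only difference is bookkeeping---you localize at one maximal end and compute the rank of the one-point compactification \( \widehat{N_0^+}\cong\omega^{\sigma+1}+1 \), whereas the paper works with the full decomposition \( \cE\ssm\cE^\al=\bigsqcup_{n\in\bz}E_n \); your version of the Cantor--Bendixson count is, if anything, slightly more careful.
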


\begin{proof}
Label the two points in \( \cE^\al \) by \( e^+ \) and \( e^- \).
As \( \isom(X) \) is virtually cyclic by Theorem~\ref{thm:doubly-pointed}, if \( \isom(X) \) is infinite, then it must contain an infinite-order element \( f \) (generating a cyclic, finite-index subgroup of \( \isom(X) \)).
By possibly replacing \( f \) by a power of \( f\), we may assume that \( f(e^\pm) = e^\pm \) and that \(f\) is orientation-preserving. 

Choose a separating simple closed geodesic \( \gamma \) in \( X \) that separates the two points of \( \cE^\al \). 
First observe that given any compact \( K \subset X \), there exists \( N \in \bn \) such that \( f^{\pm n}(\gamma) \cap K =\varnothing \) for all \( n > N \): indeed, if not, then \( K \) would be non-displaceable for the action of \( \langle f \rangle \), and hence \( \langle f \rangle \) is finite by Lemma \ref{lem:finite-isom}, a contradiction.
By potentially replacing \( f \) with a power, we may assume that both \( f(\gamma) \) and \( f^{-1}(\gamma) \) are disjoint from \( \gamma \).

Let \( U_+ \) and \( U_- \) denote the components of \( X \ssm \gamma \), labelled such that \( \widehat U_\pm \) is a neighborhood of \( e^\pm \).
Up to relabelling, we may assume that \( f(\gamma) \subset U_+ \).
Observe that as \(f\) is orientation-preserving and \( f^{-1}(\gamma) \cap \gamma = \varnothing \), we must have that \( f^{-1}(\gamma) \subset U_- \).

Now, for \( n \in \bz \), let \( V_n \) denote the subsurface co-bounded by \( f^n(\gamma) \) and \( f^{n+1}(\gamma) \). 
The complement of \( f^n(\gamma) \cup f^{n+1}(\gamma) \) in \(X\) has three components, which partition \(\cE\) into clopen subsets; let $E_n$ denote the \(\widehat{V}_n\) (which is possibly empty).
As noted above, the set of geodesics \( \{f^n(\gamma)\}_{n\in\bn} \) escape every compact set; hence, \( X = \bigcup_{n\in\bz} V_n \).
It follows that
\[
\cE \ssm \cE^\al = \bigsqcup_{n\in\bz} E_n,
\]
for if the complement of the union of the \( E_n \) contained more than two ends, then there would be a compact set--- such as a pair of pants separating \( e_+, e_- \), and the other ends in the complement of \( \bigcup_{n\in\bz} E_n \)--- that infinitely many of the \( f^n(\gamma) \) would intersect non-trivially. 

Now, as \( E_n \) and \( E_m \) are homeomorphic for all \( n, m \in \bz \), if the Cantor--Bendixson rank of \( E_n \) is \( \be \), then it follows that the rank of \( \cE \ssm \cE^\al \) is also \( \beta \); hence, we may conclude that \( \al = \beta+1 \), which contradicts \( \al \) being a limit ordinal.
\end{proof}

\begin{Lem}
Let \( S \)  be an orientable  infinite-genus 2-manifold with no planar ends and whose space of ends \(\cE\) is countable of Cantor--Bendixson degree 2 and rank \( \al + 1 \).
If \( \al \) is a successor and \( G \) is an infinite virtually cyclic group, then \( S \) admits a complete hyperbolic metric  whose isometry group is isomorphic to \( G \).
\end{Lem}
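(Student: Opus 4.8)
\textit{Proof sketch / plan.} Since every finite group is virtually cyclic, by Theorem~\ref{thm:doubly-pointed} (which applies as $\cE$ being countable of degree $2$ makes it doubly pointed) it remains only to realise an arbitrary infinite virtually cyclic $G$. The plan is to refine the construction of $Y_S^G$ from Section~\ref{sec:radial}, building the surface over a Cayley graph of $G$ for a \emph{finite} generating set, so that the combinatorial skeleton is two-ended. Recall the standard dichotomy: an infinite virtually cyclic group $G$ has a finite normal subgroup $F$ with $G/F$ isomorphic to $\bz$ or to the infinite dihedral group; in particular $G$ is finitely generated and has exactly two ends. Fix a finite generating set of $G$ and let $C_G$ be the associated labelled, directed Cayley graph; it is quasi-isometric to $G$, hence has exactly two ends, which we call $\epsilon^+$ and $\epsilon^-$, and, as in Section~\ref{Allcock}, the group of label- and direction-preserving automorphisms of $C_G$ is $G$. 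Writing $\al=\be+1$ (with $\be$ the predecessor of $\al$), Theorem~\ref{thm:classification} identifies $\cE(S)$ with $\omega^{\al}\cdot 2+1$, which is the two-point compactification of $\bigsqcup_{n\in\bz}D_n$ with each $D_n\cong\omega^\be+1$, the two added points $e^+$ and $e^-$ being the limits of the families $\{D_n\}$ as $n\to+\infty$ and $n\to-\infty$ respectively; a routine computation of Cantor--Bendixson derivatives confirms this space has rank $\al+1$ and degree $2$.

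Next I would fix the pieces. The vertex surface is a complete orientable hyperbolic surface $V$ with totally geodesic boundary, infinite genus, no planar ends, with $\cE(V)\cong\omega^\be+1$ and with finitely many boundary geodesics of pairwise distinct lengths in $(0,\arcsinh(1))$, one for each generator; as it has only finitely many boundary components, all ends of $V$ are separated from $\partial V$, i.e.\ $\cE_\partial(V)=\varnothing$. Using Fenchel--Nielsen coordinates exactly as in Section~\ref{Allcock} (generic twist parameters, interior cuff lengths in $(\arcsinh(1),2\arcsinh(1))$) one arranges $V$ to be rigid; when $G/F$ is infinite dihedral or $F$ is nontrivial one instead installs on $V$ exactly the group of symmetries required for the $G$-action on $C_G$ to be realised, by orientation-preserving isometries, following the constructions of Section~\ref{Allcock}. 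The edge surfaces are compact genus-one hyperbolic surfaces with two totally geodesic boundary components; as in Section~\ref{Allcock} their boundary lengths are chosen asymmetrically to encode the direction of an edge of $C_G$, except along edges inverted by an element of $G$ exchanging $\epsilon^+$ and $\epsilon^-$, where the two boundary lengths must agree and the surface is taken symmetric under an orientation-preserving involution swapping its boundary components.

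Now assemble $X=X(S,G)$: take a copy $V_g$ of $V$ for each $g\in G$ and, for each labelled edge of $C_G$ from $g$ to $gh$, glue in the edge surface of type $h$ along the corresponding boundary geodesics of $V_g$ and $V_{gh}$ by orientation-reversing isometries, compatibly with the left $G$-action on $C_G$; in the dihedral case one checks that the gluings may be chosen so that the elements of $G$ exchanging $\epsilon^\pm$ act by orientation-preserving isometries. Since $X$ carries a pants decomposition with uniformly bounded cuff lengths, Proposition~\ref{prop:complete} (applied as in Lemma~\ref{lem:geod-complete}) shows $X$ is complete, and by construction $G$ acts on $X$ by orientation-preserving isometries. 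That $\isom(X)\cong G$ then follows essentially verbatim from the argument of Lemma~\ref{lem:isom}: the collar lemma forces any isometry to preserve the set of closed geodesics of length $<2\arcsinh(1)$, hence the decomposition of $X$ into vertex and edge surfaces; the curves of length $<\arcsinh(1)$ single out the boundaries of the $V_g$, so an isometry permutes the $V_g$ and the edge surfaces; the asymmetry and labelling of the edge surfaces force the induced permutation to be a label- and direction-preserving automorphism of $C_G$; and the rigidity of $V$ (with $\isom(V)$ absorbed into the vertex stabilisers of $G$) shows the isometry is determined by this automorphism, whence $\isom(X)=G$.

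It remains to identify $X$ topologically. As $X$ is orientable, borderless, of infinite genus and has no planar ends, by the classification of surfaces it suffices to show $\cE(X)\cong\cE(S)$, and by Theorem~\ref{thm:classification} it suffices that $\cE(X)$ be countable, compact, of Cantor--Bendixson rank $\al+1$ and degree $2$. Since each $V_g$ has compact boundary and $\cE_\partial(V_g)=\varnothing$, the set $\cE(V_g)\cong\omega^\be+1$ is clopen in $\cE(X)$, and the ends of $X$ lying in no $V_g$ are exactly the two ends $\epsilon^\pm$ of $C_G$; thus $\cE(X)=\bigl(\bigsqcup_{g\in G}\cE(V_g)\bigr)\cup\{e^+,e^-\}$. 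A connectivity argument modelled on the proof of Lemma~\ref{lem:loch}---routing paths around compact sets along the chain of edge surfaces---shows that the $\cE(V_g)$ with $g$ eventually in the $\epsilon^+$-direction accumulate precisely onto $e^+$, those with $g$ eventually in the $\epsilon^-$-direction onto $e^-$, and that no further ends arise. The top point of each $\cE(V_g)$ has rank $\be$, so $e^+$ and $e^-$ have rank $\be+1$, with $\cE(X)^{\be+1}=\{e^+,e^-\}$ and $\cE(X)^{\be+2}=\varnothing$; hence $\cE(X)$ has Cantor--Bendixson rank $\be+2=\al+1$ and degree $2$, so $\cE(X)\cong\cE(S)$ and $X\cong S$. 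I expect the main obstacle to be exactly this last step---choosing $V$ and the gluings so that the two ends of $C_G$ persist as precisely the two special points $e^\pm$ at the correct Cantor--Bendixson level---together with the orientation-preservation check in the dihedral case.
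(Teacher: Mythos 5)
Your proposal is correct and follows essentially the same route as the paper: a two-ended Cayley graph for a finite (symmetric) generating set, rigid vertex surfaces with end space $\omega^{\al-1}+1$, asymmetric genus-one edge surfaces, the collar-lemma argument for $\isom(X)\cong G$, and the identification of the two leftover ends with the two ends of the Cayley graph. Two small remarks. First, your special handling of the ``dihedral case'' is unnecessary: since each \emph{directed} labelled edge $(g,gs)$ receives its own edge surface and $G$ acts by left multiplication, no edge surface is ever carried to itself with its boundary components exchanged (left multiplication sends the edge from $g$ to $gs$ to the edge from $hg$ to $hgs$, preserving label and direction), so no symmetric edge pieces and no extra isometries ``installed on $V$'' are needed --- indeed $V$ must be kept isometry-free, as the left action on vertices is free and any isometry of $V$ would enlarge $\isom(X)$ beyond $G$. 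Second, where you propose a hands-on connectivity argument to see that exactly two ends accumulate the $\cE(V_g)$'s, the paper argues more cleanly: after cutting off each $\cE(V_g)$ by a separating geodesic $\gamma_g$, the remaining subsurface $Y$ carries a properly discontinuous cocompact $G$-action, so $\cE(Y)\cong\cE(\Gamma)$ has exactly two points by Proposition~\ref{prop:end-group}; your Cantor--Bendixson bookkeeping from that point on agrees with the paper's.
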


\begin{proof}
Virtually cyclic groups are finitely generated. Fix a finite (symmetric) generating set \( \mathscr S \).
Let \( \Gamma \) be the directed and labelled Cayley graph of \( G \) associated to \( \mathscr S \), that is, \( \Gamma \) is the graph whose vertices correspond to the elements of \( G \) and for each ordered pair \( (g,s) \in G \times \mathscr S \) there is a directed edge from \( g \) to \( gs \) labelled `\( s \)'.
The group of automorphisms of \( \Gamma \) preserving the direction and labels of edges is \( G \).

Note that as \( G \) is an infinite virtually cyclic group, it contains an infinite cyclic normal subgroup \( H \) of finite index.
In particular, \( H \) acts on \( \Gamma \) properly discontinuously and cocompactly and, since \( H \) also acts properly discontinuously and cocomapctly on \( \br \) (which is 2-ended), by Proposition \ref{prop:end-group}, \( \Gamma \) is 2-ended.

Recall that by Theorem~\ref{thm:classification}, \( \cE \) is homeomorphic to \( \omega^\al\cdot2+1 \).
As \( \al \) is a successor, we can define \( W = \omega^{\al-1} + 1 \). 
Note that the one-point compactification of \( \bigsqcup_{n\in\bn} W \) is homeomorphic to \( \omega^\al + 1 \) (again, by Theorem~\ref{thm:classification}).

As in our previous constructions, we need to construct vertex and edge surfaces to build our desired hyperbolic surface.
We first focus on the vertex surface \( V \).
We require the topology of \( V \) to have the following properties, which, by the classification of surfaces, determine the homeomorphism type of \( V \):
\begin{itemize}
\item \( V \) is orientable, has infinite genus, and every end is non-planar,
\item \( \cE(V) \) is homeomorphic to \( W \), and
\item \(\partial V \) is compact with \( 2|\mathscr S| \) connected components.
\end{itemize}
Fix a labelling of the boundary components of \( V \) by \( \mathscr S \times \{0,1\} \); we write \( \partial_s^i \) for the component corresponding to \( (s,i) \in \mathscr S \times \{0,1\} \).
Next, we consider the geometry of \( V \).
Fix an injection  \( \zeta\co \mathscr S \times\{0,1\} \to ( 0, \arcsinh(1) ) \).
For the geometry of \( V \), we require that \( V \) is a hyperbolic surface with totally geodesic boundary satisfying:
\begin{itemize}
\item \( V \) has no isometries,
\item \( \ell(\partial_s^i)  = \zeta(s,i) \) for \( (s,i) \in \mathscr S \times \{0,1\} \), and
\item \( V \) admits a pants decomposition with all interior cuffs having lengths contained in the interval \( (\arcsinh(1), 2\arcsinh(1)) \).
\end{itemize}

As in our previous constructions, we let the edge surface \( E(g,s) \) be a hyperbolic surface homeomorphic to a torus with two boundary components, obtained by gluing two pairs of pants, one with boundary lengths \( \zeta(s,0), \arcsinh(1) \) and \( \arcsinh(1) \), and the other with boundary lengths \( \zeta(s,1), \arcsinh(1), \) and \( \arcsinh(1) \). 

Now, let \( X_\Gamma \) be a hyperbolic surface obtained as follows:
for each \( g \in G \), let \( V_g \) be a copy of \( V \).
For each \((g,s) \in G \times \mathscr S \) take a copy of \( E(g,s) \) and attach one boundary component to \( V_g \) along \( \partial(s,0) \) and the other to \( V_{gs} \) along \( \partial(s,1) \).
In addition, we require these attaching maps to be compatible with the action of \( G \) on \( \Gamma \). 

Arguing as in Lemma~\ref{lem:geod-complete} and Lemma~\ref{lem:isom}, we see that \( X_\Gamma \) is a complete hyperbolic surface and \( \isom(X_\Gamma) \cong G \). 

It is left to show that \( X_\Gamma \) is homeomorphic to \( S \). 
It is clear that \( X_\Gamma \) is borderless, has infinite genus, and has no planar ends, so we need only show that \( \Ends(X_\Gamma) \) is homeomorphic to \( \cE \). 
First observe that \( V_g \) is a closed subspace of \( X_\Gamma \) for each \( g \in G \) and hence there is an end map \( \Ends(V_g) \to \Ends(X_\Gamma) \) induced by the inclusion \( V_g \hookrightarrow X_\Gamma \). 
As \( \partial V_g \) is compact and separating, it follows that \( \Ends(V_g) \) embeds into \( \Ends(X_\Gamma) \); in fact, the same reasoning implies that \( \bigsqcup_{g\in G} \Ends(V_g) \) embeds in \( \Ends(X_\Gamma) \) as a dense subset. 
Recall that \(\cE(V_g)\) is homeomorphic to $W = \omega^{\alpha-1} + 1\) so that, by Theorem~\ref{thm:classification}, it is only left to show that
\[
\Ends(X_\Gamma) \ssm \left(\bigsqcup_{g\in G} \Ends(V_g) \right)
\]
consists of two points. 

For each \( g \in G \), let \( \gamma_g \subset V_g \) be a separating (in $X_\Gamma$) simple closed geodesic inducing a partition of \( \Ends(X_\Gamma) \) into two clopen sets, one of which is \( \Ends(V_g) \), and such that \( h(\gamma_g)=\gamma_{hg} \) for all \( g,h \in G \).
Let \( Y \) be the unique component of \( X_\Gamma \ssm \bigcup_{g\in G} \gamma_g \) with disconnected boundary; it follows that 
\[
\Ends(Y) = \Ends(X_\Gamma) \ssm \left(\bigsqcup_{g\in G} \Ends(V_g) \right).
\]
Now observe that \( G \) acts properly discontinuously and cocompactly on \( Y \); hence, by Proposition \ref{prop:end-group}, \( \cE(Y) \) and \( \cE(\Gamma) \) are homeomorphic.
As \( |\Ends(\Gamma)| = 2 \),  we can then conclude that the Cantor-Bendixson rank and degree for  \( X_\Gamma \) are $\alpha +1$ and $2$, respectively, so that \( X_\Gamma \) is homeomorphic to \( S \).
\end{proof}

Combining Corollaries~\ref{cor:CB1} and \ref{cor:CB2} and Theorem~\ref{thm:deg2} completes the proof Theorem~\ref{main}.

\begin{Thm} \label{main}
Let \( S \) be an orientable infinite-genus 2-manifold with a countable space of ends, none of which are planar, and let \( G \) be an arbitrary group.
Suppose the end space \( \cE \) of \( S \) has characteristic system \( (\al, n) \).
\begin{enumerate}
\item 
If \( n = 1 \), there exists a  complete hyperbolic metric on \( S \) whose isometry group is \( G \) if and only if \( G \) is countable.
\item 
If \( n = 2 \) and $\alpha$ is a successor ordinal, there exists a  complete hyperbolic metric on \( S \) whose isometry group is \( G \) if and only if \( G \) is virtually cyclic.
\item
If \( n \geq 3 \), or $n=2$ and $\alpha$ is a limit ordinal, there exists a complete hyperbolic metric on \( S \) whose isometry group is \( G \) if and only if \( G \) is finite. \qed
\end{enumerate}
\end{Thm}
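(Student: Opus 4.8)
The plan is to treat the theorem as an assembly: the hypotheses on \(S\) partition naturally according to the characteristic system \((\al,n)\), and in each piece a result already established in Sections~\ref{Allcock} and \ref{Denumerable} applies essentially verbatim. First I would record two standing observations that make the case analysis legitimate. Since \(\cE\) is a countable compact Hausdorff space, Theorem~\ref{thm:classification} applies, so \(\cE\cong\omega^\al\cdot n+1\) with \(n\) a well-defined positive integer; hence every admissible \(S\) falls into exactly one of \(\{n=1\}\), \(\{n=2\}\), \(\{n\ge 3\}\), with the middle case subdivided by whether \(\al\) is a successor or a limit ordinal. Moreover, any complete hyperbolic metric on \(S\) is carried by a surface with non-abelian fundamental group (infinite genus), so Lemma~\ref{lem:prop-discont}, Lemma~\ref{lem:finite-isom}, Lemma~\ref{lem:nd-countable}, and Lemma~\ref{lem:pointed-degree} are all in force.

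Then I would run the four cases. If \(n=1\), Lemma~\ref{lem:pointed-degree} says \(\cE\) has radial symmetry, so Theorem~\ref{thm:pointed} (equivalently Corollary~\ref{cor:CB2}) gives statement~(1): realizability is equivalent to countability. If \(n\ge 3\), the Cantor--Bendixson degree of \(\cE\) is at least \(3\), so Lemma~\ref{lem:nd-countable} produces a compact non-displaceable subsurface of \(S\), and Theorem~\ref{thm:lack} (equivalently Corollary~\ref{cor:CB1}) gives the \(n\ge 3\) half of statement~(3): realizability is equivalent to finiteness, with the forward direction from Theorem~\ref{thm:finite} and the reverse from Lemma~\ref{lem:finite-isom}. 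If \(n=2\), then \(\cE\) is doubly pointed and countable, so Theorem~\ref{thm:deg2} applies directly: its part~(i) is statement~(2) (successor \(\al\): isometry groups are exactly the virtually cyclic groups, the upper bound coming from Theorem~\ref{thm:doubly-pointed}), and its part~(ii) is the remaining half of statement~(3) (limit \(\al\): isometry groups are exactly the finite groups). Combining the four subcases yields the theorem.

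The hard part is not in this packaging — it is upstream, and I would point to Theorem~\ref{thm:deg2} as the genuine obstacle behind statement~(2) and the limit-ordinal part of statement~(3): realizing an arbitrary infinite virtually cyclic group requires rerunning the Cayley-graph construction of Section~\ref{Denumerable} with a \(2\)-ended ``neck'' surface so that the quotient has the prescribed Cantor--Bendixson rank, while excluding infinite isometry groups when \(\al\) is a limit relies on the subtler fact that the iterated images of a separating geodesic under an infinite-order isometry chop \(\cE\ssm\cE^\al\) into countably many mutually homeomorphic clopen pieces, forcing the rank of \(\cE\ssm\cE^\al\) (hence \(\al\)) to be a successor. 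Within the assembly itself, the one thing needing care is the degenerate value \(\al=0\) (a two-ended surface, where \(\cE\ssm\cE^\al=\varnothing\)): one must check it is absorbed into the ``successor'' branch of the \(n=2\) case — consistent with the translation action of \(\bz\) on a genus-periodic ladder — and, more generally, that a countable end space really does force the degree \(n\) to be a finite number, both of which follow from the discussion in Section~\ref{sec:CB}.
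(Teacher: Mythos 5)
Your proposal is correct and follows the paper's own route exactly: the paper proves Theorem~\ref{main} by combining Corollary~\ref{cor:CB2} (the $n=1$ case, via Lemma~\ref{lem:pointed-degree} and Theorem~\ref{thm:pointed}), Corollary~\ref{cor:CB1} (the $n\geq 3$ case, via Lemma~\ref{lem:nd-countable} and Theorem~\ref{thm:lack}), and Theorem~\ref{thm:deg2} (the $n=2$ case, split by whether $\al$ is a successor or limit ordinal). Your identification of Theorem~\ref{thm:deg2} as the genuine content upstream, and your flag of the degenerate two-ended case $\al=0$, are both consistent with how the paper organizes the argument.
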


\section{Radial symmetry and self-similarity}\label{sec:SSandPS}

The goal of this section is to give another characterization of radial symmetry for end spaces of surfaces, which will later allow us to conclude---see Theorems~\ref{thm:partialclass} and \ref{mainUncountable}---that the various results from Section~\ref{Denumerable} cover all orientable infinite-genus 2-manifolds with no planar ends.
In particular, we will show that radial symmetry is equivalent to the notion of self-similarity introduced in \cite{MannRafi}.

Throughout the entirety of the section, we let \( \cE \) denote the end space of an orientable 2-manifold \( S \) and let \( \cE_{np} \subset \cE \) be the closed subset consisting of non-planar ends. 
In order to not burden the reader with notation, we view every subset \( A \) of \( \cE \) as a pair of topological spaces, namely \( (A, A\cap \cE_{np}) \), and will assume all homeomorphisms below are as pairs of nested topological spaces; for instance, if we say \( A \) and \( B \) are homeomorphic subsets of \( \cE \), we mean that there exists a homeomorphism from \( A \) to \( B \) and mapping \( A\cap \cE_{np} \) onto \( B \cap \cE_{np} \). 
(These are the type of homeomorphisms induced by homeomorphisms of the underlying surface.)
The definition of radial symmetry given in Definition \ref{def:radial-symmetry} is readily adapted to the current setting of topological pairs (as already noted in Remark \ref{rem:self-similar_planar}).

In \cite{MannRafi}, Mann and Rafi introduced the following notion of a self-similar end space.

\begin{Def}
\( \cE \) is \emph{self-similar} if given pairwise-disjoint clopen subsets \( \cE_1, \ldots, \cE_n \) such that \( \cE = \cE_1 \sqcup \cdots \sqcup \cE_n \) there exists \( i \in \{1, \ldots, n \} \) and \( A \subset \cE_i \) such that \( A \) is open and homeomorphic to \( \cE \). 
\end{Def}

The goal is to prove:

\begin{Thm}\label{thm:SSandPS}
Let \(\cE\) denote the end space of an orientable surface \(S\). Then, \( \cE \) is self-similar if and only if \( \cE \) has radial symmetry.
\end{Thm}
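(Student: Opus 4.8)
The plan is to prove the two implications separately, working throughout with pairs $(A, A\cap\cE_{np})$ as the paper prescribes. For the forward direction, suppose $\cE$ is self-similar. If $\cE$ is a singleton we are done, so assume $|\cE|\geq 2$. The first step is to produce a candidate star point. Using self-similarity repeatedly, I would build a nested sequence of clopen sets $\cE \supseteq A_1 \supseteq A_2 \supseteq \cdots$, each homeomorphic to $\cE$, with $\mathrm{diam}(A_n)\to 0$ in some fixed compatible metric on $\cE$ (after embedding $\cE$ in the Cantor set). Concretely, partition $A_n$ into two nonempty clopen pieces of small diameter; self-similarity hands back a clopen $A_{n+1}$ homeomorphic to $\cE$ inside one of them. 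By compactness $\bigcap_n A_n = \{x\}$ for a single point $x$; this $x$ will be the star point. The heart of the forward direction is then to show $\cE\smallsetminus\{x\}$ decomposes as $\bigsqcup_{n\in\bn}E_n$ with the $E_n$ pairwise homeomorphic, noncompact, with disjoint closures, and $x\in\overline{E_n}$ for all $n$.

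To get that decomposition, I would argue that $x$ is not isolated (else $\cE\smallsetminus\{x\}$ is clopen, and partitioning $\cE = \{x\}\sqcup(\cE\smallsetminus\{x\})$, self-similarity forces a clopen copy of $\cE$ inside one of the two pieces; neither can happen if $|\cE|\ge 2$ — a short case check). Since $x$ is a non-isolated point of a compact, totally disconnected, metrizable space, there is a decreasing clopen neighborhood basis $U_1\supsetneq U_2\supsetneq\cdots$ at $x$ with $\bigcap U_k = \{x\}$. Set $F_k = U_k\smallsetminus U_{k+1}$, a clopen set; then $\cE\smallsetminus\{x\} = \bigsqcup_k F_k$. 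This is not yet the radial decomposition because the $F_k$ need not be pairwise homeomorphic. The key move is to use self-similarity to ``absorb'': given the partition $\cE = U_{k}\sqcup F_1 \sqcup\cdots\sqcup F_{k-1}$, some piece contains a clopen copy $A$ of $\cE$; by shrinking the $U_k$'s we can arrange $A\subseteq U_k$, hence $A$ is a clopen copy of $\cE$ inside every tail neighborhood of $x$. From this one shows each $F_k$ is homeomorphic to a clopen subset of $\cE$, and — using that $\cE$ itself sits clopen inside arbitrarily small neighborhoods of $x$ — that one can regroup the $F_k$ into countably many pieces $E_n$, each homeomorphic to a fixed noncompact clopen set that reappears cofinally; I expect this regrouping argument, tracking the pair structure with $\cE_{np}$, to be the main obstacle. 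A clean way to organize it is a back-and-forth: build $E_n$ as a union of a subsequence of blocks of $F_k$'s chosen so that all $E_n$ realize the same homeomorphism type, which is possible because self-similarity makes $\cE$ (hence any tail) ``stable'' under taking clopen subsets containing a copy of $\cE$.

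For the reverse direction, suppose $\cE$ has radial symmetry with star point $x$ and $\cE\smallsetminus\{x\} = \bigsqcup_{n\in\bn}E_n$, all $E_n\cong E_1 =: E$. Given a clopen partition $\cE = \cE_1\sqcup\cdots\sqcup\cE_m$, the star point $x$ lies in exactly one piece, say $\cE_1$. Since each $E_n$ is noncompact with $x$ in its closure and $\cE_1$ is a clopen neighborhood of $x$, all but finitely many $E_n$ meet $\cE_1$; in fact, because $\cE_1$ is open and the closures of the $E_n$ are pairwise disjoint, for large $n$ the set $E_n$ has $\overline{E_n}\subseteq \cE_1\cup\{x\}$ — so $\cE_1$ contains $E_n\cup\{x\}$ for infinitely many $n$, which is a one-point compactification of a disjoint union $\bigsqcup E_{n_i}\cong\bigsqcup_{n\in\bn}E_n$, hence homeomorphic to $\cE$ itself (as pairs, since the $\cE_{np}$-condition on the $E_n$'s is part of radial symmetry). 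Taking $A$ to be that clopen subset of $\cE_1$, we have found an open (indeed clopen) $A\subseteq\cE_1$ homeomorphic to $\cE$, verifying self-similarity. I would isolate the one-point-compactification identification as a small lemma: a disjoint union of countably many copies of $E$, one-point-compactified, is homeomorphic to $\cE$, and is insensitive to throwing away finitely many summands — this is exactly where radial symmetry's requirement that $x\in\overline{E_n}$ for all $n$ is used.

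Assembling these gives Theorem~\ref{thm:SSandPS}. The delicate point throughout is keeping the homeomorphisms compatible with the nested pair structure $(\,\cdot\,, \,\cdot\,\cap\cE_{np})$, but since radial symmetry as adapted in Remark~\ref{rem:self-similar_planar} already demands $E_n\cap\cE_{np}\cong E_m\cap\cE_{np}$, and self-similarity is stated for the end space of a surface (so homeomorphisms are automatically pair-homeomorphisms in the relevant sense), this bookkeeping goes through without new ideas.
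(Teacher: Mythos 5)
Your forward direction starts the same way the paper does --- nested clopen copies of \( \cE \) shrinking to a single point \( x \), which is essentially Lemma~\ref{lem:existence} obtained without the maximal-point machinery --- but it stops exactly where the real work begins. The step you yourself flag as ``the main obstacle,'' regrouping the annuli \( F_k = U_k \ssm U_{k+1} \) into pairwise homeomorphic pieces, is the entire content of the theorem in this direction, and ``a back-and-forth chosen so that all \( E_n \) realize the same homeomorphism type'' is not an argument: the \( F_k \) are arbitrary clopen sets with no a priori relation to one another, and there is no reason infinitely many unions of blocks of them should share a single homeomorphism type. The paper's resolution is a genuinely different idea: it never tries to make pieces of \( \cE \ssm \{x\} \) pairwise homeomorphic directly. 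Instead it builds the model space \( Z_x \) (countably many copies of \( \cE \) wedged at \( x \) and one-point compactified), which visibly has radial symmetry, and proves \( Z_x \cong \cE \) by a back-and-forth in which the corresponding pieces \( W_n \subset Z_x \) and \( T_n \subset \cE \) are homeomorphic \emph{to each other} (not to one another), using the absorption property that every neighborhood of \( x \) contains a clopen copy of \( \cE \) and hence a copy of any prescribed clopen set. Radial symmetry of \( \cE \) is then transported from \( Z_x \). Without this, or an equivalent device, your forward direction is incomplete.

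Your reverse direction contains an actual error. Given the partition \( \cE = \cE_1 \sqcup \cdots \sqcup \cE_m \) with \( x \in \cE_1 \), you take \( A = \{x\} \cup \bigsqcup_{n\in I} E_n \) for the infinite set \( I \) of indices with \( E_n \subset \cE_1 \), and assert that \( A \) is clopen. It is closed, but it is never open when \( I \neq \bn \): since \( x \in \overline{E_m} \) for every \( m \), every neighborhood of \( x \) meets every \( E_m \), including those with \( m \notin I \), which \( A \) misses entirely. Self-similarity requires an \emph{open} \( A \), so this set does not witness it. (There is also a smaller gap earlier: the claim that all but finitely many \( E_n \) lie in \( \cE_1 \) tacitly uses that the \( E_n \) are open, which is not part of Definition~\ref{def:radial-symmetry}.) The paper avoids constructing \( A \) by hand altogether: it verifies the hypotheses of the Mann--Rafi criterion (Proposition~\ref{thm:mr}) --- a unique maximal class in \( [\cE] \) with \( \mathcal M \) a singleton or infinite, on a surface all of whose compact subsurfaces are displaceable --- via Lemmas~\ref{lem:intersection}, \ref{lem:pointed}, and \ref{lem:displaceable}, which exploit surface homeomorphisms permuting the \( E_n \) rather than exhibiting a clopen copy of \( \cE \) directly. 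You would need either that machinery or a substantially more careful direct construction.
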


Before continuing, we note that despite using the language of surfaces in our definition of \( \cE \), this section can all be framed in terms of second-countable Stone spaces and their groups of homeomorphisms (and hence, by Stone duality, everything can also be framed in terms of second-countable Boolean algebras). 

For Theorem~\ref{mainUncountable}, we will only require the forward direction; however, we believe this to be a useful characterization and so include a proof of the converse as well. 

We will make use of the following binary relation on \( \cE \) introduced in \cite{MannRafi}. 
Recall that given \( f \in \Homeo(S) \), we let \( \widehat f \) denote the induced homeomorphism in \( \Homeo(\cE) \).

\begin{Def}
Given two points \( x, y \in \cE \) we set \( y \leq x \) if, for every neighborhood \( U \) of \( x \), there exists a neighborhood \( V \) of \( y \) and \(f \in \Homeo(S)\) so that \( \widehat f(V) \subset U \). If \(  y \leq x \) and \( x \leq y \), we say that x and y are \emph{order equivalent}.
\end{Def}

Let \( [\cE] \) denote the set of order equivalence classes, then \( ([\cE], \leq) \) is a partially ordered set. 
We say a point in \( \cE \) is \emph{maximal} if its corresponding class in \( [\cE] \) is maximal with respect to the partial order and we let \(\mathcal{M} \) denote the set of maximal points in \( \cE \). 

At the end of the section in Proposition~\ref{prop:star-maximal}, we will see that if \( \cE \) is self-similar, then a point of \( \cE \) is maximal if and only if it is a star point.


\subsection{Self-similarity implies radial symmetry}

The following sequence of lemmas will be used to show that self-similarity implies radial symmetry. 

\begin{Lem}
\label{lem:nbhd}
If \( \cE \) is self-similar, then every open neighborhood of a maximal point of \( \cE \) contains a clopen subset homeomorphic to \( \cE \). 
\end{Lem}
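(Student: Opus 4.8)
The plan is to exploit self-similarity directly against an arbitrary open neighborhood $U$ of a maximal point $x$. First I would observe that since $\cE$ is a subset of the Cantor set (a second-countable Stone space), it has a basis of clopen sets, so without loss of generality $U$ itself may be taken to be clopen. Then $\cE = U \sqcup (\cE \ssm U)$ is a partition into two clopen pieces (discarding the second if it is empty, in which case $U = \cE$ and we are done). By self-similarity, either $U$ or $\cE\ssm U$ contains an open subset homeomorphic to $\cE$. If it is $U$, then — since a subset of a Stone space that is homeomorphic to a compact space is itself clopen, or more carefully, since any open subset of $\cE$ homeomorphic to the compact space $\cE$ must be closed — we can further shrink to a clopen copy of $\cE$ inside $U$, and we are finished. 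So the entire content is to rule out the possibility that the homeomorphic copy of $\cE$ lives in $\cE \ssm U$.

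The key step is therefore to use maximality of $x$. Suppose toward a contradiction that there is an open $A \subset \cE \ssm U$ with $A \cong \cE$, and such that $U$ contains no clopen copy of $\cE$. Pick any point $z \in A$ whose image under the homeomorphism $A \to \cE$ is a maximal point of $\cE$; then $z$ is a maximal point of $\cE$ as well (maximality is detected by the order relation $\leq$, and the homeomorphism $A\cong \cE$ preserves it on the clopen-in-$\cE$ neighborhood $A$). Now I would like to produce a homeomorphism of $S$ — equivalently an element of $\Homeo(\cE)$ arising from a homeomorphism of $S$ — that moves a neighborhood of $z$ into $U$, contradicting the assumption that $U$ has no clopen copy of $\cE$; or, alternatively, directly compare $x$ and $z$ via the order $\leq$. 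Concretely: since $z \leq x$ (as $x$ is maximal, $z\le x$ for every $z$; this needs the statement that the order is "total onto $\mathcal M$" or at least that maximal elements dominate everything — this is where I would lean on results from \cite{MannRafi}, or reprove the needed piece: every point is $\leq$ some maximal point when $\cE$ is self-similar), there is a neighborhood $V$ of $z$ and $f\in\Homeo(S)$ with $\widehat f(V)\subset U$. Shrinking $V$ to a clopen set, $\widehat f(V)$ is then a clopen subset of $U$, and by arranging $V$ to be a clopen copy of $\cE$ inside $A$ (possible since $A\cong\cE$ and $A$ is clopen, so $A$ contains clopen copies of $\cE$ — in fact $A$ itself is such a copy), we conclude $U$ contains a clopen copy of $\cE$ after all. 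This contradiction completes the proof in the case the copy landed in $\cE\ssm U$, so in every case $U$ contains a clopen set homeomorphic to $\cE$.

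The main obstacle I anticipate is the bookkeeping around the order relation $\leq$: specifically, justifying cleanly that a maximal point dominates every point (so that the defining property of $\leq$ applies with $x$ on the right), and that images of clopen copies of $\cE$ under the induced maps $\widehat f$ are again clopen copies of $\cE$. The first of these may require invoking a lemma from \cite{MannRafi} (self-similar end spaces have the property that $\mathcal M$ is nonempty and every point lies below a maximal one), or a short self-contained argument using self-similarity: iterate the partition-and-find-a-copy property to push any fixed point into ever-smaller copies of $\cE$ accumulating on a maximal point. The second is routine once one notes $\widehat f$ is a homeomorphism of $\cE$ and homeomorphisms preserve the clopen property and the homeomorphism type of subsets. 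I would also want to double-check the elementary fact that an open subset of $\cE$ that is homeomorphic to $\cE$ is automatically clopen — this follows because such a subset is homeomorphic to a compact space, hence compact, hence closed in the Hausdorff space $\cE$; this lets me freely upgrade "open homeomorphic copy" to "clopen homeomorphic copy" throughout.
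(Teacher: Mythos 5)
Your reduction to the case where the open copy of $\cE$ provided by self-similarity lands in $\cE \ssm U$ is fine, as is the observation that an open subset of $\cE$ homeomorphic to the compact space $\cE$ is automatically clopen. The gap is in the final step, where you try to move that copy into $U$. The relation $z \leq x$ only asserts that \emph{there exists} a neighborhood $V$ of $z$ and some $f \in \Homeo(S)$ with $\widehat f(V) \subset U$; the quantifier is existential over the pair $(V,f)$, so you are handed a particular $V$ that you may shrink but not choose. The phrase ``by arranging $V$ to be a clopen copy of $\cE$ inside $A$'' therefore requires either (a) an $f$ carrying the \emph{specific} neighborhood $A$ of $z$ into $U$, which is strictly stronger than what $z \leq x$ provides, or (b) a clopen copy of $\cE$ inside the given $V$ --- but $V$ is just some neighborhood of the maximal point $z$, and producing a clopen copy of $\cE$ inside an arbitrary neighborhood of a maximal point is exactly the statement being proved. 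As written, the argument is circular at this point. (Your reliance on ``maximal points dominate every point,'' which you flag as needing input from \cite{MannRafi}, is shared with the paper's proof and is not the problem.)

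The paper closes this gap by reversing the order of operations: it first uses $y \leq x$ for \emph{every} $y \in \cE \ssm U$, together with compactness, to produce a finite clopen partition $\cE = U \sqcup V_1' \sqcup \cdots \sqcup V_m'$ in which each $V_i'$ admits an open embedding into $U$, and only \emph{then} applies self-similarity to this refined partition; whichever piece receives the copy of $\cE$, the copy can be transported into $U$ because the entire piece embeds there. Your argument can be repaired in the same spirit --- cover the clopen set $A$ by finitely many clopen pieces each embeddable in $U$, transport this to a finite clopen partition of $\cE$ via the homeomorphism $A \cong \cE$, and apply self-similarity to that partition --- but this is essentially the paper's proof rather than a fix of your final step.
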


\begin{proof}
Let \( x \in \cE \) be maximal and let \( U \) be an open neighborhood of \( x \).
By possibly shrinking \( U \), we may assume that \( U \) is clopen.  
Let \( V = \cE \ssm U \).
Using the maximality of \( x \), for each \( y \in V \) there exists an open neighborhood \( V_y \) of \( y \) such that \( U \) contains an open subset homeomorphic to \( V_y \).  
By possibly shrinking \( V_y \), we may assume that \( V_y \subset V \) and that \( V_y \) is clopen in \( \cE \). 

As \( V \) is clopen, it is compact; hence, there exists \(  y_1, \ldots, y_n \in V \) such that \( V=~\bigcup_{i=1}^n V_i \), where \( V_i = V_{y_i} \). 
Let \( V_1' = V_1 \), \( V_2' = V_2 \ssm V_1' \), \( V_3' = V_3 \ssm (V_1'\cup V_2') \), etc, so that 
we obtain \( V_1', \ldots, V_n' \) satisfying \[ \cE=U \sqcup V_1' \sqcup \cdots \sqcup V_m' \] with the property that \( U \) contains an open subset homeomorphic to \( V_i' \) for each \( i \in \{1, \ldots, n\} \). 
Now as \( \cE \) is self-similar, either \( U \) or one of the \( V_i' \) contains a clopen subset homeomorphic to \( \cE \).
In either case, we are done as \( U \) contains an open homeomorphic copy of \( V_i' \).
\end{proof}

\begin{Lem}
\label{lem:subset_maximal}
Let \( A \subset \cE \) be open and homeomorphic to \( \cE \).
If \( x \in A \) is maximal in \( A \), then \( x \) is also maximal in \( \cE \).
\end{Lem}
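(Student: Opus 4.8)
The plan is to feed Lemma~\ref{lem:nbhd} through a change-of-coordinates argument; recall that throughout this subsection we are assuming $\cE$ is self-similar. First I would make two reductions. Since $A$ is open in the compact Hausdorff space $\cE$ and is homeomorphic to $\cE$, it is compact, hence closed, hence clopen in $\cE$; so $\cE\ssm A$ is clopen as well, and we may assume $\cE\ssm A\neq\varnothing$ (otherwise there is nothing to prove). As self-similarity is a property of the topological pair and $A\cong\cE$ as pairs, the end space $A$ is itself self-similar and $x$ is one of its maximal points. I would also record the compatibility between the order $\le_A$ on $A$ and the order $\le_\cE$ on $\cE$: if $S_A\subset S$ is a subsurface with $\widehat{S_A}=A$ and matching non-planar ends, then extending a self-homeomorphism of $S_A$ that is the identity near $\partial S_A$ by the identity on $S\ssm S_A$ yields an embedding $\Homeo(S_A)\into\Homeo(S)$ inducing the inclusion $A\into\cE$ on ends; hence $y\le_A w$ implies $y\le_\cE w$ for $y,w\in A$.

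Next I would apply Lemma~\ref{lem:nbhd} to the self-similar space $A$ and its maximal point $x$: every neighborhood of $x$ in $A$ contains a subset that is clopen in $A$ and homeomorphic (as a pair) to $A$, hence to $\cE$. Because $A$ is open in $\cE$, intersecting with $A$ turns a neighborhood of $x$ in $\cE$ into a neighborhood of $x$ in $A$; because $A$ is clopen in $\cE$, a compact subset that is clopen in $A$ is clopen in $\cE$. Combining these gives the local picture
\[
(\star)\qquad\text{every neighborhood of }x\text{ in }\cE\text{ contains a clopen }C\subseteq\cE\text{ with }C\cong\cE\text{ (as pairs).}
\]

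It then remains to show that $(\star)$ forces $x$ to be maximal in $\cE$. I would argue by contradiction: suppose there is $z\in\cE$ with $x<_\cE z$, so $z\not\le_\cE x$, and fix a neighborhood $U_0$ of $x$ witnessing this, meaning $\widehat f(V)\not\subseteq U_0$ for every neighborhood $V$ of $z$ and every $f\in\Homeo(S)$. Using $(\star)$, choose a clopen $C_0\subseteq U_0$ together with a homeomorphism of pairs $\psi\colon\cE\to C_0$, and set $z_0=\psi(z)\in C_0$. For a small enough clopen neighborhood $V$ of $z$ we have $\psi(V)\subseteq C_0$, and $\psi|_V\colon V\to\psi(V)$ is a homeomorphism of pairs between clopen subsets of $\cE$. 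The crux is the change-of-coordinates principle, a consequence of the classification of surfaces: such a partial homeomorphism of end spaces is realized by an ambient homeomorphism, i.e.\ there is $f\in\Homeo(S)$ with $\widehat f(V)=\psi(V)$. Granting this, $\widehat f(V)=\psi(V)\subseteq C_0\subseteq U_0$ contradicts the choice of $U_0$, so $x$ is maximal in $\cE$.

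The step I expect to be the main obstacle is this realization step (equivalently, verifying that $\le_A$ agrees with the restriction of $\le_\cE$ to $A$). To promote $\psi|_V$ to an ambient homeomorphism one must build a homeomorphism $S\to S$ carrying a subsurface cut off by $V$ onto one cut off by $\psi(V)$, which by the classification requires matching the genera and the complementary end spaces of these two subsurfaces. The genus bookkeeping is available because $\psi$ preserves the non-planar ends, so $V$ and $\psi(V)$ carry non-planar ends simultaneously and the relevant minimal genera agree; the complementary end spaces can be matched using the self-similarity of $\cE$ together with the freedom to shrink $V$. Everything else in the argument is formal.
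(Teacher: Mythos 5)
Your argument is correct, but it takes a different route from the one in the paper. The paper's proof is a direct three\-/line conjugation: fix a homeomorphism \( f \co \cE \to A \); given \( y \in \cE \) comparable to \( x \) and a neighborhood \( U \) of \( x \), the maximality of \( x \) in \( A \) places a homeomorphic copy of a neighborhood of \( f(y) \) inside \( U \cap A \), and pulling back along \( f \) exhibits a homeomorphic copy of a neighborhood of \( y \) inside \( U \), which is what \( y \leq x \) requires. You instead first prove the intermediate statement \( (\star) \) --- every neighborhood of \( x \) in \( \cE \) contains a clopen copy of all of \( \cE \) --- by applying Lemma~\ref{lem:nbhd} to the self-similar space \( A \) and using that \( A \), being compact and open, is clopen in \( \cE \); you then push a neighborhood of an arbitrary point \( z \) into \( U_0 \) through such a copy. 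Your route costs an extra invocation of Lemma~\ref{lem:nbhd} (and hence of the self-similarity of \( A \)), but buys a slightly stronger conclusion, namely \( z \leq x \) for every \( z \in \cE \) rather than only for \( z \) comparable to \( x \). Both proofs terminate at the same unstated principle: that ``\( U \) contains a clopen homeomorphic copy of a neighborhood \( V \) of \( y \)'' upgrades to ``\( \widehat f(V) \subset U \) for some \( f \in \Homeo(S) \).'' The paper uses this silently, as part of the Mann--Rafi framework; you are right to single it out as the one nontrivial step.

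Your proposed justification of that step, however, aims at the wrong target. You suggest building \( f \) by matching the genera and the complementary end spaces of subsurfaces cut off by \( V \) and by \( \psi(V) \); but \( \cE \ssm V \) and \( \cE \ssm \psi(V) \) need not be homeomorphic, and neither self-similarity nor shrinking \( V \) obviously repairs this, so a homeomorphism with \( \widehat f(V) = \psi(V) \) exactly may not exist. The standard fix avoids comparing complements altogether: if \( z \in U_0 \) there is nothing to prove, and otherwise one shrinks \( V \) to be disjoint from the clopen set \( C_0 \supseteq \psi(V) \); the map that interchanges the disjoint clopen sets \( V \) and \( \psi(V) \) via \( \psi \) and \( \psi^{-1} \) and is the identity elsewhere is then a self-homeomorphism of the pair \( (\cE, \cE_{np}) \), and every such self-homeomorphism is induced by a homeomorphism of \( S \) by the classification of surfaces --- here the complement of \( V \sqcup \psi(V) \) is literally fixed, so no matching is needed. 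With that substitution your proof is complete.
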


\begin{proof}
Let \( y \in \cE \) be comparable to \( x \), let \( U \) be an open neighborhood of \( x \), and let \( f \co \cE \to A \) be a homeomorphism. 
By the maximality of \( x \) in \( A \), there exists an open neighborhood \( V \) of \( f(y) \) in \( A \) such that \(U\) contains a homeomorphic copy of \( V \).
Hence, \( U \) contains an open neighborhood homeomorphic to \( f^{-1}(V) \), giving the desired open neighborhood of \( y \); therefore, \( x \) is maximal in \(\cE\). 
\end{proof}

\begin{Lem}
\label{lem:existence}
If \( \cE \) is self-similar, then there exists \( x \in \mathcal M \) such that every open neighborhood of \( x \) contains an open neighborhood of \( x \) homeomorphic to \( \cE \). 
\end{Lem}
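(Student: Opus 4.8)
The plan is to realize the desired point $x$ as the unique point of the intersection of a nested sequence of clopen subsets of $\cE$, each homeomorphic (as a pair) to $\cE$. If $\cE$ is a singleton the statement is trivial, so assume $\cE$ is infinite; then $\cE$ is compact and metrizable, and I fix a compatible metric. The one structural observation used throughout is that, since $\cE$ is compact Hausdorff, any subset of $\cE$ homeomorphic to $\cE$ is compact, hence closed; in particular the ``open homeomorphic copy'' furnished by the definition of self-similarity is automatically \emph{clopen}. (In this argument I will not need Lemmas~\ref{lem:nbhd} or \ref{lem:subset_maximal}; indeed the proof will also establish, en route, that a self-similar end space has a maximal point at all.)

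\emph{Construction.} Set $A_0 = \cE$. Given a clopen $A_n \subseteq \cE$ homeomorphic to $\cE$ as a pair, partition $A_n$ into finitely many nonempty clopen pieces $P_1, \dots, P_k$, each of diameter less than $1/(n+1)$ (possible since $A_n$ is a compact Stone space: cover by small clopen sets, take a finite subcover, and disjointify). Since $A_n \cong \cE$ is self-similar, one of the $P_i$ contains a clopen subset $A_{n+1} \cong \cE$ as a pair, and $A_{n+1}$ has diameter less than $1/(n+1)$. The resulting sequence $A_0 \supseteq A_1 \supseteq \cdots$ consists of clopen copies of $\cE$ with diameters tending to $0$, so $\bigcap_{n} A_n = \{x\}$ for a single point $x$; moreover $\{A_n\}_{n\in\bn}$ is a neighborhood basis of $x$, since for open $U \ni x$ the nested compact sets $A_n \ssm U$ have empty intersection, hence $A_N \subseteq U$ for some $N$. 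In particular every neighborhood of $x$ contains some $A_n$, which is an open neighborhood of $x$ homeomorphic to $\cE$. It remains to show $x \in \mathcal M$.

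\emph{Maximality of $x$.} Let $y \in \cE$ with $y \neq x$ and let $U$ be a neighborhood of $x$; it suffices to produce a neighborhood $V$ of $y$ and $f \in \Homeo(S)$ with $\widehat f(V) \subseteq U$. Pick $N$ with $A_N \subseteq U$; since $y \neq x = \bigcap_n A_n$ and $A_N$ is closed, there is a clopen neighborhood $V$ of $y$ with $V \cap A_N = \varnothing$. Fix a homeomorphism of pairs $\psi \co \cE \to A_N$ and put $V_0 = \psi(V) \subseteq A_N$. Realizing the clopen partitions $\cE = V \sqcup (\cE \ssm V)$ and $\cE = V_0 \sqcup (\cE \ssm V_0)$ by single separating curves in $S$, and routing the genus of $S$ onto the complementary side in each case, the bordered classification of surfaces shows there is $f \in \Homeo(S)$ with $\widehat f(V) = V_0$ provided $\cE \ssm V$ and $\cE \ssm V_0$ are homeomorphic as pairs (the genera and numbers of boundary components of the complementary subsurfaces match by construction). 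But $\cE \ssm V_0 = (\cE \ssm A_N) \sqcup (A_N \ssm V_0) = (\cE \ssm A_N) \sqcup \psi(\cE \ssm V)$, so, writing $\cE \ssm V = A_N \sqcup R$ (valid as $A_N$ is a clopen subset of $\cE \ssm V$), one has
\[
\cE \ssm V_0 \;\cong\; (\cE \ssm A_N) \sqcup (\cE \ssm V) \;=\; \big((\cE \ssm A_N) \sqcup A_N\big) \sqcup R \;=\; \cE \sqcup R \;\cong\; A_N \sqcup R \;=\; \cE \ssm V ,
\]
the last homeomorphism applying $\psi^{-1}$ to the first summand, all homeomorphisms being as pairs. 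Hence $f$ exists, $\widehat f(V) = V_0 \subseteq A_N \subseteq U$, and $y \leq x$; as $y$ was arbitrary, $x$ is the maximum of $[\cE]$, so $x \in \mathcal M$, and $x$ has the required property by the Construction.

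The main obstacle is the maximality step: the order $\leq$ is defined through the action of $\Homeo(S)$, whereas the construction of $x$ lives entirely inside $\cE$, so the argument must pass between the two. The bridge is the bordered classification of surfaces, which promotes an end-space homeomorphism carrying a small clopen neighborhood of $y$ onto a clopen copy inside $A_N$ to an honest self-homeomorphism of $S$; the potential obstruction to this promotion lies in the homeomorphism type of the complementary pieces, and it vanishes precisely because of the trivial ``absorption'' identity $(\cE \ssm A_N) \sqcup A_N = \cE \cong A_N$. The remaining steps — the existence of the shrinking clopen pieces and the fact that $\{A_n\}$ is a neighborhood basis — are routine point-set topology of compact Stone spaces.
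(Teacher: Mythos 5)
Your proof is correct, and it takes a route that differs from the paper's in a meaningful way. The paper splits into the cases \( |\mathcal M| = 1 \) and \( |\mathcal M| > 1 \) and leans on Lemmas~\ref{lem:nbhd} and \ref{lem:subset_maximal}: in the second case it also builds a nested shrinking sequence of copies of \( \cE \), but each stage is anchored at a maximal point supplied by those lemmas, so the argument presupposes that maximal points exist. You instead manufacture the nested sequence \( \{A_n\} \) directly from self-similarity plus the fact that a compact metrizable Stone space admits finite clopen partitions of arbitrarily small mesh, with no case split; and you then prove maximality of the limit point from scratch, promoting the abstract pair homeomorphism \( \psi\colon \cE \to A_N \) to an element of \( \Homeo(S) \) via the bordered classification of surfaces, with the absorption identity \( (\cE \ssm A_N) \sqcup A_N \cong A_N \) handling the complementary pieces. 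This buys several things: the proof is self-contained (it does not need Lemmas~\ref{lem:nbhd}--\ref{lem:subset_maximal}), it shows as a byproduct that \( \mathcal M \neq \varnothing \) and indeed that \( x \) is a maximum for \( \leq \), and it makes explicit the passage from end-space homeomorphisms to surface homeomorphisms that the paper's definition of \( \leq \) requires but that the cited lemmas treat only implicitly. Two small repairs: when you ``pick \( N \) with \( A_N \subseteq U \)'' you must also enlarge \( N \) so that \( y \notin A_N \) (free, since \( \bigcap_n A_n = \{x\} \neq \{y\} \) and the \( A_n \) are nested), as otherwise \( V \cap A_N = \varnothing \) and the decomposition \( \cE \ssm V = A_N \sqcup R \) are unavailable; and the genus bookkeeping in the classification step deserves one more sentence---when \( V \) (or a complement) contains non-planar ends the corresponding subsurface is forced to have infinite genus, and since \( \psi \) and your absorption homeomorphism are homeomorphisms of pairs, the planarity types, hence the achievable genera, of corresponding pieces agree, so the curves can be chosen to make all four invariants match.
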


\begin{proof}
Let \( \mathcal M \) denote the set of maximal points of \( \cE \). 
The proof splits into two cases, either \( |\mathcal M| =1 \) or \( |\mathcal M| > 1 \).

First assume that \( \mathcal M = \{x\} \).
Let \( U \) be an open neighborhood of \( x \).
By possibly shrinking \( U \), we may assume that \( U \) is clopen.
The self-similarity of \( \cE \) guarantees that either \( U \) or \( \cE \ssm U \) contains a subset \( A \) homeomorphic to \( \cE \).
The uniqueness of \( x \) together with Lemma~\ref{lem:subset_maximal} implies that \( A \) must contain \( x \) and hence \( A \subset U \).
Therefore, \( x \) satisfies the conclusion of the lemma. 

Now assume \( |\mathcal M| > 1 \).
It follows from Lemma~\ref{lem:nbhd} and Lemma~\ref{lem:subset_maximal} that every point of \( \mathcal M \) is an accumulation point of \( \mathcal M \) (in particular, \( \mathcal M \) is a Cantor space).
This also follows from \cite[Proposition 4.8]{MannRafi}.
For the rest of the argument, we fix a metric \( d \) on \( \cE \). 

Choose a maximal point \( x_1 \) of \( \cE \) and let \( U_1 \) be the open ball of radius 1 about \( x_1 \).
By Lemma~\ref{lem:nbhd}, there exists an open subset \( A_1 \) of \( \cE \) contained in \( U_1 \) and homeomorphic to \( \cE \). 
Now choose \( x_2 \) to be a maximal point of \( \cE \) contained in \( A_1 \), which can be done by Lemma~\ref{lem:subset_maximal}. 
Let \( U_2 \) be an open neighborhood of \( x_2 \) contained in the intersection of \( A_1 \) with the open ball of radius \( \frac12 \) about \( x_2 \). 
Again by Lemma~\ref{lem:nbhd}, we can choose an open subset \( A_2 \) homeomorphic to \( \cE \) and contained in \( U_2 \).
Proceeding in this fashion we build a nested sequence \( \{A_n\}_{n\in\bn} \) of open subsets each of which is homeomorphic to \( \cE \) and such that \( A_n \) has diameter less than \( \frac2n \). 

As \( \cE \) is compact and the \( A_n \) are nested, \( \bigcap_{n\in\bn} A_n \) is non-empty; moreover, by construction, the intersection must have diameter 0 and hence there exists \( x \in \cE \) such that \( \{x\} = \bigcap_{n\in\bn} A_n \).
Thus, \( x \) satisfies the conclusion of the lemma. 
\end{proof}

Let \( \widehat \bn = \bn \cup \{\infty\} \) be the one-point compactification of \( \bn \). 
Let \( x \) be the point of \( \cE \) given by the proof of Lemma~\ref{lem:existence} and define \( Z_x \) to be the space \( Z_x = \cE \times \widehat\bn / \sim \), where \( (x,n) \sim (x,m) \) for all \( n,m \in \bn \), \( (y,\infty) \sim (y', \infty) \) for all \( y, y' \in \cE \), and \( (x,n) \sim (y, \infty) \) for every \( n \in \bn \) and every \( y \in \cE \). 
Observe that \( Z_x \) is compact, Hausdroff, and admits a countable basis consisting of clopen subsets (note that both \( \cE \) and \( \widehat \bn \) have these properties). In the lemma below, we will show that \(Z_x\) is homeomorphic to \(\cE\) and then use this to conclude that \(\cE\) has radial symmetry. 

\begin{Lem}
\label{lem:Zx}
\( Z_x \) is homeomorphic to \( \cE \). 
\end{Lem}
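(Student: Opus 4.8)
The plan is to exhibit both $\cE$ and $Z_x$ as ``a distinguished point together with countably many clopen copies of one fixed space, accumulating only at that point,'' and then deduce the homeomorphism from a gluing lemma. If $\cE$ is a single point then $Z_x$ is too and there is nothing to prove, so assume $\cE$ is not a singleton; then $x$ is not isolated (a short argument from self-similarity, as in the proof of Lemma~\ref{lem:existence}), so $W:=\cE\setminus\{x\}$ is non-compact. The gluing lemma I would record is: if $(Y,p)$ and $(Y',p')$ are pointed compact metrizable zero-dimensional pairs and there are clopen partitions $Y\setminus\{p\}=\bigsqcup_{i\in\bn}B_i$ and $Y'\setminus\{p'\}=\bigsqcup_{i\in\bn}B'_i$ with each $B_i$ clopen in $Y$, each $B'_i$ clopen in $Y'$, $p\in Y_{np}\iff p'\in Y'_{np}$, and $B_i\cong B'_i$ as pairs for all $i$, then $(Y,p)\cong(Y',p')$. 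Indeed, clopenness of the pieces forces every neighborhood of $p$ (resp.\ $p'$) to contain all but finitely many of them; pasting the homeomorphisms $B_i\cong B'_i$ and sending $p\mapsto p'$ then gives a bijection that is a local homeomorphism on each $B_i$ and is continuous at $p$ (the complement of the preimage of a neighborhood of $p'$ lies in a finite union of the $B_i$), hence a continuous bijection between compact Hausdorff spaces, hence a homeomorphism.

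Unpacking the quotient, the map $q$ identifies $Z_x\setminus\{*\}$ with $\bigsqcup_{n\in\bn}W^{(n)}$, where $W^{(n)}$ is the $n$-th copy of $W$ with the pair structure inherited from $\cE$ and $*$ is taken non-planar exactly when $x$ is; a neighborhood basis of $*$ is given by the sets keeping all but finitely many whole sheets $W^{(n)}$ and, in the remaining finitely many sheets, an arbitrary clopen neighborhood of $x$. The crux is then to produce a strictly decreasing sequence of clopen neighborhoods $\cE=O^0\supsetneq O^1\supsetneq\cdots$ of $x$ with $\bigcap_k O^k=\{x\}$, each $O^k$ homeomorphic to $\cE$ as a pair, and with all ``annuli'' $A_k:=O^k\setminus O^{k+1}$ homeomorphic to $A_0$ as pairs. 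Granting this, $\cE\setminus\{x\}=\bigsqcup_{k\ge 0}A_k$ is a clopen partition into copies of $A_0$, and replicating it in each sheet gives $Z_x\setminus\{*\}=\bigsqcup_{n,k}A_k^{(n)}$, a partition into sets clopen in $Z_x$ and all homeomorphic to $A_0$; since both index sets are countably infinite, the gluing lemma gives $\cE\cong Z_x$ as pairs. Radial symmetry of $\cE$ then follows at once: carrying the decomposition $Z_x\setminus\{*\}=\bigsqcup_n W^{(n)}$ back to $\cE$ produces exactly the data of Definition~\ref{def:radial-symmetry}, with the image of $*$ a star point.

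Building the $O^k$ is where the work is. One first notes that if $A\subset\cE$ is clopen and homeomorphic (as a pair) to $\cE$ with $x\in A$, then every neighborhood of $x$ in $A$ still contains a clopen copy of $A$ (hence of $\cE$); with Lemma~\ref{lem:existence} and second countability this lets one recursively choose $O^{k+1}\subset O^k$ to be a clopen neighborhood of $x$, homeomorphic to $\cE$, of small diameter, which already secures nestedness and $\bigcap_k O^k=\{x\}$. The delicate part, which I expect to be the main obstacle, is to make the annuli all of the same homeomorphism type. The strategy is to ``pin'' $x$, i.e.\ arrange that the homeomorphisms $O^k\to\cE$ carry $x$ to $x$; then $A_k$ becomes a homeomorphic copy of $\cE$ with a clopen neighborhood of $x$ removed, the set of achievable annulus types no longer depends on $k$, and a scheduling argument produces the required sequence. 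Pinning $x$ splits along the dichotomy in the proof of Lemma~\ref{lem:existence}: if $\cE$ has a unique maximal point, that point is the only one with the Lemma~\ref{lem:existence} property and so is preserved by every self-homeomorphism (with the countable degree-one case of Lemma~\ref{lem:pointed-degree} as a model for the bookkeeping); if the maximal points form a Cantor set, one exploits its homogeneity and the freedom in choosing $O^1$ so that $\cE\setminus O^1$ already contains a clopen copy of $\cE$ and hence absorbs further copies of itself, which is what keeps the shrinking step from changing the annulus type. Carrying out this case analysis carefully should be the technical heart of the proof.
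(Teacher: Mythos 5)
Your overall skeleton --- a gluing lemma for pointed zero-dimensional compacta plus matched clopen partitions of $\cE\setminus\{x\}$ and $Z_x\setminus\{\widehat x\}$ --- is sound and is essentially what the paper's proof produces. But you have routed all the difficulty into a sublemma that you do not prove and that is strictly harder than what is needed: the existence of clopen neighborhoods $\cE=O^0\supsetneq O^1\supsetneq\cdots$ of $x$ with $\bigcap_k O^k=\{x\}$, each $O^k\cong\cE$, \emph{and} all annuli $A_k=O^k\setminus O^{k+1}$ pairwise homeomorphic. The two natural ways to get such a sequence pull against each other. If you ``transport'' a single pinned homeomorphism $h_0\colon\cE\to O^1$ with $h_0(x)=x$ and set $O^k=h_0^k(\cE)$, you do get $A_k\cong A_0$, but $C=\bigcap_k O^k$ is then an $h_0$-invariant compactum that need not equal $\{x\}$, in which case the $A_k$ fail to cover $\cE\setminus\{x\}$ and your partition is simply wrong. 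If instead you interleave small copies of $\cE$ supplied by Lemma~\ref{lem:existence} to force the diameters to zero, the annuli change homeomorphism type and you must then ``absorb'' the discrepancies --- which is exactly the back-and-forth work you were trying to avoid. Your proposed resolution (``a scheduling argument produces the required sequence,'' plus pinning $x$ via homogeneity of the set of maximal points in the Cantor case) is not carried out, and the pinning step itself needs the transitivity results of Mann--Rafi that the paper only invokes later, in Proposition~\ref{prop:star-maximal}. So the ``technical heart'' you flag is a genuine gap, not a routine verification.

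The paper's proof avoids this entirely by never asking the pieces on the $\cE$-side to be mutually homeomorphic. It runs a back-and-forth: at each stage it takes the next ``shell'' $V_n\setminus V_{n+1}$ of $Z_x$ (a finite disjoint union of clopen pieces of finitely many sheets) and embeds it into $\cE$ arbitrarily close to $x$, using only the one-sided fact from Lemma~\ref{lem:existence} that every neighborhood of $x$ contains a clopen copy of $\cE$ and hence of any clopen subset of $\cE$; then it throws the leftover part of $U_n\setminus U_{n+1}$ onto a fresh sheet $\cE_{N+1}$ of $Z_x$, which is trivially possible because $Z_x$ contains whole copies of $\cE\setminus\{x\}$ arbitrarily close to $\widehat x$. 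The resulting partitions $\{W_n\}$ and $\{T_n\}$ come pre-matched by construction, so the gluing step (your lemma, in effect) applies with no uniformity hypothesis on the pieces. If you want to salvage your write-up, the cleanest fix is to abandon the uniform-annuli requirement and replace your construction of the $O^k$ with this alternating embed-and-absorb scheme.
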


\begin{proof}
Choose a nested sequence of clopen neighborhoods \( \{U_n\}_{n\in\bn} \) of \( x \) in \( \cE \) such that  \( U_1 = \cE \) and \( \{x\} = \bigcap_{n\in\bn} U_n \).
Let \( \widehat x \) denote the equivalence class of \((x,n)\) in \( Z_x \) and choose a nested sequence of clopen neighborhoods \( \{V_n\}_{n\in\bn} \) of \( \widehat x \) in \( Z_x \) such that \(V_1 = Z_x\) and \( \{\widehat x\} = \bigcap_{n\in\bn} V_n \).

Using a back-and-forth argument, we will construct a sequence of pairwise-disjoint clopen subsets \( \{W_n\}_{n\in\bn} \) of \( Z_x \), a sequence of pairwise-disjoint clopen subsets \( \{T_n\}_{n\in\bn} \) of \( \cE \), and homeomorphisms \( f_n\co W_n \to T_n \) such that 
\begin{enumerate}[(i)]
\item \(W_n \subset Z_x \ssm \{\widehat x\} \),
\item \( T_n \subset \cE\ssm\{x\} \),
\item \( V_n \ssm V_{n+1} \subset W_n \) and
\item \( U_n\ssm U_{n+1} \subset T_n \).
\end{enumerate}
Given such conditions, we see that \( Z_x \ssm \{x\} = \bigsqcup_{n\in\bn} W_n \) and \( \cE \ssm \{x\} = \bigsqcup_{n\in\bn} T_n\), so the map \( f \co Z_x \to \cE \) defined by \( f|_{W_n} = f_n \) and \( f(\widehat x) = x \) is well-defined.
Moreover, it is easy to see that \( f \) is bijective, maps convergent sequences to convergent sequences, and is therefore a continuous bijection between compact Hausdorff spaces; hence, \( f \) is a homeomorphism. 

One additional bit of notation: for each \( n \in \bn \), let \( \cE_n = \left(\cE\ssm\{x\}\right) \times \{n\} \subset Z_x \). 
Before beginning, note that for any open neighborhood \( V \) of \(  \widehat x  \) in \( Z_x \), there exists \( N_V \in \bn \) such that \( \cE_n \subset V \) for all \( n > N_V \): this can easily be seen by lifting  to \( \cE \times \widehat \bn \).

\begin{figure}[h]
\begin{center}
\begin{overpic}[trim = 1.25in 7in 1in 1.25in, clip=true, totalheight=0.32\textheight]{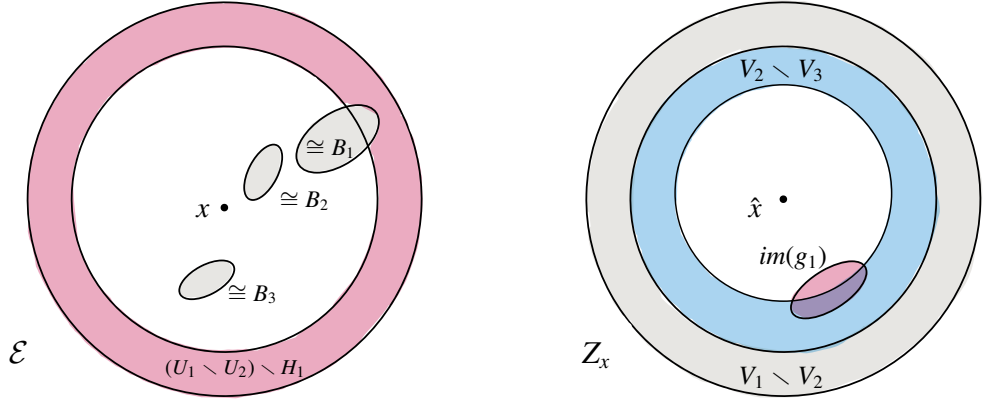}
\put(0,5){\large{$\cE$}}
\put(18,19){$x$}
\put(28.5,25){\footnotesize{$\cong B_1$}}
\put(26,20){\footnotesize{$\cong B_2$}}
\put(21,11){\footnotesize{$\cong B_3$}}
\put(15,4){\scriptsize{$(U_1 \ssm U_2) \ssm H_1$}}
\put(70,3){\small{$V_1 \ssm V_2$}}
\put(70,32.5){\small{$V_2 \ssm V_3$}}
\put(55,5){\large{$Z_x$}}
\put(71,19){$\hat x$}
\put(72,14.5){\small{$im(g_1)$}}
\end{overpic}
\caption{An example of the base case for recursion, where $V_1\ssm V_2 = B_ 1 \sqcup B_2 \sqcup B_3$.}\label{fig:base}
\end{center}
\end{figure}

We proceed by recursion. For the base case, let \( N = N_{V_2} \) and set \( B_k = (V_1\ssm V_2) \cap \cE_k \) for all \( k \in \{1, \ldots, N\} \).
Note, by the choice of \( N \), that \( V_1 \ssm V_2 = B_1 \sqcup \cdots \sqcup B_N \). 
We will now find an embedding \( h_1 \co V_1\ssm V_2 \hookrightarrow \cE \ssm \{x\} \) (see Figure~\ref{fig:base}). 
Let us identify \( B_1 \) with the corresponding subset of \( \cE \ssm \{x\} \). 
By compactness, there exists \( m_1 \) such that \( B_1 \subset U_1 \ssm U_{m_1} \).
By Lemma~\ref{lem:existence}, there exists an open neighborhood \( A \subset U_{m_1} \)  of \( x \) homeomorphic to \( \cE \).
We can now find an open subset of \( A \)---and hence \( U_{m_1} \)---homeomorphic to \( B_2 \).
Again by compactness, there exists \( m_2 \) such that \( B_2 \subset U_{m_1} \ssm U_{m_2} \). 
Continuing in this fashion, we construct a copy of \( V_1 \ssm V_2 = B_1 \sqcup \cdots \sqcup B_N \) in the set \( U_1 \ssm U_{m_N} \);
let \( h_1\co V_1\ssm V_2 \hookrightarrow \cE \ssm \{x\} \) denote the corresponding embedding.

Going back, let \( H_1 = \mathrm{image}(h_1) \subset \cE \) and let  \( g_1 \co (U_1\ssm U_2) \ssm H_1 \to \cE_{N+1} \subset Z_x \) be the restriction of the identification \( \cE \to \cE\times \{N+1\} \subset Z_x \).
Note that since \(V_1\ssm V_2 \subset \bigcup_{i=1}^N \cE_i\), the image of \(g_1\) is disjoint from \(V_1 \ssm V_2\). 
Finally, to finish the base case, let \( T_1 = (U_1\ssm U_2) \cup H_1 \) and let \( W_1 = (V_1 \ssm V_2 ) \cup \mathrm{image}(g_1) \), and let \( f_1 \co W_1 \to T_1 \) be the homeomorphism given by \( f_1= h_1 \sqcup g_1^{-1} \). 

Proceeding recursively, suppose that we have \( W_n \subset Z_x\ssm \{\widehat x\} \), \( T_n \subset \cE \ssm\{x\} \), and a homeomorphism \( f_n \co W_n \to T_n \) as desired.
We proceed exactly as in the base case with \( V_1 \) replaced by \( V_{n+1} \), \( V_2 \) replaced by \( V_{n+2} \cup W_1 \cup \cdots \cup W_n \), and \( U_1 \) replaced by an open copy of \( \cE \) contained in the clopen set \( U_{n+1} \ssm (T_1 \cup \cdots \cup T_n) \). 
\end{proof}

Since  \[ Z_x \ssm \{\widehat x\} = \bigsqcup_{n\in\bn} [(\cE\ssm\{x\})\times \{n\}], \] the space $Z_x$ has radial symmetry; hence, Lemma \ref{lem:Zx} establishes that \( \cE \) has radial symmetry, allowing us to conclude that self-similarity implies radial symmetry:

\begin{Lem}
\label{lem:forward}
If \( \cE \) is self-similar, then \( \cE \) has radial symmetry. \qed
\end{Lem}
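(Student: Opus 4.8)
The plan is to extract radial symmetry of $\cE$ from the explicit model space $Z_x$ together with the homeomorphism $Z_x \cong \cE$ supplied by Lemma~\ref{lem:Zx}. If $\cE$ is a singleton there is nothing to prove, so assume $|\cE| > 1$. By Lemma~\ref{lem:existence} there is a maximal point $x \in \cE$ such that every open neighborhood of $x$ contains an open neighborhood of $x$ homeomorphic to $\cE$; note that $x$ cannot be isolated, since otherwise $\{x\}$ would be a neighborhood of $x$ homeomorphic to $\cE$, forcing $|\cE| = 1$. This point $x$ is precisely the one used to build $Z_x$, and Lemma~\ref{lem:Zx} provides a homeomorphism $f \co Z_x \to \cE$ with $f(\widehat x) = x$, where $\widehat x$ denotes the common class in $Z_x$ of the points $(x,n)$ and $(y,\infty)$.

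The next step is to observe that $Z_x$ manifestly satisfies Definition~\ref{def:radial-symmetry} with star point $\widehat x$. Set $E_n = (\cE \ssm \{x\}) \times \{n\}$ for $n \in \bn$. Since the identifications defining $Z_x$ involve only the classes of $x$ and of $\infty$, we have $Z_x \ssm \{\widehat x\} = \bigsqcup_{n\in\bn} E_n$, and each $E_n$, with its subspace topology from $Z_x$, is canonically homeomorphic to $\cE \ssm \{x\}$; in particular the $E_n$ are pairwise homeomorphic, and each is non-compact because $\cE \ssm \{x\}$ is an open subset of the compact space $\cE$ that fails to be closed (as $x$ is non-isolated). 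Moreover the closure of $E_n$ in $Z_x$ equals $E_n \cup \{\widehat x\}$: no sequence whose second coordinate is constantly $n$ can converge to the class of $\infty$, while a sequence in $E_n$ accumulating at $(x,n)$ converges to $\widehat x$. Hence $\overline{E_n} \cap E_m = \varnothing$ for $n \neq m$, so $\widehat x$ is indeed a star point of $Z_x$.

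Transporting this structure across $f$ finishes the proof: with $\cE_n := f(E_n)$ we get $\cE \ssm \{x\} = \bigsqcup_{n\in\bn} \cE_n$, the $\cE_n$ are pairwise homeomorphic and non-compact, and $\overline{\cE_n} = f(\overline{E_n}) = \cE_n \cup \{x\}$ is disjoint from $\cE_m$ whenever $n \neq m$, so $\cE$ has radial symmetry with star point $x$. The same argument applies verbatim in the setting of topological pairs, with $Z_x$ given the non-planar subset inherited from $\cE_{np}$. The substantive input is the back-and-forth construction already carried out in Lemma~\ref{lem:Zx}, so I anticipate no real obstacle here; the only point deserving a line of care is the non-compactness of the pieces $E_n$, which is precisely why the non-isolatedness of $x$ is recorded at the outset.
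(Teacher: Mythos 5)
Your proposal is correct and follows the same route as the paper: the paper's proof of Lemma~\ref{lem:forward} is exactly the observation that \( Z_x \ssm \{\widehat x\} = \bigsqcup_{n\in\bn} \left[(\cE\ssm\{x\})\times\{n\}\right] \) exhibits radial symmetry of \( Z_x \), which transports to \( \cE \) via the homeomorphism of Lemma~\ref{lem:Zx}. You simply spell out the routine verifications (non-isolatedness of \( x \), non-compactness of the pieces, disjointness of closures) that the paper leaves implicit.
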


\begin{Rem}\label{rem:rsE}
Note that \( [(\cE\ssm\{x\})\times \{n\}] \cong \cE \ssm \{x\}\) so that, by Lemma~\ref{lem:Zx}, \(\cE\ssm\{x\} =  \bigsqcup_{n\in\bn} \cE_n\) where \(\cE_n \cup \{x\}\cong \cE\) for all \(n \in \bn\).
\end{Rem}


\subsection{Radial symmetry implies self-similarity}

Next, we establish that radial symmetry implies self-similarity.
To do so, we will use the following proposition, which is a consequence of Propositions 4.7 and 4.8 of \cite{MannRafi}. 

\begin{Prop}[Mann--Rafi]
\label{thm:mr}
Let \( \cE \) be realizable as the end space of an orientable surface in which every compact subsurface is displaceable.
If \( [\cE] \) has a unique maximal element and the set of maximal points \(\mathcal{M}\) of \( \cE \) is either a singleton or infinite, then \( \cE \) is self-similar. \qed
\end{Prop}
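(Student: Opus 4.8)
The statement reduces cleanly to a single local claim, which I will call $(\star)$: \emph{every clopen neighborhood $U$ of a maximal point $x \in \mathcal{M}$ contains a clopen subset homeomorphic to $\cE$}. Granting $(\star)$, self-similarity is immediate: given a clopen partition $\cE = \cE_1 \sqcup \cdots \sqcup \cE_n$, the set $\mathcal{M}$ is nonempty (maximal points exist by a Zorn-type argument, using compactness of $\cE$ to bound chains in the preorder $\leq$), so some piece $\cE_i$ contains a maximal point $x$; since $\cE_i$ is a clopen neighborhood of $x$, claim $(\star)$ produces a clopen copy of $\cE$ inside $\cE_i$. Thus all of the work lies in establishing $(\star)$, and it is there that the three hypotheses enter: (H1) every compact subsurface is displaceable, (H2) $[\cE]$ has a unique maximal class, and (H3) $\mathcal{M}$ is a singleton or infinite.

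For $(\star)$, I would first use (H2) to upgrade maximality of $x$ to \emph{domination}: every $y \in \cE$ satisfies $y \leq x$. (In this preorder every point lies below some maximal point, again by a compactness argument, and uniqueness of the maximal class forces that point to be order-equivalent to $x$.) Unwinding the relation $y \leq x$ at each $y$, choosing a neighborhood $V_y$ and $\widehat{f_y}$ with $\widehat{f_y}(V_y) \subset U$, and extracting a finite subcover by compactness, one obtains a finite clopen partition $\cE = V_1 \sqcup \cdots \sqcup V_k$ together with homeomorphisms $\varphi_j \colon V_j \to \varphi_j(V_j)$ onto clopen subsets of $U$; in other words, $\cE$ embeds \emph{piecewise} into $U$. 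The remaining task is to reassemble these pieces into a single embedding $\cE \hookrightarrow U$ with clopen image, i.e.\ to realize the images pairwise disjointly inside $U$.

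This reassembly is the heart of the argument, and is exactly where (H1) and (H3) are used. The concrete target is to produce, inside $U$, arbitrarily many pairwise-disjoint clopen copies of $U$; once one has disjoint copies $U_1, \ldots, U_k \subseteq U$ with $U_j \cong U$, each piece $V_j$ embeds into $U \cong U_j$, and the disjoint union of these embeddings realizes $\cE = \bigsqcup_j V_j$ as a clopen subset of $U$. To manufacture the copies I would run a back-and-forth recursion analogous to the construction of $Z_x$ in Lemma~\ref{lem:Zx}: having placed finitely many copies disjointly, I use (H1)---which at the level of ends says any proper clopen subset can be pushed off itself by some $\widehat{f}$---to displace the already-used region, while domination keeps a fresh copy available near $x$. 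Hypothesis (H3) controls the \emph{tops}: by Lemma~\ref{lem:subset_maximal} any clopen copy of $\cE$ in $U$ must contain a point maximal in $\cE$, so the maximal set of the copy we build must reproduce $\mathcal{M}$; this forces $\mathcal{M}$ to be self-similar as a space (a single point or a Cantor set), and it is precisely the excluded case of a finite set with more than one point that is genuinely non-self-similar---separating two maximal points by a clopen partition leaves no piece able to contain a copy of $\cE$, since such a copy would need both maximal points.

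The main obstacle is the recursion of the previous paragraph: ensuring that at every stage enough ``room'' survives inside $U$ near a maximal point to place the next copy, with the self-similar structure of $\mathcal{M}$ and the nested pair $(\cE, \cE_{np})$ both tracked throughout. This is exactly the analytic content packaged into Mann--Rafi's Propositions 4.7 and 4.8, and I expect the infinite (Cantor) case of $\mathcal{M}$ to be the most delicate, since there one must coordinate the displaceability moves with the homogeneity of $\mathcal{M}$ at each step of the back-and-forth rather than simply pinning a single maximal point.
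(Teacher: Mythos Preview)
The paper gives no proof of this proposition; it is imported wholesale from Mann--Rafi (as a consequence of their Propositions~4.7 and~4.8), so there is no in-paper argument to compare against beyond the bare citation.

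Your reduction to $(\star)$ is correct, and the opening of your attack on $(\star)$---domination plus compactness to produce a finite clopen partition $\cE = V_1 \sqcup \cdots \sqcup V_k$ with each $V_j$ embedding separately into $U$---is sound. The gap is entirely in the reassembly, and two specific steps in your sketch do not work as stated.

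First, your end-space translation of (H1) is wrong. A proper clopen $A \subset \cE$ corresponds to a \emph{noncompact} subsurface with compact frontier $K$; hypothesis (H1) lets you displace $K$, but the displacing homeomorphism may simply push $K$ deeper into the $A$-side, yielding $\widehat f(A) \subsetneq A$ rather than $\widehat f(A) \cap A = \varnothing$. So ``any proper clopen subset can be pushed off itself'' is not a consequence of (H1).

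Second, invoking a recursion ``analogous to Lemma~\ref{lem:Zx}'' is circular in spirit: the engine of that lemma is Lemma~\ref{lem:existence}, which rests on Lemma~\ref{lem:nbhd}, and Lemma~\ref{lem:nbhd} already \emph{assumes} self-similarity. Whatever furnishes fresh room near $x$ at each stage of your recursion must come from (H1)--(H3) alone, and you have not identified that mechanism.

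You concede in your final paragraph that this is ``exactly the analytic content packaged into Mann--Rafi's Propositions~4.7 and~4.8.'' That is honest, but it means your proposal ultimately reduces to the same citation the paper makes; the substantive idea that turns the piecewise embedding into a single clopen copy of $\cE$ is not present in your sketch.
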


We proceed with a sequence of lemmas to see that if \( \cE \) has radial symmetry, then it meets the hypotheses of Proposition~\ref{thm:mr} and is, therefore, self-similar. 

\begin{Lem}
\label{lem:intersection}
Given \( x \in \cE \), let \( \bar {\mathcal  O}_x \) denote the closure of the orbit of \( x \) under the action of \( \Homeo(\cE) \).
If \[ X_\cE = \bigcap_{x\in \cE} \bar{\mathcal O}_x \neq \varnothing, \]
then \( [\cE] \) has a unique maximal element and every point in \( X_\cE \) is maximal.
\end{Lem}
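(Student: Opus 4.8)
The plan is to show two things: first, that every point of $X_\cE$ is maximal in $[\cE]$, and second, that all points of $X_\cE$ are order equivalent (so together they form a single maximal class, and uniqueness follows because any maximal point's orbit closure contains $X_\cE$, forcing it into $X_\cE$). The key observation, which I would isolate as a preliminary remark, is a reformulation of the relation $\leq$ in terms of orbit closures: unwinding the definitions, $y \leq x$ holds if and only if $y$ lies in the closure $\bar{\mathcal O}_x$ of the orbit of $x$ under $\Homeo(\cE)$ — indeed, $y \in \bar{\mathcal O}_x$ means every neighborhood of $y$ meets $\mathcal O_x$, i.e.\ contains some $\widehat f(x)$, and by applying $\widehat f^{-1}$ this is exactly the statement that small enough neighborhoods $V$ of $y$ admit $\widehat g$ (with $g = f^{-1}$) pushing a neighborhood of $x$... wait, one must be a little careful about the direction, but the upshot is that $y \le x \iff y \in \bar{\mathcal O}_x$. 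I would verify this carefully since it is the engine of the proof; the subtlety is that in the definition of $\leq$ one gets to shrink $V$ depending on $U$, and one needs to check this matches the topological statement $y \in \overline{\mathcal O_x}$, using that $\widehat f$ is a homeomorphism of $\cE$.

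Granting the reformulation, the argument is short. Let $x \in X_\cE$. To see $x$ is maximal: suppose $x \leq z$ for some $z \in \cE$, i.e.\ $x \in \bar{\mathcal O}_z$. We want $z \leq x$, i.e.\ $z \in \bar{\mathcal O}_x$. Since $x \in X_\cE = \bigcap_{w \in \cE} \bar{\mathcal O}_w$, in particular $x \in \bar{\mathcal O}_z$ is automatic, so that hypothesis gives nothing new — instead I should argue directly: because $x \in X_\cE$, we have $x \in \bar{\mathcal O}_z$ for \emph{every} $z$, so every $z$ satisfies $x \le z$, which says $x$ is $\le$ everything; combined with the fact that $X_\cE$ is $\Homeo(\cE)$-invariant and closed (being an intersection of closed invariant sets), the orbit closure of $x$ is contained in $X_\cE$, and conversely any point of $X_\cE$ has $x$ in its orbit closure. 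So for $z \in X_\cE$ we get both $x \leq z$ and (since $z \in \bar{\mathcal O}_x$, because $\overline{\mathcal O_x} \subseteq X_\cE$ would go the wrong way — rather, because $x \in \overline{\mathcal O_z}$ and $z \in \overline{\mathcal O_x}$ both hold once one shows $\overline{\mathcal O_x} = X_\cE$ for $x \in X_\cE$) that $z \leq x$. I would clean this up by proving the clean statement: \emph{for $x \in X_\cE$, $\overline{\mathcal O_x} = X_\cE$.} One inclusion: $\overline{\mathcal O_x}$ is closed and invariant and contained in $\overline{\mathcal O_x}$, but to land inside $X_\cE$ note $X_\cE \subseteq \overline{\mathcal O_x}$ gives the reverse; for $\overline{\mathcal O_x} \subseteq X_\cE$, take $w \in \cE$ arbitrary — since $x \in X_\cE \subseteq \overline{\mathcal O_w}$ and $\overline{\mathcal O_w}$ is closed and invariant, $\overline{\mathcal O_x} \subseteq \overline{\mathcal O_w}$; as $w$ was arbitrary, $\overline{\mathcal O_x} \subseteq \bigcap_w \overline{\mathcal O_w} = X_\cE$.

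From $\overline{\mathcal O_x} = X_\cE$ for all $x \in X_\cE$, the conclusions drop out: if $x, z \in X_\cE$ then $z \in \overline{\mathcal O_x}$ and $x \in \overline{\mathcal O_z}$, so $z \leq x$ and $x \leq z$, hence $X_\cE$ is contained in a single order-equivalence class. That class is maximal: if $x \in X_\cE$ and $x \leq z$, then $x \in \overline{\mathcal O_z}$, so $\overline{\mathcal O_x} \subseteq \overline{\mathcal O_z}$, i.e.\ $X_\cE \subseteq \overline{\mathcal O_z}$; in particular $z$-orbit-closure contains a maximal point... more directly, $z \in \overline{\mathcal O_z}$ and we want $z \leq x$; pick any $x_0 \in X_\cE$ — we have $x_0 \in \overline{\mathcal O_z}$ hence $x_0 \le z$, but also $z \le $ ? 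Here I'd instead note maximality means: nothing is strictly above. If $z \ge x$ with $x \in X_\cE$, then $x \in \overline{\mathcal O_z}$; but also, since $X_\cE \subseteq \overline{\mathcal O_z}$ (just shown) and $X_\cE$ is invariant and $z$'s orbit accumulates onto all of $X_\cE$, one shows $z \in X_\cE$ as well (the orbit closure relation is "symmetric at the top"), giving $z \le x$. Finally, uniqueness of the maximal class: any maximal point $m$ has $\overline{\mathcal O_m} \supseteq X_\cE \neq \varnothing$, so picking $x \in X_\cE$ we get $x \le m$, and maximality of $m$'s class together with $x$ being in a maximal class forces $m$ order-equivalent to $x$. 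The main obstacle I anticipate is pinning down the orbit-closure reformulation of $\leq$ precisely and making sure the "symmetry at the top" step ($x \in X_\cE$, $x \le z$ $\Rightarrow$ $z \in X_\cE$) is airtight — that is really the one place where the hypothesis $X_\cE \neq \varnothing$ does work, via $\overline{\mathcal O_z} \supseteq X_\cE \ni x$ and $\overline{\mathcal O_x} = X_\cE$, whence $\overline{\mathcal O_z} \supseteq \overline{\mathcal O_x} = X_\cE$ and then $z \in \overline{\mathcal O_{x'}}$ for every $x' \in X_\cE$ by the same argument, so $z \in X_\cE$.
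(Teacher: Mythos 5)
The engine of your argument---the orbit-closure reformulation of \( \leq \)---is stated with the inequality reversed, and this is not a cosmetic slip: it inverts the conclusion. Unwinding the definition, \( y \leq x \) requires that every neighborhood \( U \) of \( x \) receive a pushed copy \( \widehat f(V) \) of a neighborhood of \( y \); in particular \( \widehat f(y) \in U \), so the orbit of \( y \) accumulates at \( x \), i.e.\ \( x \in \bar{\mathcal O}_y \). Conversely, if \( x \in \bar{\mathcal O}_y \), then for an open \( U \ni x \) with \( \widehat f(y) \in U \) one takes \( V = \widehat f^{-1}(U) \). So the correct statement is \( y \leq x \) if and only if \( x \in \bar{\mathcal O}_y \), \emph{not} \( y \in \bar{\mathcal O}_x \) as you wrote. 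With the correct direction the lemma is immediate, and this is exactly the paper's proof: a point \( p \in X_\cE \) lies in \( \bar{\mathcal O}_y \) for every \( y \in \cE \), hence \( y \leq p \) for every \( y \), so \( [p] \) is the maximum of \( [\cE] \); it is therefore the unique maximal element, and every point of \( X_\cE \) is maximal. With your direction, the same hypothesis reads ``\( p \leq y \) for all \( y \),'' which would make \( p \) a \emph{minimum}---and indeed you write that \( x \) ``is \( \leq \) everything'' and then have to struggle to recover maximality.

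The downstream consequences are genuine gaps, not just notation. Your observation that \( \bar{\mathcal O}_x = X_\cE \) for \( x \in X_\cE \) is correct and does show that all points of \( X_\cE \) are order equivalent, but it does not by itself give maximality. Your ``symmetry at the top'' step---from \( x \in X_\cE \) and \( x \leq z \) you want \( z \in X_\cE \)---is asserted via ``\( z \in \bar{\mathcal O}_{x'} \) for every \( x' \in X_\cE \) by the same argument,'' but \( \bar{\mathcal O}_z \supseteq X_\cE \) does not imply \( z \in \bar{\mathcal O}_{x'} \), and even if it did, membership in \( X_\cE \) requires \( z \in \bar{\mathcal O}_w \) for every \( w \in \cE \), not only for \( w \in X_\cE \). (Under the correct reformulation this step is trivial: \( x \leq z \) means \( z \in \bar{\mathcal O}_x = X_\cE \).) Likewise your uniqueness argument reads \( x \in \bar{\mathcal O}_m \) as \( x \leq m \) when it should be \( m \leq x \). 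In short: fix the direction of the reformulation and the whole proof collapses to the paper's two-line argument; as written, the maximality claim is not established.
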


\begin{proof}
Let \( x \in X_\cE \), let \( y \in \cE \ssm \{x\} \), and let \( U \) be a neighborhood of \( x \).
By assumption, \( x \in \bar{\mathcal O}_y \) and hence there exists \( f \in \Homeo(S) \) such that \( \widehat f(y) \subset U \), which guarantees \( y \leq x \).
As \( y \) was arbitrary we can conclude that \( y \leq x \) for all \( y \in \cE \), which establishes that \( x \) is maximal and shows that \( [\cE] \) has a unique maximal element.
We have also established that every point in \( X_\cE \) is maximal.
\end{proof}

\begin{Lem}
\label{lem:pointed}
If \( \cE \) has radial symmetry and \( x \) is a star point, then \( x\in X_\cE \).
\end{Lem}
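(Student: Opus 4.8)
The plan is to establish the slightly stronger statement that \( x\in\bar{\mathcal O}_y \) for \emph{every} \( y\in\cE \), which immediately yields \( x\in X_\cE=\bigcap_{y\in\cE}\bar{\mathcal O}_y \). Fix \( y\in\cE \) and a neighborhood \( U \) of \( x \); since \( \cE \) is compact, Hausdorff, and totally disconnected we may assume \( U \) is clopen, and the goal becomes to produce a homeomorphism \( \Phi \) of \( \cE \) carrying \( y \) into \( U \). If \( y=x \) take \( \Phi=\mathrm{id} \); otherwise \( y\in E_N \) for some \( N \), where \( \cE\ssm\{x\}=\bigsqcup_{n\in\bn}E_n \) is the decomposition provided by the star point \( x \)—so each \( E_n \) is open in \( \cE \) with \( \overline{E_n}=E_n\cup\{x\} \), and the \( E_n \) are pairwise homeomorphic as pairs. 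As \( \cE\ssm U \) is compact and \( \{E_n\}_{n\in\bn} \) is an open cover of \( \cE\ssm\{x\} \), all but finitely many \( E_n \) lie in \( U \); choose \( m\neq N \) with \( E_m\subset U \), together with a pair-homeomorphism \( \theta\co E_N\to E_m \).

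The key construction is the ``block transposition'' \( \Phi\co\cE\to\cE \): it fixes \( x \), sends \( E_N \) onto \( E_m \) via \( \theta \) and \( E_m \) onto \( E_N \) via \( \theta^{-1} \), and is the identity on every other \( E_n \). By construction \( \Phi \) is a \( \cE_{np} \)-preserving bijection with \( \Phi(y)=\theta(y)\in E_m\subset U \), and \( \Phi \) restricts to a homeomorphism of the open subspace \( \cE\ssm\{x\} \), being a homeomorphism on each member of the clopen cover \( \{E_n\} \). So the whole proof reduces to checking that \( \Phi \) is continuous at \( x \). I would deduce this from a soft observation that avoids any analysis of the (possibly complicated) local structure of \( \cE \) near \( x \): a bijection of a compact, Hausdorff, totally disconnected space that, together with its inverse, maps clopen sets to clopen sets is automatically a homeomorphism, because the clopen sets form a basis. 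That \( \Phi \) has this clopen-to-clopen property is forced by \( \Phi|_{\cE\ssm\{x\}} \) being a homeomorphism of the \emph{open} set \( \cE\ssm\{x\} \): if \( C \) is clopen in \( \cE \) with \( x\notin C \), then \( C \) is a compact open subset of \( \cE\ssm\{x\} \), hence so is \( \Phi(C) \), and a compact open subset of \( \cE\ssm\{x\} \) is clopen in \( \cE \); if instead \( x\in C \), apply this to \( \cE\ssm C \) and use \( \Phi(C)=\cE\ssm\Phi(\cE\ssm C) \). Since \( \Phi^{-1} \) is a block transposition of exactly the same shape (with \( \theta \) and \( \theta^{-1} \) interchanged), the same applies to it, so \( \Phi \) is a homeomorphism; by the classification of surfaces it is realized by some \( f\in\Homeo(S) \), and then \( \widehat f(y)\in U \), as needed.

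The step I expect to be the genuine obstacle is precisely this continuity of \( \Phi \) at \( x \): a priori an abstract homeomorphism \( \theta \) between the pieces \( E_N \) and \( E_m \) need not respect how those pieces accumulate onto the single point \( x \), so naively ``swapping'' them could fail to be continuous—the fix is to argue entirely at the level of the clopen decomposition, as above. The only other thing needing care is to keep the pair structure \( (\cE,\cE_{np}) \) present at every stage (the \( E_n \) are homeomorphic \emph{as pairs}, and \( \theta \), hence \( \Phi \), are pair-maps), so that the homeomorphism of \( \cE \) we build is genuinely induced by a homeomorphism of \( S \). Finally, I note that the same construction shows \( y\le x \) for every \( y\in\cE \)—the neighborhood \( E_N \) of \( y \) is sent into \( U \)—so \( x \) is in fact a maximal point of \( \cE \); this is the form in which the lemma feeds into Lemma~\ref{lem:intersection}.
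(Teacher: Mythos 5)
Your proof is correct and follows the same underlying strategy as the paper's: use the decomposition \( \cE\ssm\{x\}=\bigsqcup_{n}E_n \) to produce homeomorphisms of \( S \) carrying \( y \) into arbitrarily small neighborhoods of \( x \). The paper's version is terser---it simply chooses, for each \( n \), some \( f_n\in\Homeo(S) \) with \( \widehat f_n(y)\in E_n \) and notes that the resulting sequence is discrete in \( \cE\ssm\{x\} \), hence accumulates only at \( x \)---thereby leaving implicit exactly the point you isolate, namely that the ``block transposition'' swapping \( E_N \) with \( E_m \) extends continuously over \( x \); your clopen-to-clopen argument is a clean way to supply that verification, and your realization of the resulting pair-homeomorphism of \( (\cE,\cE_{np}) \) by an element of \( \Homeo(S) \) matches how the paper invokes the classification of surfaces elsewhere. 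The one caveat, which you share with the paper, is the tacit use of the openness of each \( E_n \) in \( \cE \) (your finite-subcover step needs it, just as the paper's discreteness claim does): this is not literally forced by Definition~\ref{def:radial-symmetry} as written, but it is how the decomposition is treated throughout Section~\ref{sec:SSandPS}, so it is not a gap relative to the paper.
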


\begin{proof}
Let \( x \) be a star point of \( \cE \) and write \( \cE \ssm \{x\} = \bigsqcup_{n\in\bn} \cE_n \) with the \( \cE_n \) as in the definition of radial symmetry. 
Let \( y \in \cE \ssm \{x\} \) and, without loss of generality, assume \( y \in \cE_1 \). 
For \( n > 1 \), choose \( f_n \in \Homeo(S) \) such that \( \widehat f_n(y) \in \cE_n \).
The sequence \( \{\widehat f_n(y)\}_{n\in\bn} \) is discrete in \( \cE \ssm \{x\} \) and hence must converge to \( x \). 
It follows that  \( x \in \bar{\mathcal{O}_y} \), and since \(y\) was arbitrary, \( x \in X_\cE \).
\end{proof}

\begin{Lem}
\label{lem:displaceable}
Let \( S_\cE \) be either a planar or an orientable infinite-genus 2-manifold such that \( \cE(S_\cE) = \cE \).
Every compact subsurface of \( S_\cE \) is displaceable. 
\end{Lem}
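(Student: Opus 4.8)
Since $\cE$ has radial symmetry, fix a star point $x$ and write $\cE\ssm\{x\}=\bigsqcup_{n\in\bn}\cE_n$ as in Definition~\ref{def:radial-symmetry}, where the $\cE_n$ are pairwise homeomorphic as pairs $(\cE_n,\cE_n\cap\cE_{np})$. Given a compact subsurface $K\subseteq S_\cE$, I would first enlarge $K$ to a compact \emph{connected} subsurface, still denoted $K$, each of whose complementary components has connected boundary; this is routine (connect the pieces by bands, then join up the boundary curves adjacent to each non-compact complementary component by a holed sphere, absorbing any compact complementary component), and displacing the enlargement displaces the original. Let $U_0$ be the complementary component of $K$ whose set of ends contains $x$, put $c_0=\partial U_0$, and set $W=\overline{S_\cE\ssm U_0}$; this is a connected subsurface with the single boundary curve $c_0$, it contains $K$, and it suffices to produce $f\in\Homeo(S_\cE)$ with $f(W)\subseteq U_0$, since then $f(K)\cap K\subseteq f(W)\cap W=\varnothing$ (as $W\cap U_0=\varnothing$).

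The set $\widehat W=\cE\ssm\widehat U_0$ is clopen and avoids $x$, hence is a compact subset of the disjoint open cover $\bigsqcup_n\cE_n$ of $\cE\ssm\{x\}$ and so meets only finitely many of the $\cE_n$; fix $N$ with $\widehat W\subseteq\cE_1\cup\dots\cup\cE_N$, so $\cE_n\subseteq\widehat U_0$ for all $n>N$. Transporting along the homeomorphisms of pairs $\cE_n\to\cE_{N+n}$ carries $\widehat W$ to a subset $B\subseteq\cE_{N+1}\cup\dots\cup\cE_{2N}\subseteq\widehat U_0$ which is clopen in $\cE$ (it is compact, hence closed, and open in the open set $\cE_{N+1}\cup\dots\cup\cE_{2N}$) and homeomorphic to $\widehat W$ as a pair.

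Next I would realise a copy of $W$ inside $U_0$. Here the one input beyond formal surface topology is a genus comparison. If $\mathrm{genus}(W)=\infty$ then $W$ has a non-planar end lying in some $\cE_j$ with $j\le N$, so $\cE_j\cap\cE_{np}\neq\varnothing$ and hence $\cE_n\cap\cE_{np}\neq\varnothing$ for every $n$; since every neighbourhood of $x$ in $S_\cE$ contains a neighbourhood of a non-planar end of all but finitely many of the $\cE_n$, and these can be taken pairwise disjoint, every neighbourhood of $x$ has infinite genus, so $x\in\cE_{np}$ and therefore $\overline{U_0}$ has infinite genus. If instead $\mathrm{genus}(W)$ is finite then, as $\mathrm{genus}(S_\cE)\in\{0,\infty\}$ and $\mathrm{genus}(S_\cE)=\mathrm{genus}(W)+\mathrm{genus}(U_0)$, we still get $\mathrm{genus}(U_0)\geq\mathrm{genus}(W)$. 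In either case a standard change-of-coordinates argument (isolate the clopen end set $B$ by a separating curve disjoint from $c_0$, then absorb or expel finitely many handles as needed) produces a bordered subsurface $W'\subseteq U_0$ with one boundary curve, with $\widehat{W'}=B$, and with $\mathrm{genus}(W')=\mathrm{genus}(W)$; by the classification of bordered surfaces, $W'\cong W$ (compatibly with end pairs).

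It remains to glue. Let $Q$ be the closure in $S_\cE$ of $U_0\ssm W'$, a surface with boundary curves $c_0$ and $\partial W'$. Then $\overline{S_\cE\ssm W'}=W\cup_{c_0}Q$ has one boundary curve, genus $\mathrm{genus}(W)+\bigl(\mathrm{genus}(U_0)-\mathrm{genus}(W)\bigr)=\mathrm{genus}(\overline{U_0})$, and end pair $\widehat W\sqcup(\widehat U_0\ssm B)\cong B\sqcup(\widehat U_0\ssm B)=\widehat U_0$; hence the classification of bordered surfaces yields a homeomorphism $\overline{S_\cE\ssm W'}\to\overline{U_0}$ taking boundary to boundary, which after precomposition with a collar homeomorphism we may assume agrees on $c_0$ with a fixed homeomorphism $W\to W'$. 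Patching the two maps gives $f\in\Homeo(S_\cE)$ with $f(W)=W'\subseteq U_0$, completing the proof. The step I expect to require the most care is the construction of $W'$ — in particular the genus comparison showing that an infinite-genus $W$ forces $\overline{U_0}$ to have infinite genus — together with the (routine) orientation check needed to patch the two boundary homeomorphisms.
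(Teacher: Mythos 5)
Your argument is correct, and it rests on the same core mechanism as the paper's proof: fix a star point \(x\), observe that the ends of (an enlargement of) \(K\) lying off the \(x\)-side meet only finitely many pieces \(\cE_n\) while all but finitely many \(\cE_n\) sit inside the end set of the complementary component containing \(x\), and close with the classification of surfaces. The implementation, however, is genuinely different. The paper keeps \(K\) compact: it first realizes a finite permutation of the \(\cE_n\) fixing \(x\) by an ambient homeomorphism \(f_\sigma\) (itself an application of the classification), uses \(\widehat f_\sigma\) to push \(\widehat U_1,\dots,\widehat U_{n-1}\) into \(\widehat U_n\), and then builds a disjoint genus-\(g_K\) copy \(K'\subset U_n\) whose complementary components induce the \(f_\sigma\)-image of \(K\)'s partition of \(\cE\); matching complementary data then gives \(f\) with \(f(K)=K'\). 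You instead displace the entire one-boundary-curve piece \(W=\overline{S_\cE\ssm U_0}\) (stronger than displacing \(K\)), transport only the end-space data via the abstract pair-homeomorphisms \(\cE_n\to\cE_{N+n}\) supplied by the definition of radial symmetry rather than by ambient homeomorphisms, and perform a single cut-and-paste along one circle. What your route buys is that it avoids constructing \(f_\sigma\) and reduces the final gluing to one boundary curve; what it costs is the explicit genus bookkeeping between \(W\) and \(U_0\), which you handle correctly (additivity of genus across \(c_0\), plus the observation that an infinite-genus subsurface with compact boundary has a non-planar end, forcing every \(\cE_n\) and hence \(U_0\) to carry infinite genus). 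Note that both arguments lean on the same implicit fact that the pieces \(\cE_n\) are open in \(\cE\), so that a compact subset of \(\cE\ssm\{x\}\) meets only finitely many of them and every neighborhood of \(x\) contains all but finitely many; your proof is at the paper's level of rigor on this point.
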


\begin{proof}
If \( |\cE|=1 \) and \( S_\cE \) is planar, then \( S_\cE \) is homeomorphic to \( \br^2 \) and the result is clear. 
Let \( K \subset S_\cE \) be a compact subsurface and let \( g_K \) denote the genus of \( K \).  
By possibly enlarging \( K \), we can assume that each component of \( \partial K \) is separating and each component of \( S_\cE \ssm K \) is unbounded.
Let \( b_1, \ldots, b_n \) denote the components of \( \partial K \) and, for each \( i \leq n \),  let \( U_i \) be the component of \( S_\cE \ssm K \) whose boundary is \( b_i \). 
It follows that \( \cE = \widehat U_1 \sqcup \widehat U_2 \sqcup\cdots\sqcup \widehat U_n \). 

Let \( x \) be a star point of \( \cE \) and write \( \cE \ssm \{x\} = \bigsqcup_{n\in\bn} \cE_n \) with the \( \cE_n \) as in the definition of radial symmetry. 
Given a finite permutation \( \sigma \in \rm{Sym}(\bn) \), choose \( f_\sigma \in \Homeo(S) \) satisfying \( \widehat f_\sigma(x) = x \) and \( \widehat f_\sigma(\cE_n) = \cE_{\sigma(n)} \). 

Without loss of generality, assume \( x \in \widehat U_n \). 
It follows that there exists \( M \in \bn \) such that \( \cE_m \subset \widehat U_n \) for all \( m > M \). 
Choose a finite permutation \( \sigma \in \rm{Sym}(\bn) \) such that \( \sigma(j) > M \) for all \( j \in \{1, \ldots, M \} \). 
Now, since \( \widehat f_\sigma(\widehat U_i) \subset \widehat U_n \) for each \( i < n \), we can choose \( b_1', b_2', \ldots, b_{n-1}' \) to be pairwise-disjoint separating simple closed curves in \( U_n \) such that, for each \( i < n \), there is a component \( V_i \) of \( S_\cE \ssm b_i' \) such that \( \widehat V_i = \widehat f_\sigma(\widehat U_i) \). 
It is left to choose \( b_n' \): choose any separating simple closed curve contained in \( U_n \) that co-bounds a genus-\( g_K \) subsurface \( K ' \) with \( b_1', \ldots, b_{n-1}' \). 
By construction, \( K \) and \( K' \) are disjoint, homeomorphic, and their complementary components induce homeomorphic partitions of \( \cE \); hence, by the classification of surfaces, there exists \( f \in \Homeo(S_\cE) \) such that \( f(K) = K' \). 
As \( K \) was arbitrary, every compact subsurface of \( S_\cE \) is displaceable. 
\end{proof}

\begin{Lem}
\label{lem:converse}
If \( \cE \) has radial symmetry, then \( \cE \) is self-similar.
\end{Lem}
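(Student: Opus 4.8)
The plan is to verify the three hypotheses of the Mann--Rafi criterion, Proposition~\ref{thm:mr}, so that self-similarity follows immediately. Concretely, one must check: (a) $\cE$ is realizable as the end space of an orientable surface in which every compact subsurface is displaceable; (b) $[\cE]$ has a unique maximal element; and (c) the set $\mathcal M$ of maximal points of $\cE$ is either a singleton or infinite. If $\cE$ is a singleton, all three hold vacuously (and $\cE$ is trivially self-similar), so assume $\cE$ has a star point $x$ together with a decomposition $\cE\ssm\{x\}=\bigsqcup_{n\in\bn}\cE_n$ as in Definition~\ref{def:radial-symmetry}. For (b): by Lemma~\ref{lem:pointed} the star point $x$ belongs to $X_\cE$, so $X_\cE\neq\varnothing$, and then Lemma~\ref{lem:intersection} immediately gives that $[\cE]$ has a unique maximal element and that $x$ is maximal.

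For (c) I would argue the dichotomy directly. Suppose $\mathcal M\neq\{x\}$ and fix a maximal point $y\neq x$; after relabeling the pieces we may assume $y\in\cE_1$. As in the proof of Lemma~\ref{lem:displaceable}, radial symmetry provides, for each $n\in\bn$, a homeomorphism $f_n\in\Homeo(S)$ with $\widehat f_n(x)=x$ and $\widehat f_n(\cE_1)=\cE_n$ (one realizes the transposition swapping $\cE_1$ with $\cE_n$ and fixing the remaining pieces, using that $\overline{\cE_n}=\cE_n\cup\{x\}$ is the one-point compactification of $\cE_n$, so that a pair-homeomorphism $\cE_1\to\cE_n$ extends continuously across $x$). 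Since the relation $\leq$ is preserved by homeomorphisms induced from $\Homeo(S)$, the point $y_n:=\widehat f_n(y)\in\cE_n$ is maximal for every $n$; as the $\cE_n$ are pairwise disjoint the $y_n$ are distinct, so $\mathcal M$ is infinite.

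For (a): realize the pair $(\cE,\cE_{np})$ as the end space of an orientable surface $S_\cE$ that is planar when $\cE_{np}=\varnothing$ and of infinite genus otherwise; such a surface exists by the classification of surfaces. By Lemma~\ref{lem:displaceable}, every compact subsurface of $S_\cE$ is displaceable, so (a) holds. With (a), (b), (c) established, Proposition~\ref{thm:mr} yields that $\cE$ is self-similar, completing the equivalence in Theorem~\ref{thm:SSandPS}.

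The only step requiring real care is (c): although the dichotomy ``singleton or infinite'' is forced, the work is in producing the homeomorphisms $f_n$. This is precisely the construction already invoked in Lemma~\ref{lem:displaceable}, and the one subtlety there---extending a homeomorphism between the non-compact pieces $\cE_1$ and $\cE_n$ across the star point $x$---is handled by the one-point compactification observation above; everything else is routine bookkeeping with the partial order.
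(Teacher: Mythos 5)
Your proposal is correct and follows essentially the same route as the paper: both verify the three hypotheses of Proposition~\ref{thm:mr} by citing Lemma~\ref{lem:displaceable} for displaceability, Lemma~\ref{lem:pointed} together with Lemma~\ref{lem:intersection} for the unique maximal element, and the permutation of the pieces \( \cE_n \) by homeomorphisms to show \( \mathcal M \) is a singleton or infinite. The extra details you supply (the one-point compactification extension of the swap homeomorphisms, and the preservation of maximality under induced homeomorphisms) are correct elaborations of steps the paper leaves implicit.
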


\begin{proof}
We intend to appeal to Proposition \ref{thm:mr}, so we first note that, by Lemma \ref{lem:displaceable}, \( \cE \) is realizable as the end space of an orientable surface in which every compact subsurface is displaceable. 
By Lemma~\ref{lem:pointed}, \( X_\cE \) is nonempty and hence, by Lemma~\ref{lem:intersection}, \( [\cE] \) has a unique maximal element. 
Now suppose that \( \cE \) has at least two maximal points.
Let \( x \) be a star point and write \( \cE \ssm\{x\} = \bigsqcup_{n\in\bn} \cE_n \) with the \( \cE_n \) as in the definition of radial symmetry.
If \( y \) is another maximal point, then there exists \( i \in \bn \) such that \( y \in \cE_i \). 
But for every \( n \in \bn \) there exists \( f_n \in \Homeo(S) \) such that \( \widehat f_n(\cE_i) = \cE_n \) and hence there exists infinitely many maximal points.
We have established that \( [\cE] \) has a unique maximal element and that  \( \mathcal M \) is either a singleton or infinite, therefore, by Proposition~\ref{thm:mr}, \( \cE \) is self-similar. 
\end{proof}

Together, Lemma \ref{lem:forward} and Lemma \ref{lem:converse} establish Theorem~\ref{thm:SSandPS}.

We end this section with observing the correspondence between star points and maximal points in end spaces that have radial symmetry (or equivalently, are self-similar).

\begin{Prop}
\label{prop:star-maximal}
Let \( \cE \) denote the end space of an orientable surface \( S \).
If \( \cE \) has radial symmetry, then a point of \( \cE \) is maximal if and only if \( \cE \) is a star point.
\end{Prop}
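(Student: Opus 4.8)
\emph{Proof strategy.} We treat the two implications separately; we may assume \( |\cE| > 1 \), the case of a single point being trivial.

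\emph{Star points are maximal.} Suppose \( x \) is a star point of \( \cE \). By Lemma~\ref{lem:pointed}, \( x \in X_\cE \); in particular \( X_\cE \neq \varnothing \), so Lemma~\ref{lem:intersection} applies and shows both that \( [\cE] \) has a unique maximal element and that every point of \( X_\cE \) --- hence \( x \) --- is maximal. This direction is essentially immediate.

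\emph{Maximal points are star points.} Since \( \cE \) has radial symmetry it is self-similar (Lemma~\ref{lem:converse}), so by Remark~\ref{rem:rsE} we may fix a star point \( x_0 \) with \( \cE \ssm \{x_0\} = \bigsqcup_{n\in\bn} \cE_n \) and \( \cE_n \cup \{x_0\} \cong \cE \) for every \( n \); in particular \( x_0 \) has property \( (\star) \): every open neighborhood of \( x_0 \) contains an open neighborhood of \( x_0 \) homeomorphic to \( \cE \). By the previous paragraph \( x_0 \) is maximal and \( [\cE] \) has a unique maximal class, so \( \mathcal M = X_\cE \) and every maximal point is order equivalent to \( x_0 \). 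The plan is to establish the key claim that \emph{every maximal point \( x \) has property \( (\star) \)}. Granting this, the proof of Lemma~\ref{lem:Zx} goes through verbatim with \( x \) in place of the point it selects --- that argument uses only property \( (\star) \) for the distinguished point, compactness of \( \cE \), and the existence of a nested countable clopen neighborhood basis --- and produces a homeomorphism \( f \co Z_x \to \cE \) with \( f(\hat x) = x \). Since \( Z_x \ssm \{\hat x\} = \bigsqcup_{n\in\bn} \big[(\cE\ssm\{x\}) \times \{n\}\big] \), whose pieces are pairwise homeomorphic, pairwise disjoint with closures meeting only at \( \hat x \), and noncompact (because \( \cE \ssm \{x\} \) is noncompact: \( x \) is not isolated, else Lemma~\ref{lem:nbhd} would force \( \cE \) to be a point), transporting this decomposition through \( f \) exhibits \( x \) as a star point of \( \cE \).

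\emph{The key claim.} Let \( x \) be maximal and \( U \) a clopen neighborhood of \( x \); we want a clopen neighborhood \( N \) with \( x \in N \subseteq U \) and \( N \cong \cE \). Since \( x_0 \leq x \), there is a clopen neighborhood \( W \) of \( x_0 \) and \( h \in \Homeo(S) \) with \( \widehat h(W) \subseteq U \); shrinking \( W \) via \( (\star) \) for \( x_0 \), we may assume \( x_0 \in W \cong \cE \), so \( \widehat h(W) \) is a clopen copy of \( \cE \) inside \( U \). If \( x \in \widehat h(W) \), we are done. Otherwise \( U \ssm \widehat h(W) \) is again a clopen neighborhood of the maximal point \( x \), and iterating (while also intersecting with a nested sequence of metric balls shrinking to \( x \), as in the proof of Lemma~\ref{lem:existence}) yields either a clopen copy of \( \cE \) containing \( x \) inside \( U \), or a partition \( U = \{x\} \sqcup \bigsqcup_{n} A_n \) into pairwise disjoint clopen sets with \( A_n \cong \cE \) and \( \mathrm{diam}(A_n) \to 0 \); in the second case one uses the radial decomposition of \( \cE \) furnished by Remark~\ref{rem:rsE} to identify \( U \) with \( \cE \), so \( N = U \) works.

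\emph{Main obstacle.} The delicate point is exactly this last step: guaranteeing that the recursively produced copies of \( \cE \) can be taken to contain (or to exhaust a neighborhood of) the prescribed maximal point \( x \), rather than some auxiliary maximal point appearing in the construction, and checking that the leftover configuration \( \{x\}\sqcup\bigsqcup_n A_n \) is homeomorphic to \( \cE \). I expect this to require re-running the back-and-forth argument of Lemma~\ref{lem:Zx} anchored at \( x \), the extra input being that \( x \) is order equivalent to the \( (\star) \)-point \( x_0 \) and that property \( (\star) \) is a homeomorphism invariant. Once the claim is proved, the two implications above give the stated equivalence.
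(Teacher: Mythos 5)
Your reverse direction (star points are maximal) is exactly the paper's: combine Lemma~\ref{lem:pointed} with Lemma~\ref{lem:intersection}. The problem is the forward direction. Your outline is sound as far as it goes --- if a maximal point \( x \) has property \( (\star) \), then the proof of Lemma~\ref{lem:Zx} does run verbatim anchored at \( x \) and exhibits \( x \) as a star point --- but your argument for the key claim that \emph{every} maximal point has property \( (\star) \) has a genuine gap. In your iteration you produce disjoint clopen copies of \( \cE \) inside \( U \) with no mechanism forcing any of them to contain \( x \): they are images \( \widehat h(W) \) of neighborhoods of \( x_0 \), and nothing controls where \( x \) sits relative to them. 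In the leftover second case you arrive at \( U = \{x\} \sqcup \bigsqcup_n A_n \) with each \( A_n \cong \cE \) and assert that Remark~\ref{rem:rsE} identifies \( U \) with \( \cE \); but this \( U \) is the one-point compactification of countably many copies of \( \cE \) itself (compact pieces), whereas \( Z_{x_0} \) is the one-point compactification of countably many copies of \( \cE \ssm \{x_0\} \) (non-compact pieces), and these are genuinely different spaces. For instance, if \( \cE \cong \omega^\al + 1 \), then \( \{x\}\sqcup\bigsqcup_n A_n \) with \( A_n \cong \cE \) is homeomorphic to \( \omega^{\al+1}+1 \not\cong \cE \). So the second case does not resolve as claimed, and the key claim is left unproven --- as your ``main obstacle'' paragraph half-concedes.

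What is actually needed is homogeneity of the set of maximal points, and this is where the paper takes a different (and much shorter) route: it reduces via Lemma~\ref{lem:displaceable} to the case where every compact subsurface of \( S \) is displaceable and then cites \cite[Remark 4.15 and Lemma 4.17]{MannRafi} to produce a homeomorphism \( f \colon S \to S \) carrying the known star point to the given maximal point; since being a star point is a homeomorphism-invariant condition on the pair \( (\cE, \cE_{np}) \), this finishes the proof in one line. To repair your version, either import that result or genuinely prove that \( \Homeo(S) \) acts transitively on maximal points (equivalently, that property \( (\star) \) propagates from \( x_0 \) to every maximal point). Note that order equivalence of \( x \) and \( x_0 \) alone only says that small neighborhoods of each embed into small neighborhoods of the other, which is strictly weaker than what you need.
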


\begin{proof}
Lemma~\ref{lem:intersection} and Lemma~\ref{lem:pointed} imply that every star point is maximal, so we only need to concern ourselves with the forward direction.
By the assumption of radial symmetry, \( \cE \) contains a star point \( x \), which---as just noted---is necessarily maximal.
Now let \( y \) be another maximal point.
By Lemma~\ref{lem:displaceable}, we may assume that every compact subsurface of \( S \) is displaceable. 
By Lemma~\ref{lem:converse}, \( \cE \) is self-similar and so we can apply \cite[Remark 4.15 and Lemma 4.17]{MannRafi} to see that there exists a homeomorphism \( f\co S \to S \) such that \( f(x) = y \).
In particular, \( y \) is star point.
\end{proof}

\section{The proof of Theorem~\ref{mainUncountable}}
\label{sec:trichotomy}

We begin this section by coalescing results from \cite{MannRafi} to establish a partition of all orientable surfaces into three categories, described below in Theorem \ref{thm:partialclass}.
Afterwards, using the equivalence of radial symmetry and self-similarity (Theorem \ref{thm:SSandPS}), we assemble the results from Section \ref{Allcock} and Section \ref{Denumerable} to establish our main theorem, Theorem \ref{mainUncountable}.

\begin{Thm}\label{thm:partialclass} If $S$ is an orientable 2-manifold in which every compact subsurface is displaceable, then either the end space of \(S\) is self-similar or it is doubly pointed.

\end{Thm}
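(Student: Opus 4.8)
The plan is to combine the self-similarity criterion of Proposition~\ref{thm:mr} with a counting argument driven by displaceability, organized around the partially ordered set $[\cE]$ of order-equivalence classes and the set $\mathcal M$ of maximal points of $\cE$. First I would record the relevant structural facts from \cite{MannRafi}: that $\mathcal M \neq \varnothing$, that the relation $\leq$ — hence maximality — is $\Homeo(S)$-invariant, and, as the key input, that when every compact subsurface of $S$ is displaceable, either $\cE$ is self-similar or $[\cE]$ has a unique maximal element with $\mathcal M$ a nonempty \emph{finite} order-equivalence class. (The first alternative already follows from Proposition~\ref{thm:mr} once $\mathcal M$ is a singleton or infinite; the substance of the dichotomy is that order-inequivalent maximal ends, or infinitely many maximal ends, cannot coexist with full displaceability.) It then remains to treat $\mathcal M = \{x_1,\dots,x_k\}$ with $k \geq 2$, and to show that $k = 2$ and that $\cE$ is doubly pointed.

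To see $k \leq 2$, I would choose a connected compact subsurface $K \subset S$ with separating boundary such that each $x_i$ lies in some component $U_i$ of $S\setminus K$ and no component of $S\setminus K$ meets $\mathcal M$ in more than one point (possible since $\mathcal M$ is finite and its points are distinct ends). Since $K$ is displaceable there is $f \in \Homeo(S)$ with $f(K)\cap K = \varnothing$, and as $f(K)$ is connected it lies inside a single component $U_j$ of $S\setminus K$. The components of $S\setminus f(K)$ are exactly the sets $f(U)$ for $U$ a component of $S\setminus K$, and since $\widehat f$ maps $\mathcal M$ bijectively to itself, each such $f(U)$ meets $\mathcal M$ in at most one point. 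On the other hand $K$ together with every component $U \neq U_j$ forms a connected set (each such $U$ shares a boundary curve with $K$, and all of these are disjoint from $f(K)\subset U_j$) disjoint from $f(K)$, so it lies in a single component of $S\setminus f(K)$; that component therefore contains every maximal end outside $U_j$, of which there are at least $k-1$. For $k \geq 3$ this contradicts the previous sentence, so $k=2$.

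It then remains to check that $\cE$ is doubly pointed, i.e. that $x_1$ and $x_2$ are precisely the ends of $S$ with finite $\Homeo(S)$-orbit. The orbit of $x_i$ is contained in $\mathcal M = \{x_1,x_2\}$ by $\Homeo(S)$-invariance of maximality, hence finite. Conversely, if some end $e \notin \mathcal M$ had finite $\Homeo(S)$-orbit $O = \{e_1,\dots,e_m\}$, I would choose a connected compact $K \subset S$ with separating boundary such that each of the $m+2$ ends in $O\cup\{x_1,x_2\}$ lies in its own component of $S\setminus K$ and no component meets $O\cup\mathcal M$ in more than one point. Every $f \in \Homeo(S)$ preserves $O$ and $\mathcal M$, hence maps $O\cup\mathcal M$ bijectively to itself, and the argument of the previous paragraph — now applied to the $m+2 \geq 3$ distinguished ends — shows $K$ cannot be displaced, contradicting the hypothesis. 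Hence the ends of finite orbit are exactly $x_1, x_2$, and $\cE$ is doubly pointed, completing the dichotomy.

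The step I expect to be the main obstacle is the reduction in the first paragraph — importing from \cite{MannRafi}, and making fully precise in the present language, the assertion that failure of self-similarity forces $[\cE]$ to have a unique maximal class with $\mathcal M$ finite. The delicate configurations are those with infinitely many maximal ends — in particular a Cantor set of maximal ends — where the ``at most one maximal end per complementary component'' device underlying the counting arguments above is simply unavailable, and one must instead argue at the level of neighborhood germs of ends, taking care throughout to work with the pairs-of-spaces framework that records the subset $\cE_{np}$ of non-planar ends.
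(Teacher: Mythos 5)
The parts of your argument that you actually carry out are correct. The displacement/counting argument---a compact connected subsurface whose complementary components each contain at most one point of a finite, $\Homeo(S)$-invariant set of at least three ends cannot be displaced---is valid (it is a variant of the argument in Lemma~\ref{lem:nd-countable}), and it correctly yields both $|\mathcal M|\leq 2$ and the fact that no end outside $\mathcal M$ has finite orbit. This second point is a genuinely different route from the paper's, which argues order-theoretically: a non-maximal end $x$ satisfies $x\leq x_1$ for a maximal $x_1$, so $x$ can be pushed into arbitrarily small punctured neighborhoods of $x_1$, forcing its orbit to be infinite. Your version is more hands-on but equally valid.

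The gap is exactly where you predicted it: the reduction to $\mathcal M$ finite. You do not establish---and the statement you propose to import is false as phrased---that failure of self-similarity forces $[\cE]$ to have ``a unique maximal class with $\mathcal M$ finite.'' A two-ended infinite-genus surface with one planar and one non-planar end has every compact subsurface displaceable, is not self-similar, and has two order-\emph{inequivalent} maximal ends; so order-inequivalent maximal ends can perfectly well coexist with full displaceability, and $[\cE]$ need not have a unique maximal element. What your argument actually requires is only the weaker assertion that displaceability plus non-self-similarity forces $\mathcal M$ to be nonempty and finite. That assertion is true, but it does not follow from Proposition~\ref{thm:mr} alone (which says nothing when $\mathcal M$ is infinite but not a single order class), and your counting device is unavailable precisely when $\mathcal M$ is infinite. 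The paper closes this step by citing two further results of \cite{MannRafi}: Proposition 4.7 together with the construction in Example 2.5 show that under full displaceability the maximal set is either a single order class that is a singleton, a doubleton, or a Cantor set, or else consists of two singleton classes; and Proposition 2.8 shows that $\cE$ is self-similar if and only if $\mathcal M$ is a singleton or a Cantor set of order-equivalent points. Non-self-similarity then leaves exactly two maximal ends, after which your finite-orbit analysis (or the paper's) finishes the proof. Without some such input, your argument does not get started in the infinite-$\mathcal M$ case, so the proposal as written is incomplete at its pivotal step.
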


\begin{proof}
Let \( \cE = \cE(S) \). 
Using Proposition 4.7 along with the construction in Example~2.5  from \cite{MannRafi}, we conclude that since all compact surfaces in $S$ are displaceable, either $[\cE]$ has a unique maximal element---whose equivalence class consists of a singleton, two points, or a Cantor set---or it has two maximal elements---each of which is a singleton. Assuming every compact subsurface is displaceable, by Proposition 2.8 from \cite{MannRafi}, the end space is self-similar if and only if $\mathcal{M}$ is a singleton or a Cantor set consisting of points of the same type. Thus, if all compact surfaces in $S$ are displaceable and $\cE$ is not self-similar, then either there is one maximal element---consisting of two points---or two maximal elements---each of which is a singleton. 

In either case, there are exactly two maximal points $x_1$ and $x_2$, and so each has a finite $\Homeo(S)$-orbit.
Now observe that the orbit of every other point is infinite: indeed, if \( x \in \cE \) is not maximal, then, up to relabelling, we can assume \( x \leq x_1 \); therefore, for every neighborhood \( U \) of \( x_1 \) there exists \( f \in \Homeo(S) \) such that \( f(x) \in U \ssm \{x_1\} \) and so the orbit of \( x \) is infinite.
Thus, $\cE$ is doubly pointed. 
\end{proof}

Using this theorem, we see that Theorem~\ref{mainUncountable} covers all orientable infinite-type 2-manifolds with no planar ends. Combining Theorems~\ref{thm:lack},~\ref{thm:pointed},~\ref{thm:doubly-pointed},~\ref{thm:SSandPS}, and~\ref{thm:partialclass} gives the main theorem of the paper.

\begin{Thm} \label{mainUncountable} 
Let \( S \) be an orientable infinite-genus 2-manifold with no planar ends and let $G$ be an arbitrary group. Then: 
\begin{enumerate}
\item If the end space of $S$ is self-similar, there exists a  complete hyperbolic metric on $S$ whose isometry group is $G$ if and only if $G$ is countable. 

\item If the end space $S$ is doubly pointed, then the isometry group of any complete hyperbolic metric on $S$ is virtually cyclic. 

\item If $S$ contains a compact non-displaceable subsurface, then there exists a  complete hyperbolic metric on $S$ whose isometry group is $G$ if and only if $G$ is finite. 

\end{enumerate}
Moreover, every such 2-manifold satisfies at least one of the above hypotheses. \qed
\end{Thm}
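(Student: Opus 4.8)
The plan is to derive each of the three numbered statements by invoking the matching theorem established earlier, and then to prove the final \emph{moreover} clause by splitting along the dichotomy ``has a compact non-displaceable subsurface'' versus ``every compact subsurface is displaceable'' and applying Theorem~\ref{thm:partialclass} in the latter case. Since $S$ is infinite-genus and borderless throughout, it is non-compact, so its fundamental group is a free group of countably infinite rank and in particular is non-abelian; I would record this observation at the very start, as it is precisely what is needed to invoke Theorem~\ref{thm:doubly-pointed} and Lemma~\ref{lem:finite-isom} without further comment.

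For statement (1), suppose the end space $\cE$ of $S$ is self-similar. By Theorem~\ref{thm:SSandPS}, self-similarity of $\cE$ is equivalent to $\cE$ having radial symmetry, so $S$ becomes an orientable infinite-genus 2-manifold with no planar ends whose end space has radial symmetry. Theorem~\ref{thm:pointed} then applies verbatim and gives that a group $G$ is realized as the isometry group of a complete hyperbolic metric on $S$ if and only if $G$ is countable. For statement (2), suppose $\cE$ is doubly pointed; because $\pi_1(S)$ is non-abelian, Theorem~\ref{thm:doubly-pointed} applies directly and yields that the isometry group of \emph{any} complete hyperbolic metric on $S$ is virtually cyclic. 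For statement (3), the hypothesis that $S$ contains a compact non-displaceable subsurface is exactly the hypothesis of Theorem~\ref{thm:lack}, which gives that $G$ is realized as an isometry group on $S$ if and only if $G$ is finite.

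It remains to establish the \emph{moreover} clause: that every orientable infinite-genus 2-manifold $S$ with no planar ends satisfies at least one of the three hypotheses. Here I would consider the two complementary cases. If $S$ admits a compact non-displaceable subsurface, then hypothesis (3) holds and we are done. Otherwise \emph{every} compact subsurface of $S$ is displaceable, and Theorem~\ref{thm:partialclass} guarantees that $\cE$ is either self-similar or doubly pointed, so hypothesis (1) or (2) holds. These two cases are exhaustive, so every such $S$ falls into at least one of the three classes.

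The only real subtlety — and the step I expect to need the most care — is the bookkeeping around the fact that statements (2) and (3) are not mutually exclusive, so I would check that the conclusions are consistent where their hypotheses overlap: if $S$ has a compact non-displaceable subsurface, then by Theorem~\ref{thm:lack} its isometry group is finite, and finite groups are virtually cyclic, so statement (2) remains valid on that overlap. Beyond this, the argument is pure assembly; all of the genuine mathematical content already resides in Theorems~\ref{thm:lack}, \ref{thm:pointed}, \ref{thm:doubly-pointed}, \ref{thm:SSandPS}, and \ref{thm:partialclass}, and this proof simply combines them.
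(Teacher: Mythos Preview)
Your proposal is correct and follows exactly the same approach as the paper: the paper's proof is the single sentence ``Combining Theorems~\ref{thm:lack},~\ref{thm:pointed},~\ref{thm:doubly-pointed},~\ref{thm:SSandPS}, and~\ref{thm:partialclass} gives the main theorem of the paper,'' and you have simply spelled out how these pieces fit together. Your additional remarks on $\pi_1(S)$ being non-abelian and on the consistency of the overlap between cases (2) and (3) are correct and harmless elaborations.
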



\section{Regular covers (proof of Theorem~\ref{thm:covering})}\label{sec:covers}

In this section, we give a strengthening of Proposition~\ref{prop:finite-cover} under the additional assumption that the surface has a self-similar end space, establishing Theorem~\ref{thm:covering}.

\begin{Prop}\label{prop:sscovering}
Let \( S \) be an orientable infinite-genus 2-manifold with no planar ends and self-similar end space.
If  \( G \) is any countable group, then there exists a regular covering \( \pi \co S \to S \) such that the deck group associated to \( \pi \) is isomorphic to \( G \).
\end{Prop}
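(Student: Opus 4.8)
The plan is to show that the hyperbolic surfaces built in Sections~\ref{Allcock} and~\ref{sec:radial} are themselves the total spaces of the required covers. For $G$ finite this is exactly Proposition~\ref{prop:finite-cover} (which does not use self-similarity), and for $G$ denumerable with $S$ the Loch Ness monster surface it follows from Lemma~\ref{lem:loch} (which gives $X_S^G\cong S$) together with Lemma~\ref{lem:covering} (which gives $G\backslash X_S^G\cong S$), once one notes that the $G$-action on $X_S^G$ is free. So the substantive case is $G$ denumerable and $S$ with infinitely many ends, which I would treat with the surface $Y_S^G$ of Section~\ref{sec:radial}.

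The first point is to run the construction of $Y_S^G$ with a carefully chosen star point and decomposition. By Theorem~\ref{thm:SSandPS}, $\cE$ has radial symmetry, and by Remark~\ref{rem:rsE} we may fix a star point $x$ and a decomposition $\cE\ssm\{x\}=\bigsqcup_{n\in\bn}E_n$ satisfying, in addition, $E_n\cup\{x\}\cong\cE$ for every $n$ (it is here that self-similarity, rather than mere radial symmetry, is used). Performing the construction of $Y_S^G$ with this data, Lemma~\ref{lem:Y-isom} and Lemma~\ref{lem:Y-homeo} give that $Y_S^G$ is a complete hyperbolic surface homeomorphic to $S$ with $\isom(Y_S^G)\cong G$, the action satisfying $g\cdot V_h=V_{gh}$.

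Next I would check that this $G$-action is free and hence that the quotient map is a regular $G$-cover. A nontrivial $g\in G$ sends each vertex surface $V_h$ to $V_{gh}\neq V_h$ and, by equivariance of the attaching maps, sends the copy of an edge surface based at $V_a$ to the copy based at $V_{ga}\neq V_a$; since the closures of the vertex and edge surfaces cover $Y_S^G$, have pairwise disjoint interiors, and the separating boundary curves between them are permuted without fixed curves (using that the two cuffs of each edge surface have distinct lengths, so that no isometry interchanges the two sides of an edge surface), $g$ fixes no point. As $\isom(Y_S^G)$ acts properly discontinuously on $Y_S^G$ (Lemma~\ref{lem:prop-discont}), the quotient $Q:=Y_S^G/G$ is a surface and $\pi\co Y_S^G\to Q$ is a regular covering with deck group $G$.

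Finally I would identify $Q$ with $S$, exactly as in Lemma~\ref{lem:covering}: $Q$ is obtained from a single copy of the bordered infinite-genus vertex surface $R$ by gluing, for each $(h,m)\in G\times\bn$, a copy of the compact edge surface $E(h,2m)$ along the pair of boundary curves $\partial(h,2m),\partial(h,2m-1)$ of $R$. These glued pieces are compact and form a locally finite family in $Q$, so the inclusion $R\hookrightarrow Q$ induces a homeomorphism of end spaces; and $\cE(R)$, which in the construction of $Y_S^G$ is identified (arguing as in Lemma~\ref{lem:end-space}) with $E_1\cup\{x\}$, is homeomorphic to $\cE$ by our choice of decomposition. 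Since $Q$ is also orientable, borderless, of infinite genus and has no planar ends, the classification of surfaces gives $Q\cong S$; combined with $Y_S^G\cong S$, this yields the desired regular $G$-cover $S\to S$. The step I expect to require the most care is arranging the construction so that the single vertex surface already carries all of $\cE$---i.e.\ the appeal to Remark~\ref{rem:rsE}---together with the verification of freeness; the end-space bookkeeping for $Q$ is routine given Lemma~\ref{lem:covering}.
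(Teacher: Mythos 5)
Your proposal is correct and follows essentially the same route as the paper: reduce to Proposition~\ref{prop:finite-cover} for finite $G$, and for denumerable $G$ run the construction of $Y_S^G$ using the decomposition from Remark~\ref{rem:rsE} (so that the vertex surface already has end space homeomorphic to $\cE$), then identify both $Y_S^G$ and the quotient $G\backslash Y_S^G$ with $S$. Your added verifications of freeness of the $G$-action and of the end-space bookkeeping for the quotient are details the paper leaves implicit, but the underlying argument is the same.
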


\begin{proof}
In the case that \( G \) is finite, we refer to Proposition~\ref{prop:finite-cover}, so let us assume that \( G \) is infinite.
By Theorem \ref{thm:SSandPS}, we know that the end space \( \cE \) of \( S \) has radial symmetry and, moreover, by Remark~\ref{rem:rsE}, \( \cE \) has a star point \( x \) such that \( \cE \ssm \{x\} = \bigsqcup_{n\in\bn} E_n \) with \( E_n \cup \{x\} \) homeomorphic to \( \cE \).
Let \( Y_S^G \) and \( R \) be the surfaces constructed in Section~\ref{sec:radial}, so that \( Y_S^G \) is homeomorphic to \( S \) (Lemma~\ref{lem:Y-homeo}) and \( \cE(R) \) is homeomorphic to \( \cE \). 
From the construction of \( Y_S^G \), it is not hard to see that the quotient of \( Y_S^G \) by the action of \( G \) is homeomorphic to the quotient of \( R \) obtained by identifying the boundary components of \( R \) in pairs via orientation-reversing homeomorphisms; hence, \( G \backslash Y_S^G \) is an infinite-genus surface with no planar ends whose end space is homeomorphic to \( \cE \).
Therefore, the associated covering is the desired one. 
\end{proof}

Proposition~\ref{prop:finite-cover} and \ref{prop:sscovering} immediately give: 

\begin{Thm}\label{thm:covering}
Let \( S \) be an orientable infinite-genus 2-manifold with no planar ends.
If  \( G \) is any finite group, then there exists a regular covering \( \pi \co S \to S \) such that the deck group associated to \( \pi \) is isomorphic to \( G \). If, additionally, the end space of \(S\) is self-similar and \( G \) is any countable group, then there exists a regular covering \( \pi \co S \to S \) such that the deck group associated to \( \pi \) is isomorphic to \( G \). \qed
\end{Thm}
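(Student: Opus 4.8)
The plan is to note that this statement is precisely the conjunction of Proposition~\ref{prop:finite-cover} and Proposition~\ref{prop:sscovering}, both of which have already been established; the proof therefore amounts to assembling these two results. It is worth spelling out how each half produces a regular covering of $S$ by $S$ itself.

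For the finite case I would invoke the surface $X_S^G$ constructed in Section~\ref{Allcock}. By Lemma~\ref{lem:isom} the group $G$ acts on $X_S^G$ by isometries, and since this action carries the vertex surface $V_h$ to $V_{gh}$ for all $g,h\in G$, any nontrivial $g$ moves every vertex surface off itself and hence acts without fixed points; combined with the proper discontinuity of the isometry-group action (Lemma~\ref{lem:prop-discont}), the quotient map $X_S^G\to G\backslash X_S^G$ is a regular covering with deck group $G$. Lemma~\ref{lem:surface} identifies $X_S^G$ with $S$ (when $G$ is finite) and Lemma~\ref{lem:covering} identifies $G\backslash X_S^G$ with $S$, so pre- and post-composing the covering map with these homeomorphisms yields the desired $\pi\co S\to S$.

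For the self-similar case the same scheme applies with $Y_S^G$ from Section~\ref{sec:radial} in place of $X_S^G$: Lemma~\ref{lem:Y-isom} supplies the free isometric $G$-action, Lemma~\ref{lem:Y-homeo} gives $Y_S^G\cong S$, and Proposition~\ref{prop:sscovering} identifies $G\backslash Y_S^G$ with $S$ by observing that the quotient is the bordered surface $R$ with its boundary circles glued in pairs by orientation-reversing homeomorphisms, an operation that leaves the end space (homeomorphic to $\cE(S)$) unchanged and introduces no planar ends; the classification of surfaces then finishes the identification.

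The only genuine content here — already dispatched in the cited propositions — is the control of the topology of the quotient: one must verify that the relevant capping/gluing reproduces exactly $\cE(S)$, the non-planar ends $\cE_{np}(S)$, and the genus of $S$. So I expect the "hard part" to be bookkeeping with the classification of orientable 2-manifolds rather than anything new. I would also record, as in the discussion following the statement, that since $\pi_1(S)$ is free of countably infinite rank a regular cover with any prescribed countable deck group automatically exists; the substance of the theorem is that the covering space can be taken homeomorphic to $S$.
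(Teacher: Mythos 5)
Your proposal is correct and follows the paper's proof exactly: the theorem is obtained by combining Proposition~\ref{prop:finite-cover} (itself a consequence of Lemmas~\ref{lem:covering} and~\ref{lem:surface}) with Proposition~\ref{prop:sscovering}, whose proof identifies $G\backslash Y_S^G$ with $S$ via the boundary-gluing description of the quotient of $R$, just as you describe. Your added remark on freeness of the $G$-action is a correct (and slightly more explicit) justification than the paper records.
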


\section{Application to (pure) mapping class groups}\label{sec:app2}

We proceed with an application to mapping class groups.
The \emph{extended mapping class group of a surface \( S \)} is the group  \( \mcg^\pm(S) \) of homotopy classes of homeomorphisms \( S \to S \); the \emph{mapping class group} \( \mcg(S) \) is the subgroup of \( \mcg^\pm(S) \) of orientation-preserving classes; and, the \emph{pure mapping class group}, denoted by \( \pmcg(S) \), is the kernel of the action of \( \mcg(S) \) on \( \cE(S) \).
Recently, there have been a number of results uncovering the algebraic structure of (pure) mapping class groups of infinite-type surfaces.  
One notable example is the algebraic rigidity of (pure) mapping class groups of infinite-type surfaces \cite{BDR}, that is, given two orientable infinite-type 2-manifolds \( S \) and \( S' \),  if \( \mcg(S) \) (or \( \pmcg(S) \)) and \( \mcg(S') \) (resp. \( \pmcg(S') \)) are isomorphic, then \( S \) and \( S' \) are homeomorphic. 

With the exception of a finite number of well-understood cases, algebraic rigidity can be deduced for (pure) mapping class groups of orientable finite-type 2-manifolds from the work of Ivanov \cite{Ivanov}, Korkmaz \cite{Korkmaz}, and Luo \cite{Luo}.
In the finite-type case, combining results of Birman--Lubotsky--McCarthy \cite{BLM} (computing the algebraic rank) and Harer \cite{Harer} (computing the virtual cohomological dimension), it is, moreover, possible to determine the topology of \( S \) from algebraic invariants of either \( \mcg(S) \) or \( \pmcg(S) \). 

This leads to the following problem:

\begin{problem}{Provide a list of algebraic invariants of  \( \mcg(S) \) and/or \( \pmcg(S) \) that determine the topology of \( S \).}
\end{problem}

Combining several recent results with Theorem~\ref{thm:finite}, we provide such a list for a countable family of infinite-type surfaces.
Before stating the result, we recall several definitions.
A group is \emph{residually finite} if the intersection of all its normal subgroups is trivial. 
A group is \emph{left cyclically orderable} if it admits a cyclic order invariant under left multiplication.
We avoid the definition of a cyclic order by recalling two facts:
(1) every subgroup of a left cyclically orderable group is itself left cyclically orderable and 
(2) \( \Homeo(\bS^1) \) is left cyclically orderable, where \( \bS^1 \) denotes the circle (see \cite{Zheleva}).
In fact, a countable group is left cyclically orderable if and only if it can be realized as a subgroup of \( \Homeo(\bS^1) \) (see  \cite[Theorem 2.2.14]{CalegariCircular}).
From this, one readily deduces that every finite left cyclically orderable group is cyclic.

\begin{Thm}\label{thm:rigid}
Let \( \Omega_n \) be an \( n \)-ended orientable infinite-genus 2-manifold with no planar ends and let \( G = \pmcg(S) \) for some orientable 2-manifold \( S \).
Then, \( S \) is homeomorphic to \( \Omega_n \) if and only if  \( G  \) satisfies each of the following conditions:
\begin{enumerate}[(i)]
\item \( G \) is not residually finite,
\item \( G \) is not left cyclically orderable,
\item \( \mathrm{Hom}(G, \bz) \) has algebraic rank \( n-1 \), and
\item \( G \) is finite index in \( \mathrm{Aut}(G) \).
\end{enumerate}
\end{Thm}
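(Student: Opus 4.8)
The plan is to prove both directions, relying heavily on results already cited in the paper together with the key input from this article, namely Theorem~\ref{thm:finite}. For the forward direction, suppose $S$ is homeomorphic to $\Omega_n$. Condition (i) is precisely the statement that $\pmcg(S)$ is not residually finite for any orientable infinite-genus surface, which is the result of Patel--Vlamis \cite{PatelAlgebraic}. For condition (ii), I would invoke Theorem~\ref{thm:finite}: since $\Omega_n$ is an orientable infinite-genus 2-manifold with no planar ends, it admits a complete hyperbolic metric with isometry group any prescribed finite group, and in particular (by the Remark following Theorem~\ref{mainUncountable}, or by taking an isometry group that is, say, $\bz/2\bz \times \bz/2\bz$) we can realize a finite non-cyclic group as a group of orientation-preserving isometries. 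By Nielsen realization in the infinite-type setting \cite{ACCL}, this finite group embeds in $\mcg(\Omega_n)$; one checks the isometries act trivially on the end space (as in Lemma~\ref{lem:action-trivial}), so in fact this copy lies in $\pmcg(\Omega_n)$. Since every finite left cyclically orderable group is cyclic, $\pmcg(\Omega_n)$ contains a finite subgroup that is not left cyclically orderable, and hence $\pmcg(\Omega_n)$ itself is not left cyclically orderable, giving (ii). Conditions (iii) and (iv) are pulled from the recent literature: the computation of $\mathrm{Hom}(\pmcg(S),\bz)$ for infinite-type $S$ (due to Aramayona--Patel--Vlamis and related work on the abelianization of big pure mapping class groups) shows its rank equals the number of ends accumulated by genus minus one; for $\Omega_n$ with all ends non-planar this is $n-1$. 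Condition (iv) follows from the algebraic rigidity of big (pure) mapping class groups \cite{BDR}: $\mathrm{Aut}(\pmcg(S))$ is $\mcg^\pm(S)$ for $S$ infinite-type (after the standard exceptions are ruled out), and $\pmcg(\Omega_n)$ has finite index in $\mcg^\pm(\Omega_n)$ since $\mcg^\pm(\Omega_n)/\pmcg(\Omega_n)$ embeds in the (finite) homeomorphism group of the $n$-point end space.

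For the reverse direction, suppose $G = \pmcg(S)$ satisfies (i)--(iv); I must show $S \cong \Omega_n$. First, $S$ is of infinite type: if $S$ were of finite type then $\pmcg(S)$ would be residually finite (mapping class groups of finite-type surfaces are residually finite), contradicting (i). Next I rule out planar ends and enforce infinite genus by the same token and by condition (iii): if $S$ had finite positive genus or had finitely many ends all of which are planar, the relevant abelianization computation would give the wrong rank or $\pmcg(S)$ would have properties incompatible with (i); more carefully, one uses the known classification of which infinite-type surfaces have non-residually-finite pure mapping class group (again \cite{PatelAlgebraic}, which in fact requires infinite genus) to conclude $S$ has infinite genus. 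Condition (iii) then pins down the number of ends accumulated by genus to be $n$. The remaining task is to show $S$ has no planar ends and exactly $n$ ends total (not merely $n$ ends accumulated by genus); this is where condition (iv) does the work, together with the precise relationship between $\pmcg(S)$ and $\mathrm{Aut}(\pmcg(S)) \cong \mcg^\pm(S)$ from \cite{BDR}: finiteness of $[\mcg^\pm(S) : \pmcg(S)]$ forces the end space $\cE(S)$ to be finite, hence $\cE(S)$ has exactly $n$ points, all accumulated by genus, i.e.\ non-planar. By the classification of surfaces, $S \cong \Omega_n$.

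The main obstacle, I expect, is establishing condition (ii) in the forward direction with full rigor --- specifically, transporting a finite non-cyclic isometry group through Nielsen realization into $\pmcg$ rather than merely $\mcg$, i.e.\ verifying that a suitable finite group of isometries can be chosen to act trivially on $\cE(\Omega_n)$. This should follow by adapting Lemma~\ref{lem:action-trivial}, but it requires knowing that the hyperbolic surface produced by Theorem~\ref{thm:finite} (equivalently, the construction $X_S^G$ with $G$ finite and $S = \Omega_n$) has the property that its finite isometry group acts trivially on ends, which is exactly Lemma~\ref{lem:action-trivial}; so in fact the input is already in hand and the ``obstacle'' is really just careful bookkeeping. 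A secondary subtlety is making sure the cited abelianization and automorphism-group computations apply to $\pmcg$ (not $\mcg$) and in the precise generality of $n$-ended infinite-genus surfaces with no planar ends; here one must check the hypotheses of \cite{BDR} and the relevant $\mathrm{Hom}(-,\bz)$ computation exclude only sporadic low-complexity cases that do not arise for $\Omega_n$.
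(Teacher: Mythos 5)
Your forward direction matches the paper's argument in all essentials: (i) via Patel--Vlamis, (ii) by realizing a finite non-cyclic group as orientation-preserving isometries fixing every end (Theorem~\ref{thm:finite} plus Lemma~\ref{lem:action-trivial}) and using that finite left cyclically orderable groups are cyclic, (iii) via Aramayona--Patel--Vlamis, and (iv) via Bavard--Dowdall--Rafi. One small misattribution: the embedding \( \isom(X) \hookrightarrow \pmcg(\Omega_n) \) does not come from Nielsen realization (which goes the other way, from mapping classes to isometries); it comes from the standard fact that no two distinct isometries of a hyperbolic surface are homotopic. The fact you need is in hand, but \cite{ACCL} is the wrong citation for it.

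The genuine gap is in the reverse direction: you never use condition (ii). You correctly extract infinite genus from (i), exactly \( n \) non-planar ends from (iii), and a finite end space from (iv), but then you assert that finiteness of \( \cE(S) \) forces \( \cE(S) \) to consist of exactly \( n \) points, all non-planar. That does not follow. A surface obtained from \( \Omega_n \) by deleting finitely many points satisfies (i) (infinite genus), (iii) (the rank of \( \mathrm{Hom}(\pmcg(S),\bz) \) sees only the non-planar ends, so it is still \( n-1 \)), and (iv) (the end space is still finite), yet it is not homeomorphic to \( \Omega_n \). The only hypothesis that excludes such surfaces is (ii): by Bavard--Walker, if \( S \) has an isolated planar end then \( \pmcg(S) \) is left cyclically orderable, so (ii) rules out isolated planar ends; combined with the finiteness of \( \cE(S) \) from (iv)---in a finite end space every end is isolated---this shows \( S \) has no planar ends at all, and then (iii) pins down \( \cE(S) \) as exactly \( n \) non-planar ends. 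Without this step your argument proves a false statement, so the omission is not mere bookkeeping.
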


\begin{proof}
Let \( \cE \) be the end space of \( S \).
By Patel--Vlamis \cite{PatelAlgebraic}, \( \pmcg(S) \) is residually finite if and only if \( S \) has finite genus.
Hence, (i) implies \( S \) has infinite genus.
For \( \pmcg(\Omega_n) \), \cite{PatelAlgebraic} implies (i) holds.

By Bavard--Walker \cite{BavardTwo}, if \( S \) has an isolated planar end, then \( \pmcg(S) \) is left cyclically orderable.
Hence, (ii) implies \( S \) has no isolated planar ends.
For \( \Omega_n \), let \( H \) be a non-cyclic finite group. Then, by Lemma \ref{lem:action-trivial} and Theorem~\ref{thm:finite}, there exists a hyperbolic surface \( X \) homeomorphic \( \Omega_n \) such that \( \isom(X) \cong H \) and, moreover, every element of \( \isom(X) \) is orientation-preserving and fixes every end of \( \Omega_n \).
It is well known that no two distinct isometries of a hyperbolic surface are homotopic; hence, \( \isom(X) \) can be realized as a subgroup of \( \pmcg(\Omega_n) \); in particular, \( \pmcg(\Omega_n) \) is not left cyclically orderable, establishing (ii).

By Aramayona--Patel--Vlamis \cite{AramayonaFirst}, the assumption that \( \mathrm{Hom}(\pmcg(S), \bz) \) has algebraic rank \( n-1  \) together with the fact \( S \) has infinite genus---from condition (i)---guarantees \( S \) has  \( n \) non-planar ends.
For \( \Omega_n \), the same result \cite{AramayonaFirst} tells us that (iii) holds for \( \pmcg(\Omega_n) \).

As \( \pmcg(S) \) is the kernel of the action of \( \mcg(S) \) on \( \cE \), we see that \( \pmcg(S) \) is finite index in \( \mcg(S) \) if and only if \( \cE \) is finite. 
By Bavard--Dowdall--Rafi \cite{BDR}, \( \mathrm{Aut}(\pmcg(S)) \) is isomorphic to \( \mcg^\pm(S) \); hence, for \( G \) to be finite index in \( \mathrm{Aut}(G) \), the cardinality of \( \cE \) must be finite.
For \( \Omega_n \)---which is \( n \)-ended---we see (iv) holds.

We have established that \( \pmcg(\Omega_n) \) satisfies all four conditions of the theorem.
Now, since the cardinality of \( \cE \) is finite and \( S \) has no isolated planar ends, it follows that \( S \) has no planar ends.
But, we know that \( S \) has \( n \) non-planar ends by condition (iii); hence, by the classification of surfaces, \( S \) is homeomorphic to \( \Omega_n \). 
\end{proof}


\section{Notes on planar ends and on finite genus}\label{sec:other surfaces}

We end with some observations about surfaces with finite genus or a finite number of planar ends, the goal of which is to justify the assumptions in the main theorems.

First, let us recall that the isometry group of any compact genus-\( g \) hyperbolic surface is finite with order at most \( 168(g-1) \) when \( g \geq 2 \).
We first observe that if we remove the compactness restriction, but focus on a fixed genus, there are obstructions for the action of finite groups:

\begin{Prop}\label{prop:finitegenus}
Let \( S \) be an orientable  2-manifold with finite genus \( g \) and non-abelian fundamental group.
Let \( G \) be the isometry group of a complete constant-curvature Riemannian metric on \( S \).
\begin{itemize}
\item If \( g = 0 \) and \( G \) is finite, then \( G \) is isomorphic to a subgroup of \( O(3) \), the 3-dimensional orthogonal group.
\item If \( g = 1 \), then \( G \) is a finite quotient of a 2-dimensional crystallographic group (and hence admits a generating set consisting of at most four elements).
\item If \( g \geq 2 \), then \( |G| \leq 168(g-1) \). 
\end{itemize}
\end{Prop}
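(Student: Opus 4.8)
The plan is to reduce all three cases to the hyperbolic setting and then transport the isometric $G$-action onto a \emph{closed} genus-$g$ surface, where classical facts about finite group actions apply. First observe that, since $\pi_1(S)$ is non-abelian, the constant-curvature metric must be hyperbolic (after rescaling, of curvature $-1$): a complete surface of constant positive curvature is a sphere, hence has trivial fundamental group, and the only complete flat orientable $2$-manifolds---$\br^2$, $\bS^1\times\br$, and the torus---have abelian fundamental group. Write $X$ for $S$ with this hyperbolic metric; for $g\geq 1$ Proposition~\ref{prop:finite-genus} already gives that $G=\isom(X)$ is finite, and for $g=0$ this is assumed. Since $X$ has finite genus $g$, its end space $\cE$ is a totally disconnected compact space, and we may realize it as a closed subset $C$ of the closed genus-$g$ surface $\Sigma_g$ with $X\cong\Sigma_g\ssm C$; filling the ends back in identifies the end compactification $\widehat X=X\sqcup\cE$ with $\Sigma_g$. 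Every isometry of the complete surface $X$ is proper, so by Proposition~\ref{prop:end-map} it induces a homeomorphism of $\cE$, and the two combine to a homeomorphism of $\widehat X=\Sigma_g$; this $G$-action on $\Sigma_g$ is faithful, as an element acting trivially on $\Sigma_g$ is the identity on the dense open set $X$. Thus in every case $G$ is isomorphic to a finite subgroup of $\Homeo(\Sigma_g)$.

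When $g=0$, every finite subgroup of $\Homeo(S^2)$ is topologically conjugate into $O(3)$ (Ker\'ekj\'art\'o in the cyclic case; the general finite case follows by smoothing the action, uniformizing, and conjugating the resulting finite group of conformal/anticonformal automorphisms of the Riemann sphere into $O(3)$), so $G$ is isomorphic to a subgroup of $O(3)$. When $g=1$, Nielsen's realization theorem for the torus conjugates the action of $G$ on $T^2$ to one by affine automorphisms of $T^2=\br^2/\bz^2$; averaging an inner product puts the linear parts in $O(2)$, so after a linear change of variables $G$ acts by affine isometries of a flat torus $\br^2/\Lambda$. Lifting to $\br^2$, the preimage $\widetilde G$ of $G$ in $\isom(\br^2)$ contains $\Lambda$ as a normal subgroup of index $|G|$ and acts properly discontinuously and cocompactly on $\br^2$; by Bieberbach's theorem $\widetilde G$ is a $2$-dimensional crystallographic group, and $G\cong\widetilde G/\Lambda$ is one of its finite quotients. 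Since $\widetilde G$ sits in an extension $1\to\bz^2\to\widetilde G\to P\to 1$ with $P$ a finite subgroup of $O(2)$---so cyclic or dihedral, and generated by at most two elements---the group $\widetilde G$, and hence $G$, is generated by at most four elements.

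When $g\geq 2$, the faithful $G$-action on $\Sigma_g$ may be taken smooth (finite group actions on closed surfaces are locally linearizable), so $\Sigma_g/G$ is a closed $2$-orbifold and Riemann--Hurwitz gives $2-2g=|G|\cdot\chi^{\mathrm{orb}}(\Sigma_g/G)$; as $g\geq 2$ this forces $\chi^{\mathrm{orb}}(\Sigma_g/G)=(2-2g)/|G|<0$, and the smallest absolute value of the Euler characteristic of a closed $2$-orbifold with negative Euler characteristic is $1/84$ (attained by the quotient of $\bh$ by the full $(2,3,7)$-triangle reflection group---for \emph{orientable} orbifolds it would be $1/42$, but $G$ may contain orientation-reversing isometries), whence $|G|=(2g-2)/|\chi^{\mathrm{orb}}(\Sigma_g/G)|\leq 84(2g-2)=168(g-1)$. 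The main obstacle I expect is in the cases $g=0$ and $g=1$: there the substance is the \emph{geometrization} of an a priori merely topological finite group action---conjugating it to an isometric action on $S^2$, respectively to an affine action on $T^2$---which are classical but nontrivial inputs; for $g\geq 2$ the inequality is immediate once the action has been moved onto $\Sigma_g$, the only care needed being the existence and faithfulness of the extension to $\widehat X=\Sigma_g$ and the factor of two coming from orientation-reversing isometries, which is exactly what turns the Hurwitz bound $84(g-1)$ into $168(g-1)$.
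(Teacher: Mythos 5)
Your proposal is correct and follows essentially the same route as the paper: reduce to the hyperbolic case, establish finiteness of \( G \) via non-displaceability (Proposition~\ref{prop:finite-genus}), extend the action to the closed surface \( \Sigma_g \) obtained by filling in the ends, and then geometrize the resulting finite topological action. The only cosmetic difference is in the geometrization step: the paper handles all three genera uniformly by smoothing, averaging a Riemannian metric over \( G \), and uniformizing to get a constant-curvature metric preserved by \( G \), whereas you invoke the case-specific classical statements (Ker\'ekj\'art\'o-type conjugation into \( O(3) \), Nielsen realization plus Bieberbach for the torus, and orbifold Riemann--Hurwitz with the \( 1/84 \) bound for \( g\geq 2 \)); both packagings rest on the same classical inputs and yield the same conclusions.
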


\begin{proof}
By the assumption on the fundamental group of \( S \), we have that \( G \) is the isometry group of a complete hyperbolic metric on \( S \).
First, we observe that \( G \) is finite: if \( g=0 \) then this is by assumption; otherwise, \( g > 0 \) and every compact genus-\( g \) subsurface of \( S \) is non-displaceable; hence, by Lemma \ref{lem:finite-isom}, \( G \) is finite.

Let \( \cE \) denote the end space of \( S \) and let \( S_g \) be an orientable closed genus-\( g \) surface.
Fix an embedding of \( \cE \) into \( S_g \). Then, by the classification of surfaces, \( S \) is homeomorphic to \( S_g \ssm \cE \).
Moreover, every homeomorphism \( S \to S \) has a unique extension to a homeomorphism \( S_g \to S_g \);
in particular, we can view \( G \) as a subgroup of \( \Homeo(S_g) \). 
It is well known (for instance, see \cite[Theorem 2.8]{Epstein}) that every periodic homeomorphism of a 2-manifold can be realized as an isometry of some Riemannian metric (of constant curvature) on the manifold.
Therefore, every periodic homeomorphism of a 2-manifold is smooth. 
This allows us to choose any Riemannian metric \( \rho \) on \( S_g \) and define 
\[ 
\rho'_G = \sum_{g\in G} g^* \rho.
\]
Then \( \rho'_G \) is a Riemannian metric on \( S_g \) and, by the uniformization theorem, there exists a complete Riemannian metric of constant curvature \( \rho_G \) on \( S_g \) conformally equivalent to \( \rho'_G \).
By construction, \( G < \isom( S_g, \rho_G) \).

If \( g = 0 \), then \( \rho_G \) is isometric to the round metric on the 2-sphere and hence we can realize \( G \) as a subgroup of \( O(3) \).
If \( g = 1 \), then \( G \) must be the quotient of a 2-dimensional crystallographic group: there are 17 such groups, each of which is generated by at most four elements (see \cite{Hillman} for a survey). 
If \( g\geq 2 \), then \( \rho_G \) is a complete hyperbolic metric and, by Hurwitz's theorem on automorphisms, \( |G| \leq 168(g-1) \). 
\end{proof}

We note that the finiteness assumption in the genus-0 case is necessary: it is possible for a complete hyperbolic planar surface to have an infinite isometry group.
This follows from the fact the sphere with a Cantor set removed is a regular cover of every closed genus-\( g \) surface with \( g \geq 2 \). For instance, let \( \gamma \in \pi_1(S_g) \) be nontrivial and corresponding to a simple closed curve on \( S_g \). Let \( N_\gamma \) denote the normal closure of the group generated by the conjugates of \(\gamma\). Then the regular cover of \( S_g \) associated to \( N_\gamma \) is homeomorphic to a sphere with a Cantor set removed---in fact, all such planar covers are obtained in a similar fashion \cite{Maskit}.

Note that if a 2-manifold has a non-zero finite number of planar ends, then it contains a compact non-displaceable surface; hence, by Lemma \ref{lem:finite-isom}, any isometry group of a complete hyperbolic metric is finite.
To end, we show that there exists a finite group that cannot be realized as an isometry group of a complete hyperbolic metric on each surface of this type.

\begin{Prop}\label{prop:finiteplanar}
If \( S \) is an orientable 2-manifold with a non-zero finite number of planar ends, then there exists a finite group that cannot be realized as the isometry group of a complete hyperbolic metric on \( S \).
\end{Prop}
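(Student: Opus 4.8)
The plan is to show that the isometry group of any complete hyperbolic metric on $S$ is rigidly constrained near a planar end, and then to name a finite group violating that constraint. Concretely, I will argue that such an isometry group always contains a cyclic or dihedral subgroup of index at most $p$, where $p\ge 1$ is the number of planar ends, and then take $G_0=(\bz/2\bz)^k$ for $k$ large compared to $p$.

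First I would record that the planar ends of $S$ are isolated points of $\cE(S)$: if $e$ is planar, then some term $U_N$ of a defining exiting sequence is planar, so $\widehat{U_N}$ is a neighborhood of $e$ in $\cE(S)$ consisting entirely of planar ends; hence the set of planar ends is open, and a finite open subset of the Hausdorff space $\cE(S)$ is discrete. If $\pi_1(S)$ is abelian, then $S$ is homeomorphic to $\br^2$ or to $S^1\times\br$ (the only orientable $2$-manifolds with abelian fundamental group and a nonempty finite end set), and every complete hyperbolic metric on these has infinite isometry group — $\br^2$ carries only $\bh$, and a hyperbolic metric on the cylinder is $\langle\gamma\rangle\backslash\bh$ for $\gamma$ hyperbolic or parabolic, whose circle of rotations descends — so no finite group occurs at all and the proposition is immediate. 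I therefore assume $\pi_1(S)$ is non-abelian.

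Next, fix a complete hyperbolic metric $X$ on $S$. Recall that $S$ contains a compact non-displaceable subsurface (as noted before the statement), so by Lemma~\ref{lem:finite-isom} the group $G=\isom(X)$ is finite. I would pick a planar end $e$ and use that, since $\pi_1(S)$ is non-abelian, a simple closed curve around $e$ is essential, so the holonomy of $e$ is hyperbolic or parabolic and a neighborhood of $e$ is a funnel or a cusp region. In each case there is a canonical such neighborhood $N_e$ — the component of $X\ssm C(X)$ abutting $e$ when the holonomy is hyperbolic, a maximal embedded cusp region when it is parabolic — which is preserved by every isometry of $X$ fixing $e$ because it is intrinsically characterized. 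Since the isometry group of a funnel and of a cusp region is $O(2)$, and since restriction to $N_e$ is injective on the stabilizer $G_e$ of $e$ in $G$ (an isometry fixing an open subset of a connected surface pointwise is the identity), $G_e$ embeds in $O(2)$ and so is cyclic or dihedral; and $[G:G_e]\le p$ because $G$ permutes the $p$ planar ends. Hence $G$ contains a cyclic or dihedral subgroup of index at most $p$.

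Finally, I would take $G_0=(\bz/2\bz)^k$ with $k=\lceil\log_2 p\rceil+3$: any subgroup of index $\le p$ is isomorphic to $(\bz/2\bz)^m$ with $m\ge k-\log_2 p>2$, which is abelian of rank $\ge 3$ and therefore neither cyclic nor dihedral; so $G_0$ cannot be the isometry group of any complete hyperbolic metric on $S$. The hard part is the geometric input in the third paragraph — recognizing an isolated planar end as a cusp or a funnel and exhibiting a canonical $O(2)$-symmetric neighborhood of it (in particular ruling out, via non-abelianity, a parabolic "flare"); for this one may invoke the structure theory of ends of complete hyperbolic surfaces (e.g. \cite{BasmajianSaric}). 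The remaining bookkeeping is routine.
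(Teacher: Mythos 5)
Your proposal is correct, but it reaches the conclusion by a genuinely different route than the paper. The paper's proof is almost entirely group-theoretic modulo one black box: it takes \( G \) to be a non-abelian finite simple group of order greater than \( n! \), notes that the induced permutation action on the \( n \) planar ends must then be trivial, and invokes the Bavard--Walker theorem (the subgroup of the mapping class group fixing an isolated planar end is left cyclically orderable) to conclude that \( G \) is cyclic, a contradiction. You instead extract the relevant circle action directly from hyperbolic geometry: an isolated planar end of a complete hyperbolic surface with non-abelian fundamental group is a cusp or a funnel, its stabilizer in \( \isom(X) \) injects into \( O(2) \) by restriction to a canonical end neighborhood (equivalently, by its action on the boundary geodesic of the funnel or on a canonical horocycle), and orbit--stabilizer bounds the index of this stabilizer by \( p \); then \( (\bz/2\bz)^k \) for \( k \) large has no cyclic or dihedral subgroup of index at most \( p \). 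Both arguments ultimately rest on the same phenomenon --- the stabilizer of an isolated planar end acts faithfully on a circle --- but yours is self-contained (no appeal to Bavard--Walker) and yields a stronger structural statement, namely that every such isometry group contains a cyclic or dihedral subgroup of index at most \( p \), from which many more non-realizable finite groups can be read off. The price is the cusp/funnel analysis, which you correctly identify as the nontrivial input and for which non-abelianity of \( \pi_1(S) \) is genuinely needed (to rule out parabolic flares and the plane); your separate treatment of the abelian cases \( \br^2 \) and \( S^1\times\br \), where every complete hyperbolic metric has infinite isometry group, closes that loophole cleanly.
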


\begin{proof}
Let \( n \) be the number of planar ends of \( S \).
Let \( G \) be a non-abelian simple finite group of order greater than \( n! \) and suppose \( G \) is the isometry group of some hyperbolic structure on \( S \).
We first observe that \( G \) must act on the planar ends of \( S \) trivially: indeed, the action of \( G \) on the planar ends of \( S \) induces a homomorphism from \( G \) to the symmetric group on \( n \) letters, but the assumption on the cardinality of \( G \) implies that the kernel is non-empty and hence, by the simplicity of \( G \), must be all of \( G \).
Using again the work of Bavard--Walker \cite{BavardTwo}, we must have that \( G \) is cyclically orderable and thus cyclic, which contradicts our assumption that \( G \) is non-abelian.
\end{proof}

\bibliographystyle{amsplain}
\bibliography{Bib-realization}

\end{document}